\newenvironment{LG}{{\bf LG}. \footnotesize \red}{}
\def\cal{\mathcal}
\def\frak{\mathfrak}
\newenvironment{pf}{\proof[\proofname]}{\endproof}
\newenvironment{pf*}[1]{\proof[#1]}{\endproof}
\renewcommand*\subjclass[2][1991]{%
  \def\@subjclass{#2}%
  \@ifundefined{subjclassname@#1}{%
    \ClassWarning{\@classname}{Unknown edition (#1) of Mathematics
      Subject Classification; using '1991'.}%
  }{%
    \@xp\let\@xp\subjclassname\csname subjclassname@#1\endcsname
  }%
}
\renewcommand{\subjclassname}{%
  \textup{1991} Mathematics Subject Classification}
\let\csname subjclassname@1991\endcsname \subjclassname
\newtheorem{Theorem}[equation]{Theorem}
\newtheorem{Corollary}[equation]{Corollary}
\newtheorem{Lemma}[equation]{Lemma}
\newtheorem{Proposition}[equation]{Proposition}
\theoremstyle{definition}
\newtheorem{Definition}[equation]{Definition}
\newtheorem{Notation}[equation]{Notation}
\newtheorem{Conjecture}[equation]{Conjecture}
\theoremstyle{remark}
\newtheorem{Remark}[equation]{Remark}
\newtheorem*{Claim}{Claim}
\numberwithin{equation}{section}
\newcommand{\thmref}[1]{Theorem~\ref{#1}}
\newcommand{\secref}[1]{\S\ref{#1}}
\newcommand{\lemref}[1]{Lemma~\ref{#1}}
\newcommand{\propref}[1]{Proposition~\ref{#1}}
\newcommand{\corref}[1]{Corollary~\ref{#1}}
\newcommand{\defref}[1]{Definition~\ref{#1}}
\newcommand{\remref}[1]{Remark~\ref{#1}}
\newcommand{\conref}[1]{Conjecture~\ref{#1}}
\newcommand{\C}{{\Bbb C}}
\newcommand{\Z}{{\Bbb Z}}
\newcommand{\Q}{{\Bbb Q}}
\newcommand{\R}{{\Bbb R}}
\newcommand{\la}{{\frak \lambda}}
\newcommand{\Tor}{\underline{Tor}}
\newcommand{\D}{{\mathcal D}}
\newcommand{\ra}{\rightarrow}
\newcommand{\Pic}{\operatorname{Pic}}
\newcommand{\rk}{\mathop{{\rm rk}}}
\newcommand{\Coeff}{\mathop{\text{\rm Coeff}}}
\def\oo{{\cal O}}
\def\P{{\mathbb P}}
\def\<{\langle}
\def\>{\rangle}
\def\F{{\mathcal F}}
\def\PP{{\mathcal P}}
\def\G{{\mathcal G}}
\def\E{{\mathcal E}}
\def\cc{{\mathcal C}}
\def\WT{{\widetilde \theta}}
\def\eps{{\epsilon}}
\def\th{\theta}
\newcommand{\vechatom}{
    {\Vec{\omega}}
    \,\smash[b]{\hbox{\lower2\ex@\hbox{$\m@th\hat{\null}$}}}
}
\def\R{{\mathbb R}}
\def\RR{{\mathcal R}}
\def\Q{{\mathbb Q}}
\def\P{{\mathbb P}}
\def\Z{{\mathbb Z}}
\def\C{{\mathbb C}}
\def\H{{\mathcal H}}
\def\oo{{\mathcal O}}
\begin{document}

\title[Verlinde-type formulas for  rational surfaces]
{Verlinde-type formulas for rational surfaces}
\author{Lothar G\"ottsche}
\address{International Centre for Theoretical Physics, Strada Costiera 11, 
34014 Trieste, Italy}
\email{gottsche@ictp.it}
\subjclass[2000]{Primary 14D21}

\begin{abstract} For a projective algebraic surface $X$, with an ample line bundle $\omega$, let $M_\omega^X(c_1,d)$ be the moduli space of rank $2$, $\omega$-semistable torsion free sheaves $E$ with $c_1(E)=c_1$ and $4c_2(E)-c_1^2=d$. For   line bundles $L$ on $X$, let $\mu(L)$ be the corresponding determinant line bundles  on $M_H^X(c_1,d)$.
The $K$-theoretic Donaldson invariants are the holomorphic Euler characteristics $\chi(M_\omega^X(c_1,d),\mu(L))$.
In this paper we develop an algorithm which in principle determines all their generating functions for the projective plane, its blowup in finitely many points, and also for $\P^1\times\P^1$.
Among others, we apply this algorithm to compute the generating functions of the $\chi(M_H^{\P^2}(0,d),\mu (nH))$  and $\chi(M_H^{\P^2}(H,d),\mu (nH))$ for $n\le 11$, for $H$ the hyperplane class on $\P^2$.
We give  some conjectures about the general structure of  these generating functions and interpret them in terms of  Le Potier's strange duality conjecture.
\end{abstract}

\maketitle
\tableofcontents

\section{Introduction}
In this paper let $(X,\omega)$ be a pair of a rational surface $X$ and an ample line bundle $\omega$.
We consider the moduli spaces 
$M^X_\omega(c_1,d)$ of $\omega$-semistable torsion-free coherent sheaves of rank $2$ on $X$ with Chern classes $c_1\in H^2(X,\Z)$ and $c_2$ such that $d=4c_2-c_1^2$.
Associated to  a  line bundle $L$ on $X$ there is a determinant bundle $\mu(L)\in\Pic(M^X_\omega(c_1,d))$. If $L$ is ample, then $\mu(L)$ is nef and big
on $M^X_\omega(c_1,d)$, and a suitable power induces the map from $M^X_\omega(c_1,d)$ to the corresponding Uhlenbeck compactification.

If one considers instead of a rational surface $X$ a curve $C$, the spaces of sections of the corresponding determinant bundles are the spaces of conformal blocks, and 
their dimensions are given by the celebrated Verlinde formula. In  \cite{Zag} many reformulations of this formula are given.  In particular
[Thm.~1.(vi)]\cite{Zag}  expresses the generating function on a fixed curve as a rational function. 
In this paper we  study the generating functions of the holomorphic Euler characteristics $\chi(M^X_\omega(c_1,d),\mu(L))$, and show that they are given as rational functions.
Let 
$$\chi_{c_1}^{X,\omega}(L):=\sum_{d>0}\chi(M^X_\omega(c_1,d),\mu(L))\Lambda^d$$
(In case $c_1=0$ the coefficient of $\Lambda^4$ is slightly different, furthermore in case $\omega$ lies on a wall (see below), here instead of $\chi(M^X_\omega(c_1,d),\mu(L))$
we use the average over the chambers adjacent to $\omega$.) We can view the spaces of sections $H^0(M^X_\omega(c_1,d),\mu(L))$ as analogues to the spaces of conformal blocks. In most cases we will consider (see \propref{highvan} below), the higher cohomology groups of the determinant bundle 
$\mu(L)$ vanish. Thus  our formulas for the $\chi_{c_1}^{X,\omega}(L)$  are analogues of the Verlinde formula for rational surfaces.

\begin{Notation}
For two Laurent series $P(\Lambda)=\sum_{n} a_n\Lambda^n,Q(\Lambda)=\sum_{n} b_n\Lambda^n\in \Q[\Lambda^{-1}][[\Lambda]]$ we write $P(\Lambda)\equiv Q(\Lambda)$ if 
there is an $n_0\in \Z$ with $a_n=b_n$ for all $n\ge n_0$.
\end{Notation}

\begin{Theorem}
\label{rationalal}
Let $X$ be $\P^2$, $\P^1\times \P^1$ or a blowup of $\P^2$ in $n$  points. 
Let $c_1\in H^2(X,\Z)$, $L\in Pic(X)$. 
There is a polynomial $P^{X}_{c_1,L}(\Lambda)\in \Lambda^{-c_1^2}\Q[\Lambda^{\pm 4}]$ and $l^{X}_{c_1,L}\in \Z_{\ge 0}$, such that 
$$\chi_{c_1}^{X,\omega}(L)\equiv\frac{P^X_{c_1,L}(\Lambda)}{(1-\Lambda^4)^{l^X_{c_1,L}}}.$$
Here $\omega$ is an ample line bundle on $X$ with $\<\omega,K_X\><0$. In case $X$ is the blowup of $\P^2$ in $n$ points we assume furthermore that $\omega=H-a_1E_1-\ldots-a_nE_n$, with $|a_i|<\frac{1}{\sqrt{n}}$ for all $i$.
Note that $P^{X}_{c_1,L}(\Lambda)$,  $l^{X}_{c_1,L}$ are independent of $\omega$ (subject to the conditions above).
In particular  for any other ample line bundle $\omega'$ on $X$ satisfying the conditions for $\omega$ above,  we have 
$\chi_{c_1}^{X,\omega'}(L)-\chi_{c_1}^{X,\omega}(L)\in \Z[\Lambda]$.
\end{Theorem}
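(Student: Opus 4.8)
The plan is to establish the rational form by three reductions --- a blow-up formula, a wall-crossing analysis, and a base computation on the two minimal rational surfaces --- after which the closing $\Z[\Lambda]$ statement is purely formal. Since $d=4c_2-c_1^2$ with $c_1$ fixed, every exponent $d$ appearing in $\chi_{c_1}^{X,\omega}(L)$ lies in a single residue class modulo $4$, so a priori $\chi_{c_1}^{X,\omega}(L)\in\Lambda^{-c_1^2}\Q[[\Lambda^4]]$; the real content is thus not the shape of the numerator but that the series is \emph{rational} with denominator a power of $1-\Lambda^4$ --- equivalently, that for large $c_2$ the invariants $\chi(M^X_\omega(c_1,d),\mu(L))$ are given by a polynomial in $c_2$. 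The hypothesis $\langle\omega,K_X\rangle<0$ is what makes the required cohomological input (the vanishing of \propref{highvan}) available.

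First I would reduce a blow-up $\widehat X\to\P^2$ in $n$ points to $\P^2$, treating $\P^1\times\P^1$ separately. The blow-up formula for these determinant invariants expresses $\chi_{c_1}^{\widehat X,\omega}(L)$ as a finite combination of generating functions $\chi_{c_1'}^{\P^2,\omega'}(L')$ for finitely many auxiliary $(c_1',L')$, with combining coefficients that are rational functions of $\Lambda$ whose denominators are powers of $1-\Lambda^4$. The constraint $|a_i|<\tfrac1{\sqrt n}$ keeps $\omega=H-\sum a_iE_i$ ample and in the chamber adjacent to the pullback of $H$, where this formula is clean; iterating over the $n$ exceptional divisors reduces everything to the two minimal surfaces while preserving the claimed form, a product of such rational functions being again of this form.

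Next I would treat the dependence on $\omega$ by wall-crossing. The number $\chi(M^X_\omega(c_1,d),\mu(L))$ is constant as $\omega$ varies within a chamber of the ample cone and jumps by an explicit term $\delta_\xi$ when $\omega$ crosses the wall $W^\xi$ attached to a class $\xi\equiv c_1\!\pmod 2$; for each fixed $d$ only the finitely many $\xi$ with $-d\le\xi^2<0$ are active. The crucial point is that the total wall-crossing contribution, assembled as a series in $\Lambda$, is again rational with denominator a power of $1-\Lambda^4$: for fixed $\xi$ the term $\delta_\xi$ grows polynomially in $c_2$, and the $\delta_\xi$ share a uniform structure that lets the sum over $\xi$ be resummed. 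This simultaneously yields the $\omega$-independence of $(P^X_{c_1,L},l^X_{c_1,L})$, since passing to another admissible $\omega'$ changes, at each degree, only the finite set of walls lying between $\omega$ and $\omega'$, altering finitely many coefficients and leaving the leading rational part intact.

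It then remains to prove rationality for one admissible polarization on $\P^2$ and on $\P^1\times\P^1$. Here I would use that for large $c_2$ the moduli space has the expected dimension and $\chi(M,\mu(L))$ is computed by Riemann--Roch against characteristic classes whose relevant intersection numbers are polynomial in $c_2$, so the base series has the stated form; equivalently one reads it off from the equivariant instanton-counting description, where it appears as a residue of a product of theta functions and is manifestly rational in $\Lambda$. Assembling the base case with the wall-crossing and blow-up contributions --- each rational with denominator a power of $1-\Lambda^4$ and numerator in $\Lambda^{-c_1^2}\Q[\Lambda^{\pm4}]$ --- gives $\chi_{c_1}^{X,\omega}(L)\equiv P^X_{c_1,L}(\Lambda)/(1-\Lambda^4)^{l^X_{c_1,L}}$. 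The final assertion is then immediate: with $(P,l)$ common to $\omega$ and $\omega'$ one has $\chi_{c_1}^{X,\omega'}(L)-\chi_{c_1}^{X,\omega}(L)\equiv 0$, so this difference of power series in $\Lambda$ has only finitely many nonzero coefficients, each (for $\omega,\omega'$ off walls) a difference of holomorphic Euler characteristics and hence an integer, whence it lies in $\Z[\Lambda]$. The hardest part will be the rationality of the total wall-crossing sum together with the base computation: controlling the a priori infinite sum over walls so that it --- and thus the whole expression --- collapses to a rational function with the single common denominator $(1-\Lambda^4)^{l}$.
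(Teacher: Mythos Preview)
Your overall architecture --- reduce blowups of $\P^2$ to the base surfaces via a blowup formula, show that changing $\omega$ within the allowed range alters only finitely many coefficients, and then prove rationality in a base case --- is exactly the paper's strategy. The finiteness of the wall-crossing between two admissible $\omega,\omega'$ is also right (this is \lemref{blowindep}), and the closing $\Z[\Lambda]$ claim follows as you say.

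The genuine gap is the base case. Your two proposed mechanisms both fail as written. The Riemann--Roch argument presumes that the relevant intersection numbers on $M^X_\omega(c_1,d)$ are eventually polynomial in $c_2$ with the specific structure forcing denominator $(1-\Lambda^4)^l$; nothing in the setup gives this, and it is in fact what one is trying to prove. The instanton-counting route is closer in spirit --- the paper does build on the Nekrasov partition function via \cite{GNY} --- but the result is \emph{not} ``manifestly rational'': what \cite{GNY} supplies is the wall-crossing term $\delta_\xi$ as a $q^0$-coefficient of an explicit modular/theta expression, and extracting a closed rational form for the full generating function from this requires substantial additional work.

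Concretely, the paper's proof of the $\P^2$ case runs in the \emph{opposite} direction from what you sketch: one introduces auxiliary ``invariants with point class'' $\chi^{X,\omega}_{c_1}(L,P^r)$ (defined via a further blowup), proves a blowup formula expressing $\widehat X$-invariants as $\chi^{X,\omega}_{c_1}(L,P^r\cdot R_n(\Lambda,P))$ or $\chi^{X,\omega}_{c_1}(L,P^r\cdot S_n(\Lambda,P))$, and then a \emph{blowdown} formula (\corref{blowdownmn}) going the other way, based on a coprimality property of the blowup polynomials $R_n,S_n$. The rationality for $\P^2$ is then deduced from explicit rational formulas for $\chi^{\widehat\P^2,H}_{c_1}(nF,P^r)$ and $\chi^{\widehat\P^2,H}_{c_1}((n-\tfrac12)F+G,P^r)$, which are obtained by recursion from theta-constant identities (\S\ref{wallcro}--\S6). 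Your blowup step also implicitly needs this: the blowup formula does not produce ordinary $\P^2$-invariants $\chi^{\P^2}_{c_1'}(L')$ but rather these point-class invariants, so one cannot bypass the machinery.
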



We will see that there is an algorithm for determining the generating functions $\chi_{c_1}^{X,\omega}(L)$ of \thmref{rationalal}. 
Let now $H$ be the hyperplane bundle on $\P^2$.
We apply the algorithm above to determine the generating functions of the 
$\chi(M^{\P^2}_H(0,d),\mu(nH))$ and the $\chi(M^{\P^2}_H(H,d),\mu(nH))$ for $n\le 11$. These were determined before (and strange duality proven) for $c_1=0$ and $n=1,2$ in \cite{Abe}, and for all $c_1$ for $n=1,2,3$ in \cite{GY}.
We get the following result.
Put 
{\small \begin{align*}&p_1(t)=p_2(t)=1,\ p_3(t)=1+t^2,p_4(t)=1+6t^2+t^3, 
 p_5(t)=1+21t^2+20t^3+21t^4+t^6, \\
&p_6(t)=1+56t^2+147t^3+378t^4+266t^5+148t^6+27t^7+t^8,\\
&p_7(t)=1+126t^2+690t^3+3435t^4+7182t^5+9900t^6 +7182t^7+3435t^8+690t^9+126t^{10}+t^{12},\\
&p_8(t)=1+252t^2+2475t^3+21165t^4+91608t^5+261768t^6+462384t^7+
549120t^8+417065t^9\\
&\ +210333t^{10}+66168t^{11}
+13222t^{12}+1515t^{13}+75t^{14}+t^{15},\\
&p_9(t)=1 + 462t^2 + 7392t^3 + 100359t^4 + 764484t^5+ 3918420t^6 + 13349556t^7 + 31750136t^8 \\ 
&\ + 52917800t^9 + 62818236t^{10} 
+ 52917800t^{11}+ 31750136t^{12} + 13349556t^{13} + 3918420t^{14}\\
&\ + 764484t^{15}+ 100359t^{16}+7392t^{17}+ 462t^{18}+t^{20},\\
&p_{10}(t)=1+ 792t^2 + 19305t^3 + 393018t^4 + 4788696t^5 + 39997980t^6 + 231274614t^7  + 961535355t^8 \\
&\ + 2922381518t^9 + 6600312300t^{10} + 11171504661t^{11} + 14267039676t^{12} + 13775826120t^{13}  \\
&\ + 10059442536t^{14} + 5532629189t^{15} +2277448635t^{16}+693594726t^{17} 
+ 154033780t^{18} +24383106t^{19}\\
&\ + 2669778t^{20}+192588t^{21}
+ 8196t^{22}+ 165t^{23}+t^{24}.\\
&p_{11}(t)= 1 + 1287t^2 + 45474t^3 + 1328901t^4 + 24287340t^5 + 309119723t^6
 + 2795330694t^7 \\ 
 &\ + 18571137585t^8 + 9253037887 6t^9 + 351841388847t^{10} + 1033686093846t^{11}+ 2369046974245t^{12}\\
&\   + 4264149851544t^{13} + 6056384937603t^{14} + 6805690336900t^{15}+ 6056384937603t^{16}+ 4264149851544t^{17} \\
&\  + 2369046974245t^{18} + 1033686093846t^{19} + 351841388847t^{20} + 92530378876t^{21} + 18571137585t^{22} \\
&\ + 2795 330694t^{23}+ 309119723t^{24}+ 24287340t^{25}+ 1328901t^{26}+ 45474t^{27}+ 1287t^{28}+t^{30}.
  \end{align*}}
  For $1\le n\le  11$ we put $P_n(\Lambda):=p_n(\Lambda^4)$, $Q_n(\Lambda)=\Lambda^{n^2-1}P_n(\frac{1}{\Lambda})$.
It is easy to see that for $n$ odd, $P_n$ is symmetric, i.e.  $P_n(\Lambda)=Q_n(\Lambda)$.

\begin{Theorem} \label{mainp2} For $1\le n\le 11$ we have
\begin{enumerate}
\item
$$1+\binom{n+2}{2}\Lambda^4+\sum_{d>4}\chi(M^{\P^2}_H(0,d),\mu(nH))\Lambda^d=\frac{P_n(\Lambda)}{(1-\Lambda^4)^{\binom{n+2}{2}}}.$$
\item if $n$ is even, then
$$\sum_{d>0}\chi(M^{\P^2}_H(H,d),\mu(nH))\Lambda^d=\frac{Q_n(\Lambda)}{(1-\Lambda^4)^{\binom{n+2}{2}}}.$$
\end{enumerate}
\end{Theorem}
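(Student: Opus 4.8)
The plan is to apply the algorithm that underlies \thmref{rationalal}. For $X=\P^2$ this algorithm first passes to the blowup $\widehat{\P}^2$, where genuine walls exist: using the blowup relation between the determinant–bundle Euler characteristics on $\P^2$ and on $\widehat{\P}^2$, together with the wall–crossing formula, one writes $\chi_{c_1}^{\P^2,H}(nH)$ as a finite sum of explicit rational functions in $\Lambda$, accumulated from a chamber in which the relevant moduli spaces are empty (so that $\chi=0$, providing the base of the recursion) up to the chamber determined by $H$. By \thmref{rationalal} the outcome is a rational function whose only pole is at $\Lambda^4=1$, i.e.\ of the shape $P/(1-\Lambda^4)^{l}$ with $P\in\Lambda^{-c_1^2}\Q[\Lambda^{\pm4}]$. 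The task is then to identify this rational function with the stated closed form.

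To do so I would first pin down the two discrete invariants of the rational function, the pole order $l$ and the degree of $P$. I expect $l=\binom{n+2}{2}=h^0(\P^2,\mathcal O(n))=\chi(\P^2,nH)$: since $\Lambda^4$ is the variable recording $c_2$, a pole of this order at $\Lambda^4=1$ is equivalent to the assertion that $\chi(M^{\P^2}_H(c_1,d),\mu(nH))$ grows, as a function of $c_2=\tfrac{d+c_1^2}{4}$, like a polynomial of degree $\binom{n+2}{2}-1$, and this growth rate should be read off from the leading wall–crossing contribution. The numerator degree I would bound by $n^2-1$, the exponent appearing in $Q_n(\Lambda)=\Lambda^{n^2-1}P_n(1/\Lambda)$, reflecting the expected symmetry under $\Lambda\mapsto\Lambda^{-1}$.

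Granting the bounds $l\le\binom{n+2}{2}$ and $\deg_\Lambda P\le n^2-1$, the rational function is determined by its Taylor coefficients up to order $n^2-1$. I would therefore run the algorithm to compute $\chi(M^{\P^2}_H(0,d),\mu(nH))$ for $d=0,4,8,\dots$ up to that order and check that the resulting truncated series matches the expansion of $P_n(\Lambda)/(1-\Lambda^4)^{\binom{n+2}{2}}$; the leading terms $1+\binom{n+2}{2}\Lambda^4$ simply record the $d=0$ normalization and the modified $d=4$ coefficient fixed by the convention following the definition of $\chi_{c_1}^{X,\omega}$. For part (2), with $c_1=H$ and $n$ even, the same computation produces the numerator $Q_n$; alternatively, once the functional equation $\Lambda\mapsto\Lambda^{-1}$ relating the $c_1=0$ and $c_1=H$ generating functions is in hand, part (2) follows from part (1) by the substitution that interchanges $P_n$ and $Q_n$, which is precisely the numerical form of Le Potier's strange duality.

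The main obstacle is twofold. Computationally, for $n$ up to $11$ the numerators have degree up to $120$ in $\Lambda$ and coefficients in the billions, so the wall–crossing and blowup sums must be carefully organized and carried to high order; this is heavy but routine. The genuine theoretical point is the a priori control of $l$ and $\deg P$: matching finitely many coefficients proves an identity of infinite series only once one knows that the pole order is exactly $\binom{n+2}{2}$ and the numerator degree is at most $n^2-1$. Thus the crux is to establish these two bounds from the structure of the algorithm itself, rather than merely observing them in the computed data.
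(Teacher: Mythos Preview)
Your high-level picture is right—pass to $\widehat{\P}^2$, use wall-crossing, get a rational function—but you have misidentified where the finiteness and exactness come from, and this leads you to a gap that the paper does not actually face.

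In the paper the algorithm outputs the generating function as an \emph{exact closed form}, not as a list of coefficients to be matched. The key inputs are: (i) explicit closed formulas for the boundary values $\chi^{\widehat{\P}^2,F_+}_{c_1}(L,P^s)$ (Propositions~\ref{p11r}, \ref{p11GM}, \ref{p112GM}), each of the shape ``$\frac{1}{(1-\Lambda^4)^m}$ plus a polynomial of a priori bounded degree'', where the degree bound comes from the principal-part analysis of the theta expressions (\lemref{theth1}); (ii) the fact that only finitely many wall-crossing terms $\delta^X_\xi(L,P^s)$ are nonzero, each an explicit polynomial of bounded degree (\lemref{vanwall}); and (iii) the blowdown formula (\corref{blowdownform}), which expresses $\chi^{\P^2,H}_{c_1}(nH)$ as a finite $\Q[\Lambda^{\pm4}]$-linear combination of the quantities in (i) and (ii). Running these steps for each $n\le 11$ literally produces the rational function, and one then simply reads off that it equals $P_n(\Lambda)/(1-\Lambda^4)^{\binom{n+2}{2}}$. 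There is no need to know in advance that the pole order is exactly $\binom{n+2}{2}$ or that the numerator has degree at most $n^2-1$; those facts are \emph{outputs} of the computation, not hypotheses. So the ``crux'' you isolate—proving those two a priori bounds—does not arise.

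Two further points. First, your suggestion that part~(2) follows from part~(1) via a $\Lambda\mapsto\Lambda^{-1}$ functional equation is not available: that duality is precisely \conref{p2con}(2)/\conref{ratconj}(3), which is conjectural. In the paper both $c_1=0$ and $c_1=H$ are computed independently by the same algorithm. Second, even if you wanted to proceed by coefficient matching, you would still need \emph{some} exact determination of the generating function as a rational function of controlled shape; but once you have that (via the boundary formulas and wall-crossing), the matching step is superfluous.
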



We see that for $n\le 11$, the generating functions $\chi^{\P^2,H}_H(nh)$, $\chi^{\P^2,H}_0(nH)$
have a number of interesting features, which we conjecture to hold for all $n>0$.

\begin{Conjecture}\label{p2con}
For all $n>0$ there are polynomials $p_n(t)\in \Z[t]$ such the following holds. We put $P_n(\Lambda)=p_n(\Lambda^4)$, $Q_n(\Lambda)=\Lambda^{n^2-1}P_n(\frac{1}{\Lambda})$.
\begin{enumerate}
\item $$1+\binom{n+2}{2}\Lambda^4+\sum_{d>4}\chi(M^{\P^2}_H(0,d),\mu(nH))\Lambda^d=\frac{P_n(\Lambda)}{(1-\Lambda^4)^{\binom{n+2}{2}}}.$$
\item If $n$ is odd, then
$P_n(\Lambda)=Q_n(\Lambda)$, if $n$ is even then
$$\sum_{d>0}\chi(M^{\P^2}_H(H,d),\mu(nH))\Lambda^d=\frac{Q_n(\Lambda)}{(1-\Lambda^4)^{\binom{n+2}{2}}}.$$
\item $p_n(1)=2^{\binom{n-1}{2}}$.
\item For $i$ odd and $i\le n-3$ we have 
$$\Coeff_{x^i}\big[e^{-(n^2-1)x/2}P_n(e^x)]=\Coeff_{x^i}\big[e^{-(n^2-1)x/2}Q_n(e^x)]=0.$$
\item The degree of $p_{n}(t)$ is the largest integer strictly smaller than $n^2/4$.
\end{enumerate}
\end{Conjecture}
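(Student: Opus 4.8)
The plan is to promote the effective algorithm underlying \thmref{rationalal} into a single generating function that is uniform in both $n$ and $\Lambda$, and then read the five structural assertions off its closed form. First I would re-express each $\chi(M^{\P^2}_H(c_1,d),\mu(nH))$ through toric localization on $\P^2$ combined with the wall-crossing and blowup machinery behind \thmref{rationalal}: the contributions should organize into sums of residues of products of one-variable theta-type factors, one per torus fixed point, with the line bundle $nH$ entering only through the integer $n$ and through the Euler characteristic $h^0(\P^2,nH)=\binom{n+2}{2}$. Pinning down that the pole at $\Lambda^4=1$ has order exactly $\binom{n+2}{2}$ is the crucial normalization: I would argue it equals the number of independent framing parameters, namely $h^0(nH)$, and check that the top residue neither requires an artificial cancellation (so the order is not larger) nor vanishes (so it is not smaller). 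This yields part~(1), including the evenness $P_n(\Lambda)=p_n(\Lambda^4)$, once one observes that only $\Lambda^4$-powers survive for $c_1=0$.

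For part~(2) the heart of the matter is a functional equation under $\Lambda\mapsto 1/\Lambda$. I would read $Q_n(\Lambda)=\Lambda^{n^2-1}P_n(1/\Lambda)$ as the imprint of Le Potier's strange duality: the pairing should identify the $(c_1,nH)$ tower with the dual moduli problem, the shift $\Lambda^{n^2-1}$ recording the rank and degree of the dual determinant bundle. Establishing the symmetry then amounts to producing an involution on the localization data that interchanges the $c_1=0$ and $c_1=H$ towers and fixes each separately when $n$ is odd. This is the step I expect to be the principal obstacle, since in full generality it is tantamount to a case of strange duality and is presently open; a more realistic intermediate goal is to prove the functional equation directly at the level of the residue formula, thereby sidestepping the geometric duality altogether.

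Granting a closed generating function, parts~(3)--(5) reduce to asymptotic and degree bookkeeping. For part~(3) I would compute $\lim_{\Lambda\to1}(1-\Lambda^4)^{\binom{n+2}{2}}\,\chi^{\P^2,H}_0(nH)$, which recovers $p_n(1)$ up to a fixed normalization, and match it to the leading growth of the invariants; the value $2^{\binom{n-1}{2}}$ ought to assemble as a product of local $\C$-contributions each supplying a factor $2$, indexed by the $\binom{n-1}{2}$ relevant pairs. For part~(5), $\deg p_n$ is the gap between the orders of vanishing of numerator and denominator at $\Lambda=\infty$, so I would track the top-degree term of the residue expansion and show it produces $\lfloor(n^2-1)/4\rfloor$, which is exactly the largest integer strictly below $n^2/4$. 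Part~(4) is a near-symmetry refining part~(2): since $e^{-(n^2-1)x/2}Q_n(e^x)$ is the reflection $x\mapsto-x$ of $e^{-(n^2-1)x/2}P_n(e^x)$, the vanishing of the odd coefficients up to $x^{n-3}$ just says the two series agree through that order, which I would extract from the first $n-3$ terms of the $\Lambda$-expansion, where the duality pairing is expected to be unobstructed. In all of this the single genuinely hard ingredient is the functional equation of part~(2); given it and an explicit residue formula, the remaining assertions are careful but essentially routine.
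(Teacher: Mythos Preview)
The statement you are attempting to prove is labeled a \emph{Conjecture} in the paper, and the paper does not prove it for general $n$. What the paper actually does is verify parts (1)--(5) for $1\le n\le 11$ by running the algorithm of \secref{CompP2}: compute the polynomials $f_n,g_n,h_n,l_n$ of \remref{npol}, evaluate the $\chi^{\widehat\P^2,F_+}_{c_1}(L,P^s)$ via Propositions~\ref{p11r} and \ref{p11GM}, add finitely many wall-crossing terms, and apply the blowdown formula of \corref{blowdownform}. The output is an explicit rational function for each $n\le 11$, and the five properties are then checked by inspection of the listed polynomials $p_n$. There is no uniform argument in $n$.

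Your proposal therefore does not correspond to anything in the paper; it is a speculative strategy for attacking an open problem. Several of its steps are genuine gaps rather than routine verifications. First, the paper's rationality result (\thmref{P2rat1}) gives \emph{some} exponent $d^0_{n,0}$ in the denominator, not $\binom{n+2}{2}$; your argument that the pole order equals $h^0(nH)$ because it counts ``framing parameters'' is heuristic and is precisely one of the nontrivial assertions of part~(1). Second, the paper's machinery does not use toric localization on $\P^2$ in the way you describe; it passes through $\widehat\P^2$ via blowup/blowdown and modular-form wall-crossing, and it is not clear how to extract a closed form uniform in $n$ from that. Third, you correctly identify that the functional equation underlying part~(2) is essentially a strange-duality statement, and you acknowledge this is open; but parts~(3)--(5) in your outline are contingent on having that closed form, so they inherit the same gap. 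In particular, the product heuristic for $p_n(1)=2^{\binom{n-1}{2}}$ (``a factor $2$ per pair'') is not substantiated by anything in the paper---the paper motivates that value via the rank of $\pi_*\Theta^{\otimes2}$ on a Jacobian fibre (\remref{chipol} and the remarks in \secref{strd}), which again presupposes strange duality.

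In short: there is no proof in the paper to compare your proposal against, and your outline, while a reasonable wish-list, does not close the main difficulties it names.
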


On $\P^1\times \P^1$ we get the following results. Let $F$ and $G$ be the classes of the fibres of the projections to the two factors. Let 
{\small \begin{align*}
q^0_1&:=1,\ q^0_{2}:=1+t^2,\ q^0_3=1+10t^2+4t^3+t^4,q^0_4:=1+46t^2 + 104t^3 + 210t^4 + 104t^5 + 46t^6 + t^8,\\ q^0_5&:=1 + 146t^2 + 940t^3 + 5107t^4 + 12372t^5 + 19284t^6+ 16280t^7  + 8547t^8 + 2452t^9 + 386t^{10} + 20t^{11} + t^{12},\\
q^0_6&:=1 + 371t^2 + 5152t^3 + 58556t^4 + 361376t^5 + 1469392t^6 + 3859616t^7 + 6878976t^8 + 8287552t^9 \\&+ 6878976t^{10} + 3859616t^{11} + 1469392t^{12} + 361376t^{13} + 58556t^{14} + 5152t^{15} + 371t^{16} + t^{18},\\
q^0_7&=1+812t^2+ 20840t^3 + 431370t^4+5335368t^5+ 44794932t^6+ 259164216t^7+ 1070840447t^8\\
&+ 3214402272t^9+ 7125238944t^{10}+ 11769293328t^{11}
+ 14581659884t^{12}  + 13577211024t^{13}\\
&+9496341984t^{14} + 4966846032t^{15} + 1928398719t^{16} + 548923040t^{17}+ 112654644t^{18}+ 16232904t^{19}\\
&+ 1584906t^{20}+ 97448t^{21}+ 3564t^{22} + 56t^{23}+t^{24},\\
q^{F+G}_{2}&=t^{\frac{1}{2}}(1+t),q^{F+G}_{4}=t^{\frac{1}{2}}(1 + 10t + 84t^2 + 161t^3 + 161t^4+ 84t^5+ 10t^6 + t^7),\\
q^{F+G}_{6}&=t^{\frac{1}{2}}(1+ 35t+ 1296t^2+ 18670t^3 + 154966t^4 + 770266t^5 + 2504382t^6+ 5405972t^7 + 7921628t^8 \\&+ 7921628t^{9} + 5405972t^{10}+ 2504382t^{11} + 770266t^{12}+ 154966t^{13} + 18670t^{14} + 1296t^{15}+ 35t^{16} + t^{17}),\\
q^F_{2}&:=2t,\ q^F_4:=3t + 43t^2 + 105t^3 + 210t^4 + 105t^5 + 43t^6 + 3t^7,\\ 
q^F_{6}&:=4t + 274t^2 + 5520t^3 + 57022t^4 + 366052t^5 + 1460922t^6 + 3873184t^7 + 6855798t^8 + 8316880t^9  \\&+ 6855798t^{10}+ 3873184t^{11} + 1460922t^{12} + 366052t^{13} + 57022t^{14} + 5520t^{15} + 274t^{16} + 4t^{17}.\\
\end{align*}}
For $n$ odd put $q^{F+G}_n(t)=t^{d^2/2}q^0_n(t^{-1})$. 
Then we get
\begin{Theorem}\label{P11gen}
\begin{enumerate}
\item  $\displaystyle{\sum_{d>4} \chi(M^{\P^1\times \P^1}_{F+G}(0,d),\mu(nF+nG))\Lambda^d=\frac{q^0_n(\Lambda^4)}{(1-\Lambda^4)^{(n+1)^2}}-1-(n^2+2n+1)\Lambda^4}$ for $1\le n\le 7$.
\item  $\displaystyle{\sum_{d>0} \chi(M^{\P^1\times \P^1}_{F+G}(F+G,d),\mu(nF+nG))\Lambda^d=\frac{q^{F+G}_n(\Lambda^4)}{(1-\Lambda^4)^{(n+1)^2}}-\Lambda^2}$ for $1\le n\le 7$.
\item $\displaystyle{\sum_{d>0} \chi(M^{\P^1\times \P^1}_{F+G}(F,d),\mu(nF+nG))\Lambda^d=\frac{q^F_n(\Lambda^4)}{(1-\Lambda^4)^{(n+1)^2}}}$ for $n=2,4,6$.
\end{enumerate}
\end{Theorem}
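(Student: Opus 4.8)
The plan is to read \thmref{P11gen} as an explicit instance of the algorithm underlying \thmref{rationalal}: that theorem already guarantees that each of the three generating functions $\chi_{c_1}^{\P^1\times\P^1,F+G}(nF+nG)$, for $c_1\in\{0,\,F+G,\,F\}$, coincides up to finitely many initial terms with a rational function $P(\Lambda)/(1-\Lambda^4)^{l}$, where $P\in\Lambda^{-c_1^2}\Q[\Lambda^{\pm4}]$ and $l\in\Z_{\ge0}$. So the work splits into (i) pinning down the pole order $l$, (ii) bounding $\deg P$ so that only finitely many coefficients remain unknown, and (iii) computing those coefficients and checking that they fit the claimed $q^0_n$, $q^{F+G}_n$, $q^F_n$ exactly, not merely eventually, on the stated ranges.

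First I would fix the pole order. The order of the pole at $\Lambda^4=1$ records the degree of polynomial growth of $\chi(M^{\P^1\times\P^1}_{F+G}(c_1,d),\mu(nF+nG))$ in $d$, which by the asymptotic (Verlinde-type) part of the algorithm is controlled by the holomorphic Euler characteristic $\chi(\P^1\times\P^1,\,nF+nG)=(n+1)^2$ of the line bundle underlying the determinant bundle, matching the denominators $(1-\Lambda^4)^{(n+1)^2}$ in all three cases. Next I would bound $\deg P$. Serre duality on the moduli space, combined with the known expression for its canonical bundle in terms of determinant bundles and with $K_{\P^1\times\P^1}=-2F-2G$, yields a functional equation for the coefficients; this is the origin both of the palindromic shape of the listed polynomials and of the relation between the $c_1=F+G$ and $c_1=0$ series for $n$ odd. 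Together with the fixed pole order it bounds the degree of the numerator, reducing the problem to a finite computation.

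With $l$ and $\deg P$ in hand, the remaining task is to evaluate $\chi(M^{\P^1\times\P^1}_{F+G}(c_1,d),\mu(nF+nG))$ for $d$ up to the needed range. Here I would run the wall-crossing step of the algorithm across the two-dimensional ample cone spanned by $F$ and $G$: starting from a chamber in which the relevant moduli spaces are empty (or otherwise have trivially known invariants), I would cross the finitely many walls separating it from the polarization $F+G$, accumulating the wall-crossing contributions. Each contribution is evaluated as a coefficient or residue of the K-theoretic partition functions introduced earlier, and, where \propref{highvan} applies, the Euler characteristic equals $h^0$, justifying the Verlinde interpretation. Summing the contributions, fitting the result against $P(\Lambda)/(1-\Lambda^4)^{(n+1)^2}$, and accounting for the explicit low-degree correction terms ($-1-(n+1)^2\Lambda^4$ in (1) and $-\Lambda^2$ in (2), reflecting the special behaviour of the small-$d$ moduli spaces and the convention for $c_1=0$ from the definition of $\chi_{c_1}^{X,\omega}$) then produces $q^0_n$ for $1\le n\le 7$, $q^{F+G}_n$ for $1\le n\le 7$, and $q^F_n$ for $n=2,4,6$.

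The main obstacle will be the wall-crossing computation itself. The number of contributing walls and the intricacy of each residue grow quickly with $n$ and with the range of $d$ required once the pole order is as large as $(n+1)^2$, so the genuine bottleneck is the explicit enumeration of walls and the evaluation and summation of the K-theoretic wall-crossing terms; by contrast, the reduction steps (i) and (ii) are comparatively routine once the asymptotic and Serre-duality structure supplied by the earlier sections is available.
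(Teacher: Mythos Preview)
Your steps (i) and (ii) rest on statements that the paper does \emph{not} prove. \thmref{rationalal} only supplies \emph{some} exponent $l^X_{c_1,L}$ in the denominator; the identification $l=\chi(L)=(n+1)^2$ is part of \conref{ratconj}(1), and the Serre-duality functional equation you invoke to bound $\deg P$ is \conref{ratconj}(3). Both are observed empirically after the fact, not established in advance, so you cannot use them to terminate a coefficient computation. Moreover, your wall-crossing start in step (iii) is not available for $c_1\in\{0,F+G\}$ in the naive sense: there is no ample chamber on $\P^1\times\P^1$ with empty moduli for these $c_1$, and while \corref{strucdiff} gives the formal vanishing $\chi^{X,F}_{c_1}(L)=0$, the closed evaluation of $\chi^{X,F_+}_{c_1}(nF+mG,P^r)$ in \propref{Fplus} and Propositions~\ref{p11r}--\ref{p112GM} is carried out only for $m\le 2$, not for $m=n$. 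So even setting (i)--(ii) aside, your direct $\P^1\times\P^1$ wall-crossing has no computed starting value for $L=nF+nG$.

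The paper proceeds quite differently. It never fixes the pole order or numerator degree a priori. Instead it exploits that the blowup $\widetilde\P^2$ of $\P^2$ in two points is simultaneously the blowup of $\P^1\times\P^1$ in one point. Using the blowup formula (\thmref{nblow}) it writes $\chi^{\widetilde\P^2,H}_{c_1}(d(F+G)-kE,P^r)$ for $k=d,d-1$ in terms of the already-computed $\chi^{\P^2,H}_H(nH,P^s)$; then it crosses the finitely many walls on $\widetilde\P^2$ between $H$ and $F+G$ (bounded via \lemref{blowindep}); finally it applies the blowdown formula (\corref{blowdownmn}, using the polynomials $f_d,g_d,h_d,l_d$ from \propref{blowdownpol}) to descend from $\widetilde\P^2$ to $\P^1\times\P^1$. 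The output is the full rational function, and the denominator $(1-\Lambda^4)^{(n+1)^2}$ together with the explicit numerators $q^{c_1}_n$ emerge from the computation, not as inputs. The statement of \thmref{P11gen} then follows from this and \propref{highvan}.
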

\begin{Remark}
\begin{enumerate}
\item For $d$ even and $c_1=0$, $F$, $F+G$ we have  $q^{c_1}_n(t)=t^{n^2/2}q^{c_1}_n(t^{-1})$.
\item For all $1\le n\le 7$ we have $q^0_n(1)=q^{F+G}_n(1)=2^{(n-1)^2}$, and if $d$ is also even $q^F_n(1)=2^{(n-1)^2}$.
\item For all $1\le n \le 7$ and all $i$ odd with $i\le n-2$ we have $\Coeff_{x^i}\big[e^{-n^2x/4}q^0_{d}(e^x)]=\Coeff_{x^i}\big[e^{-n^2x/4}q^{F+G}_{n}(e^x)]=0$.
\end{enumerate}
\end{Remark}

The results on $\P^2$, $\P^1\times \P^1$ as well as the computations for other rational surfaces lead to a general conjecture.
For a line bundle $L$ on a rational surface $X$ we denote $\chi(L)=L(L-K_X)/2+1$ the holomorphic Euler characteristic and 
$g(L)=L(L+K_X)/2+1$ the genus of a smooth curve in the linear system $|L|$.

\begin{Conjecture}\label{ratconj}
Let $X$ be a rational surface and let $\omega$ be ample on $X$ with $\<\omega, K_X\><0$.   Let $L$ be a sufficiently ample line bundle on $X$. 
Then we have the following.
\begin{enumerate}
\item There is a polynomial $P^X_{c_1,L}(\Lambda)\in \Lambda^{-c_1^2}\Z_{\ge 0}[\Lambda^{\pm 4}]$, such  that
$$\sum_{d\ge 0} \chi(M^{X}_\omega(c_1,d),\mu(L))\Lambda^d\equiv \frac{P^X_{c_1,L}(\Lambda)}{(1-\Lambda^4)^{\chi(L)}}.$$
\item We have $P^X_{c_1,L}(1)=2^{g(L)}$.
\item We have the "duality" 
$$P^X_{c_1,L}(\Lambda)=\Lambda^{L^2+8-K_X^2}P^X_{L+K_X-c_1,L}(\frac{1}{\Lambda}).$$
\item If $i$ is odd, and  $L$ is sufficiently ample with respect to $i$, then 
$$\Coeff_{x^i}\big[e^{-\frac{1}{2}(L^2+8-K_X^2)x}P^X_{c_1,L}(e^x)\big]=0.$$
In the case of $(\P^2,dH)$ and $(\P^1\times \P^1,dF+dG)$ sufficiently ample with respect to $i$  means that $L+K_X$ is $i$-very ample.
\end{enumerate}
\end{Conjecture}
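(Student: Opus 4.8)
The plan is to prove the four assertions by reading them off from the explicit wall-crossing and Nekrasov-function machinery that already underlies \thmref{rationalal}. For part (1), \thmref{rationalal} gives rationality with denominator $(1-\Lambda^4)^{l}$ for some $l\ge 0$, so the real content is to identify $l=\chi(L)$ and to establish that $P^X_{c_1,L}\in\Lambda^{-c_1^2}\Z_{\ge 0}[\Lambda^{\pm 4}]$. First I would use \propref{highvan} to see that for $L$ sufficiently ample the higher cohomology of $\mu(L)$ vanishes, so each coefficient $\chi(M^X_\omega(c_1,d),\mu(L))=h^0$ is a nonnegative integer; integrality of the $P$-coefficients is then formal. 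The pole order I would pin down from the asymptotics in $d$: since $\dim M^X_\omega(c_1,d)=d-3$ on a rational surface, asymptotic Riemann--Roch should give growth of $\chi(M^X_\omega(c_1,d),\mu(L))$ that is polynomial in $d$ of degree exactly $\chi(L)-1$, which forces a pole of order $\chi(L)$ at $\Lambda^4=1$. Positivity of the numerator I would attack by exhibiting its coefficients as honest dimensions, i.e. via a filtration or a degeneration of the moduli space whose strata contribute nonnegatively.

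For the duality (3) I would invoke Le Potier's strange duality in its K-theoretic form: the spaces $H^0(M^X_\omega(c_1,d),\mu(L))$ and $H^0(M^X_\omega(L+K_X-c_1,d'),\mu(L))$ are conjecturally dual through the theta pairing supported on pairs $(E,F)$ of sheaves with nonvanishing $H^*(E\otimes F)$. Granting this pairing, the two Euler-characteristic generating functions satisfy $P^X_{c_1,L}(\Lambda)=\Lambda^{L^2+8-K_X^2}P^X_{L+K_X-c_1,L}(1/\Lambda)$, the exponent $L^2+8-K_X^2$ being the net shift between the dual degrees $d$ and $d'$ dictated by $\dim M^X_\omega(c_1,d)=d-3$ and the degree of $\mu(L)$. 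I expect this functional equation to also follow directly from the reflection symmetry of the wall-crossing terms, which would give a self-contained proof on $\P^2$, $\P^1\times\P^1$, and their blowups without assuming strange duality; this is consistent with the observed identities $P_n=Q_n$ for $n$ odd and $P^{\P^2}_{H,nH}=Q_n$ for $n$ even in \thmref{mainp2}.

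Parts (2) and (4) I would derive from (3) together with a boundary analysis. Writing $N=L^2+8-K_X^2$ and $f_{c_1}(x)=e^{-Nx/2}P^X_{c_1,L}(e^x)$, the duality gives $f_{c_1}(x)=f_{L+K_X-c_1}(-x)$; in the self-dual case $2c_1\equiv L+K_X$ this makes $f_{c_1}$ even, and the vanishing of all its odd Taylor coefficients is exactly (4). The remaining "sufficiently ample with respect to $i$" cases, where $L+K_X$ is $i$-very ample, I would obtain by showing the finitely many relevant wall-crossing contributions become parity-symmetric in that range. For $P^X_{c_1,L}(1)=2^{g(L)}$ the strategy is to degenerate $|L|$ so that the leading behaviour of the generating function is governed by a smooth curve $C\in|L|$ of genus $g(L)$; the limiting rank-$2$ Verlinde-type count is $2^{g(L)}=\dim H^0(\mathrm{Jac}\,C,2\Theta)$, which I would match to $P(1)$ via the residue of the rational function at $\Lambda^4=1$.

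The hardest steps will be establishing the duality (3) unconditionally and proving the positivity in (1): both require genuine geometric input rather than formal manipulation of the rational function, namely a strange-duality isomorphism valid for all sufficiently ample $L$ (known at present only in special cases) and a nonnegative geometric model for the coefficients of $P^X_{c_1,L}$. I expect the wall-crossing route to deliver (1) as rationality, together with (3) and (4), for the surfaces of \thmref{rationalal}, while the exact value $2^{g(L)}$ in (2) and full positivity remain the real obstacles.
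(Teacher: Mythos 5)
The statement you are proving is \conref{ratconj}, which the paper states as a \emph{conjecture} and does not prove; the paper's actual evidence for it consists of \thmref{rationalal} (rationality with an unidentified exponent $l^X_{c_1,L}$ and rational, not necessarily nonnegative integer, coefficients), the explicit computations of \thmref{mainp2} and \thmref{P11gen}, and the strange-duality heuristics of \conref{splitconj} and the surrounding remarks. Your proposal is therefore best read as a strategy sketch, and judged as such it reproduces the paper's motivation in several places (the $2^{g(L)}$ value via $\chi(J_d(C),\Theta_{2,c_1}|_{J_d(C)})=2^{g(C)}$ for the polarization of type $(2,\dots,2)$, and the identification of the denominator exponent with $\chi(L)$ via the pushforward to $|L|\cong\P^{\chi(L)-1}$), but it does not close any of the genuinely open steps.

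Concretely: (i) your asymptotic Riemann--Roch argument for the pole order is not justified --- $\mu(L)$ is a line bundle on a moduli space whose dimension $d-3$ grows with $d$, so $\chi(M^X_\omega(c_1,d),\mu(L))$ is not controlled by a fixed-degree polynomial in $d$ by any standard asymptotic RR statement; the paper only obtains the exponent $\chi(L)$ conjecturally, by assuming strange duality and the splitting $\pi^{L,c_1}_{!}(\Theta_{2,c_1})=p^{X,c_1}_L(\oo_{|L|}(-1))$ of \conref{splitconj}. (ii) \propref{highvan} gives nonnegativity of each coefficient $\chi(M^X_\omega(c_1,d),\mu(L))$ of the generating function, but that does not imply nonnegativity of the coefficients of the numerator $P^X_{c_1,L}$ after multiplying by $(1-\Lambda^4)^{\chi(L)}$; your ``filtration or degeneration'' is exactly the missing geometric input, and the paper offers none beyond \conref{splitconj}. (iii) The duality in part (3) is not known to follow from a reflection symmetry of the wall-crossing terms; the paper records only a modified, still conjectural version in \remref{chipol}. (iv) Your derivation of part (4) from part (3) only works in the self-dual case $2c_1\equiv L+K_X \bmod 2H^2(X,\Z)$, where $f_{c_1}(x)=f_{c_1}(-x)$; for general $c_1$ the functional equation relates $f_{c_1}$ to $f_{L+K_X-c_1}$ and gives no parity constraint on either one separately, so (4) does not reduce to (3). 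None of these gaps is filled by the paper either --- which is precisely why the statement is labelled a conjecture --- but your write-up should not present (1)--(4) as consequences of the existing machinery.
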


\begin{Remark} \label{unique}
The polynomial $P^X_{c_1,L}(\Lambda)$ is not well defined. We can write $P^X_{c_1,L}(\Lambda)=\Lambda^{-c_1^2}p^X_{c_1,L}(\Lambda^4)$, and the polynomial
$p^X_{c_1,L}(t)$ is well defined only up to adding a Laurent polynomial in $t$ divisible by $(1-t)^{\chi(L)}$. On the other hand, if $L$ is sufficiently ample with respect to $c_1,X$, we conjecture that we can choose $p^X_{c_1,L}(t)$ with
$\deg(p^X_{c_1,L}(t))<\chi(L)$ (i.e. the difference in degree of the highest order and lowest order term in $p^X_{c_1,L}(t)$ is smaller than $\chi(L)$). Assuming this, $p^X_{c_1,L}(t)$ and thus $P^X_{c_1,L}(\Lambda)$ are uniquely determined.
\end{Remark}

\begin{Remark}\label{chipol}
Part (1) of \conref{ratconj} requires a condition of sufficient ampleness (see \thmref{rpoly}). On the other hand it appears that a modified version of the conjecture holds in  larger generality, i.e. 
$\chi^{\omega}_{X,c_1}(L)\equiv \frac{P^X_{c_1,L}(\Lambda)}{(1-\Lambda^4)^{\chi(L)}}.$
with $P^X_{c_1,L}(\Lambda)\in \Lambda^{-c_1^2}\Q[\Lambda^{\pm 4}]$, and
\begin{enumerate}
\item  $P^X_{c_1,L}(1)=2^{g(L)}$,
\item 
$\chi^{\omega}_{X,c_1}(L)\equiv (-1)^{\chi(L)}\Lambda^{-(L-K_X)^2+4} \cdot \chi^\omega_{X,L+K_X-c_1}(L)|_{\Lambda=\frac{1}{\Lambda}}.$
\end{enumerate}
\end{Remark}

{\bf Approach.}
This paper is built on  \cite{GY}, and both papers are built on \cite{GNY}. In \cite{GNY} the wallcrossing terms for the $K$-theoretic Donaldson invariants are 
determined in terms of modular forms, based on the solution of the Nekrasov conjecture for the $K$-theoretic partition function (see \cite{Nek}, \cite{NO}, \cite{NY1},\cite{NY2},\cite{NY3}).
, and both \cite{GY} and this paper sum up the wallcrossing terms to get closed formulas for the generating functions. 
The main new inputs are the systematic use  of the generating function  the "$K$-theoretic Donaldson invariants 
with point class" $\chi^{X,\omega}_{c_1}(L,P^r)$, and the blowup formulas.
We introduce in an ad hoc way  $\chi^{X,\omega}_{c_1}(L,P^r):=\frac{1}{\Lambda^r}\chi^{\widehat X,\omega}_{c_1+E}(L-E)$, where $\widetilde X$ is the blowup of $X$ in $r$ general points and $E$ is the sum of the exceptional divisors (but note that these are invariants on $X$, depending on an ample class $\omega$ on $X$). These invariants satisfy a wallcrossing formula which is very similar to that of the standard $K$-theoretic Donaldson invariants $\chi^{X,\omega}_{c_1}(L)$.
We prove blowup formulas that compute all the generating formulas of $K$-theoretic Donaldson invariants on any blowup of $X$ in terms of the $\chi^{X,\omega}_{c_1}(L,P^r)$.
On the other hand we also prove blowdown formulas, which compute all the generating functions of the $K$-theoretic Donaldson invariants with point class $\chi^{X,\omega}_{c_1}(M,P^r)$ in terms of a {\it very small part} of those on the blowup $\widehat X$. 
Then, generalizing the methods of \cite{GY}, we compute this small part in the case $\widehat X$ is the blowup of $\P^2$ in a point. Thus, using the blowdown formulas, we determine the generating functions of the $K$ theoretic Donaldson invariants with point class of $\P^2$, and thus, by using the blowup formula again, of all blowups of $\P^2$. Finally, as the blowup of $\P^1\times \P^1$ in a point is equal to the blowup of 
$\P^2$ in two points, we apply the blowdown formulas again to determine generating functions for $\P^1\times\P^1$.
These methods give an algorithm, which in principle computes all the generating functions mentioned above. The algorithm proves the rationality of the 
generating functions, and is carried for many $X$ and $L$ to obtain the  explicit generating functions $\chi^{X,\omega}_{c_1}(L)$.

\section{Background material}\label{sec:background}

In this whole paper $X$ will be a simply connected nonsingular projective rational surface over $\C$. 
Usually $X$ will be $\P^2$, $\P^1\times \P^1$ or the blowup of $\P^2$ in finitely many points.

We will fix some notation that we want to use during this whole paper.

\begin{Notation} \label{R} 
\begin{enumerate}
\item For a class $\alpha,\beta\in H^2(X,\Q)$, we denote $\<\alpha,\beta\>$ their intersection product.
For $\beta\in H^2(X)$ we also write $\beta^2$ instead of $\<\beta,\beta\>$.
\item For a line bundle $L$ on $X$ we denote its first Chern class by the same letter.
\item If $\widehat X$ is the blowup of $X$ in a point or in a finite set of points, and $L\in Pic(X)$, we denote its pullback to $\widehat X$ by the same letter.
The same holds for classes $\alpha\in H^2(X,\R)$.
\item We denote $
 \RR:=\Q[[q^2\Lambda^2,q^4]]$.
 \item Let $\Q[t_1,\ldots,t_k]_n$ be the set of polynomials in $t_1,\ldots,t_k$ of degree $n$ and $\Q[t_1,\ldots,t_k]_{\le n}$ the polynomials in $t_1,\ldots,t_n$
 of degree at most $n$.
\item Let $\omega$ be an ample divisor on $X$. 
For $r\ge 0$, $c_1\in Pic(X)$, $c_2\in H^4(X,\Z)$ let
$M_\omega^X(r,c_1,c_2)$  the moduli space of $\omega$-semistable rank $r$ sheaves on $X$ with $c_1(E)=c_1$, $c_2(E)=c_2$. 
Let $M^X_\omega(r, c_1,c_2)_s$ be the open 
subset of stable sheaves. 
We will   write $M^X_\omega(c_1,d)$ with $d:=4c_2-c_1^2$ instead of  $M^X_\omega(2,c_1,c_2)$.
 \end{enumerate}
 \end{Notation}

\subsection{Determinant line bundles}\label{sec:detbun}
We briefly review the determinant line bundles on the moduli space
\cite{DN},\cite{LP1}, \cite{LP2}, for more details we refer to \cite[Chap.~8]{HL}. We mostly follow \cite[Sec.~1.1,1.2]{GNY}.

For a Noetherian scheme $Y$ we denote by $K(Y)$ and $K^0(Y)$ the Grothendieck groups of coherent sheaves and locally free sheaves on $Y$ respectively.
If $Y$ is nonsingular and quasiprojective, then $K(Y)=K^0(Y)$.
If we want to distinguish a sheaf $\F$ and its class in $K(Y)$, we
denote the latter by $[\F]$. The product $[\F].[\G]:=\sum_i (-1)^i[\Tor_i(\F,\G)]$ makes
$K^0(Y)$ into a commutative ring and $K(Y)$ into a $K^0(Y)$ module.
For a proper morphism $f\colon Y_1\to Y_2$ we have the pushforward
homomorphism
\(f_!\colon K(Y_1)\to K(Y_2);  [\F] \mapsto\sum_i (-1)^i [R^if_*\F].\)
For any morphism $f\colon Y_1\to Y_2$ we have the pullback
homomorphism \(   f^*\colon K^0(Y_2)\to K^0(Y_1) \)
given by 
\(  [\F] \mapsto[f^*\F] \) for a locally free sheaf $\F$ on $Y_2$.
Let $\E$ be a flat family of coherent sheaves of class $c$ on $X$ parametrized by a scheme $S$, then $\E\in K^0(X\times S)$.
Let $p:X\times S\to S$, $q:X\times S\to X$ be the projections.
Define $\lambda_\E:K(X)\to \Pic(S)$ as the composition of the following homomorphisms:
\begin{equation}\label{dlb}
\xymatrix@C=0.3cm{
  K(X)=K^0(X) \ar[rr]^{~~q^{*}} && K^0(X\times S) \ar[rr]^{.[\E]} && K^0(X\times
S) \ar[rr]^{~~~p_{!}} && K^0(S)\ar[rr]^{det^{-1}} &&
\Pic(S),}\end{equation}
 By Proposition 2.1.10 in \cite{HL} $p_{!}([\F])\in K^0(S)$ for $\F$ $S$-flat .
We have the following facts.
\begin{enumerate}
\item $\lambda_\E$ is a homomorphism, i.e. $\lambda_\E(v_1+v_2)=\lambda_\E(v_1)\otimes \lambda_{\E}(v_2)$.
\item If $\mu\in \Pic(S)$ is a line bundle, then $\lambda_{\E\otimes p^*\mu}(v)=
\lambda_{\E}(v)\otimes \mu^{\chi(c\otimes v)}$.
\item $\lambda_\E$ is compatible with base change: if
$\phi:S'\to S$ is a morphism, then $\lambda_{\phi^*\E}(v)=\phi^*\lambda_{\E}(v)$.
\end{enumerate}
Define $K_c:=c^\perp=\big\{v\in K(X)\bigm|
\chi(v\otimes c)=0\big\}$,~
and $K_{c,\omega}:=c^\perp\cap\{1,h,h^2\}^{\perp\perp}$, where $h=[\oo_\omega]$.  Then we have a well-defined morphism $\lambda\colon K_c\to \Pic(M_\omega^X(c)^s)$,  and $\lambda\colon K_{c,\omega}\to \Pic(M_\omega^X(c))$ satisfying the following properties:
 \begin{enumerate}
 \item The $\lambda$ commute with the  inclusions $K_{c,\omega}\subset K_c$ and $\Pic(M_\omega^X(c))\subset \Pic(M_\omega^X(c)^s)$.
 \item If $\E$ is a flat family of semistable sheaves  on $X$ of class $c$ parametrized by $S$,  then  we have
$\phi_{\E}^*(\lambda(v))=\lambda_{\E}(v)$ for all $v\in K_{c,\omega}$ with $\phi_\E:S\ra M^X_\omega(c)$ the classifying morphism.
 \item If $\E$ is a flat family of stable sheaves, the statement of (2) holds with  $K_{c,\omega}$, $M^X_\omega(c)$ replaced by  $K_{c}$, $M^X_\omega(c)^s$.
 \end{enumerate}
 

Since $X$ is a simply connected surface, both the moduli space $M^X_\omega(c)$ and the determinant line bundle $\lambda(c^*)$ only depend on the images of $c$ and $c^*$ in $K(X)_{num}.$  Here $K(X)_{num}$ is the Grothendieck group modulo numerical equivalence.  We say that $u,v\in K(X)$ are numerically equivalent if $u-v$ is in the radical of the quadratic form $(u,v)\mapsto \chi(X,u\otimes v)\equiv \chi(u\otimes v)$

We call $H$ {\it general} with respect to
$c$ if all the strictly semistable sheaves in $M_H^X(c)$
are strictly semistable with respect to all ample divisors on $X$ in a neighbourhood of $H$ \

Often $\lambda\colon K_{c,\omega}\to \Pic(M_\omega^X(c))$ can be extended.  For instance let $c=(2,c_1,c_2)$, then $\lambda(v(L))$ is well-defined over $M^X_\omega(c)$ if $\<L,\xi\>=0$ for all $\xi$ a class of type $(c_1,d)$ (see \secref{walls}) with $\<\omega,\xi\>=0$.  This can be seen  from the construction of $\lambda(v(L))$ (e.g. see the proof of Theorem 8.1.5 in \cite{HL}).

\subsection{Walls}\label{walls}
Denote by $\cc\subset H^2(X,\R)$ the ample cone of $X$. 
Then $\cc$ has a chamber structure:
For a class $\xi\in H^2(X,\Z)\setminus \{0\}$ let $W^\xi:=\big\{ x\in \cc\bigm| \< x,\xi\>=0\big\}$.
Assume $W^\xi\ne \emptyset $. Let $c_1\in \Pic(X)$, $d\in \Z$ congruent to $-c_1^2$ modulo 4.
Then we call $\xi$ a {\it class of type} $(c_1,d)$ and call 
$W^\xi$ a {\it wall of type} $(c_1,d)$ if the following conditions hold
\begin{enumerate}
\item 
$\xi+c_1$ is divisible by $2$ in $H^2(X,\Z)$,
\item $d+\xi^2\ge 0$.
\end{enumerate}
We call $\xi$ a {\it class of type} $(c_1)$, if $\xi+c_1$ is divisible by $2$ in $H^2(X,\Z)$.
We say that $\omega\in \cc$ lies on the wall $W^\xi$ if $\omega\in W^\xi$.
The {\it chambers of type} $(c_1,d)$ are the connected components of the complement
of the walls of type $(c_1,d)$ in $\cc$.
Then $M_\omega^X(c_1,d)$ depends only on the chamber of type $(c_1,d)$ of $\omega$.
Let $c\in K(X)$ be the class of $\F\in M_\omega^X(c_1,d)$. It is easy to see 
that $\omega$ is general with respect to $c$ if and only if $\omega$ does not lie on a wall of 
type $(c_1,d)$.

\subsection{$K$-theoretic Donaldson invariants}\label{backDong}
We write $M^X_\omega(c_1,d)$ for $M^X_\omega(2,c_1,c_2)$ with $d=4c_2-c_1^2$.
Let $v\in K_c$, where $c$ is the class of a coherent rank $2$ sheaf 
with Chern classes $c_1,c_2$. 
Let $L$ be a line bundle on $X$ and assume that $\<L,c_1\>$ is even.
Then for $c$ the class of a rank $2$ coherent sheaf with Chern classes 
$c_1,c_2$, we put 
\begin{equation}\label{eq:uL} v(L):=(1-L^{-1})+\<\frac{L}{2},L+K_X+c_1\>[\oo_x]\in K_c.\end{equation}
Note that $v(L)$ is independent of $c_2$. 
Assume that $\omega$ is general with respect to $(2,c_1,c_2)$. Then we denote
$\mu(L):=
\lambda(v(L))\in \Pic(M^X_\omega(c_1,d))$.
The {\it $K$-theoretic Donaldson invariant\/} of $X$, with respect to $L,c_1,d,\omega$ is 
$\chi(M^X_\omega(c_1,d),\mathcal O(\mu(L)))$.

We recall the following blowup relation for the  $K$-theoretic Donaldson invariants from  \cite[Sec.1.4]{GNY}.  
Let $(X,\omega)$ be a polarized rational surface. Let $\widehat X$ be the
blowup of $X$ in a point and $E$ the exceptional divisor.
In the
following we always denote a class in $H^*(X,\Z)$ and its pullback by
the same letter. 
Let $Q$ be an open subset of a suitable quot-scheme
such that $M^X_\omega(c_1,d)=Q/GL(N)$. Assume that $Q$ is smooth \textup(e.g.\ $\langle -K_X,\omega\rangle>0$\textup).
We choose $\epsilon>0$ sufficiently small so that  $\omega-\epsilon E$ is ample on $\widehat X$ and  there is no class $\xi$ of type $(c_1,d)$ or of type $(c_1+E,d+1)$ on $\widehat X$ with $\<\xi, \omega\><0<\<\xi, (\omega-\epsilon E)\>.$
In case $c_1=0$ assume $d>4$.

\begin{Lemma}  \label{blowsimple}
We have 
\begin{align*}
\chi({M}^{\widehat X}_{\omega-\epsilon E}(c_1,d),\mu(L))&
=\chi(M^X_\omega(c_1,d),\mu(L)), \\
 \chi({M}^{\widehat X}_{\omega-\epsilon E}(c_1+E,d+1),\mu(L))&
=\chi(M^X_\omega(c_1,d),\mu(L))
\end{align*}
for any line bundle  $L$ on $X$ such that $\<L, c_1\>$ is even and 
$\<L,\xi\>=0$ for $\xi$ any class of 
type $(c_1,d)$ on $\widehat X$ with $\<\omega,\xi\>=0$. 
\end{Lemma}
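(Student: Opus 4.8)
The plan is to deduce each identity from an isomorphism of moduli spaces under which the two line bundles called $\mu(L)$ are identified, after which equality of holomorphic Euler characteristics is automatic. Write $\pi\colon\widehat X\to X$ for the blowdown, so that $\pi_*\oo_{\widehat X}=\oo_X$, $R^1\pi_*\oo_{\widehat X}=0$, $K_{\widehat X}=\pi^*K_X+E$ and $E^2=-1$. The natural comparison morphism for the first identity is pullback $F\mapsto\pi^*F$, and for the second an elementary modification along $E$ that raises the first Chern class by $E$. In both cases the numerical hypotheses should force the minimal amount of ``charge'' onto $E$, making these correspondences bijective.

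For the first identity, $F\mapsto\pi^*F$ carries $\omega$-semistable sheaves with invariants $(c_1,d)$, where $\langle c_1,E\rangle=0$, to $(\omega-\epsilon E)$-semistable sheaves with the same invariants: the $\pi^*\omega$-slope is unchanged, the Chern numbers satisfy $c_2(\pi^*F)=c_2(F)$ and $(\pi^*c_1)^2=c_1^2$ so that $d$ is preserved, and $(\omega-\epsilon E)$-semistability for small $\epsilon$ follows because no wall of type $(c_1,d)$ separates $\omega$ from $\omega-\epsilon E$. Conversely, a $(\omega-\epsilon E)$-semistable $G$ with $\langle c_1(G),E\rangle=0$ restricts to $E\cong\P^1$ in degree $0$, and semistability for small $\epsilon$ together with the fixed discriminant forces $G|_E\cong\oo_E^{\oplus 2}$, whence $G\cong\pi^*\pi_*G$ with $\pi_*G$ locally free near the centre and $\omega$-semistable; this gives $M^X_\omega(c_1,d)\cong M^{\widehat X}_{\omega-\epsilon E}(c_1,d)$. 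For the second identity the same bookkeeping applies with $\langle c_1(G),E\rangle=-1$: since $(c_1+E)^2=c_1^2-1$ the target $M^{\widehat X}_{\omega-\epsilon E}(c_1+E,d+1)$ carries the same $c_2$ as $M^X_\omega(c_1,d)$, the degree-$(-1)$ restriction is forced to the rigid splitting type $\oo_E\oplus\oo_E(-1)$, and a minimal elementary modification along $E$ (built from $\pi^*F$, increasing the first Chern class by $E$ and raising $d$ by $1$) sets up the inverse isomorphism $M^{\widehat X}_{\omega-\epsilon E}(c_1+E,d+1)\cong M^X_\omega(c_1,d)$.

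It remains to match the determinant line bundles. The key point is that, because $L$ is pulled back from $X$ with $\langle L,E\rangle=0$, the class $v(L)$ computed on $\widehat X$ is insensitive both to the $+E$ correction in $K_{\widehat X}$ and to the $+E$ in the first Chern class: by the projection formula $\langle L,K_{\widehat X}\rangle=\langle L,K_X\rangle$ and $\langle L,c_1+E\rangle=\langle L,c_1\rangle$, so the point-class coefficient in \eqref{eq:uL} agrees on both sides and $v(L)_{\widehat X}$ is the pullback of $v(L)_X$. Using the base-change and homomorphism properties of $\lambda_\E$ together with the fact that the universal family on the blowup side is $\pi^*$ of (resp.\ the elementary modification of) the universal family over $M^X_\omega(c_1,d)$, one concludes that $\mu(L)$ on the blowup side is the image of $\mu(L)$ on $X$. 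The hypothesis $\langle L,\xi\rangle=0$ for every wall class $\xi$ of type $(c_1,d)$ with $\langle\omega,\xi\rangle=0$ is exactly what guarantees (as in \secref{sec:detbun}) that $\mu(L)=\lambda(v(L))$ is defined over the strictly semistable locus as well, so the identification holds over the whole moduli space and the equalities of $\chi$ follow.

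The hard part is the surjectivity in the two moduli isomorphisms: proving that \emph{every} $(\omega-\epsilon E)$-semistable sheaf on $\widehat X$ with the prescribed invariants arises from $X$, i.e.\ that its restriction to $E$ is forced to the minimal splitting type. This is where the no-wall hypothesis must be combined with a discriminant estimate controlling how much charge an elementary modification can absorb at the exceptional curve; the hypotheses $\langle\omega,K_X\rangle<0$ and, when $c_1=0$, $d>4$ enter here to keep the quot-scheme $Q$ smooth and the moduli space well-behaved. Once this and the matching of $v(L)$ are established, both equalities are formal.
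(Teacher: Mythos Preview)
The core of your proposal—that $\pi^*$ (respectively $\pi^*$ followed by an elementary modification) induces an \emph{isomorphism} of moduli spaces—is not correct, and this is precisely the ``hard part'' you flag at the end. Here is a concrete counterexample for the first identity. Take $X=\P^2$, $\omega=H$, $c_1=H$, and on $\widehat X=\widehat{\P}^2$ consider a nontrivial extension
\[
0\longrightarrow\oo_{\widehat X}(-E)\longrightarrow G\longrightarrow\oo_{\widehat X}(H+E)\otimes I_Z\longrightarrow 0
\]
with $Z\neq\emptyset$ a finite subscheme disjoint from $E$. Then $c_1(G)=H$, $c_2(G)=1+|Z|$, and restricting to $E$ (near which $Z$ plays no role) gives $G|_E\cong\oo_E(1)\oplus\oo_E(-1)$. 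A direct check shows that for any $\epsilon>0$ small enough that the lemma's no-wall hypothesis holds, $G$ is $(H-\epsilon E)$-stable: the sub $\oo(-E)$ has slope $-\epsilon<\tfrac12$, and any line subbundle mapping nontrivially to the quotient has slope at most $1+\epsilon-\langle D,H-\epsilon E\rangle$ for some effective $D$ meeting $Z$, hence $<\tfrac12$. Thus $G$ defines a point of $M^{\widehat X}_{H-\epsilon E}(H,d)$ that is \emph{not} of the form $\pi^*F$, since any pullback restricts trivially to $E$. The ``discriminant estimate'' you hope for does not rule this out: the discriminant of $G$ equals that of the target moduli space. An analogous construction obstructs the second identity.

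The paper itself does not prove the lemma but quotes it from \cite[Sec.~1.4]{GNY}. The argument there is not via a naive isomorphism of moduli spaces; note that the hypothesis that the Quot scheme $Q$ with $M^X_\omega(c_1,d)=Q/\GL(N)$ be \emph{smooth} is a genuine input, pointing to a comparison at the level of equivariant Euler characteristics on Quot schemes rather than a pointwise identification. In the framework developed by Nakajima--Yoshioka, the moduli space on $\widehat X$ for a polarization just past the pullback wall is related to $M^X$ through $R\pi_*$ and an analysis of the stratification by splitting type along $E$, and it is this finer structure that forces the equality of $\chi(\,\cdot\,,\mu(L))$ despite the presence of extra strata such as the one exhibited above. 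Your matching of the determinant line bundles under pullback is correct and is one ingredient, but the geometric comparison must be replaced by something more than $F\mapsto\pi^*F$.
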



Following \cite{GY}, we introduce the generating function of the $K$-theoretic Donaldson invariants.
\begin{Definition} \label{KdonGen} Let $c_1\in H^2(X,\Z)$. Let $\omega$ be ample on $X$ not on a wall of type $(c_1)$.
\begin{enumerate}
\item 
If $c_1\not \in 2 H^2(X,\Z)$, let
\begin{equation}\label{eq:Kdon}
\begin{split}
\chi_{c_1}^{X,\omega}(L)&:=\sum_{d>0}
\chi(M^X_\omega(c_1,d),\mathcal O(\mu(L)))\Lambda^d, 
\end{split}
\end{equation}
\item In case $c_1=0$ let 
$\widehat X$ be the blowup of $X$ in a point. Let $E$ be the exceptional divisor. Let $\epsilon>0$ be sufficiently small so that there is no class
$\xi$ of class $(E,d+1)$ on $\widehat X$ with  $\<\xi, \omega\> <0 <\<\xi, (\omega-\epsilon E)\>$. 
We put 
\begin{equation}\label{eq:Kdon0}
\begin{split}
\chi_{0}^{X,\omega}(L)&:=\sum_{d>4}
\chi(M^X_\omega(0,d),\mathcal O(\mu(L)))\Lambda^d+\Big(\chi(M^{\widehat X}_{\omega-\epsilon E}(E,5) ,\mu(L))+LK_X-\frac{K_X^2+L^2}{2}-1\Big)\Lambda^4.
\end{split}
\end{equation}
\end{enumerate}
\end{Definition}



\begin{Remark}\label{rem:canonical}(\cite[Rem.~1.9]{GNY})
If $H$ is a general polarization, then
$\mu(2K_X)$ is a line bundle on $M^X_H(c)$
which coincides with the dualizing sheaf
on the locus of stable sheaves $M_H^X(c)^s$.
If $\dim (M_H^X(c) \setminus M_H^X(c)^s) \leq \dim M_H^X(c)-2$,
then $\omega_{M_H^X(c)}=\mu(2K_X)$.
\end{Remark}

Under rather general assumptions the higher cohomology of  $\mu(L)$ vanishes. 
The following follows from \cite[Prop.2.9]{GY} and its proof, which is based on  \cite[Sec.1.4]{GNY}.
\begin{Proposition} \label{highvan}
Fix $c_1,d$. Let $\omega$ be an ample line bundle on $X$ which is general with respect to $c_1,d$, and satisfies $\<-K_X,\omega\>>0$.
Let $L$ be a nef line bundle on $X$ such that $L-2K_X$ is ample.
If $c_1$ is not divisible by $2$ in $H^2(X,\Z)$ or $d>8$, we have 
$H^i(M_\omega^X(c_1,d),\mu(L))=0$ for all $i>0$, in particular 
$$\dim H^0(M_\omega^X(c_1,d),\mu(L))=\chi(M_\omega^X(c_1,d),\mu(L)).$$
\end{Proposition}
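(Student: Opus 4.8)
The plan is to realize $\mu(L)$ as the tensor product of the dualizing sheaf of $M:=M^X_\omega(c_1,d)$ with a nef and big line bundle, and then to invoke Kawamata--Viehweg vanishing. First I would record the good geometry of $M$: since $\langle -K_X,\omega\rangle>0$, the quot-scheme $Q$ with $M=Q/\GL(N)$ is smooth (as recalled before \lemref{blowsimple}), so $M$ is a good quotient of a smooth variety by a reductive group and therefore has rational singularities. This is what makes a Kawamata--Viehweg-type vanishing statement available on the (generally singular) space $M$.

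Next I would identify the dualizing sheaf via \remref{rem:canonical}: one has $\omega_M=\mu(2K_X)$ as soon as the strictly semistable locus $M\setminus M^s$ has codimension at least $2$. This is exactly where the dichotomy in the hypothesis enters. If $c_1$ is not divisible by $2$ in $H^2(X,\Z)$, then $\omega$-semistability coincides with stability, so $M=M^s$ and there is nothing to check. If $c_1$ is divisible by $2$, the strictly semistable sheaves are S-equivalent to direct sums $F_1\oplus F_2$ of rank-one torsion-free sheaves with $c_1(F_i)=\tfrac12 c_1$; a standard dimension count against $\dim M=d-3$ shows that this locus has codimension $\ge 2$ precisely once $d>8$. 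Either way $\omega_M=\mu(2K_X)$, so in particular $M$ is Gorenstein and KV applies.

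Then I would set $N:=\mu(L)\otimes\mu(2K_X)^{-1}=\lambda\big(v(L)-v(2K_X)\big)$, so that $\mu(L)=\omega_M\otimes N$, and reduce to showing that $N$ is nef and big. Writing $D:=L-2K_X$, a direct computation of the defining classes gives $v(L)-v(2K_X)=[\oo(-2K_X)]\cdot(1-D^{-1})+c\,[\oo_x]$ for a suitable integer $c$, so $N$ is, up to the point-class correction, the determinant line bundle attached to the twist of $1-D^{-1}$ by $\oo(-2K_X)$. Following \cite[Sec.~1.4]{GNY} and the proof of \cite[Prop.~2.9]{GY}, nef-ness of $N$ follows from $L$ being nef, while big-ness follows from $D=L-2K_X$ being ample (the determinant line bundle attached to an ample class has maximal Iitaka dimension on $M$). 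Granting this, Kawamata--Viehweg vanishing yields $H^i(M,\mu(L))=H^i(M,\omega_M\otimes N)=0$ for all $i>0$, and hence $\dim H^0(M,\mu(L))=\chi(M,\mu(L))$.

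The hard part will be the positivity statement for $N$: one must check carefully that the combination $v(L)-v(2K_X)$, \emph{including} its point-class correction and the twist by $\oo(-2K_X)$, produces a determinant line bundle that is simultaneously nef --- using only that $L$ is nef, since $-2K_X$ need not be positive on the non-del-Pezzo blowups considered here --- and big, using ampleness of $L-2K_X$. This is precisely the content imported from \cite[Sec.~1.4]{GNY} and \cite[Prop.~2.9]{GY}. The secondary technical point is the codimension estimate for $M\setminus M^s$ when $c_1\in 2H^2(X,\Z)$, which is what pins down the threshold $d>8$ in the statement.
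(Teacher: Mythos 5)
Your argument is correct and is essentially the proof this paper relies on: the text gives no proof of its own but defers to \cite[Prop.~2.9]{GY} (built on \cite[Sec.~1.4]{GNY}), and that argument is exactly your decomposition $\mu(L)=\omega_M\otimes\mu(L-2K_X)$ combined with Kawamata--Viehweg vanishing on the rational-singularity quotient $M=Q/\GL(N)$, with the dichotomy between $c_1\notin 2H^2(X,\Z)$ and $d>8$ entering only through the codimension-$\ge 2$ condition of \remref{rem:canonical}. One small simplification: since $v(L)-v(2K_X)$ and $v(L-2K_X)$ have the same rank and first Chern class and both lie in $K_c$ while $[\oo_x]\notin c^{\perp}$, one has $N=\mu(L)\otimes\mu(2K_X)^{-1}=\mu(L-2K_X)$ on the nose, so its nefness and bigness follow directly from the ampleness of $L-2K_X$ as quoted in the introduction, without the twist-by-$\oo(-2K_X)$ computation.
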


\section{Strange duality}\label{strd}
\subsection{Review of strange duality}
We briefly review the strange duality conjecture from for surfaces from \cite{LPst}.
The strange duality conjecture was formulated for $X$ a smooth curve in the 1990s (see \cite{Bea} and \cite{Donagi}) and in this case been proved around 2007 (see \cite{Bel1}, \cite{MO1}). For $X$ a surface, there  is a formulation for some special due to Le Potier (see \cite{LPst} or \cite{Da2}). 
Let $c,c^*\in K(X)_{num}$ with $c\in K_{c^*}$.
Let $H$ be an ample line bundle on $X$ which is
both $c$-general and $c^*$-general.
Write $\D_{c,c^*}:=\lambda(c^*)\in \Pic(M^X_H(c))$,
$\D_{c^*,c}:=\lambda(c)\in \Pic(M^X_H(c^*))$.
Assume that  all $H$-semistable sheaves $\F$ on $X$ of class $c$ and
all $H$-semistable sheaves $\G$ on $X$ of class $c^*$ satisfy
\begin{enumerate}
\item $\Tor_i(\F,\G)=0$ for all $i\ge 1$,
\item $H^2(X,\F\otimes \G)=0$.
\end{enumerate}
Both conditions are automatically satisfied if
$c$ is not of dimension $0$ and  $c^*$ is of dimension $1$
(see \cite[p.9]{LPst}). 

Put $\D:=\D_{c,c^*}\boxtimes \D_{c^*,c}\in \Pic(M^X_H(c)\times M^X_H(c^*))$.
In \cite[Prop.~9]{LPst} a canonical section $\sigma_{c,c^*}$ of $\D$ is constructed, whose zero set is supported on
$$\mathscr{D}:=\big\{([\F],[\G])\in   M^X_H(c)\times M^X_H(c^*)\bigm| H^0(X,\F\otimes \G)\ne 0\big\}.$$
The  element $\sigma_{c,c^*}$ of $H^0(M^X_H(c),\D_{c,c^*})\otimes H^0(M^X_H(c^*),\D_{c^*,c})$, gives  a linear map
\begin{equation}
\label{SDmap}
SD_{c,c^*}:H^0(M^X_H(c),\D_{c,c^*})^\vee \to H^0(M^X_H(c^*),\D_{c^*,c}),
\end{equation}
 called the {\it strange duality map}.
Le Potier's strange duality conjecture is then the following.
\begin{Conjecture}\label{sdcon} Under the above assumptions  $SD_{c,c^*}$ is an isomorphism.
\end{Conjecture}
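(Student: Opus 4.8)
The plan is to reduce the isomorphism statement to a numerical coincidence together with an injectivity (equivalently surjectivity) statement, and to attack the numerical half with the Verlinde-type machinery of this paper. Since $H^0(M^X_H(c),\D_{c,c^*})^\vee$ and $H^0(M^X_H(c^*),\D_{c^*,c})$ are finite-dimensional, $SD_{c,c^*}$ is an isomorphism as soon as \textbf{(a)} the two spaces have equal dimension and \textbf{(b)} the map is injective. In the cases of interest here $c=(2,c_1,c_2)$ has rank $2$ while $c^*$ has dimension $1$, so $M^X_H(c)$ is one of our moduli spaces $M^X_\omega(c_1,d)$ and $\D_{c,c^*}=\lambda(c^*)$ is of the form $\mu(L)$ for a suitable line bundle $L$, while $M^X_H(c^*)$ parametrizes sheaves supported on curves. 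Under the ampleness hypotheses of \propref{highvan} the higher cohomology of both determinant bundles vanishes, so $\dim H^0=\chi$ on each side and the comparison of dimensions reduces to a comparison of holomorphic Euler characteristics.

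For step \textbf{(a)} I would set up the dictionary between the two Euler characteristics and the generating functions $\chi^{X,\omega}_{c_1}(L)$. The rank-$2$ side $\chi(M^X_H(c),\D_{c,c^*})$ is a coefficient of one of the generating functions computed in \thmref{mainp2} and \thmref{P11gen}, while the dimension-$1$ side $\chi(M^X_H(c^*),\D_{c^*,c})$ is a genuine Verlinde-type number for moduli of sheaves supported on curves; the content of strange duality at the numerical level is that these agree. This matching is precisely the numerical shadow of the ``duality'' predicted in \conref{ratconj}(3), namely $P^X_{c_1,L}(\Lambda)=\Lambda^{L^2+8-K_X^2}P^X_{L+K_X-c_1,L}(1/\Lambda)$: the substitution $c_1\mapsto L+K_X-c_1$ is exactly the reflection expected to exchange the two sides of the pairing, reflecting Serre duality on $X$ (which governs the vanishing of $H^2(X,\F\otimes\G)$ assumed in the setup). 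I would therefore first verify, by comparing the appropriate coefficients across this symmetry, that $\chi(M^X_H(c),\D_{c,c^*})=\chi(M^X_H(c^*),\D_{c^*,c})$; for the explicitly computed cases this is a finite check, and in general it is the content of establishing the duality symmetry of the generating functions.

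For step \textbf{(b)} I would argue injectivity geometrically via the section $\sigma_{c,c^*}$ and its zero divisor $\mathscr{D}$. Restricting $\sigma_{c,c^*}$ along $M^X_H(c)\times\{[\G]\}$ gives, for each stable $\G$ of class $c^*$, a section $\theta_\G\in H^0(M^X_H(c),\D_{c,c^*})$ vanishing exactly where $H^0(X,\F\otimes\G)\ne 0$; injectivity of $SD_{c,c^*}$ is equivalent to the spanning of $H^0(M^X_H(c),\D_{c,c^*})$ by the $\theta_\G$. Because $c^*$ has dimension $1$, the support map sends $M^X_H(c^*)$ to the linear system of its determinant, and I would try to reduce this spanning statement to strange duality on a general smooth curve $C$ in that system, where the isomorphism is already known (\cite{Bel1},\cite{MO1}). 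Concretely one restricts the rank-$2$ sheaf $\F$ to $C$, identifies $\D_{c,c^*}$ with a twist of the pullback of the curve-level determinant bundle, and transports the known curve-level isomorphism back to the surface.

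The main obstacle is exactly step \textbf{(b)}. The dimension equality is within reach of the explicit and conjectural formulas developed here, but bijectivity of $SD_{c,c^*}$ is a genuinely global statement that does not follow formally from matching dimensions. The restriction-to-a-curve reduction must control the sheaves $\F$ that fail to restrict well (those not locally free or not semistable along $C$) and must show that the spanning property survives passage between the surface and curve moduli spaces; handling the strictly semistable locus and the base locus of $\mathscr{D}$ is where the difficulty concentrates. This is why the statement is recorded as a conjecture in general: the numerical half is accessible by the methods of this paper, while the geometric half is currently established only in the special cases (such as those of \cite{Abe} and \cite{GY}) where the relevant moduli spaces and theta divisors admit an explicit description.
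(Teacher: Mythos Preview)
The statement you are addressing is \conref{sdcon}, Le Potier's strange duality conjecture. The paper does not prove it; it is recorded precisely as a conjecture and used only as motivation and interpretation for the generating-function results (see \secref{strd}). So there is no ``paper's own proof'' to compare against, and any proposal that purports to establish it is necessarily going beyond the paper.

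Your outline has one substantive error in step \textbf{(a)}. You identify the numerical equality $\chi(M^X_H(c),\D_{c,c^*})=\chi(M^X_H(c^*),\D_{c^*,c})$ with the duality of \conref{ratconj}(3), i.e.\ $P^X_{c_1,L}(\Lambda)=\Lambda^{L^2+8-K_X^2}P^X_{L+K_X-c_1,L}(1/\Lambda)$. But that relation compares two \emph{rank-$2$} generating functions (first Chern classes $c_1$ and $L+K_X-c_1$); it does not compare a rank-$2$ Euler characteristic with the dimension-$1$ side $\chi(M^X_H(c^*),\D_{c^*,c})$. In the paper the link to the dimension-$1$ side goes instead through the support morphism $\pi^{L,c_1}:M^X_H(c^*)\to|L|$ and the (conjectural) splitting of $\pi_*\Theta_{2,c_1}$ in \conref{splitconj}; see the remarks following that conjecture, where the shape $P^X_{c_1,L}(\Lambda)/(1-\Lambda^4)^{\chi(L)}$ is \emph{derived from} assuming strange duality, not used to prove it. So even the numerical half is not a consequence of the results proved here.

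For step \textbf{(b)} you correctly diagnose the real difficulty: spanning of $H^0(M^X_H(c),\D_{c,c^*})$ by the restricted theta sections $\theta_\G$, and the problems caused by non--locally-free or non-semistable restrictions to a curve $C\in|L|$. This is exactly why the statement remains a conjecture; the known surface cases (\cite{Abe}, \cite{GY}) proceed by direct geometric arguments in low degree rather than by the reduction you sketch. Your final paragraph already concedes this, so the honest summary is that your text is a reasonable discussion of the problem and its obstacles, but not a proof, and the paper offers none either.
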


It seems natural to believe that under more general assumptions than \conref{sdcon} we have the  {\it numerical version of strange duality}
$\chi(M^X_H(c),\D_{c,c^*})^\vee = \chi(M^X_H(c^*),\D_{c^*,c})$.

\subsection{Interpretation of the main results and conjectures in view of strange duality}
In this subsection let 
 $c=(2,c_1,c_2)$ and $c^*=(0,L,\chi=\<\frac{L}2,c_1\>)$, so that $\D_{c,c^*}=\mu(L)$. The moduli space $M^X_H(c^*)$ is a moduli space of pure dimension $1$ sheaves.
 It has a natural projection $\pi:=\pi^{L,c_1}:M^X_H(c^*)\to |L|$, whose fibre over a smooth curve $C$ in $|L|$ is the Jacobian of line bundles degree $\< \frac{L}{2},c_1+K_X+L\>$ on $C$.
 In particular $c^*$ is independent of $c_2$.
 In case $c_1=0$ the fibre of  $\pi^{L,0}$ over the class of a nonsingular curve $C$ is the Jacobian $J_{g(C)-1}(C)$ of degree $g(C)-1=\frac{1}{2}\deg(K_C)$ line bundles on $C$.
In this case  we denote by $\Theta:=\lambda([\oo_X])\in \Pic(M^X_H(c^*)$. The divisor of its restriction to a fibre $J_{g(C)-1}(C)$ is the classical theta divisor of 
degree $g(C)-1$ divisors on $C$ with a section.

Let again $c_1$ be general and let $\oo_X(c_1)$ be the line bundle with first Chern class $c_1$; we denote $\Theta_{2,c_1}:=\lambda([\oo_X\oplus \oo_X(c_1)])\in\Pic(M^X_H(c^*))$.
We also denote $\eta:=\lambda(\oo_{x})\in \Pic(M^X_H(c^*))$, for $x$ a general point of $X$. It is standard that 
$\eta=\pi^*(\oo_{|L|}(1))$, with $\oo_{|L|}(1)$ the hyperplane bundle on $|L|$.
Thus  we see that $\D_{c^*,c}=\lambda(c)=\Theta_{2,c_1}\otimes \pi^*(\oo_{|L|}(c_2))$; in particular
in case $c_1=0$ we have $\D_{c^*,c}=\lambda(c)=\Theta^{\otimes 2}\otimes \pi^*(\oo_{|L|}(c_2))$.

 We use Le Potier's strange duality conjecture and the results and conjectures from the introduction to make conjectures about the pushforwards
 $\pi^{L,c_1}_*(\Theta_{2,c_1})$, $\pi^{L,c_1}_!(\Theta_{2,c_1})$.
For a Laurent polynomial $f(t):=\sum_{n} a_n t^n\in \Z[t^{-1},t]$ we put $f(\oo_{|L|}(-1)):=\bigoplus_n \oo_{|L|}(-n)^{\oplus a_n}.$

\begin{Conjecture}\label{splitconj} \begin{enumerate}
\item If $L$ is sufficiently ample on $X$, then, defining $p^{X,c_1}_L$ as in \remref{unique}, then
$\pi^{L,c_1}_{*}(\lambda(\Theta_{2,c_1}))\otimes \oo_{|L|}=p^{X,c_1}_L(\oo_{|L|}(-1))$ and $R^i\pi^{L,c_1}_*(\lambda(\Theta_{2,c_1}))=0$ for $i>0$.
In particular $\pi_{*}(\lambda_{c^*}(\Theta_{2,c_1}))$ splits as a direct sum of line bundles on 
$|L|$. (Note that his implies that $p^{X,c_1}_L$ is a polynomial with nonnegative coefficients, as conjectured in  \conref{ratconj}(1)).
\item In particular in the case $X=\P^2$, and $d>0$, we get, with the polynomials $p_d(t)$ from \conref{p2con}, that
$$\pi^{dH,0}_{*}(\lambda(\Theta^2))=p_d(\oo_{|dH|}(-1)), \ \pi^{2dH,H}_{*}(\lambda(\Theta_{2,H}))=p_{2d}(\oo_{|2dH|}(1))\otimes\oo_{|2dH|}(-d^2).$$
\item Under more general assumptions on  $L$ on $X$, we expect that there is a choice  of $P^{X,c_1}_L(\Lambda)=\Lambda^{-c_1^2}p^{X,c_1}_L(\Lambda^4)$,
such that
$\pi^{L,c_1}_{!}(\lambda(\Theta_{2,c_1}))=p^{X,c_1}_L(\oo_{|L|}(-1))$
\end{enumerate}
\end{Conjecture}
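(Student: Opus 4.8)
The plan is to obtain Conjecture \ref{splitconj} as the sheaf-theoretic refinement, along the fibration $\pi=\pi^{L,c_1}\colon M^X_H(c^*)\to |L|$, of the numerical identity furnished by strange duality. Write $N:=\dim|L|$; since $L$ is sufficiently ample we have $H^i(X,L)=0$ for $i>0$, so $N=\chi(L)-1$ and $|L|=\P^N$. With $c=(2,c_1,c_2)$ and $c^*=(0,L,\langle L/2,c_1\rangle)$ one has $\mathcal D_{c,c^*}=\mu(L)$ and $\mathcal D_{c^*,c}=\Theta_{2,c_1}\otimes\pi^*\mathcal O_{|L|}(c_2)$, with $\Theta_{2,c_1}$ independent of $c_2$. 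Assuming the numerical strange duality $\chi(M^X_\omega(c_1,d),\mu(L))=\chi(M^X_H(c^*),\mathcal D_{c^*,c})$ for $d=4c_2-c_1^2$, and applying the Leray spectral sequence (which is exact on Euler characteristics, so needs no vanishing), I get
\[
\chi(M^X_\omega(c_1,d),\mu(L))=\chi\big(\P^N,(\pi_!\Theta_{2,c_1})\otimes\mathcal O(c_2)\big).
\]
The generating series of the right-hand side over all $c_2$ is the Hilbert series of the $K$-theory class $\pi_!\Theta_{2,c_1}\in K^0(\P^N)$, while by \conref{ratconj}(1) (via \thmref{rationalal}) the left-hand series equals $P^X_{c_1,L}(\Lambda)/(1-\Lambda^4)^{\chi(L)}$, which is exactly the Hilbert series of $p^X_{c_1,L}(\mathcal O_{|L|}(-1))=\bigoplus_n\mathcal O(-n)^{\oplus a_n}$ on $\P^N=\P^{\chi(L)-1}$, where $p^X_{c_1,L}(t)=\sum_n a_nt^n$. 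Since a class in $K^0(\P^N)$ is determined by its Hilbert polynomial, and the ambiguity of $p^X_{c_1,L}$ from \remref{unique} maps to $(1-[\mathcal O(-1)])^{N+1}=0$ in $K^0(\P^N)$, this already yields part (3): $\pi_!\Theta_{2,c_1}=p^X_{c_1,L}(\mathcal O_{|L|}(-1))$ in $K^0(|L|)$.

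To upgrade this $K$-theory identity to the sheaf-level statement of part (1), I would proceed in two steps. First, I would show $R^i\pi_*\Theta_{2,c_1}=0$ for $i>0$ and $V:=\pi_*\Theta_{2,c_1}$ locally free. The line bundle $\Theta_{2,c_1}$ is relatively ample, restricting on each fibre to a polarization of the (compactified) Jacobian; over smooth $C\in|L|$ the fibre is the abelian variety $\mathrm{Jac}(C)$, where higher cohomology of an ample line bundle vanishes by Mumford's index theorem, and over singular members the fibre is a compactified Jacobian. For $L$ sufficiently ample one arranges (Bertini, together with a bound on the discriminant locus) that all members of $|L|$ are integral with sufficiently mild singularities, so that the compactified Jacobians are integral and the fibrewise vanishing persists; cohomology and base change then give $R^i\pi_*\Theta_{2,c_1}=0$ for $i>0$ and $V$ locally free, whence $\pi_!\Theta_{2,c_1}=[V]$ and $[V]=[p^X_{c_1,L}(\mathcal O(-1))]$ in $K^0(\P^N)$.

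Second, to see that $V$ actually splits as this direct sum of line bundles I would invoke Horrocks' criterion: a vector bundle on $\P^N$ ($N\ge 2$) is a sum of line bundles if and only if $H^i(\P^N,V(m))=0$ for all $0<i<N$ and all $m\in\Z$. By the first step and Leray, $H^i(\P^N,V(m))=H^i\big(M^X_H(c^*),\Theta_{2,c_1}\otimes\pi^*\mathcal O(m)\big)$, so it suffices to prove these intermediate cohomology groups vanish. Here the full strange duality of \conref{sdcon}, rather than only its numerical shadow, must be brought in: on the Donaldson side \propref{highvan} gives $H^i(M^X_\omega(c_1,d),\mu(L))=0$ for $i>0$, and a cohomological enhancement of the isomorphism $SD_{c,c^*}$ would transport this vanishing to the $c^*$ side. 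Granting this, Horrocks applies and $V$ splits; the splitting type is forced by the Euler characteristics already computed, giving $V=p^X_{c_1,L}(\mathcal O_{|L|}(-1))$, which is part (1). Part (2) is then the specialization $X=\P^2$, $L=dH$, $c_1\in\{0,H\}$, reading off $p^X_{c_1,L}$ from the explicit $p_d$ of \thmref{mainp2}; the twist by $\mathcal O(-d^2)$ and the substitution $\mathcal O(1)\leftrightarrow\mathcal O(-1)$ for $c_1=H$ are exactly the duality $Q_n(\Lambda)=\Lambda^{n^2-1}P_n(1/\Lambda)$.

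The main obstacle is the second step. Strange duality as formulated in \conref{sdcon} only asserts that the pairing $SD_{c,c^*}$ on spaces of global sections is perfect; it says nothing a priori about the intermediate cohomology $H^i(\P^N,V(m))$ for $0<i<N$, which is precisely what Horrocks requires for every twist $m$. Producing this vanishing seems to demand either a genuinely derived, Fourier--Mukai-type refinement of strange duality relating the two moduli problems at the level of derived categories, or a direct geometric analysis of the theta fibration $\pi$. A secondary difficulty is the behaviour of $\pi$ over the discriminant: since the fibres over singular curves are compactified Jacobians rather than abelian varieties, both the base-change argument of the first step and the identification of fibre cohomology rely essentially on the sufficient-ampleness hypothesis to bound the singularities of members of $|L|$, and on a vanishing theorem for ample line bundles on integral compactified Jacobians.
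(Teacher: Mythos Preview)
This statement is a \emph{Conjecture} in the paper, not a theorem; the paper offers no proof. What the paper does provide is a motivating Remark immediately following the conjecture, and that Remark is essentially your first paragraph: assuming (numerical) strange duality and the projection formula, one matches the Hilbert series of $\pi_!\Theta_{2,c_1}$ on $|L|=\P^{\chi(L)-1}$ with $P^X_{c_1,L}(\Lambda)/(1-\Lambda^4)^{\chi(L)}$, which is the Hilbert series of $p^X_{c_1,L}(\mathcal O_{|L|}(-1))$. That is precisely the heuristic the paper records for part~(3), and it stops there.

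Your attempt to go further and actually prove parts~(1) and~(2) via Horrocks' criterion is a sensible strategy, but the gaps you flag are real and not filled anywhere in the paper. Strange duality as stated in \conref{sdcon} is only an isomorphism of $H^0$'s; it gives no control over $H^i(\P^N,V(m))$ for $0<i<N$, which is exactly what Horrocks demands for every twist. The ``cohomological enhancement of $SD_{c,c^*}$'' you invoke is not available, and producing it would amount to a derived-category strengthening of strange duality that is well beyond what is assumed. Likewise, the claim that for $L$ sufficiently ample all members of $|L|$ are integral with mild enough singularities for fibrewise vanishing on compactified Jacobians is not something one can arrange in general (reducible and non-reduced curves will appear in $|L|$ on a surface), so your first step is also incomplete. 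In short: your $K$-theoretic identity for $\pi_!$ matches the paper's motivation, but the upgrade to an honest splitting of $\pi_*$ remains open, which is why the statement is labelled a conjecture.
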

\begin{Remark}
\begin{enumerate}
\item Assuming part (2) of \conref{splitconj}, \thmref{mainp2} determines $\pi^{dH,0}_{*}(\lambda(\Theta^2))$, $\pi^{dH,H}_{*}(\lambda(\Theta_{2,H}))$ as direct sum of line bundles for $d\le 11$.
\item For $X=\P^1\times\P^1$, assuming part (1) of \conref{splitconj},\thmref{P11gen} gives, with the notation from there, for $d\le 7$ that
\begin{align*}
\pi^{d(F+G),0}_{*}(\lambda(\Theta^2))&=q^0_d(\oo_{|d(F+G)|}(-1)),\\ 
\pi^{d(F+G),F}_{*}(\lambda(\Theta_{2,F}))&=q^F_d(\oo_{|d(F+G)|}(-1)),\\
\pi^{d(F+G),F+G}_{*}(\lambda(\Theta_{2,F+G}))&=(t^{1/2}q^{F+G}_d(t))|_{t=(\oo_{|d(F+G)|}(-1))}.
\end{align*}
\item
In \cite{GY} some further generating functions for the $K$-theoretic Donaldson invariants of $X=\P^1\times\P^1$ or $X=\widetilde \P^2$ are computed. From 
the results there we expect
\begin{align*}
\pi^{nF+2G,0}_*(\Theta^{2})&=(\oo_{|nF+2G|}\oplus \oo_{|nF+2G|}(-1))^{\otimes n}_{ev},\\
\pi^{nF+2G,F}_*(\Theta_{2,F})&=(\oo_{|nF+2G|}\oplus \oo_{|nF+2G|}(-1))^{\otimes n}_{odd},
\end{align*}
where $(\bullet)_{ev}$ and $(\bullet)_{odd}$ denotes respectively the part consisting only of even powers $\oo(-2d)$ or odd powers $\oo(-2d-1)$.
In particular this would give
$$\pi^{nF+2G,0}_*(\Theta^{2})\oplus
\pi^{nF+2G,F}_*(\Theta_{2,F})=(\oo_{|nF+2G|}\oplus \oo_{|nF+2G|}(-1))^{\otimes n}.$$
\end{enumerate}
\end{Remark}

\begin{Remark} We briefly motivate the above conjectures.
Assuming strange duality \conref{sdcon}, we have, using also the projection formula,
\begin{align*}
H^0(M_\omega^X(c_1,d),\chi(L))^\vee&=H^0(M^X_\omega(c^*), \lambda(c))=H^0(|L|,  \pi^{L,c_1}_*(\lambda(c))\\
&=H^0(|L|,  \pi^{L,c_1}_*(\lambda(\Theta_{2,c_1}))\otimes \oo_{|L|}(c_2)),
\end{align*}
and similarly, assuming the numerical version of strange duality above,
$$\chi(M_\omega^X(c_1,d),\chi(L))=\chi( \lambda(c))=\chi(M^X_\omega(c^*),\pi_!(\lambda(c))=\chi(\pi^{L,c_1}_!(\lambda(\Theta_{2,c_1}))\otimes \oo_{|L|}(c_2)).$$
We assume $H^i(X,L)=0$ for $i>0$, thus $\dim(|L|)=\chi(L)-1$,
 then for $0\le l\le \dim(|L|)$, and $n\ge 0$, we have 
 $$\sum_{n\ge 0} \chi(|L|,\oo_{|L|}(-l+n))t^n=\frac{t^l}{(1-t)^{\chi(L)}}.$$
Thus, assuming the numerical part of the strange duality conjecture  and part (3) of \conref{splitconj}, we would get
\begin{align*}
\chi^{X,\omega}_{c_1}(L)&\equiv\Lambda^{-c_1^2} \sum_{n\ge 0}\chi\big(|L|,\pi^{L,c_1}_!(\Theta_{2,c_1})\otimes \oo_{|L|}(n)\big)\Lambda^{4n}\\
&\equiv\Lambda^{-c_1^2} \sum_{n\ge 0}\chi\big(|L|, p^{X,c_1}_L(\oo_{|L|}(-1))\otimes \oo_{|L|}(n)\big)\Lambda^{4n}\\
&=\Lambda^{-c_1^2}\frac{p^{X,c_1}_L(\Lambda^4)}{(1-\Lambda^4)^{\chi(L)}}=\frac{P^{X,c_1}_L(\Lambda)}{(1-\Lambda^4)^{\chi(L)}}
\end{align*}
Assuming the strange duality conjecture and part (1) of \conref{splitconj}, we would get the same statement with the left hand side replaced by 
$\sum_{n\ge 0} H^0(M^X_H(2,c_1,n), \mu(L)) t^{n}$. In other words \conref{splitconj} explains the generating functions of \thmref{mainp2}, \thmref{P11gen} and \conref{ratconj}(1).
\end{Remark}

\begin{Remark}
Assuming \conref{splitconj} and the strange duality conjecture, we see that $\rk(\pi_!(\Theta_{2,c_1}))=p^{X,c_1}_L(1)$.
As mentioned above, the fibre over $\pi^{L,c_1}:M^X_H(c^*)\to |L|$ over the point corresponding to  a smooth curve $C$ in $|L|$ is the Jacobian $J_d(C)$ of line bundles degree $d=\< \frac{L}{2},c_1+K_X+L\>$ on $C$, and
we see that $\Theta_{2,c_1}$ is a polarisation of type $(2,\ldots,2)$. Thus by the Riemann-Roch theorem we have $\chi(J_d(C),\Theta_{2,c_1}|_{J_d(C)})=2^{g(C)}$.
Thus \conref{splitconj} implies that $\pi_!(\Theta_{2,c_1})$ has rank $2^{g(C)}$, therefore, assuming the strange duality conjecture, it implies $p^{X,c_1}_L(1)=2^{g(C)}$, as predicted in 
\conref{ratconj} and seen e.g. in \thmref{mainp2} and \thmref{P11gen} for many $L$ in the case of $\P^2$ and $\P^1\times\P^1$.
\end{Remark}

\begin{Remark} Let $L$ again be sufficiently ample on $X$. 
Assuming the strange duality conjecture \conref{sdcon} and part (1) of \conref{splitconj} we get that part (3) of \conref{ratconj} 
gives the conjectural duality 
$$\pi^{L,c_1}_*(\Theta_{2,c_1})=(\pi^{L,L+K_X-c_1}_*(\Theta_{2,L+K_X-c_1}))^\vee\otimes \oo_{|L|}(-\<L,L+K_X)\>/2-\<c_1,c_1-K_X)\>/2+\<L,c_1\>/2-2)).$$
In particular in case $c_1=0$, 
$$\pi^{L,0}_*(\Theta^{\otimes 2})=(\pi^{L,L+K_X}_*(\Theta_{2,L+K_X}))^\vee\otimes \oo_{|L|}(-\<L,L+K_X)\>/2-2).$$
In the case of $X=\P^2$ we should have for $d>0$ that
\begin{align*}
\pi^{2dH,0}_*(\Theta^{\otimes 2})&=(\pi^{2dH,H}_*(\Theta_{2,H}))^\vee\otimes \oo_{|2dH|}(-d^2),\\
\pi^{(2d+1)H,0}_*(\Theta^{\otimes 2})&=(\pi^{(2d+1)H,0}_*(\Theta^{\otimes 2}))^\vee\otimes \oo_{|(2d+1)H|}(-d(d+1)).
\end{align*}
Similarly we conjecture for $X=\P^1\times\P^1$ e.g. that for $d>0$
$$\pi^{2d(F+G),0}_*(\Theta^{\otimes 2})=(\pi^{2d(F+G),0}_*(\Theta^{\otimes 2}))^\vee\otimes \oo_{|2d(F+G)|}(-2d^2).$$
\end{Remark}

\section{Wallcrossing formula}
\subsection{Theta functions and modular forms}\label{thetamod}
We start by reviewing results and notations from \cite{GNY}, \cite[Sec.~3.1]{GY}.
 For $\tau\in \H=\big\{\tau\in \C\bigm| \Im(\tau)>0\big\}$ put $q=e^{\pi i\tau/4}$ and for  $h\in \C$ put $y=e^{h/2}$. Note that the notation is not standard.
Recall the $4$ Jacobi theta functions:
\begin{equation}
\begin{split}\label{theta}
\theta_1(h)&:=\sum_{n\in \Z} i^{2n-1} q^{(2n+1)^2} y^{2n+1}=-iq(y-y^{-1})\prod_{n>0}(1-q^{8n})(1-q^{8n}y^2)(1-q^{8n}y^{-2}),\\
\theta_2(h):&=\sum_{n\in \Z} q^{(2n+1)^2} y^{2n+1}=-q(y+y^{-1})\prod_{n>0}(1-q^{8n})(1+q^{8n}y^2)(1+q^{8n}y^{-2}),\\
\theta_3(h)&:=\sum_{n\in \Z} q^{(2n)^2} y^{2n},\qquad
\theta_4(h):=\sum_{n\in \Z} i^{2n}q^{(2n)^2} y^{2n}.
\end{split}
\end{equation}
We usually do not write the argument $\tau$. The conventions are essentially the same as in 
\cite{WW} and in \cite{Ak}, where the $\theta_i$ for $i\le 3$ are denoted $\vartheta_i$ and $\theta_4$ is  denoted $\vartheta_0$.
Denote 
\begin{equation}\label{thetatilde}
\begin{split}\theta_i&:=\theta_i(0), \quad 
\widetilde\theta_i(h):=\frac{\theta_i(h)}{\theta_i}, \quad i=2,3,4;\qquad \widetilde\theta_1(h):=\frac{\theta_1(h)}{\theta_4},\\
u&:=-\frac{\th_2^2}{\th_3^2}-\frac{\th_3^2}{\th_2^2}=-\frac{1}{4}q^{-2} - 5q^2 + \frac{31}{2}q^6 - 54q^{10}+O(q^{14}),
\end{split}
\end{equation}
  and two Jacobi functions, i.e. Jacobi forms of weight and index $0$,
  $\Lambda:=\frac{\theta_1(h)}{\theta_4(h)}$, $M:=2\frac{\widetilde \theta_2(h)\widetilde \theta_3(h)}{\widetilde \theta_4(h)^2}$, which satisfy the relation 
  \begin{equation}\label{MuL}
  M=2\sqrt{1+u\Lambda^2+\Lambda^4},\end{equation}
   and the formulas
 \begin{equation}\label{dLdh}\frac{\partial\Lambda}{\partial h}=\frac{\theta_2\theta_3}{4i}M,
\quad 
 h=\frac{2i}{\theta_2\theta_3}\int_{0}^\Lambda\frac{dx}{\sqrt{1+ux^2+x^4}}.
  \end{equation}

 In \cite[Sec.~3.1]{GY} it is shown that  $h\in iq^{-1}\Lambda\RR.$
    A function $F(\tau,h)$ can via formula \eqref{dLdh} also be viewed as a function 
 of $\tau$ and $\Lambda$.
  In this case, viewing $\tau$ and $\Lambda$ as the independent variables we define 
 $F' :=\frac{4}{\pi i} \frac{\partial F}{\partial \tau}=q\frac{\partial F}{\partial q},
 \quad F^*:=\Lambda\frac{\partial F}{\partial \Lambda},$ and get 
\begin{equation}
\label{hstar}
h^*=\frac{4i\Lambda}{\theta_2\theta_3 M}, \quad
u'=\frac{2\theta_4^8}{\theta_2^2\theta_3^2}.
\end{equation}

For future use we record the following standard formulas for the behaviour of the theta functions under translation.
\begin{align}\label{T4trans}
\theta_4(h+2\pi i)&=\theta_4(h),\quad \theta_4(h+ 2\pi i \tau)=-q^{-4}y^{-2}\theta_4(h),\quad\theta_4(h+ \pi i \tau)= i q^{-1}y^{-1}\theta_1(h)\\
\label{T1trans}
\theta_1(h+2\pi i)&=-\theta_1(h),\quad \theta_1(h+ 2\pi i\tau)=-q^{-4}y^{-2}\theta_1(h),\quad \theta_1(h+ \pi i\tau)= i q^{-1}y^{-1}\theta_4(h),\\
\label{T2trans}\theta_2(h+\pi i \tau)&=q^{-1}y^{-1}\theta_3(h),\quad
\theta_3(h+\pi i \tau)=q^{-1}y^{-1}\theta_2(h).\end{align}
(see e.g. \cite[Table VIII, p.~202]{Ak}).

\begin{Lemma}\label{thetaadd} Let $a$, $b\in \Z$. Then
\begin{enumerate}
\item $\theta_4(h)=(-1)^bq^{4b^2}y^{2b}\theta_4(h+2\pi i b\tau)$, $\quad \theta_4(h+2\pi i a)=\theta_4(z)$,
\item $\theta_1(h)=(-1)^bq^{4b^2}y^{2b}\theta_1(h+2\pi i b\tau)$,$\quad \theta_1(h+2\pi i a)=(-1)^a\theta_1(z)$,
\item $\theta_4(h)=e^{\pi i (b-\frac{1}{2})}q^{(2b+1)^2}y^{2b+1}\theta_1(h+2\pi i (b+\frac{1}{2})\tau)$,
\item $\theta_1(h)=e^{\pi i (b-\frac{1}{2})}q^{(2b+1)^2}y^{2b+1}\theta_4(h+2\pi i (b+\frac{1}{2})\tau)$.
\end{enumerate}
\end{Lemma}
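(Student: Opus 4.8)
The plan is to prove all four identities directly from the Fourier series definitions in \eqref{theta}, reducing each to a completion of the square in the exponent of $q$ followed by a reindexing of the summation. Alternatively, each follows by induction on $b$ from the basic quasi-periodicity relations \eqref{T4trans}, \eqref{T1trans}, \eqref{T2trans}; but the series computation is more transparent and disposes of arbitrary $b$ in one stroke, so I would present that.

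First I would record the effect of the two elementary shifts on $y=e^{h/2}$. Since $q=e^{\pi i\tau/4}$, the shift $h\mapsto h+2\pi i c\tau$ sends $y\mapsto y\,q^{4c}$, while $h\mapsto h+2\pi i a$ sends $y\mapsto(-1)^a y$. For the integer-shift identities (1) and (2), I substitute $y\mapsto y\,q^{4b}$ into the series for $\theta_4$ (resp.\ $\theta_1$) and multiply by the claimed prefactor; the key point is that the total $q$-exponent completes a square, namely $4n^2+8bn+4b^2=4(n+b)^2$ for $\theta_4$ and $(2n+1)^2+4b(2n+1)+4b^2=(2(n+b)+1)^2$ for $\theta_1$, after which the substitution $m=n+b$ recovers the original series term by term. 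The sign $(-1)^b$ and the factor $y^{2b}$ are exactly what is needed to absorb the reindexing. The second formula in each of (1) and (2) is immediate from $y\mapsto(-1)^a y$ together with the parity of the exponent of $y$ (even for $\theta_4$, odd for $\theta_1$), giving the trivial multiplier for $\theta_4$ and $(-1)^a$ for $\theta_1$.

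For the half-period identities (3) and (4), I would use the shift $h\mapsto h+2\pi i(b+\frac{1}{2})\tau$, i.e.\ $y\mapsto y\,q^{4b+2}$. The crucial phenomenon is that this half-integer shift interchanges the even-index series of $\theta_4$ with the odd-index series of $\theta_1$, matching the swap $\theta_1\leftrightarrow\theta_4$ already visible in the third relations of \eqref{T4trans} and \eqref{T1trans}. Completing the square now reads $(2b+1)^2+2(2b+1)(2n+1)+(2n+1)^2=(2(n+b+1))^2$, and after the reindexing $m=n+b+1$ the odd series of $\theta_1$ turns into the even series of $\theta_4$ (and conversely for (4)); the prefactor $e^{\pi i(b-\frac{1}{2})}=-i(-1)^b$ combines with the $i$-factor $i^{2n-1}=-i(-1)^n$ in the definition of $\theta_1$ to produce exactly the trivial overall constant. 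Part (4) is then the same computation with the roles of $\theta_1$ and $\theta_4$ exchanged.

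The main obstacle is purely bookkeeping: keeping the phase factors consistent, especially the $i^{2n-1}$ in $\theta_1$ and the half-integer phase $e^{\pi i(b-\frac{1}{2})}$, and checking that the powers of $q$ and $y$ in the prefactors are precisely those produced by completing the square and reindexing. There is no conceptual difficulty; the only care needed is to verify that the $b=1$ (resp.\ $b=0$) specializations reproduce the stated elementary relations \eqref{T4trans}, \eqref{T1trans}, \eqref{T2trans}, which serves as a useful consistency check on the signs and the factors of $i$.
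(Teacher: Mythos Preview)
Your argument is correct. You establish all four identities directly from the Fourier expansions in \eqref{theta} by completing the square in the $q$-exponent and reindexing, and your bookkeeping of the phases is accurate (in particular $e^{\pi i(b-\frac{1}{2})}\cdot i^{2n-1}=(-1)^{n+b+1}$ is exactly what turns the odd series of $\theta_1$ into the even series of $\theta_4$ after $m=n+b+1$).

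The paper, however, does not argue this way: it simply performs induction on $b$ using the one-step relations \eqref{T4trans} and \eqref{T1trans}, illustrating with (1) and (3). Your series proof is a genuinely different route. It has the advantage of settling the identity for all $b\in\Z$ in a single computation and of making the square-completion structure explicit; the paper's inductive proof has the virtue of deriving everything from the two basic quasi-periodicity facts already recorded, so nothing about the series needs to be revisited. Both are standard and equally valid here; you might mention explicitly that the $b=0,1$ cases of your formulas specialize to \eqref{T4trans}, \eqref{T1trans} as a sanity check, since you allude to this only in passing.
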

\begin{proof}
All these formulas follow by straightforward induction from \eqref{T4trans} and \eqref{T1trans}.
As an illustration we check   (1) and (3). The formula $\theta_4(h+ 2\pi i \tau)=-q^{-4}y^{-2}\theta_4(h)$ gives by induction
\begin{align*}\theta_4(h+2\pi i b\tau)&=-q^{-4}e^{-(h+2\pi i(b-1)\tau)}\theta_4(h+2\pi i (b-1)\tau)\\
&=
-q^{-8b+4}y^{-2}(-1)^{-(b-1)}q^{-4(b-1)^2}y^{-(2b-2)}\theta_4(h)=(-1)^{-b}q^{-(2b)^2}y^{-2b}\theta_4(h),\end{align*}
and (1) follows.
Similarly
\begin{align*}\theta_4(h+2\pi i (b+1/2)\tau)&=iq^{-1}e^{-h/2-\pi i b\tau}\theta_1(h+2\pi ib\tau)=iq^{-4b-1}y^{-1}(-1)^{-b}q^{-(2b)^2}y^{-2b}\theta_1(h)\\
&=e^{-\pi i (b-\frac{1}{2})}q^{-(2b+1)^2}y^{-(2b+1)}\theta_1(h),\end{align*} and (3) follows.
\end{proof}

\subsection{Wallcrossing formula}\label{wallcro}
Now we review the wallcrossing formula from \cite{GNY}, \cite{GY}, and generalize it slightly.
Let $\sigma(X)$ be the signature of $X$.

\begin{Definition}\label{wallcrossterm}
Let $r\ge 0$, let $\xi\in H^2(X,\Z)$ with $\xi^2< 0$. Let $L$ be a line bundle on $X$. We put
$$\Delta_\xi^X(L,P^r):=2 i^{\<\xi, K_X\>} \Lambda^2 q^{-\xi^2}
y^{\<\xi,(L-K_X)\>}\widetilde\theta_4(h)^{(L-K_X)^2}\theta_4^{\sigma(X)}u'h^*M^r,$$
and put $\Delta_\xi^X(L):=\Delta_\xi^X(L,P^0)$.
 By the results of the previous section it can be developed as a power series  
$$\Delta_\xi^X(L,P^r)=\sum_{d\ge 0} f_d(\tau)\Lambda^d\in \C((q))[[\Lambda]],$$
whose coefficients $f_d(\tau)$ are Laurent series in $q$. 
If $\<\xi,L\>\equiv r \mod 2$,
the {\it  wallcrossing term} is defined as
$$\delta_{\xi}^X(L,P^r):=\sum_{d\ge 0} \delta_{\xi,d}^X(L,P^r)\Lambda^d\in \Q[[\Lambda]], $$
with 
$$\delta_{\xi,d}^X(L,P^r)=\Coeff_{q^0}[f_d(\tau)].$$
Again we write $\delta_{\xi,d}^X(L):=\delta_{\xi,d}^X(L,P^0)$ and $\delta_{\xi}^X(L):=\delta_{\xi}^X(L,P^0)$.

The wallcrossing terms $\delta_{\xi}^X(L):=\delta_{\xi,d}^X(L,P^0)$  were already introduced in \cite{GNY} and used in
\cite{GY}. As we will recall in a moment, they compute the change of the K-theoretic Donaldson invariants $\chi^{X,\omega}_{c_1}(L)$, when $\omega$ crosses a wall. 
Later we will introduce  K-theoretic Donaldson invariants with point class $\chi^{X,\omega}_{c_1}(L,P^r)$, whose  wallcrossing is computed by 
$\delta_{\xi}^X(L,P^r)$. Intuitively we want to  think of $r$ as the power of a $K$-theoretic point class $\cal P$.
\end{Definition}

\begin{Remark}\label{delb}
\begin{enumerate}
\item $\delta_{\xi,d}^X(L,P^r)=0$ unless $d\equiv -\xi^2\mod 4$.
\item 
In the definition of $\delta_\xi^X(L,P^r)$ we can replace
$\Delta_{\xi}^X(L,P^r)$ by
\begin{equation}
\begin{split}\label{Delbar}
&\overline \Delta_{\xi}^X(L,P^r):= \frac{1}{2}(\Delta_{\xi}^X(L,P^r)-\Delta_{-\xi}^X(L,P^r))\\
&\ =M^{r} i^{\<\xi, K_X\>} \Lambda^2 q^{-\xi^2}
\big(y^{\<\xi(L-K_X)\>}-(-1)^{\xi^2}y^{-\<\xi,(L-K_X)\>}\big)\widetilde\theta_4(h)^{(L-K_X)^2}\theta_4^{\sigma(X)}u'h^*.
\end{split}
\end{equation}
\end{enumerate}
\end{Remark}
\begin{proof}
(1) As $h\in \C[[q^{-1}\Lambda,q^4]]$, we also have $h^*, y,\widetilde \theta_4(h),M\in \C[[q^{-1}\Lambda,q^4]]$. Finally $u,u'\in q^{-2}\Q[[q^4]]$.
It follows that $\Delta_{\xi}^X(L,P^r)\in q^{-\xi^2}\C[[q^{-1}\Lambda,q^4]]
$. Writing $\Delta_{\xi}^X(L,P^r)=\sum_{d} f_{d,r}(\tau)\Lambda^d$, we see that $\Coeff_{q^0}[f_{d,r}(\tau)]=0$ unless $d\equiv -\xi^2\mod 4$.
(2) Note that $\widetilde \theta_4(h)$ is even in $\Lambda$ and $h^*$ is odd in 
$\Lambda$, thus $\overline \Delta_{\xi}^X(L,P^r)= \sum_{d\equiv -\xi^2 (2) } f_{d,r}(\tau)\Lambda^d$, and the claim follows by (1). 
\end{proof}

The main result of \cite{GNY} is the following (see also \cite{GY}).

\begin{Theorem}\label{wallcr}
Let $H_1$, $H_2$ be ample divisors on $X$, assume that $\<H_1,K_X\><0$, $\<H_2,K_X\><0$, and that $H_1$, $H_2$ do not lie on a wall 
of type $(c_1,d)$. Then 
\begin{align*}
\chi(M^X_{H_1}(c_1,d),\mu(L))-\chi(M^X_{H_2}(c_1,d),\mu(L))&=\sum_{\xi}\delta^X_{\xi,d}(L),
\end{align*}
where $\xi$ runs through all classes of type $(c_1,d)$
with $\<\xi, H_1\>>0 >\<\xi, H_2\>$. 
\item
\end{Theorem}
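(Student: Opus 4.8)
The plan is to reduce the statement to a single wallcrossing and then to invoke the main computation of \cite{GNY}. First I would join $H_1$ and $H_2$ by a path $\gamma$ inside the ample cone $\cc$. Every class $\xi$ of type $(c_1,d)$ satisfies $\xi^2\ge -d$ together with $\xi^2<0$ (the latter since $W^\xi\ne\emptyset$ forces $\xi$ to pair to zero with an ample class, and ample classes have positive square by the Hodge index theorem); hence the walls $W^\xi$ of type $(c_1,d)$ form a locally finite family in $\cc$. After a small generic perturbation of $\gamma$ with fixed endpoints, the path meets only finitely many walls, crosses each transversally, and avoids all points lying on two distinct walls. Since $M^X_\omega(c_1,d)$ depends only on the chamber of $\omega$ (see \secref{walls}), the difference $\chi(M^X_{H_1}(c_1,d),\mu(L))-\chi(M^X_{H_2}(c_1,d),\mu(L))$ telescopes into a sum of jumps, one per wall crossed. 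It therefore suffices to compute the jump across a single wall $W^\xi$ and to record the correct sign.

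Next I would analyze the single-wall jump geometrically. As $\omega$ crosses $W^\xi$, the sheaves whose semistability changes are precisely the rank $2$ torsion free $E$ fitting into a nonsplit extension $0\to I_{Z_1}(F_1)\to E\to I_{Z_2}(F_2)\to 0$ with $2F_1=c_1+\xi$ and $Z_1,Z_2$ zero-dimensional; these are stable on one side of the wall and are replaced by the opposite extensions on the other side, with the lengths of $Z_1,Z_2$ constrained by $d$ and $\xi^2$. The locus of such $E$ fibers over a product of Hilbert schemes $X^{[n_1]}\times X^{[n_2]}$, with extension classes parametrized by projective bundles. I would then express $\chi(M^X_{H_1})-\chi(M^X_{H_2})$ as the Riemann--Roch contribution of $\mu(L)$ restricted to these flip loci, reducing the problem to a generating function over the Hilbert schemes in the variable $\Lambda$ recording $d$.

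The heart of the argument, and the main obstacle, is to evaluate this generating function in closed form and to identify it with $\Coeff_{q^0}$ of the Jacobi form $\Delta_\xi^X(L)$ of \defref{wallcrossterm}. This is exactly where the solution of the $K$-theoretic Nekrasov conjecture (\cite{Nek}, \cite{NO}, \cite{NY1}, \cite{NY2}, \cite{NY3}) enters: the equivariant $K$-theoretic partition function of the relevant moduli problem, together with its modular behaviour, packages the Hilbert-scheme contributions into the product $y^{\<\xi,L-K_X\>}\widetilde\theta_4(h)^{(L-K_X)^2}\theta_4^{\sigma(X)}u'h^*$, while taking the $q^0$-coefficient effects the passage from the equivariant partition function back to the ordinary holomorphic Euler characteristic on $X$. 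I expect the modularity input and the $q^0$-extraction to be the technical crux; the flip analysis of the previous paragraph is comparatively standard and parallels the wallcrossing computations for the ordinary Donaldson invariants.

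It remains to fix signs. By \remref{delb}(2) one may compute $\delta_\xi^X(L)$ from $\overline\Delta_\xi^X(L)=\frac{1}{2}(\Delta_\xi^X(L)-\Delta_{-\xi}^X(L))$, which is odd under $\xi\mapsto-\xi$, so $\delta_{-\xi,d}^X(L)=-\delta_{\xi,d}^X(L)$. Orienting each separating wall by the representative $\xi$ with $\<\xi,H_1\>>0>\<\xi,H_2\>$, crossing it from the $H_2$-side to the $H_1$-side contributes exactly $+\delta_{\xi,d}^X(L)$; summing over all such $\xi$ gives the asserted formula. Finally, the slight generalization over \cite{GNY} is accounted for by the hypotheses $\<H_i,K_X\><0$, which ensure the smoothness of the ambient quot-scheme (as in the hypothesis on $Q$ in \lemref{blowsimple}) needed for the Riemann--Roch step, so that the cited computation applies verbatim to the line bundles $L$ considered here.
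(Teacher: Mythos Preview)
Your sketch is a reasonable outline of the strategy behind the result, but note that the paper itself does not prove \thmref{wallcr} at all: it is stated as ``the main result of \cite{GNY}'' and simply cited. The only argument the paper supplies is the one-line remark following the statement, namely that the hypotheses $\<H_1,K_X\><0$, $\<H_2,K_X\><0$ guarantee that every class $\xi$ of type $(c_1,d)$ with $\<\xi,H_1\>>0>\<\xi,H_2\>$ is \emph{good} in the sense of \cite{GNY}, so that the wallcrossing formula proved there applies directly. Your path-and-telescope reduction to a single wall, the flip description via nonsplit extensions over products of Hilbert schemes, and the identification of the resulting generating series with $\Coeff_{q^0}[\Delta_\xi^X(L)]$ through the $K$-theoretic Nekrasov partition function are indeed the skeleton of the \cite{GNY} argument, so in that sense you have correctly reconstructed the shape of the proof rather than diverged from it.

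One small point of alignment: in your last paragraph you attribute the role of $\<H_i,K_X\><0$ to smoothness of the ambient Quot scheme as in \lemref{blowsimple}. The paper's justification is phrased differently: the condition is used to ensure the relevant wall classes are ``good'' in the technical sense of \cite{GNY} (which is what makes their wallcrossing computation valid for those $\xi$). These are related---goodness in \cite{GNY} does involve control of the deformation theory along the wall---but if you want to match the paper you should phrase it as invoking the ``good class'' hypothesis of \cite{GNY} rather than the Quot-scheme smoothness used elsewhere.
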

Note that the condition  $\<H_1,K_X\><0$, $\<H_2,K_X\><0$ implies that all the classes of type  $(c_1,d)$
with $\<\xi, H_1\>>0 >\<\xi ,H_2\>$ are good in the sense of \cite{GNY}, so the wallcrossing formula there applies.
Let $c_1\in H^2(X,\Z)$. Let $H_1,H_2$ be ample on $X$, assume they do not lie on a wall of type $(c_1)$. Then it follows that
$$\chi^{X,H_1}_{c_1}(L)-\chi^{X,H_2}_{c_1}(L)=\sum_{\xi}\delta^X_\xi(L),$$
where $\xi$ runs through all classes in $c_1+2H^2(X,\Z)$ with $\<\xi ,H_1\> >0>\<\xi, H_2\>$.

\subsection{Polynomiality and vanishing of the wallcrossing}
By definition the wallcrossing terms 
$\delta_\xi^X(L,P^r)$ are power series in $\Lambda$.  We now show that 
they are always polynomials, modifying the proof of \cite[Thm.~3.19]{GY}. 
We have seen above that $h\in iq^{-1}\Lambda\Q[[q^{-2}\Lambda^2,q^4]]$, and thus $y=e^{h/2}\in \Q[[iq^{-1}\Lambda,q^4]]$.
\begin{Lemma} \label{qpow}
(\cite[Lem.~3.18]{GY})
\begin{enumerate}
\item $\sinh(h/2)=y-y^{-1}\in iq^{-1}\Lambda\RR$, $\frac{1}{\sinh(h/2)}\in iq\Lambda^{-1}\RR$.
\item For all integers $n$ we have 
\begin{align*}
\sinh((2n+1)h/2)&\in i\Q[q^{-1}\Lambda]_{|2n+1|}\RR,\quad
\cosh(nh)\in  \Q[q^{-2}\Lambda^2]_{|n|} \RR,\\
\sinh(nh)h^*&\in \Q[q^{-2}\Lambda^2]_{|n|}\RR
\quad \cosh((2n+1)h/2)h^*\in i \Q[q^{-1}\Lambda]_{|2n+1|} \RR.
\end{align*}
\item $\widetilde \theta_4(h)\in \RR
$, with $\widetilde \theta_4(h)=1+q^2\Lambda^2+O(q^4)$.
\end{enumerate}
\end{Lemma}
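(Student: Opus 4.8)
The whole lemma reduces to part (1); once that is available, parts (2) and (3) are formal. Write $s:=\sinh(h/2)=\tfrac12(y-y^{-1})$. Two facts will be used throughout: first, as recorded just above, $h\in iq^{-1}\Lambda\,\Q[[q^{-2}\Lambda^2,q^4]]$; second, the operation $F^{*}=\Lambda\,\partial F/\partial\Lambda$ from \secref{thetamod} multiplies a monomial $q^{a}\Lambda^{b}$ by $b$, so $*$ maps each of $\RR$, $\Q[q^{-2}\Lambda^2]_{n}\RR$ and $\Q[q^{-1}\Lambda]_{m}\RR$ into itself. By the evenness (resp.\ oddness) of the functions involved it suffices to treat $n\ge 0$.

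First I would prove (1), which is the sharpening of $h\in iq^{-1}\Lambda\,\Q[[q^{-2}\Lambda^2,q^4]]$ to the subring $\RR$; the content is a bound on the negative $q$-degree. Writing $s/(iq^{-1}\Lambda)=\sum_{j\ge 0}(q^4)^{j}\gamma_{j}$ with $\gamma_{j}\in\Q[[q^{-2}\Lambda^2]]$, membership in $\RR$ is exactly the assertion $\deg_{q^{-2}\Lambda^2}\gamma_{j}\le j$. For $j=0$ this is transparent from the elliptic integral \eqref{dLdh}: substituting $x=q\chi$ and reducing modulo $q^4$ gives $h\equiv 2i\arcsin(\tfrac12 q^{-1}\Lambda)$, whence $s\equiv\sinh\!\big(i\arcsin(\tfrac12 q^{-1}\Lambda)\big)=\tfrac{i}{2}q^{-1}\Lambda$ exactly, the whole $\arcsin$-tail collapsing, so $\gamma_{0}=\tfrac12$ is constant. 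For $j\ge 1$ I would run an induction on the $q^4$-order driven by \eqref{dLdh} and the $q$-expansions of $\theta_{2}\theta_{3}$, $u$ and $M$ from \eqref{thetatilde}, \eqref{MuL}, showing that each correction raises the $q^4$-order at least as fast as the $q^{-2}\Lambda^2$-degree; this induction is the one genuinely hard point. The inverse is then immediate: with $s=iq^{-1}\Lambda\,\sigma$, $\sigma\in\RR$ a unit (constant term $\tfrac12$), one has $1/s=-iq\Lambda^{-1}\sigma^{-1}\in iq\Lambda^{-1}\RR$.

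Granting (1), part (2) is purely algebraic. From $\cosh h=1+2s^{2}$ and $s=iq^{-1}\Lambda\,\sigma$ I get $s^{2}=-(q^{-2}\Lambda^2)\sigma^{2}\in\Q[q^{-2}\Lambda^2]_{1}\RR$, hence $\cosh h\in\Q[q^{-2}\Lambda^2]_{1}\RR$; applying $*$ together with $(\cosh h)^{*}=\sinh(h)\,h^{*}$ gives $\sinh(h)h^{*}\in\Q[q^{-2}\Lambda^2]_{1}\RR$, and $s^{*}=\tfrac12\cosh(h/2)\,h^{*}$ gives $\cosh(h/2)h^{*}\in i\Q[q^{-1}\Lambda]_{1}\RR$. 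Now I feed these into the multiple-angle identities. Using $\cosh(n\phi)=T_{n}(\cosh\phi)$, $\sinh(n\phi)=\sinh\phi\cdot U_{n-1}(\cosh\phi)$ with $\phi=h$, and the inclusion $\Q[q^{-2}\Lambda^2]_{a}\RR\cdot\Q[q^{-2}\Lambda^2]_{b}\RR\subseteq\Q[q^{-2}\Lambda^2]_{a+b}\RR$, I obtain $\cosh(nh)=T_{n}(\cosh h)\in\Q[q^{-2}\Lambda^2]_{n}\RR$ and $\sinh(nh)h^{*}=U_{n-1}(\cosh h)\cdot\big(\sinh(h)h^{*}\big)\in\Q[q^{-2}\Lambda^2]_{n}\RR$. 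For the half-integer statements I take $\phi=h/2$ and use that $\sinh((2n+1)\phi)$ is an odd polynomial of degree $2n+1$ in $\sinh\phi$, while $\cosh((2n+1)\phi)=\cosh\phi\cdot p_{n}(\sinh^{2}\phi)$ with $\deg p_{n}=n$; substituting $s\in i\Q[q^{-1}\Lambda]_{1}\RR$, $s^{2}\in\Q[q^{-2}\Lambda^2]_{1}\RR$ and $\cosh(h/2)h^{*}\in i\Q[q^{-1}\Lambda]_{1}\RR$ yields $\sinh((2n+1)h/2)\in i\Q[q^{-1}\Lambda]_{2n+1}\RR$ and $\cosh((2n+1)h/2)h^{*}=\big(\cosh(h/2)h^{*}\big)\,p_{n}(s^{2})\in i\Q[q^{-1}\Lambda]_{2n+1}\RR$.

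Finally, for (3) I would expand $\theta_{4}(h)=\sum_{n}(-1)^{n}q^{4n^{2}}y^{2n}=1+2\sum_{n\ge 1}(-1)^{n}q^{4n^{2}}\cosh(nh)$. By (2), $\cosh(nh)=\sum_{l=0}^{n}(q^{-2}\Lambda^2)^{l}r_{n,l}$ with $r_{n,l}\in\RR$, so $q^{4n^{2}}(q^{-2}\Lambda^2)^{l}=(q^2\Lambda^2)^{l}(q^4)^{n^{2}-l}\in\RR$ for $0\le l\le n$ (as $n^{2}-l\ge 0$ when $n\ge 1$); hence $\theta_{4}(h)\in\RR$, and since $\theta_{4}=\theta_{4}(0)\in 1+q^4\Q[[q^4]]$ is a unit of $\RR$, also $\widetilde\theta_{4}(h)=\theta_{4}(h)/\theta_{4}\in\RR$. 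The leading correction comes from $n=1$: $-2q^4\cosh h=-2q^4+q^2\Lambda^2+O(q^{6})$, which with $\theta_{4}=1-2q^4+O(q^{16})$ gives $\widetilde\theta_{4}(h)=1+q^2\Lambda^2+O(q^4)$. Thus the only place I expect real work is the $j\ge 1$ degree bound in (1); everything after (1) is bookkeeping with the multiple-angle identities and the derivation $*$.
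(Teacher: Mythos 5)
First, a point of comparison: the paper offers no proof of this lemma at all --- it is imported verbatim from \cite[Lem.~3.18]{GY} --- so the only question is whether your argument stands on its own. Your derivation of parts (2) and (3) from part (1) does: the identity $\cosh h=1+2\sinh^{2}(h/2)$, the Chebyshev-type multiple-angle expansions, the observation that $F\mapsto F^{*}=\Lambda\,\partial F/\partial\Lambda$ preserves each graded piece $\Q[q^{-2}\Lambda^2]_{n}\RR$ and $i\Q[q^{-1}\Lambda]_{m}\RR$, and the rearrangement $\theta_{4}(h)=1+2\sum_{n\ge1}(-1)^{n}q^{4n^{2}}\cosh(nh)$ together give (2) and (3) cleanly, and your leading-term check of $\widetilde\theta_{4}(h)$ is correct.

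The gap is in part (1), and it is not merely that you have postponed a hard induction: the mechanism you propose for $j\ge1$ is false. Write $\tfrac{2i}{\theta_{2}\theta_{3}}=iq^{-1}E$ with $E=1-2q^{4}+O(q^{8})$ and $u=q^{-2}\bigl(-\tfrac14+q^{4}w\bigr)$. The order-$q^{4}$ correction to $\sinh(h/2)/i$ coming from $E$ alone is $-2q^{4}\sqrt{1-\tfrac14q^{-2}\Lambda^{2}}\,\arcsin\bigl(\tfrac12q^{-1}\Lambda\bigr)$, whose coefficient of $(q^{-1}\Lambda)^{2m+1}$ is a nonzero rational multiple of $q^{4}$ for every $m$; for $m\ge2$ such a term lies outside $iq^{-1}\Lambda\RR$. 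The lemma survives only because this is exactly cancelled by the order-$q^{4}$ correction coming from $w$ (for the coefficient of $q^{4}(q^{-1}\Lambda)^{5}$ the two contributions are $+\tfrac1{120}$ and $-\tfrac1{120}$). So the assertion that ``each correction raises the $q^{4}$-order at least as fast as the $q^{-2}\Lambda^{2}$-degree'' is wrong, and an induction that tracks the corrections to $\theta_{2}\theta_{3}$, $u$, $M$ separately cannot close; the required cancellation is global. The efficient proof bypasses \eqref{dLdh} entirely: by the product formulas in \eqref{theta},
\[
y-y^{-1}=iq^{-1}\Lambda\cdot\Pi,\qquad
\Pi=\prod_{n>0}\frac{(1-q^{8n-4}y^{2})(1-q^{8n-4}y^{-2})}{(1-q^{8n}y^{2})(1-q^{8n}y^{-2})},
\]
and each factor satisfies $(1-q^{2k}y^{2})(1-q^{2k}y^{-2})=(1-q^{2k})^{2}-q^{2k}(y-y^{-1})^{2}=(1-q^{2k})^{2}+q^{2k-2}\Lambda^{2}\Pi^{2}$ with $2k\ge4$ divisible by $4$, hence lies in $1+q^{4}\Q[[q^{4}]]+q^{2}\Lambda^{2}\Q[[q^{4}]]\Pi^{2}$. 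Thus $\Pi$ satisfies a fixed-point equation $\Pi=\Phi(q^{4},\,q^{2}\Lambda^{2}\Pi^{2})$ with $\Phi$ a power series with constant term $1$, which solves uniquely for $\Pi\in\RR^{\times}$ and yields both halves of (1) at once. With (1) repaired in this way, the rest of your write-up goes through as stated.
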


\begin{Lemma}\label{vanwall}
Let $r\in \Z_{\ge 0}$, let $\xi\in H^2(X,\Z)$, and $L$ a line bundle on $X$  with $\xi^2<0$ and $\<\xi,L\>\equiv r\mod 2$.
\begin{enumerate}
\item 
$\delta_{\xi,d}^X(L,P^r)=0$ unless $-\xi^2\le d\le \xi^2+2|\<\xi,L-K_X\>|+2r+4$.
In particular $\delta_{\xi}^X(L,P^r)\in \Q[\Lambda]$.
\item $\delta_\xi^X(L,P^r)=0$ unless $-\xi^2\le |\<\xi,L-K_X\>|+r+2$. (Recall that 
by definition $\xi^2<0$).
\end{enumerate}
\end{Lemma}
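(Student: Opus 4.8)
The plan is to run the whole argument on the symmetrized kernel $\overline\Delta_\xi^X(L,P^r)$ of \eqref{Delbar}, which by \remref{delb} computes the same coefficients $\delta^X_{\xi,d}(L,P^r)=\Coeff_{q^0}\Coeff_{\Lambda^d}$. Writing $a:=\<\xi,L-K_X\>$, the factor $y^a-(-1)^{\xi^2}y^{-a}$ equals $2\sinh(ah/2)$ when $\xi^2$ is even and $2\cosh(ah/2)$ when $\xi^2$ is odd. I would prove the two bounds on $d$ in part (1) separately; part (2) is then immediate, since the interval $[-\xi^2,\,\xi^2+2|a|+2r+4]$ is nonempty exactly when $-\xi^2\le |a|+r+2$. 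The claim ``$\delta^X_\xi(L,P^r)\in\Q[\Lambda]$'' is just the upper bound, which leaves only finitely many nonzero coefficients.

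For the lower bound $d\ge-\xi^2$: the proof of \remref{delb}(1) already gives $\overline\Delta_\xi^X(L,P^r)\in q^{-\xi^2}\C[[q^{-1}\Lambda,q^4]]$, so its $\Lambda^d$-coefficient is a Laurent series in $q$ supported in degrees $\ge -\xi^2-d$, whence $\delta^X_{\xi,d}=0$ once $d<-\xi^2$. The upper bound is the heart of the matter, and here I would measure each factor by the quantity $i-j$ attached to a monomial $q^i\Lambda^j$: every monomial of an element of $\RR=\Q[[q^2\Lambda^2,q^4]]$ has $i\ge j$, and every monomial of an element of $\Q[q^{-1}\Lambda]_{\le D}\RR$ satisfies $i-j\ge-2D$. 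The key step is to show
\[
\big(y^a-(-1)^{\xi^2}y^{-a}\big)\,h^*M^r\in\Q[q^{-1}\Lambda]_{\le |a|+r}\,\RR.
\]
Granting this, $\Lambda^2q^{-\xi^2}$, $u'\theta_4^{\sigma(X)}\in q^{-2}\Q[[q^4]]$ and $\widetilde\theta_4(h)^{(L-K_X)^2}\in\RR$ contribute $i-j$ at least $-\xi^2-2$, $-2$ and $0$ respectively, so every monomial of $\overline\Delta_\xi^X(L,P^r)$ has $i-j\ge-\xi^2-4-2(|a|+r)$; setting $i=0$ forces $d=j\le\xi^2+2|a|+2r+4$.

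To establish the displayed membership I would perform a parity analysis. By adjunction $\<\xi,K_X\>\equiv\xi^2\pmod2$, so the hypothesis $\<\xi,L\>\equiv r$ gives $a\equiv r-\xi^2\pmod2$, and the four cases match the building blocks of \lemref{qpow}(2) exactly. When $\sinh(ah/2)$ or $\cosh(ah/2)$ has the polynomial parity (odd multiple of $h/2$ for $\sinh$, integral multiple of $h$ for $\cosh$) it already lies in $\Q[q^{-1}\Lambda]_{\le|a|}\RR$, and the factor $h^*$ is then absorbed into $M^r$; in the complementary parity the product $\sinh(\cdot)h^*$ or $\cosh(\cdot)h^*$ is itself the bounded object of \lemref{qpow}(2), and $M^r$ is handled alone. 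For the $M$-factors I would use $M^2=4(1+u\Lambda^2+\Lambda^4)\in\Q[q^{-1}\Lambda]_{\le2}\RR$ (since $u\in q^{-2}\Q[[q^4]]$), so any even power of $M$ lies in $\Q[q^{-1}\Lambda]_{\le r}\RR$, together with the relation $Mh^*=4i\Lambda/(\theta_2\theta_3)\in q^{-1}\Lambda\,\Q[[q^4]]$ from \eqref{hstar} (note $\theta_2\theta_3\in q\Q[[q^4]]$) to convert a leftover odd power of $M$ paired with $h^*$ into a degree-one polynomial. In each of the four cases the degrees add up to exactly $|a|+r$.

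The main obstacle is precisely this parity bookkeeping: one must route the single available $h^*$ either into the $\sinh/\cosh$ factor or into one power of $M$ so that no unpaired square root survives — neither $M$ to an odd power nor the $1/M$ hidden inside $h^*$ may be left over, since each of these has unbounded negative $i-j$ and would destroy the bounded-degree conclusion. It is exactly the congruence $\<\xi,L\>\equiv r\pmod2$ that makes the parities line up, and it is the \emph{exact} polynomiality of \lemref{qpow}(2) (not a mere order-of-magnitude estimate) that supplies the theta-function cancellations underlying that polynomiality; a naive term-by-term degree count on $M^r$ and $h^*$ alone does not suffice.
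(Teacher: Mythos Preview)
Your proof is correct and follows essentially the same route as the paper's: both work with the symmetrized kernel $\overline\Delta^X_\xi(L,P^r)$, use \lemref{qpow}(2) together with $M^2\in\Q[q^{-2}\Lambda^2]_{\le1}\RR$ and $h^*M\in q^{-1}\Lambda\,\Q[[q^4]]$ from \eqref{hstar}, and obtain the containment $\overline\Delta^X_\xi(L,P^r)\in q^{-\xi^2}\Q[q^{-1}\Lambda]_{\le|a|+r+2}\RR$ via a parity split. The only organizational difference is that the paper imports the $r$ even case wholesale from the proof of \cite[Thm.~3.19]{GY} and only writes out the $r$ odd case explicitly, whereas you treat all four parity combinations $(\xi^2\bmod2,\,r\bmod2)$ uniformly; the underlying content is identical.
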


\begin{proof}
Assume first that $r=2l$ is even. 
Let $N:=\<\xi,L-K_X\>.$ Then it is shown in the proof of \cite[Thm.~3.19]{GY} that 
$\overline \Delta_\xi^X(L)=q^{-\xi^2}\Q[q^{-1}\Lambda]_{\le |N|+2}\RR.$
On the other hand we note that $M^2=4(1+u\Lambda^2+\Lambda^4)\in \Q[q^{-2}\Lambda^2]_{\le 1}\RR.$
Putting this together we get 
$$\Delta_\xi^X(L,P^{r})\in q^{-\xi^2}\Q[q^{-1}\Lambda]_{\le |N|+r+2}\RR.$$

Now assume that $r=2l+1$ is odd.  If $N$ is even, then by the condition that $\<L,\xi\>$ is odd, we get $(-1)^{\xi^2}=-(-1)^{N}$, and therefore
$$\overline \Delta_\xi^X(L,P^r)=q^{-\xi^2}M^ri^{\<\xi,K_X\>}\Lambda^2\cosh(Nh/2)h^*
\widetilde \theta_4^{(L-K_X)^2}\theta_4^{\sigma(X)} u'.$$
By \eqref{hstar} we get $h^*M=\frac{4i\Lambda}{\theta_2\theta_3}\in i\Lambda q^{-1}\Q[q^4]$.
Thus by Lemma \ref{qpow} we get $\cosh(Nh/2)h^*M\in i\Q[q^{-1}\Lambda]_{\le |N|+1}\RR.$
Using also that $\<\xi, K_X\>\equiv \xi^2\equiv 1\mod 2$, that  $M^2\in \Q[q^{-2}\Lambda^2]_{\le 1}\RR$ and $\Lambda^2u'\in q^{-2}\Lambda^2\RR$, we get
again $\overline \Delta_\xi^X(L,P^r)\in q^{-\xi^2}\Q[q^{-1}\Lambda]_{\le |N|+r+2}\RR.$
Finally, if $N$ is odd, a similar argument shows that 
$$\overline \Delta_\xi^X(L,P^r)=q^{-\xi^2}M^ri^{\<\xi,K_X\>}\Lambda^2\sinh(Nh/2)h^*
\widetilde \theta_4^{(L-K_X)^2}\theta_4^{\sigma(X)} u'\in q^{-\xi^2}\Q[q^{-1}\Lambda]_{\le |N|+r+2}\RR.$$

Therefore we have in all cases that 
$\delta_{\xi,d}^X(L,P^r)= 0$ unless $-\xi^2-\min(d,2|N|+2r+4-d)\le 0$, i.e. 
unless 
$-\xi^2\le d\le \xi^2+2|N|+2r+4$.
In particular $\delta_{\xi}^X(L,P^r)=0$ unless 
$-\xi^2\le\xi^2+2|N|+2r+4$, i.e. unless $-\xi^2\le |N|+r+2$.
\end{proof}

\begin{Remark}\label{c1d}
We note that this implies that for $\xi$ a class of type $(c_1)$, 
$\delta_{\xi,d}^X(L)=0$ for all $L$ unless $\xi$ a class of type $(c_1,d)$.
\end{Remark}

\section{Indefinite theta functions, vanishing, invariants with point class}
We want to study the $K$-theoretic Donaldson invariants 
for polarizations on the boundary of the ample cone. 
Let  $F\in H^2(X,\Z)$ the class of an effective divisor with $F^2=0$ and such that $F$ is nef, i.e. 
$\<F,C\>\ge 0$ for any effective curve in $X$. Then $F$ is a limit of ample classes. 
Let $c_1\in H^2(X,\Z)$ such that $\<c_1,F\>$ is odd. 
Fix $d\in \Z$ with $d\equiv -c_1^2 \mod 4$. Let $\omega$ be ample on $X$. Then for $n>0$ sufficiently large $nF+ \omega$ is ample on $X$ and there is no wall $\xi$ of type $(c_1,d)$ with 
$\<\xi, (nF+ \omega)\>>0> \<\xi, F\>$.  Let $L\in \Pic(X)$ and $r\in\Z_{\ge 0}$ with $\<c_1,L\>$ even. 
Thus we define for $n$ sufficiently large
\begin{align*}M_F^X(c_1,d)&:=M_{nF+\omega}^X(c_1,d), \\ \chi(M_F^X(c_1,d),\mu(L))&:=\chi(M_{nF+\omega}^X(c_1,d),\mu(L)),\\
\chi^{X,F}_{c_1}(L)&:=\sum_{d\ge 0} \chi(M_F^X(c_1,d),\mu(L))\Lambda^d.
\end{align*}

We use the  following standard fact.
\begin{Remark}\label{vanbound}
Let $X$ be a simply connected algebraic surface, and let 
$\pi:X\to \P^1$ be a morphism whose general fibre is isomorphic to 
$\P^1$. Let $F\in H^2(X,\Z)$ be the class of a fibre. Then $F$ is nef. Assume that 
$\<c_1,F\>$ is odd. 
Then $M_F^X(c_1,d)=\emptyset$ for all $d$. 
Thus $\chi(M_F^X(c_1,d),\mu(L))=0$ for all $d\ge 0$.
Thus if $\omega$ ample on $X$ and  does not lie on a wall of type $(c_1)$, then 
$$\chi^{X,\omega}_{c_1}(L)=\sum_{\omega\xi>0>\xi F} \delta_{\xi}^X(L),$$
where the sum is over all classes $\xi$ of type $(c_1)$ with 
$\omega\xi>0>\xi F$.
\end{Remark}

\subsection{Theta functions for indefinite lattices} 


We briefly review a few facts about  theta functions for indefinite lattices of type $(r-1,1)$
introduced in \cite{GZ}. More can be found in \cite{GZ}, \cite{GY}. For us a {\it lattice} is a free $\Z$-module $\Gamma$ together with a quadratic form
$Q:\Gamma\to \frac{1}{2}\Z$, such that the associated bilinear form 
$x\cdot y:=Q(x+y)-Q(x)-Q(y)$ is nondegenerate and $\Z$-valued.
We denote the extension of the quadratic and bilinear form to 
$\Gamma_\R:=\Gamma\otimes_{\Z} \R$ and $\Gamma_\C:=\Gamma\otimes_{\Z} \C$ by the same letters.
 We will consider the case that $\Gamma$ is  $H^2(X,\Z)$ for a rational surface $X$ with the {\it negative} of the intersection form.Thus for $\alpha,\beta\in H^2(X,\Z)$ we have $Q(\alpha)=-\frac{\alpha^2}{2}$, $\alpha\cdot \beta=-\<\alpha,\beta\>$.
Now let $\Gamma$ be a lattice of rank $r$. Denote by $M_\Gamma$ the set of meromorphic 
maps $f:\Gamma_\C\times \H\to \C$.
For $A=\left(\begin{matrix} a&b\\c&d\end{matrix}\right)\in Sl(2,\Z)$,
we define  a map
$|_kA:M_\Gamma\to M_\Gamma$ by
$$f|_{k}A(x,\tau):=(c\tau+d)^{-k}\exp\left(-2\pi i\frac{cQ(x)}{c\tau+d}\right)
f\left(\frac{x}{c\tau+d},\frac{a\tau+b}{c\tau+d}\right).$$
Then $|_kA$ defines an action of $Sl(2,\Z)$ on $M_\Gamma$.
We denote 
\begin{align*}S_\Gamma&:=\big\{ f\in \Gamma\bigm| f \hbox{ primitive},  Q(f)=0,\  f\cdot h <0\big\},\
C_\Gamma:=\big\{ m\in \Gamma_\R\bigm| Q(m)<0, \ m\cdot h<0\big\}.
\end{align*}
For $f\in S_\Gamma$ put 
$D(f):=\big\{(\tau,x)\in \H\times \Gamma_\C\bigm| 0< \Im(f\cdot x)<\Im(\tau)/2\big\},$
and for $h\in C_\Gamma$ put $D(h)=\H\times \Gamma_\C$.
For $t\in \R$ denote $$\mu(t):=\begin{cases} 1& t\ge 0, \\0 & t<0.\end{cases}$$
Let $c,b\in \Gamma$. Let  $f,g\in S_\Gamma\cup C_\Gamma$.
Then for $(\tau,x)\in D(f)\cap D(g)$
define 
$$\Theta^{f,g}_{\Gamma,c,b}(\tau,x):=\sum_{\xi\in \Gamma+c/2}(\mu(\xi\cdot f)-\mu(\xi\cdot  g)) e^{2\pi i \tau Q(\xi)}e^{2\pi i \xi\cdot(x+b/2)}.$$
Let $T:=\left(\begin{matrix} 1 & 1\\0&1\end{matrix}\right)$, 
$S:=\left(\begin{matrix} 0 & -1\\1&0\end{matrix}\right)\in Sl(2,\Z)$.

\begin{Theorem}\label{thetaprop}
\begin{enumerate}
\item For $f,g\in S_\Gamma$ 
the function $\Theta^{f,g}_{X,c,b}(\tau,x)$ has an meromorphic continuation to 
$\H\times \Gamma_\C$.
\item For 
 $|\Im(f\cdot x)/\Im(\tau)|<1/2$ and $|\Im(g\cdot x)/\Im(\tau)|<1/2$ it has a Fourier development
\begin{align*}
&\Theta_{X,c,b}^{f,g}(x,\tau):=\frac{1}{1-e^{2\pi i f\cdot (x+b/2)}}
\sum_{\substack{\xi\cdot f=0\\ f\cdot g\le\xi\cdot g<0}}e^{2\pi i \tau Q(\xi)}e^{2\pi i \xi \cdot (x+b/2)}\\ &
-\frac{1}{1-e^{2\pi i g\cdot (x+b/2)}}\sum_{\substack{\xi\cdot g=0\\ f\cdot g \le \xi \cdot f<0 }}e^{2\pi i \tau Q(\xi)}e^{2\pi i \xi \cdot(x+b/2)}+
\sum_{\xi f>0>\xi g} e^{2\pi i \tau Q(\xi)}\big(e^{2\pi i \xi \cdot(x+b/2)}-
e^{-2\pi i \xi \cdot(x+b/2)}\big),
\end{align*}
where the sums are always over $\xi\in \Gamma+c/2$.
\item
\begin{equation*}
\label{thetajacobi}
\begin{split}
(\Theta_{X,c,b}^{f,g}\theta_{3}^{\sigma(\Gamma)})|_1S&=(-1)^{-b\cdot c/2} \Theta_{X,b,c}^{f,g}\theta_{3}^{\sigma(\Gamma)},\\
(\Theta_{X,c,b}^{f,g}\theta_{3}^{\sigma(\Gamma)})|_1T&=(-1)^{3Q(c)/2-c w/2} \Theta_{X,c,b-c+w}^{f,g}\theta_{4}^{\sigma(\Gamma)},\\
(\Theta_{X,c,b}^{f,g}\theta_{3}^{\sigma(\Gamma)})|_1T^2&=(-1)^{-Q(c)} 
\Theta_{X,c,b}^{f,g}\theta_{3}^{\sigma(\Gamma)},\\
(\Theta_{X,c,b}^{f,g}\theta_{3}^{\sigma(\Gamma)})|_1T^{-1}S&=
(-1)^{-Q(c)/2-c\cdot b/2}\Theta_{X,w-c+b,c}^{f,g}\theta_{2}^{\sigma(\Gamma)},
\end{split}\end{equation*}
where $w$ is a characteristic element of $\Gamma$.
\end{enumerate}
\end{Theorem}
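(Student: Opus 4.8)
The plan is to prove (1) and (2) in tandem, since the Fourier expansion of (2) is exactly what exhibits the meromorphic continuation asserted in (1). First I would work on the region where $|\Im(f\cdot x)/\Im(\tau)|<1/2$ and $|\Im(g\cdot x)/\Im(\tau)|<1/2$. The indicator difference $\mu(\xi\cdot f)-\mu(\xi\cdot g)$ is supported on the wedge cut out between the two isotropic hyperplanes $\xi\cdot f=0$ and $\xi\cdot g=0$; because $f,g\in S_\Gamma$ lie on the same nappe of the light cone (both pair negatively with $h$), the form $Q$ is non-negative on this wedge and grows away from the two null rays. The domain condition on $x$ supplies exponential decay of $e^{2\pi i\xi\cdot(x+b/2)}$ precisely along the isotropic directions where $Q$ fails to grow, so the defining series converges absolutely there and may be rearranged freely.

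To extract the closed form I would foliate the wedge by the translation orbits of $f$ and $g$. Since $f$ is isotropic, $(\xi+nf)\cdot f=\xi\cdot f$ and $Q(\xi+nf)=Q(\xi)+n\,\xi\cdot f$, so on the face $\{\xi\cdot f=0\}$ translation by $f$ both preserves the face and fixes $Q$; choosing representatives with $f\cdot g\le\xi\cdot g<0$ and summing the $n\ge 0$ translates via the geometric series $\sum_{n\ge0}e^{2\pi i nf\cdot(x+b/2)}=\frac{1}{1-e^{2\pi i f\cdot(x+b/2)}}$ reproduces exactly the boundary contribution $\{\xi\cdot f=0,\ \xi\cdot g<0\}$ and yields the first term of the expansion. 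The same construction along $g$ gives the second term with the opposite sign, and the remaining interior points $\{\xi f>0>\xi g\}$, together with the reflected region $\{\xi f<0<\xi g\}$ on which the indicator difference equals $-1$, assemble into the antisymmetric sum via $\xi\mapsto-\xi$. As the three resulting pieces are each manifestly meromorphic on all of $\H\times\Gamma_\C$, with poles only at the zeros of $1-e^{2\pi i f\cdot(x+b/2)}$ and $1-e^{2\pi i g\cdot(x+b/2)}$, this single identity proves (2) and furnishes the continuation needed for (1).

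For (3) I would read off the behaviour under the generators $T$ and $S$ of $Sl(2,\Z)$ and combine it with the classical transformations of the theta constants $\theta_3,\theta_4,\theta_2$ recorded in \secref{thetamod}. Under $T$ one has $\tau\mapsto\tau+1$, which multiplies the $\xi$-term by $e^{2\pi i Q(\xi)}$; the dependence of this phase on the coset $\Gamma+c/2$ is governed by a characteristic element $w$, and it reorganizes into the shift $b\mapsto b-c+w$ of the theta characteristic, the replacement $\theta_3^{\sigma(\Gamma)}\mapsto\theta_4^{\sigma(\Gamma)}$ (via $\theta_3|T=\theta_4$), and the overall root of unity $(-1)^{3Q(c)/2-c\cdot w/2}$. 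The substantive case is $S$, which I would obtain by Poisson summation over $\Gamma$: the Gaussian factor $\exp(-2\pi i\,cQ(x)/(c\tau+d))$ built into the slash action $|_1S$ is exactly the weight produced by Poisson summation, the passage to the dual lattice interchanges the roles of the characteristics $c$ and $b$, and the factor $\sqrt{-i\tau}^{\,\sigma(\Gamma)}$ coming from $\theta_3(-1/\tau)$ combines with the lattice volume factor to give weight $1$. The cases $T^2$ and $T^{-1}S$ then follow by composing these two transformations and tracking signs.

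The hard part will be the $S$-transformation. Poisson summation cannot be applied term-by-term to an indefinite theta series, so I must first replace $\Theta^{f,g}_{X,c,b}$ by the three genuinely convergent sums of the Fourier development from (2) — along each of which the effective quadratic form is positive — apply Poisson summation to those, and then reassemble the pieces into the transformed indefinite series. The most error-prone part is the bookkeeping of the numerous sign factors $(-1)^{\cdots}$, the role of the characteristic element $w$, and the simultaneous interchanges $c\leftrightarrow b$ and $\theta_3\leftrightarrow\theta_4\leftrightarrow\theta_2$; I would fix all of these by first checking the rank-one case against the classical Jacobi theta transformations before asserting the general formulas.
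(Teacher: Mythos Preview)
The paper does not prove this theorem. It is stated as background material reviewed from \cite{GZ} (and \cite{GY}); no argument is given in the paper itself. So there is no ``paper's own proof'' to compare against, and your outline should be judged against the original source.

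Your treatment of (1) and (2) is essentially the argument of \cite{GZ}: the absolute convergence on $D(f)\cap D(g)$ comes from positivity of $Q$ on the interior of the wedge together with the exponential decay in the isotropic directions supplied by the domain condition on $x$; foliating the boundary faces $\{\xi\cdot f=0\}$ and $\{\xi\cdot g=0\}$ by translates of $f$ and $g$ and summing the geometric series gives the two rational prefactors; and the interior contribution antisymmetrizes under $\xi\mapsto-\xi$. This is correct and matches the reference.

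For (3) there is a genuine gap in your plan. You propose to apply Poisson summation to the three pieces of the Fourier development, but none of these is a sum over a full lattice coset: the first two are sums over a slab in $f^\perp$ (a fundamental domain for translation by $f$ or $g$), and the third is a sum over an open cone $\{\xi\cdot f>0>\xi\cdot g\}$. Poisson summation does not apply to such truncated sums without substantial additional machinery. The route actually taken in \cite{GZ} is different: one first establishes the modular transformation for $f,g\in C_\Gamma$, where the defining series converges absolutely as a genuine lattice sum (the indicator difference is bounded and $Q$ is positive on its support) and Poisson summation applies directly; one then passes to $f,g\in S_\Gamma$ by a limiting argument, using the cocycle relation $\Theta^{f,g}=\Theta^{f,h}+\Theta^{h,g}$ with $h\in C_\Gamma$ and the Fourier development from (2) to control the limit. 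Your handling of $T$ is fine; it is only the $S$-step that needs this rerouting.
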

\begin{Remark}
For $f,\ g,\ h\in C_\Gamma\cap S_\Gamma$ we have the cocycle condition.
$\Theta^{f,g}_{\Gamma,c,b}(\tau,x)+\Theta^{g,h}_{\Gamma,c,b}(\tau,x)=\Theta^{f,h}_{\Gamma,c,b}(\tau,x)$, which holds wherever all three terms are defined. 
\end{Remark}

In the following let $X$ be a rational algebraic surface. 
We can express the difference of the $K$-theoretic Donaldson invariants  for two different polarisations in terms of these indefinite theta functions. Here we take $\Gamma$ to be $H^2(X,\Z)$ with the {\it negative} of the intersection form, and we choose $K_X$ as the characteristic element in \thmref{thetaprop}(3).

\begin{Definition}
Let $F,G\in S_\Gamma\cup C_\Gamma$, let $c_1\in H^2(X,\Z)$.
We put 
\begin{align*}
\Psi^{F,G}_{X,c_1}(L;\Lambda,\tau)&:=
\Theta^{F,G}_{X,c_1,K_X}\Big(\frac{(L-K_X)h}{2\pi i},\tau\Big)   \Lambda^2 
\widetilde\theta_4(h)^{(L-K_X)^2}\theta_4^{\sigma(X)}u'h^*.
\end{align*}
\end{Definition}

\begin{Lemma}\label{thetawall}
Let $H_1,H_2$ be ample on $X$ with $\<H_1,K_X\><0$ and $\<H_2,K_X\><0$, and assume that they do not lie on a wall of type $(c_1)$. 
Then
\begin{enumerate}
\item $$\Psi^{H_2,H_1}_{X,c_1}(L;\Lambda,\tau)M^r=\sum_\xi \overline \Delta^X_\xi(L,P^r),$$ were $\xi$ runs through all classes on $X$ of type $(c_1)$ with 
$\<H_2,\xi\> >0> \<H_1, \xi \>$.
\item 
$\chi^{X,H_2}_{c_1}(L)-\chi^{X,H_1}_{c_1}(L)=\Coeff_{q^0} \big[\Psi^{H_2,H_1}_{X,c_1}(L;\Lambda,\tau)\big].
$
\end{enumerate}
\end{Lemma}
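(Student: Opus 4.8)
The substantive statement is part (1); part (2) will follow from it formally by extracting the $q^0$-coefficient and invoking \thmref{wallcr}. For (1) the plan is to unfold the definition of $\Theta^{H_2,H_1}_{X,c_1,K_X}$, substitute the special argument $x=\frac{(L-K_X)h}{2\pi i}$, and match the resulting series term-by-term against $\sum_\xi \overline\Delta^X_\xi(L,P^r)$. The two key structural facts I will exploit are the factor-of-two relabeling between the summation lattice $\Gamma+c_1/2$ and the walls of type $(c_1)$, and the fact that $K_X$ is a characteristic element, i.e. $\langle\xi,K_X\rangle\equiv\xi^2\pmod 2$.

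\textbf{The computation for (1).} I would write each summand of $\Theta^{H_2,H_1}_{X,c_1,K_X}$ over $\xi'\in\Gamma+c_1/2$ and set $\xi:=2\xi'$, so that $\xi$ ranges exactly over the classes of type $(c_1)$. Using $Q(\alpha)=-\alpha^2/2$, $\alpha\cdot\beta=-\langle\alpha,\beta\rangle$, together with $q=e^{\pi i\tau/4}$ and $y=e^{h/2}$, a direct computation gives $e^{2\pi i\tau Q(\xi')}=q^{-\xi^2}$ and $e^{2\pi i\xi'\cdot(x+K_X/2)}=y^{-\langle\xi,L-K_X\rangle}\,i^{-\langle\xi,K_X\rangle}$. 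The indicator $\mu(\xi'\cdot H_2)-\mu(\xi'\cdot H_1)$ is $+1$ on the wedge $\langle\xi,H_2\rangle<0<\langle\xi,H_1\rangle$ and $-1$ on the opposite wedge $\langle\xi,H_2\rangle>0>\langle\xi,H_1\rangle$ (the non-strict cases are excluded since $H_1,H_2$ lie on no wall of type $(c_1)$). Pairing $\xi\leftrightarrow-\xi$ collapses the two wedges into the single wedge $\langle\xi,H_2\rangle>0>\langle\xi,H_1\rangle$, where the summand becomes
\[
q^{-\xi^2}\,i^{\langle\xi,K_X\rangle}\bigl(y^{\langle\xi,L-K_X\rangle}-(-1)^{\langle\xi,K_X\rangle}y^{-\langle\xi,L-K_X\rangle}\bigr).
\]
Here the characteristic-element identity replaces $(-1)^{\langle\xi,K_X\rangle}$ by $(-1)^{\xi^2}$. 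Multiplying by the common factor $\Lambda^2\widetilde\theta_4(h)^{(L-K_X)^2}\theta_4^{\sigma(X)}u'h^*M^r$ reproduces precisely $\overline\Delta^X_\xi(L,P^r)$ as written in \eqref{Delbar}, proving (1).

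\textbf{Justifying the rearrangement.} The reindexing, the $\xi\leftrightarrow-\xi$ pairing, and the substitution of the formal argument $x$ all have to be legitimate as identities in $\C((q))[[\Lambda]]$. The point is that for each fixed power $\Lambda^d$ only finitely many $\xi$ contribute: by \lemref{vanwall} the $\xi$-summand has no $\Lambda^d$-term unless $-\xi^2\le d$, and within the wedge $\langle\xi,H_2\rangle>0>\langle\xi,H_1\rangle$ the relevant classes have $\xi^2<0$, hence $Q(\xi)>0$, so bounding $-\xi^2$ from above confines $\xi$ to a finite set by the standard hyperbolic-geometry finiteness for a form of signature $(r-1,1)$. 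Because $H_1,H_2\in C_\Gamma$ this finiteness makes the series directly convergent coefficientwise; I will not need the meromorphic-continuation part of \thmref{thetaprop}(1), which is only required on the boundary $S_\Gamma$.

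\textbf{Deducing (2).} Setting $r=0$ in (1) gives $\Psi^{H_2,H_1}_{X,c_1}(L)=\sum_\xi\overline\Delta^X_\xi(L)$ over the wedge $\langle\xi,H_2\rangle>0>\langle\xi,H_1\rangle$. Taking $\Coeff_{q^0}$ term-by-term (valid by the same per-$\Lambda^d$ finiteness) and using \remref{delb}(2), which identifies $\delta^X_\xi(L)=\Coeff_{q^0}\bigl[\overline\Delta^X_\xi(L)\bigr]$, yields $\Coeff_{q^0}\bigl[\Psi^{H_2,H_1}_{X,c_1}(L)\bigr]=\sum_\xi\delta^X_\xi(L)$. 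Comparing this with the generating-function form of \thmref{wallcr} (summed over $d$, with $H_1$ and $H_2$ interchanged so the wedge orientation matches) gives exactly $\chi^{X,H_2}_{c_1}(L)-\chi^{X,H_1}_{c_1}(L)$, which is the claim. The main obstacle throughout is the careful bookkeeping in (1): keeping the factor-of-two reindexing, the powers of $i$, and the sign conventions consistent, and pinning down where the characteristic property of $K_X$ enters; once (1) is secured, (2) is purely formal.
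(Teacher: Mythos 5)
Your proof is correct, but it takes a genuinely different route from the paper's. The paper disposes of this lemma in two sentences: part (2) is cited from \cite[Cor.~4.6]{GY}, with the remark that the hypothesis ``$-K_X$ ample'' made there can be relaxed to $\<H_1,K_X\><0$, $\<H_2,K_X\><0$ because that is all \thmref{wallcr} requires; part (1) for $r=0$ is observed to be what the argument of \cite[Cor.~4.6]{GY} actually proves, and general $r$ follows from $\overline\Delta^X_\xi(L,P^r)=\overline\Delta^X_\xi(L,P^0)M^r$ --- the same observation you absorb by carrying $M^r$ along in the ``common factor.'' You instead reprove the identity from scratch, and your bookkeeping checks out: the reindexing $\xi=2\xi'$ identifies $\Gamma+c_1/2$ with the classes of type $(c_1)$; with $Q(\alpha)=-\alpha^2/2$, $q=e^{\pi i\tau/4}$, $y=e^{h/2}$ the summand becomes $q^{-\xi^2}y^{-\<\xi,L-K_X\>}i^{-\<\xi,K_X\>}$; and the $\xi\leftrightarrow-\xi$ pairing plus $(-1)^{\<\xi,K_X\>}=(-1)^{\xi^2}$ reproduces \eqref{Delbar} exactly, with the non-wall hypothesis ensuring the indicator never takes the boundary value. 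The one imprecision is your appeal to \lemref{vanwall} for convergence: that lemma controls only the $q^0$-coefficients $\delta^X_{\xi,d}$, whereas the $\Lambda^d$-coefficient of $\overline\Delta^X_\xi(L,P^r)$ is generally nonzero for $d<-\xi^2$ (it just involves only positive powers of $q$). The correct finiteness statement, which follows from $\Delta^X_\xi(L,P^r)\in q^{-\xi^2}\C[[q^{-1}\Lambda,q^4]]$ as in the proof of \remref{delb}, is that only finitely many $\xi$ contribute to each monomial $\Lambda^dq^s$ (namely those with $-\xi^2\le d+s$), which combined with the Hodge-index finiteness of wedge classes of bounded $-\xi^2$ is what makes the identity meaningful in $\C((q))[[\Lambda]]$. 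Your version buys a self-contained proof that makes visible exactly where the characteristic property of $K_X$ enters; the paper's buys brevity at the cost of leaving the actual computation inside the reference.
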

\begin{proof} (2) is proven in  \cite[Cor.~4.6]{GY}, where the assumptions is made that $-K_X$ is ample, but the proof only uses $\<H_1,K_X\><0$ and $\<H_2,K_X\><0$, because
this condition is sufficient for \thmref{wallcr}. The argument of \cite[Cor.~4.6]{GY} actually shows (1) in case $r=0$, but as $\overline \Delta^X_\xi(L,P^r)=\overline \Delta^X_\xi(L,P^0)M^r$, the case of 
general $r$ follows immediately.
\end{proof}

Following \cite{GY} we use \lemref{thetawall} to extend the generating function $\chi^{X,\omega}_{c_1}(L)$   to 
$\omega\in S_L\cup C_L$.

\begin{Definition}\label{chiext}
Let $\eta$ be ample on $X$ with $\<\eta, K_X\><0$,  and not on a wall of type $(c_1)$. Let $\omega\in S_X\cup C_X$. We put 
$$\chi^{X,\omega}_{c_1}(L):=\chi^{X,\eta}_{c_1}(L)+\Coeff_{q^0} \big[\Psi^{\omega,\eta}_{X,c_1}(L;\Lambda,\tau)\big].$$
By the cocycle condition the definition of $\chi^{X,\omega}_{c_1}(L)$  is independent of the choice of $\eta$.
Furthermore by  \corref{thetawall} this coincides with the previous definition in case $\omega$ is also ample, $\<\omega,K_X\><0$ and $\omega$ does not lie on a wall of type $(c_1)$. However  if  $\<\omega,K_X\>\ge 0$, it is very well possible that the coefficient of $\Lambda^d$ of $\chi^{X,\omega}_{c_1}(L)$ is different
from $\chi(M^X_\omega(c_1,d),\mu(L))$.
\end{Definition}

\begin{Remark}\label{difftheta}
Now let $H_1,H_2\in S_X\cup C_X$.
By the cocycle condition, we have
\begin{align*}
\chi^{X,H_2}_{c_1}(L)-\chi^{X,H_1}_{c_1}(L)=\Coeff_{q^0} \big[\Psi^{H_2,H_1}_{X,c_1}(L;\Lambda,\tau)\big].
\end{align*}
\end{Remark}


\begin{Proposition} \label{blowgen}
Let $X$ be a rational surface. Let $\omega\in C_X\cup S_X$. Let $c_1\in H^2(X,\Z)$. Let $\widehat X$ be the blowup of $X$ in a general point, and $E$ the exceptional divisor.
Let $L\in \Pic(X)$ with $\<L, c_1\>$ even. Then
\begin{enumerate}
\item $\chi^{\widehat X,\omega}_{c_1}(L)=\chi^{X,\omega}_{c_1}(L)$, 
\item $\chi^{\widehat X,\omega}_{c_1+E}(L)=\Lambda \chi^{X,\omega}_{c_1}(L)$.
\end{enumerate}
\end{Proposition}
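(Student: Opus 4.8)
The plan is to reduce everything to the wallcrossing description of the extended invariants given in \defref{chiext} and \lemref{thetawall}, so that both identities become equalities of indefinite theta functions. The key point is that the generating functions $\chi^{X,\omega}_{c_1}(L)$ for $\omega\in C_X\cup S_X$ are, by \defref{chiext}, determined by a fixed reference value on $X$ together with a $\Psi$-term, and the same holds on $\widehat X$. Since $H^2(\widehat X,\Z)=H^2(X,\Z)\oplus \Z E$ orthogonally (with $E^2=-1$), the lattice $\Gamma_{\widehat X}$ is $\Gamma_X\perp \langle E\rangle$, and $K_{\widehat X}=K_X+E$, $\sigma(\widehat X)=\sigma(X)-1$. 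I would first establish the two identities in the \emph{ample} chamber, where the classical blowup relation \lemref{blowsimple} directly gives, for a suitable $\omega-\epsilon E$ on $\widehat X$, that $\chi(M^{\widehat X}_{\omega-\epsilon E}(c_1,d),\mu(L))=\chi(M^X_\omega(c_1,d),\mu(L))$ and $\chi(M^{\widehat X}_{\omega-\epsilon E}(c_1+E,d+1),\mu(L))=\chi(M^X_\omega(c_1,d),\mu(L))$; summing over $d$ yields (1) and the shift by $\Lambda$ in (2) on the level of the ample reference invariants.

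Next I would propagate these identities from the ample reference polarization to arbitrary $\omega\in C_X\cup S_X$ using \remref{difftheta}. For (1), the crucial computation is that the wallcrossing contribution $\Psi^{\omega,\eta}_{\widehat X,c_1}(L)$ on $\widehat X$ agrees with $\Psi^{\omega,\eta}_{X,c_1}(L)$ on $X$. This follows because the relevant classes $\xi$ of type $(c_1)$ on $\widehat X$ that cross between $\omega$ and $\eta$ (both pulled back from $X$) must be orthogonal to $E$: a class $\xi=\xi_0+aE$ with $\<\omega,\xi\>$ and $\<\eta,\xi\>$ depending only on $\xi_0$ forces the $E$-component to contribute only through $Q$ and through $\widetilde\theta_4(h)^{(L-K_{\widehat X})^2}\theta_4^{\sigma(\widehat X)}$. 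One checks that the change $(L-K_{\widehat X})^2=(L-K_X)^2-1$ together with $\sigma(\widehat X)=\sigma(X)-1$ means one extra factor of $\widetilde\theta_4(h)^{-1}$ and one fewer factor of $\theta_4$; since $E$ is pulled back from a point and $\<E,\xi_0\>=0$, the sum over the $E$-component of $\xi$ collapses to the $a=0$ term, reproducing exactly $\Psi^{\omega,\eta}_{X,c_1}(L)$.

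For (2) the same reduction applies but now $c_1$ is replaced by $c_1+E$, so the lattice coset is $c_1+E+2H^2(\widehat X,\Z)$ and the crossing classes have half-integral $E$-component. Here the $E$-part of the indefinite theta function factors off as a one-variable classical theta contribution which, combined with the $\Lambda^2 y^{\<\xi,L-K_X\>}$ prefactor, produces precisely the overall factor $\Lambda$; I expect this bookkeeping with the $E$-summation and the characteristic element $K_{\widehat X}=K_X+E$ in \thmref{thetaprop}(3) to be the main obstacle, since one must verify that the transformation factors and the $(-1)$-signs coming from $Q(c)$ and $c\cdot w$ match up correctly after the splitting. Once the $\Psi$-terms on $\widehat X$ are shown to equal $\Psi^{\omega,\eta}_{X,c_1}(L)$ (resp.\ $\Lambda\,\Psi^{\omega,\eta}_{X,c_1}(L)$), the identities (1) and (2) follow for all $\omega\in C_X\cup S_X$ by adding the matching reference terms established in the first step, and invoking independence of the reference choice via the cocycle condition.
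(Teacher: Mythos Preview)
Your overall strategy is the same as in \cite{GY} (which is what the paper cites): anchor at an ample reference $H_0$ on $X$ (and $H_0-\epsilon E$ on $\widehat X$) via \lemref{blowsimple}, and then propagate to arbitrary $\omega\in C_X\cup S_X$ by comparing the $\Psi$-terms. That part is fine.

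The gap is in your mechanism for (1). The assertion that ``the sum over the $E$-component of $\xi$ collapses to the $a=0$ term'' is false. For $c_1$ pulled back from $X$, the classes of type $(c_1)$ on $\widehat X$ are $\xi=\xi_0+aE$ with $\xi_0$ of type $(c_1)$ on $X$ and $a\in 2\Z$; since $\omega,\eta$ are pulled back, the indicator $\mu(\xi\cdot\omega)-\mu(\xi\cdot\eta)$ depends only on $\xi_0$, so for each $\xi_0$ you get \emph{all} even $a$. The correct computation is that this inner sum is a rank-one theta function:
\[
\sum_{a\in 2\Z} i^{-a}\,q^{a^2}\,y^{a}=\sum_{n\in\Z}(-1)^n q^{4n^2}y^{2n}=\theta_4(h),
\]
which exactly cancels the extra factor $\widetilde\theta_4(h)^{-1}\theta_4^{-1}=\theta_4(h)^{-1}$ coming from $(L-K_{\widehat X})^2=(L-K_X)^2-1$ and $\sigma(\widehat X)=\sigma(X)-1$. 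That is what yields $\Psi^{\omega,\eta}_{\widehat X,c_1}(L)=\Psi^{\omega,\eta}_{X,c_1}(L)$. You already describe the right mechanism for (2): there $a$ runs over odd integers, the inner sum is $\theta_1(h)$, and $\theta_1(h)/\theta_4(h)=\Lambda$. So the fix is simply to apply to (1) the same one-variable theta factorization you use for (2), rather than a (nonexistent) collapse to $a=0$. One further small point: the reference on $\widehat X$ is $H_0-\epsilon E$, not a pullback, so to make the splitting clean use the cocycle condition to replace $\Psi^{\omega,H_0-\epsilon E}_{\widehat X}$ by $\Psi^{\omega,H_0}_{\widehat X}$ plus a term with no walls between $H_0$ and $H_0-\epsilon E$ (for $\epsilon$ small and $H_0$ off all walls of type $(c_1)$ on $X$), which contributes nothing after taking $\Coeff_{q^0}$.
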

\begin{proof}This is \cite[Prop.~4.9]{GY}, where the additional assumption is made that $-K_{\widehat X}$ is ample. The proof works without this assumption with very minor modifications. 
In the original proof the result is first proven for an $H_0\in C_X$ which does not lie on any wall of type $(c_1)$. We now have to assume in addition that $\<H_0,K_X\><0$.
The rest of the proof is unchanged. 
\end{proof}
In  \cite[Thm.~4.21]{GY} is is shown that if $X$ is a rational surface with $-K_X$ ample, then $\chi^{X,F}_{c_1}(L)=\chi^{X,G}_{c_1}(L)$  for all $F,G\in S_X$.
A modification of this proof shows the following.
\begin{Proposition}\label{basic}
Let $X$ be $\P^1\times\P^1$ or a blowup of $\P^2$ in finitely many points. Let $L\in \Pic(X)$, let $c_1\in H^2(X,\Z)$ with $\<c_1,L\>$ even. Let $F,G\in S_X$.
Assume that for all $W\in K_X+2H^2(X,\Z)$ with $\<F,W\>\le 0\le  \<G,W\>$, we have 
$W^2<K_X^2$. Then 
$\chi^{X,F}_{c_1}(L)=\chi^{X,G}_{c_1}(L)$.
\end{Proposition}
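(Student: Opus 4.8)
The plan is to reduce the asserted equality to the vanishing of a single $q^0$-coefficient and then to extract that coefficient by a modular transformation, following the proof of \cite[Thm.~4.21]{GY} and isolating the one place where the ampleness of $-K_X$ is used. Since $F,G\in S_X\subset S_X\cup C_X$, \remref{difftheta} gives
$$\chi^{X,F}_{c_1}(L)-\chi^{X,G}_{c_1}(L)=\Coeff_{q^0}\big[\Psi^{F,G}_{X,c_1}(L;\Lambda,\tau)\big],$$
so the whole statement is equivalent to showing that this $q^0$-coefficient, regarded as a power series in $\Lambda$, is zero.

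Recall that $\Psi^{F,G}_{X,c_1}(L)$ is the product of the indefinite theta function $\Theta^{F,G}_{X,c_1,K_X}$, evaluated at $x=(L-K_X)h/2\pi i$, with the factor $\Lambda^2\widetilde\theta_4(h)^{(L-K_X)^2}\theta_4^{\sigma(X)}u'h^*$. By \thmref{thetaprop}(3) the product $\Theta^{F,G}_{X,c_1,K_X}\theta_3^{\sigma(X)}$ is modular of weight one for $Sl(2,\Z)$, up to the explicit multiplier system and the interchange of characteristics $c_1\leftrightarrow K_X$ recorded there ($K_X$ being our chosen characteristic element). Combining this with the transformation laws for the $\theta_i$, $u'$, $h^*$ and $\widetilde\theta_4(h)$ collected in \secref{thetamod}, the expression $\Psi^{F,G}$ assembles into a meromorphic modular form of fixed weight. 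The plan is then to apply the modular substitution used in \cite[Thm.~4.21]{GY}, which carries the cusp $q=0$ — where $\Coeff_{q^0}$ is taken — to the opposite cusp; under this substitution the extraction of the constant $q$-coefficient is matched against coefficients supported along the two isotropic directions $F$ and $G$.

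To identify which terms survive I would use the Fourier development of \thmref{thetaprop}(2): after the interchange $c_1\leftrightarrow K_X$, the contributions to $\Coeff_{q^0}$ that are not of the standard wallcrossing type organize into a sum over characteristic classes $W\in K_X+2H^2(X,\Z)$ lying in the region $\langle F,W\rangle\le 0\le\langle G,W\rangle$ cut out by the two isotropic directions (these are exactly the classes governed by the geometric-series prefactors in \thmref{thetaprop}(2)). Because $F$ and $G$ are nef and isotropic on a rational surface, $\langle F,K_X\rangle,\langle G,K_X\rangle<0$ (indeed $=-2$ for a ruling class, by adjunction), so $W=K_X$ — the unique class that would contribute a genuine $q^0$-term — does not lie in this region. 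For every $W$ in the region the $q$-valuation of its contribution is a positive multiple of $K_X^2-W^2$, which is strictly positive by the hypothesis $W^2<K_X^2$; hence each such term is divisible by a positive power of $q$ and contributes nothing to $\Coeff_{q^0}$. Summing, the whole coefficient vanishes. This is precisely the step where \cite[Thm.~4.21]{GY} invoked ampleness of $-K_X$ (which forces the inequality when $K_X^2>0$); supplying the inequality directly is what allows blowups of $\P^2$ in arbitrarily many points, where $-K_X$ need not be ample.

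The main obstacle is the bookkeeping in the third step: verifying that the modular substitution produces exactly the sum over $W\in K_X+2H^2(X,\Z)$ in the prescribed region and no further terms, and that the $q$-valuation of the $W$-term is controlled by $K_X^2-W^2$ with the correct sign. This requires tracking the multiplier systems and characteristic shifts of \thmref{thetaprop}(3) through all the auxiliary factors, and confirming that passing from the ample cone to $F,G\in S_X$ introduces no boundary contributions beyond those already captured by the prefactors of \thmref{thetaprop}(2); the degree bounds of \lemref{vanwall} should be used to control convergence and the range of surviving terms. The hypothesis that $X$ is $\P^1\times\P^1$ or a blowup of $\P^2$ enters here, guaranteeing the existence of the isotropic classes $F,G\in S_X$, the value $\langle F,K_X\rangle=\langle G,K_X\rangle=-2$, and that $H^2(X,\Z)$ is the expected Lorentzian lattice with characteristic coset $K_X+2H^2(X,\Z)$.
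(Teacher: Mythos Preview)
Your approach is correct and matches the paper's: reduce to $\Coeff_{q^0}[\Psi^{F,G}_{X,c_1}(L)]=0$, apply the $S$-transform identity from the proof of \cite[Thm.~4.21]{GY} to rewrite this in terms of $\Coeff_{q^0}$ of $\Theta^{F,G}_{X,K_X,c_1}\cdot\theta_2^{K_X^2}\cdot(\text{regular at }q=0)$, and then observe that every summand of the theta function carries a factor $q^{-W^2}$ with $W\in K_X+2H^2(X,\Z)$ satisfying $\langle F,W\rangle\le 0\le\langle G,W\rangle$, so the hypothesis $W^2<K_X^2$ makes each total $q$-order strictly positive. The bookkeeping you flag as the main obstacle is exactly what the paper imports verbatim from \cite{GY}; your separate discussion of $W=K_X$ is unnecessary (the hypothesis already excludes it), and note that elements of $S_X$ are not assumed nef.
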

\begin{proof}
We know that $\chi^{X,F}_{c_1}(L)-\chi^{X,G}_{c_1}(L)=\Coeff_{q^0}\big[\Psi^{F,G}_{X,c_1}(L,\Lambda,\tau)\big]$, and 
in the proof of \cite[Thm.~4.21]{GY} it is shown that 
$$\Coeff_{q^0}\big[\Psi^{F,G}_{X,c_1}(L,\Lambda,\tau)\big]=-\frac{1}{4}\Coeff_{q^0}\big[\tau^{-2}\Psi^{F,G}_{X,c_1}(L,\Lambda,S\tau)\big]
-\frac{1}{4}i^{c_1^2+3}\Coeff_{q^0}\big[\tau^{-2}\Psi^{F,G}_{X,c_1}(L,i\Lambda,S\tau)\big].$$
Therefore it is enough to show that 
$\Coeff_{q^0}\big[\tau^{-2}\Psi^{F,G}_{X,c_1}(L,\Lambda,S\tau)\big]=0$.
Furthermore in the proof of  \cite[Thm.~4.21]{GY} we have seen that the three functions
$\widetilde u:=-\frac{\theta_3^4+\theta_4^4}{\theta_3^2\theta_4^2}$,
$$
\widetilde h=-\frac{2}{\theta_{4}\theta_{3}}.
\sum_{\substack{n\ge 0\\ n\ge k\ge0}}
\binom{-\frac{1}{2}}{n}\binom{n}{k}\frac{\widetilde u^k\Lambda^{4n-2k+1}}{4n-2k+1},\quad 
\widetilde G(\Lambda,\tau)=
\frac{(-1)^{\<c_1,K_X\>/2-\sigma(X)/4}\Lambda^3}{\theta_3^3\theta_4^3(1+\widetilde{u}\Lambda^2+\Lambda^4)} 
\widetilde\theta_{2}(\widetilde h)^{(L-K_X)^2}
$$ 
are regular at $q=0$, and furthermore that we can write
$$\tau^{-2}\Psi^{F,G}_{X,c_1}(L,\Lambda,S\tau)=\Theta_{X,K_X,c_1}^{F,G}\left(\frac{(L-K_X)\widetilde h}{2\pi  i }, \tau\right)\theta_{2}^{K_X^2}
\widetilde G(\Lambda,\tau).$$
(note that $\sigma(X)+8=K_X^2$ to compare with the formulas in the  proof of \cite[Thm.~4.19]{GY}).
As $\theta_{2}^{K_X^2}$ starts with $q^{K_X^2}$, specializing the formula of \thmref{thetaprop}(2) to  the case
$c=K_X$, $b=c_1$, $F=f$, $G=g$, we see that all the summands in $\Theta_{X,K_X,c_1}^{F,G}\left(\frac{(L-K_X)\widetilde h}{2\pi  i }, \tau\right)$ 
are of the form $q^{-W^2}J_W(\Lambda,\tau)$, where $J_W(\Lambda,\tau)$ is regular at $q=0$ and 
 $W\in K_X+2H^2(X,\Z)$ with $\<F,W\>\le 0\le  \<G,W\>$.
The claim follows.
\end{proof}

\begin{Corollary}\label{strucdiff}
Let $X=\P^1\times\P^1$, or let $X$ be the blowup of $\P^2$ in finitely many general points $p_1,\ldots,p_n$ with exceptional divisors $E_1,\ldots,E_n$. In case $X=\P^1\times\P^1$ let 
$F$ be the class of a fibre of the projection to one of the two factors;  otherwise let $F=H-E_i$ for some $i\in \{1,\ldots,n\}$. 
Let $c_1\in H^2(X,\Z)$ and let $L$ be a line bundle on $X$ with $\<L,c_1\>$ even.
Then 
\begin{enumerate}
\item $\chi^{X,F}_{c_1}(L)=0.$

\item Thus for all $\omega\in S_X\cup C_X$ we have 
$$\chi^{X,\omega}_{c_1}(L)=\Coeff_{q^0}\big[\Psi^{\omega,F}_{X,c_1}(L;\Lambda,\tau)\big].$$
\end{enumerate}
\end{Corollary}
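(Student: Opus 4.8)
The plan is to get part (2) for free from part (1). By \remref{difftheta}, which extends \lemref{thetawall} to all classes in $S_X\cup C_X$ through the cocycle condition, one has $\chi^{X,\omega}_{c_1}(L)-\chi^{X,F}_{c_1}(L)=\Coeff_{q^0}\big[\Psi^{\omega,F}_{X,c_1}(L;\Lambda,\tau)\big]$ for every $\omega\in S_X\cup C_X$; so once the vanishing $\chi^{X,F}_{c_1}(L)=0$ of part (1) is established, part (2) is immediate. Thus the entire content is the vanishing in (1).

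For (1) my strategy is to produce, on a one-point blowup of $X$, an auxiliary ruling class $F'$ whose intersection with the relevant first Chern class is odd. By \remref{vanbound} such a ruling makes the corresponding moduli spaces empty, so its generating function vanishes, and I then transport this vanishing to $F$ using the wall-independence of \propref{basic}. I first reduce the $\P^1\times\P^1$ case to the blowup case: blowing up one general point does not change $\chi^{X,F}_{c_1}(L)$ by \propref{blowgen}(1), and the blowup of $\P^1\times\P^1$ is the blowup of $\P^2$ in two points, under which the fibre class $F$ becomes a class of the form $H-E_i$. Hence it suffices to treat $X$ a blowup of $\P^2$ in general points $p_1,\dots,p_n$ with $F=H-E_i$.

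In that case set $c_0:=\<c_1,H\>$ and blow up a further general point $p_{n+1}$ to obtain $\widehat X$ with exceptional divisor $E_{n+1}$; let $F'=H-E_{n+1}$ be the fibre class of the pencil of lines through $p_{n+1}$, a $\P^1$-fibration in the sense of \remref{vanbound}. Because $c_1$ is pulled back from $X$ we have $\<c_1,F'\>=c_0$ and $\<c_1+E_{n+1},F'\>=c_0+1$, so exactly one of these is odd. If $c_0$ is odd I use \propref{blowgen}(1) to write $\chi^{X,F}_{c_1}(L)=\chi^{\widehat X,F}_{c_1}(L)$ and \remref{vanbound} to get $\chi^{\widehat X,F'}_{c_1}(L)=0$; if $c_0$ is even I use \propref{blowgen}(2) to write $\Lambda\,\chi^{X,F}_{c_1}(L)=\chi^{\widehat X,F}_{c_1+E_{n+1}}(L)$ and \remref{vanbound} to get $\chi^{\widehat X,F'}_{c_1+E_{n+1}}(L)=0$ (note $\<L,c_1+E_{n+1}\>=\<L,c_1\>$ stays even). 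In both cases it then remains to identify the $F$-invariant with the vanishing $F'$-invariant for the same first Chern class.

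That identification is the crux, and I would get it from \propref{basic} applied to the pair $(F,F')=(H-E_i,H-E_{n+1})$ on $\widehat X$, whose hypothesis I must verify: every $W\in K_{\widehat X}+2H^2(\widehat X,\Z)$ with $\<F,W\>\le 0\le \<F',W\>$ satisfies $W^2<K_{\widehat X}^2=8-n$. Writing $W=aH-\sum_j b_jE_j$, being characteristic forces $a$ and all $b_j$ to be odd, hence nonzero; the two inequalities become $b_i\ge a$ and $b_{n+1}\le a$. For $a>0$ the first gives $b_i^2\ge a^2$, for $a<0$ the second gives $b_{n+1}^2\ge a^2$, and since the remaining $n$ squares are each at least $1$ we obtain $\sum_j b_j^2\ge a^2+n$ in either case, so that $W^2=a^2-\sum_j b_j^2\le -n<8-n$. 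This is the main obstacle: the vanishing itself is cheap once an odd ruling is present, but for $c_1$ divisible by $2$ (e.g.\ $c_1=0$) none exists on $X$, which is exactly why the blowup-and-twist of \propref{blowgen}(2) is needed, and the whole comparison then rests on the uniform indefinite-lattice bound $W^2\le -n<K_{\widehat X}^2$ above.
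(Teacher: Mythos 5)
Your proof is correct and follows essentially the same route as the paper: blow up one more general point, twist $c_1$ by the new exceptional divisor when $\<c_1,H\>$ is even so that the auxiliary ruling $H-E_{n+1}$ pairs oddly with it, kill that invariant via \remref{vanbound}, and transfer the vanishing back to $F$ via \propref{basic} with the same characteristic-vector estimate $W^2\le -n<8-n=K_{\widehat X}^2$. The only (immaterial) difference is that you convert $\P^1\times\P^1$ to a blowup of $\P^2$ by a separate preliminary blowup, whereas the paper lets the single auxiliary blowup serve both purposes.
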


\begin{proof}
(1) Let $\widehat X$ be the blowup of $X$ in a general point with exceptional divisor $E$. Then $\widehat X$ is the blowup of $\P^2$ in $n+1$ general general points, (with $n=1$ in case $X=\P^1\times\P^1$). We denote $E_1,\ldots,E_{n+1}$ the exceptional divisors, then we can assume that $F=H-E_1$. We put $G=H-E_{n+1}$. If $\<c_1, H\>$ is even, we put $\widehat c_1=c_1+E_{n+1}$, and 
if $\<c_1, H\>$ is even, we put $\widehat c_1=c_1$. Thus $\<\widehat c_1, G\>$ is odd and therefore by \remref{vanbound}  we get 
$\chi^{\widehat X, G}_{\widehat c_1}(L)=0$. 
By \propref{blowgen} we have $\chi^{X,F}_{c_1}(L)=\chi^{\widehat X,F}_{\widehat c_1}(L)$ or 
$\chi^{X,F}_{c_1}(L)=\frac{1}{\Lambda}\chi^{\widehat X,F}_{\widehat c_1}(L)$.
Therefore it is enough to show that $\chi^{\widehat X,F}_{\widehat c_1}(L)=\chi^{\widehat X,G}_{\widehat c_1}(L)$.
So by \propref{basic} we need to show that for all $W\in K_{\widehat X}+2H^2(\widehat X,\Z)$ with $\<F,W\>\le 0\le  \<G,W\>$, we have 
$W^2<K_{\widehat X}^2$. 
Let $W=kH+a_1E_1+\ldots +a_{n+1}E_{n+1}\in K_{\widehat X}+2H^2(\widehat X,\Z)$ with $\<F,W\>\le 0\le  \<G,W\>$.
Then $k,a_1,\ldots,a_{n+1}$ are odd integers, the condition 
$\<F,W\>\le 0$ gives that $k\le a_1$, and the condition $\<G,W\>\ge 0$ gives that $k\ge a_{n+1}$.
So either $k<0$ and $|a_1|\ge |k|$ or $k>0$, and $|a_{n+1}|\ge |k|$.
As all the $a_i$ are odd, this gives 
$$W^2=k^2-a_1^2-\ldots-a_{n+1}^2\le -n<8-n=K_X^2.$$
\end{proof}

\subsection{Invariants with point class}
We can now define $K$-theoretic Donaldson invariants with powers of the point class.
\begin{Corollary}
Let $X$ be the blowup of  $\P^2$ in general points $p_1,\ldots,p_r$, with exceptional divisors $E_1,\ldots, E_r$, 
Let $\overline X$ be the blowup of  $\P^2$ in general points $q_1,\ldots,q_r$, with exceptional divisors $\overline E_1,\ldots, \overline E_r$.
For a class $M=dH+a_1E_1+\ldots+a_rE_r\in H^2(X,\R)$ let $\overline M:=dH+a_1\overline E_1+\ldots+a_r\overline E_r\in H^2(\overline X,\R)$.
Then for all $L\in \Pic(X)$, $c_1\in H^2(X,\Z)$ with $\<L,c_1\>$ even, $\omega\in C_X\cup S_X$, we have
$\chi^{X,\omega}_{c_1}(L)=\chi^{\overline X,\overline \omega}_{\overline c_1}(\overline L)$.
\end{Corollary}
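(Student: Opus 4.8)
The plan is to reduce both generating functions to the purely lattice-theoretic formula of \corref{strucdiff}(2), and then to observe that the resulting expression is manifestly invariant under the tautological isometry between $H^2(X,\Z)$ and $H^2(\overline X,\Z)$. The conceptual point is that $\chi^{X,\omega}_{c_1}(L)$ is computed by an indefinite theta function that sees the surface only through its intersection lattice together with $K_X$, $\sigma(X)$, and the classes $c_1,L,\omega$; everything carrying the $q$- and $\Lambda$-dependence is intrinsic to the modular variables and blind to the position of the blown-up points.

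First I would introduce the lattice map $\phi\colon H^2(X,\Z)\to H^2(\overline X,\Z)$ determined by $\phi(H)=H$ and $\phi(E_i)=\overline E_i$, so that $\phi(M)=\overline M$ for every class $M$. Since $H,E_1,\ldots,E_r$ and $H,\overline E_1,\ldots,\overline E_r$ are orthogonal bases with the same self-intersections, $\phi$ is an isometry: it preserves the bilinear form $\alpha\cdot\beta=-\<\alpha,\beta\>$ and the quadratic form $Q$, sends $K_X=-3H+\sum_i E_i$ to $K_{\overline X}$, and sends $L\mapsto\overline L$, $c_1\mapsto\overline c_1$, $\omega\mapsto\overline\omega$. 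Moreover $\sigma(X)=\sigma(\overline X)=1-r$, and $\phi$ carries the component of $\{Q<0\}$ containing the ample class $H$ on $X$ to the corresponding one on $\overline X$; consequently $\phi(S_X\cup C_X)=S_{\overline X}\cup C_{\overline X}$, so in particular $\overline\omega\in S_{\overline X}\cup C_{\overline X}$ and the hypotheses of \corref{strucdiff} are available on $\overline X$ as well.

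Assuming $r\ge 1$ (the case $r=0$ is trivial, as then $X=\overline X=\P^2$), I would set $F:=H-E_1\in S_X$ and $\overline F:=\phi(F)=H-\overline E_1\in S_{\overline X}$. By \corref{strucdiff}(2) applied on each surface,
$$\chi^{X,\omega}_{c_1}(L)=\Coeff_{q^0}\big[\Psi^{\omega,F}_{X,c_1}(L;\Lambda,\tau)\big],\qquad \chi^{\overline X,\overline\omega}_{\overline c_1}(\overline L)=\Coeff_{q^0}\big[\Psi^{\overline\omega,\overline F}_{\overline X,\overline c_1}(\overline L;\Lambda,\tau)\big],$$
so it suffices to match the two functions $\Psi$ coefficientwise in $\Lambda$. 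The heart of the argument is that $\Psi^{\omega,F}_{X,c_1}(L)$ depends on its surface only through data that $\phi$ preserves. Indeed, by its definition the indefinite theta function $\Theta^{f,g}_{\Gamma,c,b}$ is a function of the abstract lattice $(\Gamma,Q)$ together with $(f,g,c,b)$ and its $\Gamma_\C$-argument. Reindexing the defining sum by the bijection $\xi\mapsto\phi(\xi)$ of $H^2(X,\Z)+c_1/2$ onto $H^2(\overline X,\Z)+\overline c_1/2$ preserves each factor: $Q(\xi)$, the signs $\mu(\xi\cdot\omega)$ and $\mu(\xi\cdot F)$, and $\xi\cdot(x+K_X/2)$, where $x=(L-K_X)h/(2\pi i)$ and $\phi_\C(x)=(\overline L-K_{\overline X})h/(2\pi i)$. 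Hence $\Theta^{\omega,F}_{X,c_1,K_X}(x,\tau)=\Theta^{\overline\omega,\overline F}_{\overline X,\overline c_1,K_{\overline X}}(\phi_\C(x),\tau)$. The remaining factors $\Lambda^2\widetilde\theta_4(h)^{(L-K_X)^2}\theta_4^{\sigma(X)}u'h^*$ agree as well, since $(L-K_X)^2=(\overline L-K_{\overline X})^2$ and $\sigma(X)=\sigma(\overline X)$, while $\Lambda,\widetilde\theta_4(h),u',h^*$ depend only on $\tau$ and $\Lambda$. Therefore $\Psi^{\omega,F}_{X,c_1}(L)=\Psi^{\overline\omega,\overline F}_{\overline X,\overline c_1}(\overline L)$, and applying $\Coeff_{q^0}$ finishes the proof.

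I expect the only genuine work to be the bookkeeping of this last step: verifying that $\phi$ matches every ingredient of $\Psi$, in particular the destabilizing classes $F\leftrightarrow\overline F$ and the cone conditions defining $S_\Gamma$ and $C_\Gamma$, so that \corref{strucdiff}(2) is legitimately applicable on $\overline X$. There is no analytic or geometric obstacle beyond this naturality check, precisely because the hard content — the vanishing $\chi^{X,F}_{c_1}(L)=0$ and the reduction to a single theta-function term — has already been supplied by \corref{strucdiff}.
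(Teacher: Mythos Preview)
Your proof is correct and follows essentially the same approach as the paper: introduce the lattice isometry $\phi$ sending $E_i\mapsto\overline E_i$, use \corref{strucdiff}(2) with $F=H-E_1$ on both sides, and observe that the isometry identifies the two $\Psi$-functions. The paper's proof is terser but structurally identical; your additional bookkeeping (checking that $\phi$ respects $K_X$, $\sigma(X)$, and the cones $S_X,C_X$, and handling the trivial case $r=0$) is all sound.
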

\begin{proof} 
Let  $F=H-E_1\in S_X$, then $\overline F=H-\overline E_1\in S_{\overline X}$, and thus
$\chi^{X,F}_{c_1}(L)=0=\chi^{\overline X,\overline F}_{\overline c_1}(\overline L)$.
The map sending $E_i$ to $\overline E_i$ for all $i$ is an isomorphism of lattices, thus 
$\Psi^{\omega,F}_{X,c_1}(L;\Lambda,\tau)=\Psi^{\overline \omega,\overline F}_{\overline X,\overline c_1}(\overline L;\Lambda,\tau)$.
Thus we get by \corref{strucdiff} that
$$\chi^{X,\omega}_{c_1}(L)=\Coeff_{q^0}\big[\Psi^{\omega,F}_{X,c_1}(L;\Lambda,\tau)\big]=\Coeff_{q^0}\big[\Psi^{\overline \omega,\overline F}_{\overline X,\overline c_1}(\overline L;\Lambda,\tau)\big]
=\chi^{\overline X,\overline \omega}_{\overline c_1}(\overline L).$$
\end{proof}

\begin{Definition}
Let $X$ be $\P^1\times\P^1$ or the blowup of $\P^2$ in finitely many general points. Let $\omega\in S_X\cup C_X$, $c_1\in H^2(X,\Z)$, $L\in Pic(X)$.
Let $X_r$ be the blowup of $X$ in $r$ general points, with exceptional divisors $E_1,\ldots,E_r$. Write $E:=E_1+\ldots+E_r$.
We put 
$$\chi^{X,\omega}_{c_1}(L,P^r):=\Lambda^{-r}\chi^{X_r,\omega}_{c_1+E}(L-E),
\quad
\chi^{X,\omega}_{c_1,d}(L,P^r):=\Coeff_{\Lambda^d}\big[\chi^{X,\omega}_{c_1}(L,P^r)\big].$$ 
We call the $\chi^{X,\omega}_{c_1,d}(L,P^r),\ \chi^{X,\omega}_{c_1}(L,P^r)$ the $K$-theoretic Donaldson invariants with point class.
More generally, if $F(\Lambda,P)=\sum_{i,j} a_{i,j} \Lambda^iP^j\in \Q[\Lambda,P]$ is a polynomial, we put
$$\chi^{X,\omega}_{c_1}(L,F(\Lambda,P)):=\sum_{i,j}a_{i,j} \Lambda^i \chi^{X,\omega}_{c_1}(L,P^j).$$
\end{Definition}

\begin{Remark}
There should be a $K$-theory class ${\mathcal P}$ on $M^X_\omega(c_1,d)$, such that 
$\chi^{X,\omega}_{c_1,d}(L,P^r)=\chi(M^X_\omega(c_1,d),\mu(L)\otimes {\mathcal P}^r)$.
By the definition $\chi^{X,M}_{c_1}(L,P)=\Lambda^{-r}\chi^{\widehat X,M}_{c_1+E}(L-E)$ the sheaf $\PP$  would encode local information  at the blown-up point.
We could view $\mathcal P$ as a $K$-theoretic analogue of the point class in Donaldson theory. 
This is our motivation for the name of $\chi^{X,\omega}_{c_1}(L,P^r)$.
For the moment we do not attempt to give a definition of this class ${\mathcal P}$.
There are already speculations about possible definitions of $K$-theoretic Donaldson invariants with powers of the point class in \cite[Sect.~1.3]{GNY}, and the introduction 
of $\chi^{X,\omega}_{c_1}(L,P^r)$ is motivated by that, but for the moment we do not 
try to make a connection to the approach in \cite{GNY}.
\end{Remark}

\section{Blowup polynomials, blowup formulas and blowdown formulas}
In \cite[Section 4.6]{GY} the blowup polynomials 
$R_n(\la,x)$, $S_n(\la,x)$ are introduced. They play a central role in our approach. In this section we will first show that they express all the 
$K$-theoretic Donaldson invariants of the blowup $\widehat X$ of a surface $X$ in terms of the $K$-theoretic Donaldson invariants of $X$.
On the other hand we will use them to show that a small part of the $K$-theoretic Donaldson invariants of the blowup $\widehat X$ determine {\it all} the 
$K$-theoretic Donaldson invariants of $X$ (and thus by the above all the $K$-theoretic Donaldson invariants of any blowup of $X$, including $\widehat X$).
Finally (as already in \cite{GY} in some  cases), in the next section, we will use the blowup polynomials to construct recursion relations for many $K$-theoretic Donaldson invariants of rational ruled surfaces, enough to apply the above-mentioned results and determine all $K$-theoretic Donaldson invariants of $\P^2$ and thus of any blowup of $\P^2$.

\subsection{Blowup Polynomials and blowup formulas}

\begin{Definition}\label{blowpol}
Define for all $n\in \Z$  rational functions  $R_n$, $S_n\in \Q(\la,x)$ by 
$R_0=R_1=1,$   
$S_1=\la,\ S_2=\la x,$
the  recursion relations
\begin{align}
\label{recur} R_{n+1}&=\frac{R_{n}^2-\la^2 S_n^2}{R_{n-1}},\quad  n\ge 1;\qquad
S_{n+1}=\frac{S_{n}^2-\la^2 R_{n}^2}{S_{n-1}},\quad  n\ge 2.
\end{align}
and $R_{-n}=R_{n}$, $S_{-n}=-S_{n}$.
We will prove  later that the $R_n$, 
$S_n$
are indeed polynomials in $\la,x$.
\end{Definition}
The definition gives 
\begin{align*}
R_1&=1,\ R_2=(1-\la^4), R_3=-\la^4 x^2 + (1-\la^4)^2,\ R_4=-\la^4x^4+(1-\la^4)^4,\\
R_5&= -\la^4x^2\big(x^4 +(2-\la^4)(1-\la^4)^2x^2 +3(1-\la^4)^3\big)+(1-\la^4)^6,\\
S_1&=\la,\ S_2=\la x,\ S_3=\la(x^2-(1-\la^4)^2),\ S_4=\la x\big((1-\la^8)x^2-2(1-\la^4)^3\big).
\end{align*}

\begin{Proposition}\label{rnpropo}(\cite[Prop.~4.7]{GY})
\begin{equation}\label{ThRn}
\frac{\widetilde \theta_4(nh)}{\WT_4(h)^{n^2}}=R_n(\Lambda,M),\qquad 
\frac{\WT_1(nh)}{\WT_4(h)^{n^2}}=S_n(\Lambda,M).
\end{equation}
\end{Proposition}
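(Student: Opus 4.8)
The plan is to show that the two families
$$\rho_n:=\frac{\widetilde\theta_4(nh)}{\widetilde\theta_4(h)^{n^2}},\qquad \sigma_n:=\frac{\widetilde\theta_1(nh)}{\widetilde\theta_4(h)^{n^2}}$$
obey exactly the recursions of \defref{blowpol} after the substitution $\la=\Lambda$, $x=M$, and then to match the initial data; a joint induction then forces $\rho_n=R_n(\Lambda,M)$ and $\sigma_n=S_n(\Lambda,M)$. The one structural fact I will use throughout is the identity $\widetilde\theta_1(h)=\Lambda\,\widetilde\theta_4(h)$, which is immediate from \eqref{thetatilde}, since $\widetilde\theta_1(h)=\theta_1(h)/\theta_4=\big(\theta_1(h)/\theta_4(h)\big)\big(\theta_4(h)/\theta_4\big)=\Lambda\,\widetilde\theta_4(h)$. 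All identities below are identities of meromorphic functions (equivalently, via \eqref{dLdh}, of power series in $\Lambda$), so dividing by $\rho_{n-1}$ or $\sigma_{n-1}$ is legitimate.

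First I would record the two classical quartic theta addition formulas (see \cite{WW}, \cite{Ak})
$$\theta_4(u+v)\theta_4(u-v)\theta_4^2=\theta_4(u)^2\theta_4(v)^2-\theta_1(u)^2\theta_1(v)^2,$$
$$\theta_1(u+v)\theta_1(u-v)\theta_4^2=\theta_1(u)^2\theta_4(v)^2-\theta_4(u)^2\theta_1(v)^2,$$
and specialise them to $u=nh$, $v=h$, so that $u\pm v=(n\pm1)h$. Dividing each relation by $\theta_4^4$ converts every $\theta_i(kh)/\theta_4$ into a tilded theta; the key bookkeeping is the exponent identity $(n+1)^2+(n-1)^2=2n^2+2$, which is precisely what makes $n^2$ the correct normalising power. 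Dividing the resulting relations by $\widetilde\theta_4(h)^{2n^2+2}$ and using $\widetilde\theta_1(h)=\Lambda\,\widetilde\theta_4(h)$ to pull out the factor $\Lambda^2$ yields
$$\rho_{n+1}\rho_{n-1}=\rho_n^2-\Lambda^2\sigma_n^2,\qquad \sigma_{n+1}\sigma_{n-1}=\sigma_n^2-\Lambda^2\rho_n^2,$$
which are exactly the relations \eqref{recur} with $\la=\Lambda$, $x=M$.

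It then remains to check the base cases. Here $\rho_0=\widetilde\theta_4(0)=1=R_0$ and $\rho_1=1=R_1$ are trivial, and $\sigma_1=\widetilde\theta_1(h)/\widetilde\theta_4(h)=\Lambda=S_1$ is immediate. The only genuine computation is $\sigma_2=S_2=\Lambda M$: using the duplication formula
$$\theta_1(2h)=\frac{2\,\theta_1(h)\theta_2(h)\theta_3(h)\theta_4(h)}{\theta_2\theta_3\theta_4}$$
and rewriting every factor through $\widetilde\theta_2(h),\widetilde\theta_3(h),\widetilde\theta_4(h)$ together with $\theta_1(h)=\Lambda\,\widetilde\theta_4(h)\,\theta_4$, all the $\theta_i(0)$ and the surplus powers of $\widetilde\theta_4(h)$ cancel and one is left with $\sigma_2=2\Lambda\,\widetilde\theta_2(h)\widetilde\theta_3(h)/\widetilde\theta_4(h)^2=\Lambda M$ by the definition of $M$. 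Since the $R$-recursion runs for $n\ge1$ and the $S$-recursion for $n\ge2$, these four values $\rho_0,\rho_1,\sigma_1,\sigma_2$ determine all $\rho_n,\sigma_n$ (and all $R_n,S_n$), and the joint induction gives $\rho_n=R_n(\Lambda,M)$, $\sigma_n=S_n(\Lambda,M)$ for $n\ge0$. The conventions $R_{-n}=R_n$, $S_{-n}=-S_n$ match the evenness of $\theta_4$ and the oddness of $\theta_1$, so the negative indices require nothing further.

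I expect the main obstacle to be pinning down the exact normalising constants in the two quartic addition formulas and in the duplication formula in the paper's nonstandard convention $q=e^{\pi i\tau/4}$ (so that each $\theta_i(h)$ is the standard Jacobi theta evaluated at $z=h/2\pi i$). Once those identities are fixed with the correct powers of $\theta_4(0)$, the exponent count $(n+1)^2+(n-1)^2=2n^2+2$ makes the recursion matching automatic, and the remainder is the short induction above.
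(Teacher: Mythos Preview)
Your argument is correct. The two classical quartic addition formulas you quote, specialised to $u=nh$, $v=h$ and then divided by $\theta_4^4\,\widetilde\theta_4(h)^{2n^2+2}$, do yield exactly the recursions \eqref{recur} for $\rho_n,\sigma_n$ with $\la=\Lambda$; the exponent bookkeeping $(n+1)^2+(n-1)^2=2n^2+2$ is the right thing to check. The verification of the four initial values $\rho_0,\rho_1,\sigma_1,\sigma_2$ is also right, and your use of the duplication formula for $\theta_1(2h)$ gives $\sigma_2=\Lambda M$ cleanly. The induction then closes, and the parity remark handles negative $n$.

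As for comparison: the paper does not actually prove this proposition; it simply quotes it as \cite[Prop.~4.7]{GY}. Your proof is the natural one and is presumably the argument in \cite{GY}. The theta identities you invoke are used elsewhere in this paper (e.g.\ in the proof of \propref{blowpolprop}(4)), so your approach is entirely in keeping with the paper's toolkit. Your closing worry about the nonstandard $q$-convention is harmless here: the addition and duplication formulas are identities in the variable $h$ alone, with only $\theta_i=\theta_i(0)$ as structure constants, so the specific $q$-normalisation never enters.
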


In the following \propref{blowpsi} and \corref{blowp} let $X=\P^1\times\P^1$, or let $X$ be the blowup of $\P^2$ in finitely many general points $p_1,\ldots,p_n$ with exceptional divisors $E_1,\ldots,E_n$. In case $X=\P^1\times\P^1$ let 
$F$ be the class of a fibre of the projection to one of the two factors;  otherwise let $F=H-E_i$ for some $i\in \{1,\ldots,n\}$.

\begin{Proposition}\label{blowpsi}
Let $c_1\in H^2(X,\Z)$ and let $L$ be a line bundle on $X$ with $\<c_1, L\>$ even.
Let $\widehat X$ be the blowup of $X$ in a point, and let $E$ be the exceptional divisor.
Let $\omega\in C_X\cup S_X$. Then
\begin{enumerate}
\item $\chi^{\widehat X,\omega}_{c_1}(L-(n-1)E)=\Coeff_{q^0}\big[\Psi^{\omega,F}_{X,c_1}(L,\Lambda,\tau)R_{n}(\Lambda, M)\big]$.
\item $\chi^{\widehat X,\omega}_{c_1+E}(L-(n-1)E)=\Coeff_{q^0}\big[\Psi^{\omega,F}_{X,c_1}(L,\Lambda,\tau)S_{n}(\Lambda, M)\big]$.
\end{enumerate}
\end{Proposition}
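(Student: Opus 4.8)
The plan is to reduce both statements, via \corref{strucdiff} applied on the blowup $\widehat X$, to a single identity of $q$-series between the indefinite theta expressions $\Psi$, and then to extract the polynomials $R_n$, $S_n$ from the orthogonal splitting $H^2(\widehat X,\Z)=H^2(X,\Z)\perp\Z E$ together with \propref{rnpropo}.

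First I would observe that $\widehat X$ is again a blowup of $\P^2$ in general points (or of $\P^1\times\P^1$) and that $F=H-E_i$ (or a ruling fibre) still lies in $S_{\widehat X}$, so \corref{strucdiff} is available on $\widehat X$. Since $\langle c_1,L-(n-1)E\rangle=\langle c_1,L\rangle$ is even, \corref{strucdiff}(2) gives $\chi^{\widehat X,\omega}_{c_1}(L-(n-1)E)=\Coeff_{q^0}\big[\Psi^{\omega,F}_{\widehat X,c_1}(L-(n-1)E;\Lambda,\tau)\big]$, and likewise with $c_1$ replaced by $c_1+E$ in part (2). It then suffices to prove the two $q$-series identities
$$\Psi^{\omega,F}_{\widehat X,c_1}(L-(n-1)E;\Lambda,\tau)=\Psi^{\omega,F}_{X,c_1}(L;\Lambda,\tau)\,R_n(\Lambda,M),\qquad \Psi^{\omega,F}_{\widehat X,c_1+E}(L-(n-1)E;\Lambda,\tau)=\Psi^{\omega,F}_{X,c_1}(L;\Lambda,\tau)\,S_n(\Lambda,M).$$

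The heart of the matter is factoring the indefinite theta function. Using $K_{\widehat X}=K_X+E$, the argument becomes $\big(L-(n-1)E-K_{\widehat X}\big)h/(2\pi i)=\big((L-K_X)-nE\big)h/(2\pi i)$. Splitting $\xi=\eta+mE$ with $\eta\in H^2(X,\Z)+c_1/2$ and $E$-coefficient ranging over $\Z$ (respectively over $\Z+\tfrac12$ for the class $c_1+E$), I would use that $\omega,F,L,K_X,c_1$ are all pulled back from $X$ and hence orthogonal to $E$: this makes the indicator factor $\mu(\xi\cdot\omega)-\mu(\xi\cdot F)$ depend only on $\eta$, so the $E$-summation decouples. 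A short computation of $e^{2\pi i\tau Q(\xi)}e^{2\pi i\xi\cdot(x+K_{\widehat X}/2)}$ using $E\cdot E=1$, $E\cdot\eta=0$, $e^{\pi i\tau}=q^4$, $e^{h/2}=y$ identifies the decoupled $E$-sum with $\sum_{m}(-1)^mq^{4m^2}e^{-nmh}=\theta_4(nh)$ in the $c_1$ case and with $i\sum_{m}(-1)^mq^{4(m+1/2)^2}e^{-n(m+1/2)h}=\theta_1(nh)$ in the $c_1+E$ case, while the residual $\eta$-sum is exactly $\Theta^{\omega,F}_{X,c_1,K_X}\big((L-K_X)h/(2\pi i),\tau\big)$. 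Collecting the scalar prefactors via $(L-(n-1)E-K_{\widehat X})^2=(L-K_X)^2-n^2$, $\sigma(\widehat X)=\sigma(X)-1$, and $\widetilde\theta_4(h)=\theta_4(h)/\theta_4$, $\widetilde\theta_1(h)=\theta_1(h)/\theta_4$, the leftover factors combine into $\widetilde\theta_4(nh)/\widetilde\theta_4(h)^{n^2}$ and $\widetilde\theta_1(nh)/\widetilde\theta_4(h)^{n^2}$, which are $R_n(\Lambda,M)$ and $S_n(\Lambda,M)$ by \propref{rnpropo}. Taking $\Coeff_{q^0}$ finishes both parts.

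I expect the main obstacle to be the rigorous justification of this factorization, since the indefinite series is only conditionally convergent and is really defined by meromorphic continuation (\thmref{thetaprop}(1)) or through its Fourier development (\thmref{thetaprop}(2)). The clean way around this is to note that the $E$-direction carries a positive definite form, so its summation converges absolutely and may be performed first for each fixed $\eta$, leaving the indefinite regularization to act only on the $\Gamma_X$-sum. A minor secondary point is parity: in part (2) with $n$ even the geometric pairing $\langle c_1+E,L-(n-1)E\rangle$ is odd, so there the identity is to be read in the extended theta-function sense by which $\chi^{\widehat X,\omega}_{c_1+E}$ is defined, consistent with the point-class conventions of \lemref{vanwall}.
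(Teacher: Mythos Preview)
Your proposal is correct and follows essentially the same route as the paper. The paper also reduces to the $\Psi$-identities
\[
\Psi^{\omega,F}_{\widehat X,c_1}(L-(n-1)E;\Lambda,\tau)=\Psi^{\omega,F}_{X,c_1}(L,\Lambda,\tau)R_n(\Lambda,M),\qquad
\Psi^{\omega,F}_{\widehat X,c_1+E}(L-(n-1)E;\Lambda,\tau)=\Psi^{\omega,F}_{X,c_1}(L,\Lambda,\tau)S_n(\Lambda,M),
\]
and then applies \corref{strucdiff} on $\widehat X$; the only difference is that the paper cites these identities from \cite[Prop.~4.34]{GY} (noting that the argument there extends verbatim to arbitrary blowups of $\P^2$), whereas you sketch the orthogonal-splitting proof directly. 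Your handling of the convergence issue---factoring out the absolutely convergent positive-definite $E$-summation before the indefinite regularization on $\Gamma_X$---is the standard justification, and your parity remark for part~(2) with $n$ even is a fair caveat that the paper leaves implicit.
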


\begin{proof} In \label{highblow}[Prop.~4.34]\cite{GY} it is proven for $X=\P^1\times \P^1$ or the blowup of $\P^2$ in at most 7 points, and any $F,\omega\in C_X\cup S_X$  that 
 \begin{align*}\Psi^{\omega,F}_{\widehat X,c_1}(L-(n-1)E;\Lambda,\tau)&=\Psi^{\omega,F}_{X,c_1}(L,\Lambda,\tau)R_{n}(\Lambda, M)\\
\Psi^{\omega,F}_{\widehat X,c_1+E}(L-(n-1)E;\Lambda,\tau)&=\Psi^{\omega,F}_{X,c_1}(L,\Lambda,\tau)S_{n}(\Lambda, M),
\end{align*}
But the proof works without modification also for $X$ the blowup of $\P^2$ in finitely many points.
The result follows by \corref{strucdiff}.\end{proof}

We now see that the wall-crossing for the  $K$-theoretic Donaldson invariants $\chi^{X,\omega}_{c_1}(L,P^r)$ with point class is given by the 
 wallcrossing terms $\delta^{X}_{\xi}(L,P^r)$.

\begin{Corollary} \label{blowp}
\begin{enumerate}
\item Let $r\ge 0$ and let $L$ be a line bundle on $X$ with $\<L,c_1\>\equiv r\mod 2$.
Then $$\chi^{X,\omega}_{c_1}(L,P^r)=\Coeff_{q^0}\big[\Psi^{\omega,F}_{X,c_1}(L,\Lambda,\tau)M^r\big].$$
\item let $H_1,H_2\in C_X$, not on a wall of type $(c_1)$. Then 
\begin{align*}
\chi^{X,H_2}_{c_1}(L,P^r)-\chi^{X,H_2}_{c_1}(L,P^r)&=\sum_{\xi} \delta^X_{\xi}(L,P^r),\\
\chi^{X,H_2}_{c_1,d}(L,P^r)-\chi^{X,H_2}_{c_1,d}(L,P^r)&=\sum_{\xi} \delta^X_{\xi,d}(L,P^r),
\end{align*}
where in the first (resp.~second) sum $\xi$ runs though all classes of type $(c_1)$ (resp.~$(c_1,d)$)with $\<H_1,\xi\><0<\<H_2,\xi\>$.
\end{enumerate}
\end{Corollary}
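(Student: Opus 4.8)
The plan is to prove (1) by pushing the point-class invariant, which by definition lives on the $r$-fold blowup $X_r$, down to the base surface $X$ using the multiplicativity of the functions $\Psi$ under blowup established in \propref{blowpsi}, and then to deduce (2) formally from (1) together with the cocycle relation and the wallcrossing identity \lemref{thetawall}(1).

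For (1) I would unwind the definition $\chi^{X,\omega}_{c_1}(L,P^r)=\Lambda^{-r}\chi^{X_r,\omega}_{c_1+E}(L-E)$ with $E=E_1+\dots+E_r$, and regard $X_r$ as a tower of single blowups $X=X_0,X_1,\dots,X_r$, where $X_{k+1}$ is the blowup of $X_k$ in one general point with exceptional divisor $E_{k+1}$. Each $X_k$ is again $\P^1\times\P^1$ or a blowup of $\P^2$ in finitely many general points, and the pullback of $F$ stays a fibre class in $S_{X_k}$ and of $\omega$ in $C_{X_k}\cup S_{X_k}$, so the factorization used in the proof of \propref{blowpsi} (from \cite[Prop.~4.34]{GY}) applies at every stage. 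Taking $n=2$, so that $L-(n-1)E_{k+1}=L-E_{k+1}$ and $S_2(\Lambda,M)=\Lambda M$, gives
\begin{equation*}
\Psi^{\omega,F}_{X_{k+1},\,c_1^{(k)}+E_{k+1}}\big(L^{(k)}-E_{k+1};\Lambda,\tau\big)=\Psi^{\omega,F}_{X_k,\,c_1^{(k)}}\big(L^{(k)};\Lambda,\tau\big)\,\Lambda M,
\end{equation*}
where $c_1^{(k)}=c_1+E_1+\dots+E_k$ and $L^{(k)}=L-E_1-\dots-E_k$. Iterating from $k=0$ to $k=r-1$ accumulates one factor $\Lambda M$ per blowup, so $\Psi^{\omega,F}_{X_r,c_1+E}(L-E)=\Psi^{\omega,F}_{X,c_1}(L)\,(\Lambda M)^r$. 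Applying \corref{strucdiff} on $X_r$ and pulling the formal factor $\Lambda^{-r}$ through $\Coeff_{q^0}$ (legitimate since $\Lambda$ is an independent formal variable and $\Coeff_{q^0}$ only affects the $q$-grading) yields
\begin{equation*}
\chi^{X,\omega}_{c_1}(L,P^r)=\Lambda^{-r}\Coeff_{q^0}\big[\Psi^{\omega,F}_{X,c_1}(L)\,\Lambda^r M^r\big]=\Coeff_{q^0}\big[\Psi^{\omega,F}_{X,c_1}(L)\,M^r\big],
\end{equation*}
which is (1).

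For (2) I would apply (1) to both polarizations and subtract, obtaining
\begin{equation*}
\chi^{X,H_2}_{c_1}(L,P^r)-\chi^{X,H_1}_{c_1}(L,P^r)=\Coeff_{q^0}\big[\big(\Psi^{H_2,F}_{X,c_1}(L)-\Psi^{H_1,F}_{X,c_1}(L)\big)\,M^r\big].
\end{equation*}
Since the factor of $\Psi$ following the indefinite theta function does not depend on the polarizations, the cocycle condition for the $\Theta^{f,g}$ gives $\Psi^{H_2,F}_{X,c_1}(L)-\Psi^{H_1,F}_{X,c_1}(L)=\Psi^{H_2,H_1}_{X,c_1}(L)$, so the difference equals $\Coeff_{q^0}[\Psi^{H_2,H_1}_{X,c_1}(L)\,M^r]$. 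By \lemref{thetawall}(1) this is $\Coeff_{q^0}\big[\sum_\xi\overline\Delta^X_\xi(L,P^r)\big]$, the sum running over classes $\xi$ of type $(c_1)$ with $\<H_2,\xi\>>0>\<H_1,\xi\>$; here the hypothesis that $H_1,H_2$ lie off every wall of type $(c_1)$ is exactly what forces the two boundary terms in the Fourier development of \thmref{thetaprop}(2) to vanish. Finally $\Coeff_{q^0}[\overline\Delta^X_\xi(L,P^r)]=\delta^X_\xi(L,P^r)$ by \defref{wallcrossterm} and \remref{delb}(2), which gives the first identity; extracting $\Coeff_{\Lambda^d}$ and using \remref{c1d} (so that only $\xi$ of type $(c_1,d)$ contribute) gives the second.

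The main obstacle is the iterated factorization in (1): one must verify that \propref{blowpsi} genuinely applies at each rung of the blowup tower, that the intermediate surfaces remain of the allowed type with $F$ still a fibre class, and above all that the single-blowup contribution is exactly $S_2=\Lambda M$, so that $r$ successive blowups produce precisely $(\Lambda M)^r$ and the $\Lambda^{-r}$ in the definition cancels the accumulated $\Lambda^r$, leaving the clean $M^r$. Once this multiplicativity is secured, the remainder is formal bookkeeping with $\Coeff_{q^0}$ together with the cocycle and wallcrossing identities already in hand.
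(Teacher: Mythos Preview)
Your argument is correct and follows essentially the same route as the paper: for (1) you unwind the definition on the $r$-fold blowup, iterate the $\Psi$-identity from the proof of \propref{blowpsi} with $S_2(\Lambda,M)=\Lambda M$, apply \corref{strucdiff}, and cancel $\Lambda^r$; for (2) you subtract two instances of (1), use the cocycle relation, and invoke \lemref{thetawall}(1) together with \remref{delb}(2) and \remref{c1d}. The paper's proof is more terse but identical in substance, so your more explicit verification that each intermediate $X_k$ remains of the admissible type (with $F$ still a fibre class and $\omega$ still in $C_{X_k}\cup S_{X_k}$) is a welcome elaboration rather than a deviation.
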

\begin{pf} (1)
Let $\widetilde X$ be the blowup of $X$ in $r$ general points and let $\overline E=\overline E_1+\ldots+\overline E_r$ be the sum of the
exceptional divisors. Then by definition and iteration of \propref{blowpsi} we have 
$$\chi^{X,\omega}_{c_1}(L,P^r)=\frac{1}{\Lambda^r}\chi^{\widetilde X,\omega}_{c_1+\overline E}(L-\overline E)=\frac{1}{\Lambda^r}
\Coeff_{q^0}\big[\Psi^{\omega,F}_{X,c_1}(L,\Lambda,\tau)S_{2}(\Lambda, M)^r\big].$$
The claim follows because $S_{2}(\Lambda, M)=\Lambda M$.
(2) By definition $\Coeff_{q^0}\big[\overline \Delta^X_{\xi}(L,P^r)\big]=\delta^X_{\xi}(L,P^r),$ therefore by \lemref{thetawall}, and using also 
\remref{c1d},
(2) follows from (1).
\end{pf}
With this we get a general blowup formula.
\begin{Theorem}\label{nblow}
Let $X$ be $\P^2$, $\P^1\times\P^1$ or the blowup of $\P^2$ in finitely many general points. 
Let $c_1\in H^2(X,\Z)$ and let $L$ be a line bundle on $X$ and let $r\in \Z_{\ge 0}$ with $\<c_1, L\>\equiv r\mod 2$.
Let $\omega\in C_X\cup S_X$. 
Let $\widehat X$ be the blowup of $X$ in a general point with exceptional divisor $E$.
Then 
\begin{enumerate}
\item $\chi^{\widehat X,\omega}_{c_1}(L-(n-1)E,P^r)=\chi^{X,\omega}_{c_1}(L,P^r \cdot R_n(\Lambda,P))$, 
\item $\chi^{\widehat X,\omega}_{c_1+E}(L-(n-1)E,P^r)=\chi^{X,\omega}_{c_1}(L,P^r \cdot S_n(\Lambda,P))$.
\end{enumerate}
\end{Theorem}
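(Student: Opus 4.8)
The plan is to show that the two sides of each identity become literally the same expression once everything is written as $\Coeff_{q^0}$ of $\Psi^{\omega,F}_{X,c_1}(L;\Lambda,\tau)$ times a universal polynomial in $M$ and $\Lambda$. Before starting I would reduce to the case where $X$ is $\P^1\times\P^1$ or a blowup of $\P^2$ in finitely many general points, so that \corref{blowp} and \propref{blowpsi} (and a class $F\in S_X$) are available. The surface $\P^2$ has $S_{\P^2}=\emptyset$, so it must be treated separately: since every point-class invariant is by definition an invariant of a blowup, \propref{blowgen}(1) identifies $\chi^{\P^2,\omega}_{c_1}(L,P^j)$ with the corresponding invariant on the blowup of $\P^2$ in a point, reducing the $X=\P^2$ instance to the blowup case. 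Throughout I fix $F$ as in \propref{blowpsi}, and observe that $\widehat X$, being a further blowup of $\P^2$ in a general point, is again of the admissible type with $F\in S_{\widehat X}$, so \corref{blowp}(1) applies verbatim on $\widehat X$ as well as on $X$.

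For part (1) I would start from the right-hand side. Writing $P^rR_n(\Lambda,P)=\sum_{i,j}a_{i,j}\Lambda^iP^j$ and invoking the definition of $\chi^{X,\omega}_{c_1}(L,F(\Lambda,P))$ gives $\chi^{X,\omega}_{c_1}(L,P^rR_n(\Lambda,P))=\sum_{i,j}a_{i,j}\Lambda^i\chi^{X,\omega}_{c_1}(L,P^j)$. Because $R_n$ contains only even powers of $P$ (visible from the recursion \eqref{recur} and the listed examples, or from \propref{rnpropo}), every exponent $j$ occurring satisfies $j\equiv r\equiv\<c_1,L\>\bmod 2$, so \corref{blowp}(1) applies to each summand and yields $\chi^{X,\omega}_{c_1}(L,P^j)=\Coeff_{q^0}\big[\Psi^{\omega,F}_{X,c_1}(L;\Lambda,\tau)M^j\big]$. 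Pulling the $\Lambda^i$, which are independent of $q$, through the $q^0$-coefficient and resumming turns the right-hand side into $\Coeff_{q^0}\big[\Psi^{\omega,F}_{X,c_1}(L;\Lambda,\tau)\,M^rR_n(\Lambda,M)\big]$. The $\Psi$-multiplication identity established in the proof of \propref{blowpsi}, namely $\Psi^{\omega,F}_{X,c_1}(L;\Lambda,\tau)R_n(\Lambda,M)=\Psi^{\omega,F}_{\widehat X,c_1}(L-(n-1)E;\Lambda,\tau)$, converts this into $\Coeff_{q^0}\big[\Psi^{\omega,F}_{\widehat X,c_1}(L-(n-1)E;\Lambda,\tau)M^r\big]$, which by \corref{blowp}(1) applied on $\widehat X$ is exactly the left-hand side $\chi^{\widehat X,\omega}_{c_1}(L-(n-1)E,P^r)$.

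Part (2) is formally identical, replacing $R_n$ by $S_n$ and the blowup class $c_1$ by $c_1+E$, using the companion identity $\Psi^{\omega,F}_{X,c_1}(L;\Lambda,\tau)S_n(\Lambda,M)=\Psi^{\omega,F}_{\widehat X,c_1+E}(L-(n-1)E;\Lambda,\tau)$. The step I expect to require the most care, and which is the main obstacle, is the parity bookkeeping: $S_n$ carries only powers of $P$ congruent to $n-1\bmod 2$, so the exponents $j$ appearing in $P^rS_n(\Lambda,P)$ satisfy $j\equiv r+n-1\bmod 2$, and \corref{blowp}(1) applies term-by-term on $X$ only when $\<c_1,L\>\equiv j$, i.e. when $n$ is odd. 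This is precisely the parity for which the left-hand side is itself defined through determinant line bundles, since on $\widehat X$ one computes $\<L-(n-1)E,\,c_1+E\>=\<c_1,L\>+(n-1)$; thus both sides are defined for the same parities and the calculation matches there, while for the remaining parities the equality is read as one of the formal generating functions supplied by the theta/$\Psi$ machinery. Beyond this parity consistency and the reduction of the $X=\P^2$ case, the argument is a mechanical combination of \corref{blowp}(1) with the two $\Psi$-identities of \propref{blowpsi}, so no new analytic input should be needed.
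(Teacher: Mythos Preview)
Your proof is correct and follows essentially the same approach as the paper: reduce the $\P^2$ case via \propref{blowgen}, then combine the $\Psi$-multiplication identities of \propref{blowpsi} with \corref{blowp}(1) to equate both sides as $\Coeff_{q^0}$ of the same expression. The only organizational difference is that you apply \corref{blowp}(1) directly on $X$ and on $\widehat X$, whereas the paper unwinds the definition of the point-class invariant to the $r$-fold blowup $\widetilde X$ and works there; these two packagings amount to the same computation, and your parity discussion is exactly the consistency check needed for the term-by-term application of \corref{blowp}(1).
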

\begin{proof}
If $X=\P^2$, then we apply \propref{blowgen} to reduce to the case that $X$ is the blowup of $\P^2$ in a point. 
Thus we can by \corref{strucdiff} and the definition of $\chi^{X,G}_{c_1}(L,P^s)$,  assume that there is an $G\in S_X$ with $\chi^{X,G}_{c_1}(L,P^s)=0$ for all $s\ge 0$.

(1) Let $\widetilde X$ be the blowup of $X$ in $r$ general points, with exceptional divisors $F_1,\ldots,F_r$ and put 
$F:=F_1+\ldots+F_r$, and let $\overline X$ the blowup of $\widetilde  X$ in a point with exceptional divisor $E$.
Then by definition
$$\chi^{\widehat X,\omega}_{c_1}(L-(n-1)E,P^r)=\chi^{\overline X,\omega}_{c_1+F}(L-F-(n-1)E).$$
We get by  \corref{strucdiff} that 
$$\chi^{\overline X,\omega}_{c_1+F}(L-F-(n-1)E)=\Coeff_{q^0}\big[\Psi^{\omega,G}_{\widetilde X,c_1+F}(L-F,\Lambda,\tau)R_{n}(\Lambda, M)\big].$$
On the other hand, by \corref{blowp} we get 
$$\Coeff_{q^0}\big[\Psi^{\omega,G}_{\widetilde X,c_1+F}(L-F,\Lambda,\tau)R_{n}(\Lambda, M)\big]=\chi^{\widetilde X,\omega}_{c_1+F}(L-F,R_{n}(\Lambda, P))=\chi^{X,\omega}_{c_1}(L,P^r\cdot R_n(\Lambda,P)).$$
The proof of (2) is similar.
\end{proof}

\subsection{Further properties of the blowup polynomials}
\begin{Proposition}\label{blowpolprop}
\begin{enumerate}
\item For all $n\in \Z$,   we have  $R_n
\in \Z[\la^4,x^2]$, $S_{2n+1}\in \la\Z[\la^4,x^2]$, $S_{2n}\in \la x\Z[\la^4,x^2]$.
\item $R_n(\lambda,-x)=R_n(\lambda,x)$ and $S_n(\lambda,-x)=(-1)^{n-1} S_n(\lambda,x)$.
\item The $R_n$, $S_n$ satisfy the symmetries
\begin{align*}
R_{2n}\Big(\frac{1}{\la},\frac{x}{\la^2}\Big)&=\frac{(-1)^n}{\la^{(2n)^2}}R_{2n}(\la,x),\quad S_{2n}\Big(\frac{1}{\la},\frac{x}{\la^2}\Big)=\frac{(-1)^{n-1}}{\la^{(2n)^2}}S_{2n}(\la,x),\\
R_{2n+1}\Big(\frac{1}{\la},\frac{x}{\la^2}\Big)&=\frac{(-1)^n}{\la^{(2n+1)^2}}
S_{2n+1}(\la,x),\quad S_{2n+1}\Big(\frac{1}{\la},\frac{x}{\la^2}\Big)=\frac{(-1)^n}{\la^{(2n+1)^2}}
R_{2n+1}(\la,x).
\end{align*}

\item For all $k,n\in \Z$, we have the relations
\begin{equation}\label{r2nRn}
\begin{split}
R_{2n}&=R_n^4-S_n^4, \quad S_{2n}=\frac{1}{\la}R_nS_n(S_{n+1}R_{n-1}-R_{n+1}S_{n-1}).
\end{split}
\end{equation}
\end{enumerate}
\end{Proposition}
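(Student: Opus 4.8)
The plan is to handle the four parts in an order that lets a single inductive mechanism do most of the work: I would get (2), (3), and the multiplicative $\la$-symmetry feeding into (1) by simultaneous induction on the defining recursion \eqref{recur}, and reserve the theta description of \propref{rnpropo} for the two genuinely analytic points, namely the polynomiality in (1) and the duplication identities (4).

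First I would prove (2). The base cases $R_0=R_1=1$, $S_1=\la$, $S_2=\la x$ have the asserted $x$-parity, and \eqref{recur} propagates it: sending $x\mapsto-x$ fixes $R_n^2$, $S_n^2$ and (inductively) $R_{n-1}$, so $R_{n+1}$ is even, whereas $S_{n-1}\mapsto(-1)^{n}S_{n-1}$ forces $S_{n+1}\mapsto(-1)^{n}S_{n+1}=(-1)^{(n+1)-1}S_{n+1}$. By the same bookkeeping I would establish the auxiliary symmetry $R_n(i\la,x)=R_n(\la,x)$ and $S_n(i\la,x)=iS_n(\la,x)$: under $\la\mapsto i\la$ one has $\la^2\mapsto-\la^2$, so $R_n^2-\la^2S_n^2$ is unchanged (the two sign flips cancel) while $S_n^2-\la^2R_n^2\mapsto-(S_n^2-\la^2R_n^2)$, which reproduces exactly the claimed characters on $R_{n+1}$, $S_{n+1}$; the base cases check directly. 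Granting for the moment the polynomiality $R_n,S_n\in\Z[\la,x]$, these two symmetries pin down the ring membership in (1): invariance under $\la\mapsto i\la$ forces every monomial of $R_n$ to have $\la$-degree divisible by $4$ and every monomial of $S_n$ to have $\la$-degree $\equiv1\pmod4$, while (2) fixes the $x$-parities; factoring out one $\la$ (and, at even index, one $x$) yields $R_n\in\Z[\la^4,x^2]$, $S_{2n+1}\in\la\Z[\la^4,x^2]$, $S_{2n}\in\la x\Z[\la^4,x^2]$, integrality coming from the integrality of the lower terms via the recursion and Gauss's lemma.

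Part (3) I would again prove by induction, now tracking the substitution $\phi:(\la,x)\mapsto(1/\la,x/\la^2)$ together with the $R\leftrightarrow S$ swap it induces at odd index. Setting $A_m:=\la^{m^2}(R_m\circ\phi)$ and $B_m:=\la^{m^2}(S_m\circ\phi)$, applying $\phi$ to \eqref{recur} and clearing the $\la$-powers gives the transformed recursion $A_{n+1}A_{n-1}=\la^2A_n^2-B_n^2$ together with its $S$-analogue $B_{n+1}B_{n-1}=\la^2B_n^2-A_n^2$. Substituting the conjectured values (e.g. $A_{2k}=(-1)^kR_{2k}$, $A_{2k+1}=(-1)^kS_{2k+1}$, and the matching expressions for $B_m$) turns these identities back into the original relations \eqref{recur}; for instance at $n=2k$ the left side becomes $-S_{2k+1}S_{2k-1}$ and the right side becomes $-(S_{2k}^2-\la^2R_{2k}^2)=-S_{2k+1}S_{2k-1}$. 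Since the base cases hold by direct computation, the claim propagates.

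The substantive points are the polynomiality in (1) and the formulas (4), and for both I would invoke \propref{rnpropo}. Because $M^2=4(1+u\Lambda^2+\Lambda^4)$ by \eqref{MuL} with $u$ a nonconstant function of $\tau$, as $(h,\tau)$ vary the pair $(\Lambda,M)$ sweeps a two-parameter family, so $\Lambda$ and $M$ are algebraically independent; hence any identity among the theta quotients $\widetilde\theta_4(nh)/\widetilde\theta_4(h)^{n^2}=R_n(\Lambda,M)$ and $\widetilde\theta_1(nh)/\widetilde\theta_4(h)^{n^2}=S_n(\Lambda,M)$ lifts to a polynomial identity in $\la,x$. For (4) this reduces $R_{2n}=R_n^4-S_n^4$ to the Jacobi duplication identity $\widetilde\theta_4(2g)=\widetilde\theta_4(g)^4-\widetilde\theta_1(g)^4$ with $g=nh$ (and the second formula to the companion identity for $\widetilde\theta_1(2g)$). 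For polynomiality I would argue that each quotient $\widetilde\theta_4(nh)/\widetilde\theta_4(h)^{n^2}$ is, as a function of $h$, regular and of bounded order, hence a polynomial of bounded degree in the generators $\Lambda,M$ of the relevant function field; algebraic independence then identifies this polynomial with the rational function produced by \eqref{recur}, forcing the divisions there to be exact. Controlling the poles of these quotients well enough to conclude polynomiality — equivalently, proving the exactness of the divisions in \eqref{recur} — is the main obstacle. If that analytic estimate proves awkward, the fallback I would keep in reserve is to derive a complete set of addition formulas expressing $R_m,S_m$ polynomially through $R_j,S_j$ with $j\le\lceil m/2\rceil+1$ (of which (4) is the even case) and conclude polynomiality by induction on $m$.
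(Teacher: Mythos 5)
Your treatment of (2) matches the paper's, and your proof of (3) by transporting the recursion through $\phi:(\la,x)\mapsto(1/\la,x/\la^2)$ and checking that $A_m=\la^{m^2}(R_m\circ\phi)$, $B_m=\la^{m^2}(S_m\circ\phi)$ satisfy $A_{n+1}A_{n-1}=\la^2A_n^2-B_n^2$ (and its companion) is correct and is genuinely different from the paper, which instead deduces (3) from the translation behaviour $\Lambda(h+\pi i\tau)=1/\Lambda$, $M(h+\pi i\tau)=-M/\Lambda^2$ and \lemref{thetaadd}; your route is more elementary and self-contained. Your reduction of the ring structure in (1) to bare polynomiality via the $\la\mapsto i\la$ character is also a reasonable alternative to the paper's direct bookkeeping. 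For (4), the first identity $R_{2n}=R_n^4-S_n^4$ does come out of algebraic independence of $\Lambda,M$ plus $\widetilde R_{2n}=\widetilde R_2(nh)\widetilde R_n(h)^4$ exactly as you say; but the second identity is not a duplication formula for $\widetilde\theta_1(2g)$ alone — after writing $\widetilde S_{2n}=\Lambda(nh)M(nh)\widetilde R_n^4$ one still has to express $M(nh)$ as $\frac{1}{\Lambda}\bigl(\widetilde S_{n+1}\widetilde R_{n-1}-\widetilde S_{n-1}\widetilde R_{n+1}\bigr)/\widetilde R_n^2$, which requires the genuine two-variable addition formula $\theta_1(y\pm z)\theta_4(y\mp z)\theta_2\theta_3=\theta_1(y)\theta_4(y)\theta_2(z)\theta_3(z)\pm\theta_2(y)\theta_3(y)\theta_1(z)\theta_4(z)$ specialized at $y=nh$, $z=h$. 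That ingredient is classical but it is not supplied by your sketch.

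The genuine gap is the polynomiality in (1), which you yourself flag as "the main obstacle" and do not close. Your primary argument — that $\widetilde\theta_4(nh)/\widetilde\theta_4(h)^{n^2}$ is "regular and of bounded order" in $h$ — fails at the first step: this quotient has poles at the zeros of $\theta_4(h)$ (of order $n^2$ or $n^2-1$ depending on the parity of $n$), as do $\Lambda$ and $M$ themselves, so regularity in $h$ is simply false and cannot be the mechanism forcing the divisions in \eqref{recur} to be exact. Your fallback (a full set of addition formulas reducing index $m$ to indices $\le\lceil m/2\rceil+1$) would work in principle but requires an odd-index duplication formula that you have not derived. The paper avoids needing one: having obtained $R_{2m+2},S_{2m+2}\in\Z[\la^4,x]$ from the identities of (4) and the inductive hypothesis at index $m+1$, it rewrites the defining recursions as
\begin{align*}
(1-\la^4)R_{2m+1}^2&=R_{2m+2}R_{2m}+\la^2 S_{2m+2}S_{2m},\\
(1-\la^4)S_{2m+1}^2&=S_{2m+2}S_{2m}+\la^2R_{2m+2}R_{2m},
\end{align*}
and then uses that $1-\la^4$ is squarefree in $\Q[\la,x]$ to conclude $R_{2m+1}^2,\ \la^{-2}S_{2m+1}^2\in\Z[\la^4,x]$, hence $R_{2m+1}\in\Z[\la^4,x]$ and $S_{2m+1}\in\la\Z[\la^4,x]$ since these are already known to be rational functions. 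This squarefreeness step is the missing idea; without it (or an executed odd-index addition formula) your proof of (1) is incomplete, and with it your appeal to Gauss's lemma for integrality becomes unnecessary.
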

\begin{proof}
We write 
$$\widetilde R_n(h):=\frac{\WT_4(nh)}{\WT_4(h)^{n^2}}=R_n(\Lambda,M),\quad \widetilde S_n(h):= \frac{\WT_1(nh)}{\WT_4(h)^{n^2}}=S_n(\Lambda,M),$$ 
where we have used \eqref{ThRn}.
It is easy to see that $\Lambda$ and $M$ are algebraically independent, i.e. there exists no polynomial $f\in \Q[\lambda,x]\setminus \{0\}$, such that $f(\Lambda,M)=0$ as 
a function on $\H\times \C$. For this, note that by $M^2=4(1+u\Lambda^2+\Lambda^4)$, the algebraic independence of $\Lambda$ and $M$ is equivalent to that of $\Lambda$ and $u$.
But this is clear, because as Laurent series in $q,y$,   $u$ is a Laurent series in $q$ starting with $-\frac{1}{4q^{-2}}$ and $\Lambda$ depends on $y$ in a nontrivial way.
As $M$ and $\Lambda$ are algebraically independent, $R_n$, $S_n$
are the unique rational functions 
satisfying \eqref{ThRn} .

Now we will show (4). 
For any $k\in \Z$ we also have
\begin{equation}\label{RSkn}
\begin{split}
\widetilde R_{kn}(h)&= \frac{\WT_4(knh)}{\WT_4(h)^{k^2 n^2}}=\frac{\WT_4(knh)}{\WT_4(nh)^{k^2}}\Big(\frac{\WT_4(nh)}{\WT_4(h)^{n^2}}\Big)^{k^2}=
\widetilde R_k(nh) \widetilde R_n(h)^{k^2},\\
\widetilde S_{kn}(h)&=\frac{\WT_1(kn z)}{\WT_4(nh)^{k^2}}=\widetilde S_k(nh)\widetilde R_n(h)^{k^2}
\end{split}
\end{equation}
Thus, using $\widetilde R_2(h)=1-\Lambda^4$, we find in particular
$$\widetilde R_{2n}(h)=\widetilde R_2(nh) \widetilde R_n(h)^{4}=
\left(1-\left(\frac{\WT_1(nh)}{\WT_4(nh)}\right)^4\right)\widetilde R_n(h)^4=
\widetilde R_n(h)^4-\widetilde S_n(h)^4;$$
i.e., using the algebraic independence of $\Lambda$ and $M$, $R_{2n}=R_n^4- S_n^4$.
In the same way we have 
 $$\widetilde S_{2n}(h)=\widetilde S_2(nh)\widetilde R_n(h)^{4}=\Lambda(nh)M(nh)\widetilde R_n(h)^{4}.$$
 By definition $\Lambda(nh)=\frac{\WT_1(nh)}{\WT_4(nh)}=\frac{\widetilde S_n(h)}{\widetilde R_n(h)}$. 
Now take the difference of the two formulas (see \cite[\S 2.1 Ex.~3]{WW})
$$\theta_1(y\pm z)\theta_4(y\mp z)\th_2\th_3=\theta_1(y)\theta_4(y)\theta_2(z)\theta_3(z)\pm
\theta_2(y)\theta_3(y)\theta_1(z)\theta_4(z)$$ with $y=nh$, $z=h$
to get 
$$\theta_1((n+1)h)\theta_4((n-1)h)\theta_2\theta_3-\theta_1((n-1)h)\theta_4((n+1)h)\theta_2\theta_3=2\theta_2(nh)\theta_3(nh)\theta_1(h)\theta_4(h).$$
This gives
\begin{align*}
M(nh)&=2\frac{\theta_2(nh)\theta_3(nh)}{\th_2\th_3\theta_4(nh)^2}=
\frac{\theta_1((n+1)h)\theta_4((n-1)h)-\theta_1((n-1)h)\theta_4((n+1)h)}{\theta_1(h)\theta_4(h)
\theta_4(nh)^2}\\&=\frac{1}{\Lambda}\frac{\widetilde S_{n+1}(h)\widetilde R_{n-1}(h)-\widetilde S_{n-1}(h)\widetilde R_{n+1}(h)}{\widetilde R_n(h)^2}
\end{align*}
Thus 
$S_{2n}=\frac{1}{\lambda}S_nR_n(S_{n+1}R_{n-1}-S_{n-1}R_{n+1}).$
This shows (4)

(1) Next we will show that 
$R_n\in \Z[\lambda^4,x]$ and $S_n \in \lambda\Z[\lambda^4,x]$ for all $n\in \Z$.
By symmetry it is enough to show this if $n$ is a nonnegative  integer. 
We know that this is true for $0\le n\le 4$. 
Now assume that $m\ge 2$, and that we know
the statement for all $0\le n\le 2m$.
Therefore $R_{m+1}\in \Z[\lambda^4,x], S_{m+1}\in \lambda\Z[\lambda^4,x]$, and  the formulas \eqref{r2nRn} give that 
$R_{2m+2}\in \Z[\lambda^4,x]$, $S_{2m+2}\in \lambda\Z[\lambda^4,x]$.
The relations \eqref{recur} say that
\begin{align*}
R_{2m+2}R_{2m}&=R_{2m+1}^2-\la^2S_{2m+1}^2,\quad
S_{2m+2}S_{2m}=S_{2m+1}^2-\la^2 R_{2m+1}^2,
\end{align*}
and thus
\begin{equation}
\label{r2n1}
\begin{split}
(1-\la^4)R_{2m+1}^2&=R_{2m+2}R_{2m}+\la^2 S_{2m+2}S_{2m},\\
(1-\la^4)S_{2m+1}^2&=S_{2m+2}S_{2m}+\la^2R_{2m+2}R_{2m}.
\end{split}
\end{equation}
Thus we get $(1-\lambda^4)R_{2m+1}^2\in \Z[\lambda^4,x]$ and
$(1-\lambda^4)S_{2m+1}^2\in \lambda^2\Z[\lambda^4,x]$. Therefore, as   $1-\la^4$ is squarefree in $\Q[\lambda,x]$, we also have 
$R_{2m+1}^2\in \Z[\lambda^4,x]$ and
$S_{2m+1}^2\in \lambda^2\Z[\lambda^4,x]$. As we already know that $R_{2m+1},\ S_{2m+1}\in \Q(\lambda,x)$, this gives 
$R_{2m+1}\in\Z[\lambda^4,x]$ and $S_{2m+1}\in \lambda\Z[\lambda^4,x]$. So $R_{2m+1}$, $R_{2m+2}\in \Z[\lambda^4,x]$ and  $S_{2m+1}$, $S_{2m+2}\in \lambda\Z[\lambda^4,x]$.
Thus  by induction on $m$, we get $R_n\in \Z[\lambda^4,x]$, $S_n\in \lambda\Z[\lambda^4,x]$.

(2) For $n=0,1,2$ we see immediately that the $R_n$ are even in $x$ and the 
$S_n$ have partity $(-1)^{n-1}$ in $x$. On the other hand the 
recursion formulas \eqref{recur} say that $R_{n+1}$ has the same parity as $R_{n-1}$ in $x$ and $S_{n+1}$ the same parity as $S_{n-1}$.
This also shows that $R_n\in \Z[\lambda^4,x^2]$, $S_{2n}\in \lambda x\Z[\lambda^4,x^2]$, $S_{2n+1}\in \lambda \Z[\lambda^4,x^2]$.

(3)
The formulas \eqref{T4trans},\eqref{T1trans},\eqref{T2trans} imply
\begin{equation}\label{Lambdatau}
\Lambda(h+\pi i \tau)=\frac{\theta_4(h)}{\theta_1(h)}=\frac{1}{\Lambda},\quad
M(h+\pi i \tau)=-2\frac{\WT_2(h)\WT_3(h)}{\WT_1(h)^2}=-\frac{M}{\Lambda^2}.
\end{equation}
Part (1) of \lemref{thetaadd}  shows
$$
\theta_4(2nh+2\pi i n\tau)=(-1)^n q^{-4n^2}(y^{2n})^{-2n}\theta_4(2nh).
$$
Thus, using \eqref{T4trans} again, we get
$$
\widetilde R_{2n}(h+\pi i \tau)=(-1)^n\frac{\WT_4(2nh)}{\WT_1(h)^{4n^2}}=(-1)^n \frac{\widetilde R_{2n}(h)}{\Lambda^{(2n)^2}}.
$$
As $\Lambda^4$ and $M$ are algebraically independent, \eqref{Lambdatau} and \eqref{R2ntau} imply that
\begin{equation}\label{R2ntau}R_{2n}\Big(\frac{1}{\lambda},\frac{x}{\lambda^2}\Big)=R_{2n}\Big(\frac{1}{\lambda},-\frac{x}{\lambda^2}\Big)=(-1)^n\frac{1}{\la^{(2n)^2}}R_{2n}(\la,x).\end{equation}
The same argument using part (2) of \lemref{thetaadd} and \eqref{T1trans} shows
$
\widetilde S_{2n}(h+\pi i \tau)=(-1)^n \frac{\widetilde S_{2n}(h)}{\Lambda^{(2n)^2}},
$
 and thus
\begin{equation}\label{S2ntau}S_{2n}\Big(\frac{1}{\lambda},\frac{x}{\lambda^2}\Big)=-S_{2n}\Big(\frac{1}{\lambda},-\frac{x}{\lambda^2}\Big)=(-1)^{n-1}\frac{1}{\la^{(2n)^2}}S_{2n}(\la,x).\end{equation}
Similarly using parts (3) and (4) of \lemref{thetaadd} we get by the same arguments
\begin{align*}
\widetilde R_{2n+1}(h+\pi i \tau)=\frac{\WT_4((2n+1)h+\pi i (2n+1)\tau)}{\WT_4(h+\pi i \tau)^{(2n+1)^2}}=
(-1)^n \frac{\WT_1((2n+1)h)}{\WT_1(h)^{(2n+1)^2}}=(-1)^n \frac{ \widetilde S_{2n+1}}{\Lambda^{(2n+1)^2}},
\end{align*}
and thus 
$$R_{2n+1}\big(\frac{1}{\la},\frac{x}{\la^2}\big)=R_{2n+1}\big(\frac{1}{\la},-\frac{x}{\la^2}\big)=\frac{(-1)^n}{\la^{(2n+1)^2}} S_{2n+1}.$$
The same argument shows
$S_{2n+1}\big(\frac{1}{\la},\frac{x}{\la^2}\big)=S_{2n+1}\big(\frac{1}{\la},-\frac{x}{\la^2}\big)=\frac{(-1)^n}{\la^{(2n+1)^2}} R_{2n+1}.$
\end{proof}

\subsection{Blowdown formulas}
Let $\widehat X$ be the blowup of a rational surface $X$ in a point. 
As mentioned at the beginning of this section, the blowup polynomials determine a blowup formula which computes the $K$-theoretic Donaldson invariants  $\widehat X$ in terms of those of $X$. We will also need a blowdown
formula which determines all the $K$-theoretic Donaldson invariants of $X$ in 
terms of a small part of those of $\widehat X$. In order to  prove the blowdown formula, we will need that,  for $n,m$ relatively prime integers, 
the polynomials $R_n$ $R_{m}$ and $S_n$, $S_{m}$  are as polynomials in $x$ in a suitable sense relatively prime.


\begin{Proposition}\label{blowdownpol} Let $n,m\in \Z$ be relatively prime. 
\begin{enumerate}
\item There exists a minimal integer $M^0_{n,m}\in \Z_{\ge 0}$ and unique polynomials 
$h^0_{n,m},\ l^0_{n,m}\in \Q[\lambda^4,x^2]$, such that $(1-\lambda^4)^{M^0_{n,m}}=h^0_{n,m} R_n+l^0_{n,m} R_m$.

\item 
 There exists a minimal integers $M^1_{n,m}\in \Z_{\ge 0}$ and unique polynomials 
$h^1_{n,m},\ l^1_{n,m}\in \Q[\lambda^4,x]$, such that $\lambda(1-\lambda^4)^{M^1_{n,m}}=h^1_{n,m}  S_n+l^1_{n,m}  S_m$.
\end{enumerate}
\end{Proposition}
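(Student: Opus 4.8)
The plan is to regard $R_n,R_m$ as elements of $\C[t][X]$, where $t=\lambda^4$ and $X=x^2$ (legitimate by \propref{blowpolprop}(1)), and to reduce part (1) to the single geometric fact that \emph{every common zero of $R_n$ and $R_m$ in $\mathbb{A}^2_\C$ lies over $t=1$}. Granting this, the elimination theorem identifies the eliminant ideal $J:=(R_n,R_m)\cap\C[t]$ with the defining ideal of the Zariski closure of the projection of the common zero locus to the $t$-line; that closure is contained in $\{1\}$, so $J=((1-t)^{M^0_{n,m}})$ for a minimal $M^0_{n,m}\ge 0$ (or $J=(1)$, giving $M^0=0$). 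Thus $(1-t)^{M^0_{n,m}}\in(R_n,R_m)$ inside $\C[t,X]$, and since $R_n,R_m,(1-t)^{M^0}\in\Q[t,X]$ the identity descends to $\Q$, giving $h^0_{n,m},l^0_{n,m}\in\Q[\lambda^4,x^2]$. For uniqueness I would pin down the cofactors by the degree-reduced Bézout normalization $\deg_{x^2}h^0_{n,m}<\deg_{x^2}R_m$ available in the PID $\Q(\lambda^4)[x^2]$: any two solutions differ by $(wR_m,-wR_n)$ with $w\in\Q(\lambda^4)[x^2]$, and the degree bound (together with $\deg_{x^2}R_k=k-2$ and leading $x^2$-coefficient $-\lambda^4$ for $k\ge 3$) forces $w=0$.

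The key tool is a pair of addition formulas. From \propref{rnpropo} one has $R_k(\Lambda,M)=\widetilde\theta_4(kh)/\widetilde\theta_4(h)^{k^2}$ and $S_k(\Lambda,M)=\widetilde\theta_1(kh)/\widetilde\theta_4(h)^{k^2}$. Substituting $y=ah,\ z=bh$ into the two–variable theta identities used in the proof of \propref{blowpolprop}(4) and dividing by the appropriate power of $\theta_4(h)$, the algebraic independence of $\Lambda$ and $M$ yields
$$R_{a+b}R_{a-b}=R_a^2R_b^2-S_a^2S_b^2,\qquad S_{a+b}S_{a-b}=S_a^2R_b^2-R_a^2S_b^2,$$
the recursion \eqref{recur} being the case $b=1$. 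The same description gives, on a fixed smooth fibre (generic $\tau$, $h\in\C/L_\tau$), the dictionary $R_k(P)=0\iff kh\equiv T$ and $S_k(P)=0\iff kh\equiv 0$, where $T$ is the $2$-torsion zero of $\theta_4$.

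Now for the localization of common zeros. Since $\Lambda,M$ are algebraically independent the map $(h,\tau)\mapsto(\Lambda,M)$ is dominant, so any common zero over $t_0\neq 0$ lying on a smooth fibre (modular parameter $u_0\neq\pm 2$) is realized by some $(h,\tau)$; there $nh\equiv mh\equiv T$, hence $(n-m)h\equiv 0$, and since $an+bm=1$ we get $h\equiv(a+b)T\in\{0,T\}$. Both are impossible: $h=0$ forces $\Lambda=0$, i.e. $t_0=0$ (excluded, as $R_k(0,\cdot)\equiv 1$ by \propref{blowpolprop}), and $h=T$ is a pole of $\Lambda$. This already proves $R_n,R_m$ coprime in $\Q(t)[X]$, so $J\neq 0$. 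The only remaining common zeros lie over the degenerate values $u_0=\pm 2$, where $M^2=4(1\pm\Lambda^2)^2$ and the fibre is a nodal rational curve with node over $\Lambda^2=\mp 1$, i.e. over $t=1$; here the group degenerates to $\mathbb{G}_m$ and $R_n=R_m=0$ become multiplicative conditions $s^n=s^m=\zeta$, which for $\gcd(n,m)=1$ force the common solutions to the node, hence over $t=1$. Concretely one checks $R_k|_{u=\pm2}=(1-\Lambda^2)^{e_k}P_k(\Lambda^2)$ with $\gcd(P_n,P_m)=1$. Thus all common zeros lie over $t=1$, completing the reduction.

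Part (2) is handled identically after writing $S_k=\lambda\sigma_k$ with $\sigma_k\in\Z[\lambda^4,x]$ (\propref{blowpolprop}(1)): dividing the asserted identity by $\lambda$ reduces it to a Bézout relation for $\sigma_n,\sigma_m$ in $\Q[\lambda^4,x]$, and the same torsion dictionary ($S_k=0\iff kh\equiv 0$) together with the degeneration analysis localizes the common zeros of $\sigma_n,\sigma_m$ over $t=1$ (using that one of $n,m$ is odd). I expect the genuine obstacle to be exactly the two degenerate fibres $u=\pm 2$: the clean torsion argument is available only on smooth fibres, so confirming that the only common zeros over the nodal fibres sit over $t=1$ — via the $\mathbb{G}_m$ degeneration and the factorizations $R_k|_{u=\pm2}=(1-\Lambda^2)^{e_k}P_k(\Lambda^2)$, $\gcd(P_n,P_m)=1$ — is the step that must be carried out carefully.
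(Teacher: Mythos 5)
Your strategy is essentially the paper's: reduce both parts to the geometric claim that the common zero locus of $R_n,R_m$ (resp.\ of the $\lambda$-normalized $S$'s) in the $(\lambda^4,x^2)$-plane is contained in $\{\lambda^4=1\}$, prove this on smooth fibres via the torsion dictionary ($R_k=0\iff kh\equiv T$, $S_k=0\iff kh\equiv 0$) together with $\gcd(n,m)=1$, and then conclude by the Nullstellensatz (the paper) or, equivalently, by elimination in $\C[t][X]$ (you). Your generic-fibre argument is correct, and your uniqueness normalization is a genuine improvement: as stated, Bézout cofactors are only unique modulo $(R_m,-R_n)$, so some degree bound of the kind you impose is needed, and the paper's proof is silent on this.

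The gap is exactly where you flag it, and it is not a small one: the degenerate fibres are where the paper's proof does most of its explicit work. The paper covers $\C^2\setminus\{(1,0)\}$ by three charts --- $\tau\in\H$ (hitting everything off the locus $\{(c^2,4(1+c)^2)\}$, i.e.\ off $u=\pm2$), the cusp $\tau=\infty$ (hitting $\{0\}\times\C$), and the $S$-transformed cusp (hitting $\{(c^2,4(1-c)^2):c\ne 1\}$) --- and on the two boundary charts computes the zero sets of $\widetilde R_k$ and $\widehat S_k$ explicitly as zero sets of $\cosh(kh/2)/\cosh(h/2)^{k^2}$ and of the corresponding $\sinh$-quotients, from which disjointness for coprime $n,m$ is immediate; this is the rigorous form of your $\mathbb{G}_m$-degeneration sketch, and your claimed factorization $R_k|_{u=\pm2}=(1-\Lambda^2)^{e_k}P_k(\Lambda^2)$ is consistent with it. Two points you should not gloss over when completing this step. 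First, ``the map $(h,\tau)\mapsto(\Lambda,M)$ is dominant'' does not imply that every point on a smooth fibre is attained; what you actually need is that for each fixed $\tau$ the elliptic function $\Lambda(\cdot,\tau)$ is surjective onto $\C\cup\{\infty\}$ and that the Hauptmodul $u$ attains every value in $\C\setminus\{\pm2\}$. Second, for part (2) the line $\{t=0\}$ is not harmless as it is in part (1): while $R_k(0,\cdot)\equiv 1$, the polynomials $\sigma_k(0,x)$ are nonconstant (e.g.\ $\sigma_2(0,x)=x$, $\sigma_3(0,x)=x^2-1$), and only the single point $(0,4)$ of that line lies on a smooth fibre, so the whole of $\{0\}\times\C$ must be handled through the $\tau=\infty$ chart rather than by the torsion argument. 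With those two items supplied, your argument closes and coincides in substance with the paper's.
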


\begin{proof}

For all $l\in \Z$ we write
$$\overline S_{2l}:=x\frac{S_{2l}}{\lambda}=\frac{S_2S_{2l}}{S_1^2}, \quad \overline S_{2l+1}:=\frac{S_{2l+1}}{\lambda}=\frac{S_{2l+1}}{S_1}\in \Z[\lambda^4,x^2].$$
Let $I_{n,m}=\<R_n,R_{m}\>\subset \Z[\lambda^4,x^2]$ be the ideal generated by $R_n, R_{m}\in \Z[\lambda^4,x^2]$, and let
$J_{n,m}=\<\overline S_n,\overline S_{m}\>\subset \Z[\lambda^4,x^2]$ be the ideal generated by $\overline S_n,\overline S_{m}\in \Z[\lambda^4,x^2]$.
Then the Proposition follows immediately from the following.

\begin{Claim}[1]
There are $M^0_{n,m} , M^1_{n,m}\in \Z_{\ge 0}$ with $(1-\lambda^4)^{M^0_{n,m}}\in I_{n,m}$ and  $(1-\lambda^4)^{M^1_{n,m}}\in J_{n,m}$.
\end{Claim}


Let 
\begin{align*}
V_{n,m}&:=\big\{(\alpha^4,\beta^2)\in \C^2\bigm| R_{n}(\alpha,\beta)=R_{m}(\alpha,\beta)=0 \big\}, \\
W_{n,m}&:=\big\{(\alpha^4,\beta^2)\in \C^2\bigm| \overline S_{n}(\alpha,\beta)=\overline S_{m}(\alpha,\beta)=0 \big\}.
\end{align*}
Then by the Nullstellensatz the Claim (1)  follows immediately from the following.


\begin{Claim}[2]
 $V_{n,m}, W_{n,m}\subset \{(1,0)\}$,
 \end{Claim}

\noindent{\it Proof of  Claim(2):}
The idea of the proof is as follows:
For each $(\alpha,\beta)\in \C^2$ with $(\alpha^4,\beta^2)\in \C^2\setminus \{(1,0)\}$ we want to show that 
\begin{enumerate}
\item $R_n(\alpha,\beta)$ or $R_{m}(\alpha,\beta)$ is nonzero, 
\item $\overline S_n(\alpha,\beta)$ or $\overline S_{m}(\alpha,\beta)$ is nonzero.
\end{enumerate}
Recall that we have $\widetilde R_n=R_n(\Lambda,M)$, and we put $\widehat S_n:=\overline S_n(\Lambda,M)$, so that
$\widehat S_{2n}=\frac{M \overline S_{2n}}{\Lambda}$ and $\widehat S_{2n+1}=\frac{\overline S_{2n+1}}{\Lambda}$.
We denote 
\begin{align*}
\Lambda|_S(h,\tau)&:=\Lambda(\frac{h}{\tau},-\frac{1}{\tau}), \quad M|_S(h,\tau):=M(\frac{h}{\tau},-\frac{1}{\tau}),\\
\widetilde R_m|_S(h,\tau)&:=\widetilde R_m(\frac{h}{\tau},-\frac{1}{\tau}), \quad \widehat S_m|_S(h,\tau)=\widehat S_m(\frac{h}{\tau},-\frac{1}{\tau})
\end{align*}
the application of the operator $S:(h,\tau)\mapsto (\frac{h}{\tau},-\frac{1}{\tau})$ to the Jacobi functions $\Lambda$, $M$, $\widetilde R_m$, $\widehat S_m$.
Obviously we have 
$$R_m(\Lambda|_S,M|_S)= \widetilde R_m|_S, \quad \overline S_m(\Lambda|_S,M|_S)= \widehat S_m|_S.$$
We denote $Z(f)\subset \C$ the zero set of a meromorphic function $f:\C\to\C$.  
Therefore Claim (2) will follow once we prove the following  facts:
\begin{enumerate}
\item Every $(\alpha, \beta)\in \C^2\setminus  \{(1,0)\}$ can be written as 
$(\Lambda(h,\tau)^4,M(h,\tau)^2)$ for some $h\in \C$, $\tau\in \H\cup\{\infty\}$ or as $(\Lambda^4|_S(h,\infty)$, $M^2|_S(h,\infty))$ for some $h\in \C$. 
Here we by $\Lambda(h,\infty)$, $M(h,\infty)$, $(\Lambda|_S(h,\infty)$, $M|_S(h,\infty))$, we mean the coefficient of $q^0$ of the $q$-development of 
$\Lambda$, $M$, ($\Lambda|_S$, $M|_S$) (asserting also that these developments are power series in $q$).

\item For all $\tau\in \H\cup \{\infty\}$ we have 
\begin{align*}
Z(\widetilde R_n(\bullet,\tau))\cap Z(\widetilde R_{m}(\bullet,\tau))&:= \big\{h\in \C \bigm|\widetilde R_n(h,\tau)=\widetilde R_{m}(h,\tau)=0\big\}=\emptyset,\\
Z(\widetilde R_n|_S(\bullet,\infty))\cap Z(\widetilde R_{m}|_S(\bullet,\infty))&:= \big\{h\in \C \bigm|\widetilde R_n|_S(h,\infty)=\widetilde R_{m}|_S(h,\infty)=0\big\}=\emptyset.
\end{align*}
\item For all $\tau\in \H\cup \{\infty\}$ we have
\begin{align*}Z(\widehat S_n(\bullet,\tau))\cap  Z(\widehat S_{m}(\bullet,\tau))&:= \big\{h\in \C \bigm|\widehat S_n(h,\tau)=\widehat S_{m}(h,\tau)=0\big\}=\emptyset,\\
Z(\widehat S_n|_S(\bullet,\infty)\cap Z(\widehat  S_{m}|_S(\bullet,\infty))&:= \big\{h\in \C \bigm|\widehat S_n|_S(h,\infty)=\widehat S_{m}|_S(h,\infty)=0\big\}=\emptyset.
\end{align*}
\end{enumerate}
(1) For any fixed $\tau\in \H$ the range of the elliptic function  $\Lambda=\Lambda(\tau,\bullet)$ is  $\C\cup \infty$. 
$u$ is a Hauptmodul   for $\Gamma^0(4)$, which takes the values
$-2,2,\infty$ at the cusps $0,2,\infty$ respectively. Therefore the range of $u$ as a function on $\H$ is 
$\C\setminus \{-2,2\}$.
By the equation 
 $M^2=4(1+u\Lambda^2+\Lambda^4)$, we get therefore that the range of $(\Lambda^4, M^2)$  on $\H\times \C$ contains 
 the set
 $$I_1:=\C^2\setminus \{(c^2,4(1+c)^2)\ |\ c\in \C\}.$$
 
 Now we look at $\tau=\infty$, i.e. $q=0$.
 By the $q$-developments \eqref{theta}, we see that 
 \begin{equation}
 \label{thetaq0}
 \begin{split}
 \WT_1(h,\tau)&=O(q),\quad 
  \WT_4(h,\tau)=1+O(q),
 \quad \WT_3(h,\tau)=1+O(q),\\
 \WT_2(h,\tau)&=\cosh(h/2)+O(q),\quad  \frac{\WT_1(h,\tau)}{\WT_2(h,\tau)}=-i\tanh(h/2)+O(q).
 \end{split}
 \end{equation}
 Therefore we get from the  definitions
 $$\Lambda(h,\tau)=O(q), \quad M(h,\tau)=2\frac{\WT_2(h,\tau)\WT_3(h,\tau)}{\WT_4(h,\tau)^2}=2\cosh(h/2)+O(q).$$
  As $\cosh:\C\to \C$ is surjective, we see that the range of 
 $(\Lambda^4, M^2)$  on $ \C\times \{\infty\} $ is $I_2:=\{0\}\times \C$.
 From the definitions and  \eqref{thetaq0} we obtain
 \begin{align*}
 \Lambda^4|_S(h,\tau)&=\frac{\WT_1(h,\tau)^4}{\WT_2(h,\tau)^4}=\tanh(h/2)^4+O(q),\\
 M^2|_S(h,\tau)&=4\frac{\WT_3(h,\tau)^2\WT_4(h,\tau)^2}{\WT_2(h,\tau)^4}=\frac{4}{\cosh(h/2)^4}+O(q)=
 4(1-\tanh(h/2)^2)^2+O(q).\end{align*}
 It is an easy exercise that the range of $\tanh:\C\to \C$ is $\C\setminus\{\pm 1\}$.
Thus the range of $( \Lambda^4|_S,M^2|_S)$ on $\C\times \{\infty\}$ is
$$I_3=\big \{(c^2,4(1-c)^2)\bigm| c\in \C\setminus \{1\}\big\}.$$
As $I_1\cup I_2\cup I_3=\C^2\setminus \{(1,0)\}$, (1) follows.

(2) First let $\tau$ in $\H$. It is standard that $\theta_1(h)$ and $\theta_4(h)$ are holomorphic in $h$ on $\C$ and 
$Z(\th_1(h))=2\pi i (\Z+\Z\tau)$, $Z(\th_4(h))=2\pi i (\Z+(\Z+\frac{1}{2})\tau)$. 
Thus by $\widetilde R_n=\frac{\WT_4(nz)}{\WT_4(z)^{n^2}}$, 
we see that
\begin{align*}
Z(\widetilde R_n(\bullet,\tau))&=\Big\{2\pi i\big(\frac{a}{n}+\frac{b}{2n}\tau\big)\bigm| a,b\in \Z,\ b \hbox{ odd},
(a,b)\not \equiv (0,0) \hbox{ mod }n
\Big\}
\end{align*}
Assume that  $2\pi i \big(\frac{a}{n}+\frac{b}{2n}\tau \big)=2\pi i \big(\frac{a'}{m}+\frac{b'}{2m}\tau \big)\in \Z(\widetilde R_n(\bullet,\tau))\cap Z(\widetilde R_{m}(\bullet, \tau))$.
As $n$ and $m$ are relatively prime, we see that there $a'',b''\in \Z$, such that 
$$\frac{b}{2n}=\frac{b'}{2m}=\frac{b''}{2}, \quad \frac{a}{n}=\frac{a'}{m}=a''.$$
Thus $a$ and $b$ are both divisible by $n$, and thus $2\pi i \big(\frac{a}{n}+\frac{b}{2n}\tau\big)\not\in \Z(\widetilde R_n(\bullet,\tau))$.

Now let $\tau=\infty$
Then $\widetilde R_n(h,\infty)=\frac{\WT_4(nh,\infty)}{\WT_4(h,\infty)^{n^2}}=1$. Thus $Z(R_n(\bullet,\infty))=\emptyset$.

Finally we consider  $\widetilde R_n|_S(h,\infty)$.
We have 
\begin{align*}
\widetilde R_n|_S(h,\infty)&=\frac{\WT_2(nh,\infty)}{\WT_{2}(h,\infty)^{n^2}}=
\frac{\cosh(n h/2)}{\cosh( h/2)^{n^2}},\quad
\end{align*}
This gives 
\begin{align*}
Z(\widetilde R_n|_S(\bullet,\infty))&=\Big\{\pi i \frac{b}{2n}\Bigm| b\in \Z \hbox{ odd, } n\not|b\Big\},\quad
\end{align*}
and again it is clear that $Z(\widetilde R_n|_S(\bullet,\infty))\cap Z(\widetilde R_m|_S(\bullet,\infty))=\emptyset$.

(3) We note that
\begin{align*}
\widehat S_{2l+1}&=\frac{\widetilde S_{2l+1}}{\widetilde S_1}=\frac{\th_{1}((2l+1)h)}{\th_1(h)\WT_4(h)^{4l^2+4l}},\\
\widehat S_{2l}&=\frac{\widetilde S_2\widetilde S_{2l}}{\widetilde S_1^2}=\frac{\th_{1}(2lh)\th_1(2h)}{\th_1(h)^2\WT_4(h)^{4l^2+2}}.
\end{align*}
Let $\tau\in \H$, then this gives 
\begin{align*}
Z(\widehat S_{2l+1}(\bullet, \tau))&=\big\{2\pi i (\frac{a}{2l+1}+\frac{b}{2l+1}\tau)\bigm| a,b\in \Z, \ (a,b)\not \equiv (0,0)\mod 2l+1\big\},\\
Z(\widehat S_{2l}(\bullet, \tau))&=\big\{2\pi i (\frac{a}{2l}+\frac{b}{2l}\tau)\bigm| a,b\in \Z, \ (a,b)\not \equiv (0,0)\mod 2l\big\}.
\end{align*}
Thus we see immediately that $Z(\widehat S_{n}(\bullet, \tau))\cap Z(\widehat S_{m}(\bullet, \tau))=\emptyset,$ if $n$ and $m$ are relatively prime.

Now let $\tau=\infty$. Then $$\widehat S_{2l+1}(h, \infty)=\frac{\sinh((2l+1)h/2)}{\sinh(h/2)},\quad \widehat S_{2l}(h ,\infty)=\frac{\sinh(lh)\sinh(h)}{\sinh(h/2)^2},$$
So it is easy to see that $Z(\widehat S_{n}(\bullet,\infty))\cap Z(\widehat S_{m}(\bullet, \infty))=\emptyset,$  if $n$ and $m$ are relatively prime.
Finally $$\widehat S_{2l+1}|_S(h,\tau)=\frac{\th_1((2l+1)h)}{\th_1(h)\WT_2(h)^{4l^2+4l}},\quad
\widehat S_{2l}|_S(h,\tau)=\frac{\th_1(2lh) \th_1(2h)}{\th_1(h)^2\WT_2(h)^{4m^2+2}}.$$
Thus we get $$\widehat S_{2l+1}|_S(h,\infty)=\frac{\sinh((2l+1)h/2)}{\sinh(h/2)\cosh(h/2)^{4l^2+4l}},\ 
\widehat S_{2l}|_S(h,\infty)=\frac{\sinh(lh)\sinh(h)}{\sinh(h/2)^2\cosh(h/2)^{4l^2+2}},$$
 and again it is evident that for $n$ and $m$ relatively prime
$Z(\widehat S_{n}|_S(\bullet,\infty))\cap Z(\widehat S_{m}|_S(\bullet, \infty))=\emptyset.$
\end{proof}

\begin{Corollary}\label{blowdownmn}
Let $n,m\in \Z$ be relatively prime. 
Let $X$ be $\P^2$, $\P^1\times\P^1$ or the blowup of $\P^2$ in finitely many general points.
Let $c_1\in H^2(X,\Z)$, let $L$ be a line bundle on $X$, let $r\in \Z_{\ge 0}$ with $\<c_1,L\>\equiv r\mod 2$.
Let $\widehat X$ be the blowup of $X$ in a point. Let $\omega\in S_X$
Using the notations of \propref{blowdownpol}, we have
\begin{align*}
\chi^{X,\omega}_{c_1}(L,P^r)&=\frac{1}{(1-\Lambda^4)^{M^0_{n,m}}}\big(\chi^{\widehat X,\omega}_{c_1}(L-(n-1)E,P^r\cdot h^0_{m,n}(\Lambda,P)\\
&\qquad+
\chi^{\widehat X,\omega}_{c_1}(L-(m-1)E,P^r\cdot l^0_{m,n}(\Lambda,P)\big)\\
&=\frac{1}{\Lambda(1-\Lambda^4)^{M^1_{n,m}}}\big(\chi^{\widehat X,\omega}_{c_1+E}(L-(n-1)E,P^r\cdot h^1_{n,m}(\Lambda,P)\\
&\qquad+
\chi^{\widehat X,\omega}_{c_1+E}(L-(m-1)E,P^r\cdot l^1_{n,m}(\Lambda,P)\big).
\end{align*}
\end{Corollary}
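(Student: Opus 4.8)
The plan is to derive both identities purely formally from the blowup formula \thmref{nblow} together with the B\'ezout-type relations of \propref{blowdownpol}, pushing everything down to the $K$-theoretic Donaldson invariants with point class on $X$ itself. When $X=\P^2$ I would first pass to the blowup of $\P^2$ in one point via \propref{blowgen}, exactly as at the start of the proof of \thmref{nblow}, so that in every case I may work on a surface to which the blowup formula applies; in particular this makes available a class $F\in S_X$ and the reductions of \corref{strucdiff}.

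First I would upgrade \thmref{nblow} from a single power $P^r$ to an arbitrary polynomial point class. Since $\chi^{X,\omega}_{c_1}(L,-)$ is by definition $\Q$-linear in its point-class slot, applying \thmref{nblow}(1) to each monomial $P^j$ gives, for every $G(\Lambda,P)\in\Q[\Lambda,P]$,
\[
\chi^{\widehat X,\omega}_{c_1}(L-(n-1)E,\,G(\Lambda,P))=\chi^{X,\omega}_{c_1}(L,\,G(\Lambda,P)\,R_n(\Lambda,P)),
\]
and the analogous identity with $R_n$ replaced by $S_n$ and $c_1$ by $c_1+E$ from \thmref{nblow}(2). Applying the first identity with $G=P^r h^0_{m,n}(\Lambda,P)$ and $G=P^r l^0_{m,n}(\Lambda,P)$, summing, and using linearity once more, I obtain that the bracketed numerator in part (1) equals
\[
\chi^{X,\omega}_{c_1}\bigl(L,\,P^r\bigl(h^0_{m,n}(\Lambda,P)\,R_n(\Lambda,P)+l^0_{m,n}(\Lambda,P)\,R_m(\Lambda,P)\bigr)\bigr).
\]
Now \propref{blowdownpol}(1) (applied to the relevant ordering of the coprime pair) identifies the inner combination $h^0_{m,n}R_n+l^0_{m,n}R_m$ with $(1-\Lambda^4)^{M^0_{n,m}}$, a polynomial in $\Lambda^4$ alone. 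Since this factor involves no $P$, linearity lets me pull it out of the point-class slot, so the numerator becomes $(1-\Lambda^4)^{M^0_{n,m}}\chi^{X,\omega}_{c_1}(L,P^r)$; dividing by $(1-\Lambda^4)^{M^0_{n,m}}$ (which is invertible as a power series in $\Lambda^4$) recovers $\chi^{X,\omega}_{c_1}(L,P^r)$, i.e.\ the left-hand side. Part (2) is identical, using \thmref{nblow}(2) and \propref{blowdownpol}(2): here the inner combination $h^1_{n,m}S_n+l^1_{n,m}S_m$ equals $\Lambda(1-\Lambda^4)^{M^1_{n,m}}$, which accounts precisely for the extra factor $\Lambda$ in the prefactor $\tfrac{1}{\Lambda(1-\Lambda^4)^{M^1_{n,m}}}$.

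The only genuinely non-formal point to verify, and what I would treat as the main obstacle, is that every intermediate invariant $\chi^{\widehat X,\omega}_{c_1}(L-(n-1)E,P^j)$ (resp.\ $\chi^{\widehat X,\omega}_{c_1+E}(L-(n-1)E,P^j)$) occurring after expanding the polynomial point class is actually \emph{defined}, i.e.\ that the required parity $\langle\,\cdot\,,\,\cdot\,\rangle\equiv j \bmod 2$ holds. For part (1) one computes $\langle c_1,L-(n-1)E\rangle\equiv\langle c_1,L\rangle\equiv r$, while $h^0_{m,n}\in\Q[\Lambda^4,P^2]$ is even in $P$, so all exponents $j$ in $P^r h^0_{m,n}$ satisfy $j\equiv r$, matching. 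For part (2) one finds $\langle c_1+E,L-(n-1)E\rangle\equiv r+(n-1)$, and by \propref{blowpolprop}(2) and the uniqueness in \propref{blowdownpol}(2) the coefficient $h^1_{n,m}$ inherits parity $n-1$ in $P$ (replace $P$ by $-P$ in the B\'ezout identity and invoke uniqueness), so $P^r h^1_{n,m}$ has all exponents $\equiv r+(n-1)$, again matching; the same check applies to the $l$-terms. With this parity bookkeeping in place the computation above is valid, and the corollary follows.
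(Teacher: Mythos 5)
Your proposal is correct and follows essentially the same route as the paper's proof: apply \thmref{nblow} monomial by monomial (using the linearity built into the definition of $\chi^{X,\omega}_{c_1}(L,F(\Lambda,P))$), sum to get $\chi^{X,\omega}_{c_1}(L,P^r\cdot(h^0_{m,n}R_n+l^0_{m,n}R_m))$, and then invoke the B\'ezout relation of \propref{blowdownpol} to identify this with $(1-\Lambda^4)^{M^0_{n,m}}\chi^{X,\omega}_{c_1}(L,P^r)$, and similarly for the $S$-version. The extra parity bookkeeping you carry out is sound and is simply left implicit in the paper.
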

\begin{proof} 
(1) By \thmref{nblow} we have
\begin{align*}
\big(&\chi^{\widehat X,\omega}_{c_1}(L-(n-1)E,P^r\cdot h^0_{m,n}(\Lambda,P)+
\chi^{\widehat X,\omega}_{c_1}(L-(m-1)E,P^r\cdot l^0_{m,n}(\Lambda,P)\big)\\&
=
\chi^{X,\omega}_{c_1}\big(L,P^r\cdot \big(R_n(\Lambda,P)h^0_{m,n}(\Lambda,P)+R_{m}(\Lambda,P)l^0_{n,m}(\Lambda,P)\big)\big)=
(1-\Lambda^4)^{M^0_{n,m}} \chi^{X,\omega}_{c_1}(L,P^r),
\end{align*}
where in the last step we use \propref{blowdownpol}.
The proof of (2) is similar.
\end{proof}

\section{Recursion formulas for rational ruled surfaces}

\subsection{The limit of the invariant at the boundary point}
For $X=\P^1\times \P^1$ or $X=\widehat \P^2$ the blowup of $\P^2$ in a point,
we denote the line bundles on $X$ in a uniform way.
\begin{Notation}
Let $X=\P^1\times \P^1$ or $X=\widehat \P^2$. In the case $X=\P^1\times \P^1$ we denote $F$ the class of the fibre of the projection to the first factor, and by $G$ the class of the fibre of the projection to the second factor. In the case $X=\widehat \P^2$, let $H$ be the pullback of the hyperplane class on $\P^2$ and $E$ the class of the exceptional divisor. Then  $F:=H-E$ is the fibre of the ruling of $X$. We put $G:=\frac{1}{2}(H+E)$. Note that $G$ is not an integral cohomology class. In fact, while $H^2(\P^1\times\P^1,\Z)=\Z F\oplus \Z G$, we have
$$H^2(\widehat \P^2,\Z)=\Z H\oplus \Z E=\big\{aF+bG\bigm| a\in \Z,b\in 2\Z \hbox{ or } a\in \Z+\frac{1}{2}, b\in 2\Z+1\big\}.$$
On the other hand we note that both on $X=\P^1\times\P^1$ and $\widehat \P^2$ we have
$F^2=G^2=0$, $\<F,G\>=1$, and $-K_X=2F+2G$.
\end{Notation}

We want to define and study the limit of the $K$-theoretic Donaldson invariants $\chi^{X,\omega}_{c_1}(L,P^r)$ as the ample class $\omega$ tends to $F$.
For $c_1=F$ or $c_1=0$ this will be different from our previous definition of $\chi^{X,F}_{c_1}(L,P^r)$.

\begin{Definition}
Let $r\in \Z_{\ge 0}$, let $L\in \Pic(X)$ with $\<c_1,L\>+r$ even. Fix $d\in \Z$ with $d\equiv -c_1^2 \mod 4$. For $n_{d,r}>0$ sufficiently large, $n_{d,r}F+ G$ is ample on $X$, 
and 
there is no wall $\xi$ of type $(c_1,d)$ with 
$\<\xi ,(n_{d,r}dF+ G)\>>0> \<\xi, F\>$.   
For all  $\omega\in S_X\cup C_X$, we define  $\chi^{X,\omega}_{c_1,d}(L,P^r):=\Coeff_{\Lambda^d}\big[\chi^{X,\omega}_{c_1,d}(L,P^r)\big],$ and put 
\begin{align*}\\
\chi^{X,F_+}_{c_1,d}(L,P^r)&:=\chi^{X,n_{d,r}F+ G}_{c_1,d}(L,P^r),\quad
\chi^{X,F_+}_{c_1}(L):=\sum_{d\ge 0} \chi^{X,F_+}_{c_1,d}(L,P^r)\Lambda^d.
\end{align*}
\end{Definition}

Now we give a formula for $\chi^{X,F_+}_{0}(nF+mG,P^r)$ and $\chi^{X,F_+}_{F}(nF+mG,P^r)$. The result and the proof are similar to \cite[Prop.~5.3]{GY}.
The rest of this section will be mostly devoted to giving an explicit 
evaluation of this formula for $m\le 2$. 

\begin{Proposition}\label{Fplus}
Let $X=\P^1\times\P^1$ or $X=\widehat \P^2$.
\begin{enumerate}
\item 
Let $nF+mG$ be a line bundle on $X$ with $m$ even. Then
$$\chi^{X,F_+}_{F}(nF+mG,P^r)=\Coeff_{q^0}\left[\frac{1}{2\sinh((m/2+1)h)}\Lambda^2\WT_4(h)^{2(n+2)(m+2)}u'h^*M^r\right].$$

\item 
Let $nF+mG$ be a line bundle on $X$ (note that we might have $n\in \frac{1}{2}\Z$. Then
$$\chi^{X,F_+}_{0}(nF+mG,P^r)=-\Coeff_{q^0}\left[\frac{1}{2}\left(\coth((m/2+1)h)\right)\Lambda^2\WT_4(h)^{2(n+2)(m+2)}u'h^*M^r\right].$$
\end{enumerate}

\end{Proposition}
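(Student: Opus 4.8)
The plan is to read off $\chi^{X,F_+}_{c_1}(L,P^r)$ from the Fourier expansion of the indefinite theta function attached to the pair $(\omega,F)$, retaining only the boundary contribution of the isotropic class $F$.

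Fix $d$ and set $\omega=n_{d,r}F+G$, so that by definition $\chi^{X,F_+}_{c_1,d}(L,P^r)=\Coeff_{\Lambda^d}\chi^{X,\omega}_{c_1}(L,P^r)$. Since $F\in S_X$, \corref{blowp}(1) gives $\chi^{X,\omega}_{c_1}(L,P^r)=\Coeff_{q^0}\big[\Psi^{\omega,F}_{X,c_1}(L;\Lambda,\tau)M^r\big]$, where by definition $\Psi^{\omega,F}_{X,c_1}(L)=\Theta^{\omega,F}_{X,c_1,K_X}\big(\tfrac{(L-K_X)h}{2\pi i},\tau\big)\Lambda^2\widetilde\theta_4(h)^{(L-K_X)^2}\theta_4^{\sigma(X)}u'h^*$. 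For both $X=\P^1\times\P^1$ and $X=\widehat\P^2$ one has $\sigma(X)=0$ and $K_X=-2F-2G$, hence $L-K_X=(n+2)F+(m+2)G$, $(L-K_X)^2=2(n+2)(m+2)$, $\<F,L-K_X\>=m+2$ and $\<F,K_X\>=-2$. Thus the common factor $\Lambda^2\widetilde\theta_4(h)^{2(n+2)(m+2)}u'h^*M^r$ already matches the two asserted formulas, and the whole computation reduces to extracting $\Coeff_{q^0}$ of $\Theta^{\omega,F}_{X,c_1,K_X}$ paired with this factor.

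I would then insert the Fourier development of \thmref{thetaprop}(2) with $f=\omega\in C_X$ and $g=F\in S_X$. It splits into the $F$-boundary sum over $\xi$ with $\xi\cdot F=0$, the $\omega$-boundary sum over $\xi$ with $\xi\cdot\omega=0$, and the generic antisymmetrized sum over $\xi$ with $\xi\cdot\omega>0>\xi\cdot F$. Writing $\xi=aF+bG$, every class in the last two groups has $b\neq0$ together with $a=-n_{d,r}b$ (resp.\ $|a|>n_{d,r}|b|$), so that $-\xi^2=2|ab|\ge 2n_{d,r}$. The degree estimate of \lemref{vanwall} (whose proof bounds the lowest occurring $\Lambda$-power by $-\xi^2$) then shows that each such $\xi$ contributes to the $\Lambda^d$-coefficient of $\Coeff_{q^0}[\,\cdot\,]$ only for $d\ge-\xi^2$; hence once $2n_{d,r}>d$ the $\omega$-boundary and generic sums drop out, leaving only the $F$-boundary sum. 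Verifying this vanishing uniformly over the infinitely many classes generated by the geometric-series prefactors, via \lemref{vanwall} and \lemref{qpow} exactly as in \cite[Prop.~5.3]{GY}, is the main technical point.

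It remains to evaluate the $F$-boundary sum
$$-\frac{1}{1-e^{2\pi iF\cdot(x+K_X/2)}}\sum_{\substack{\xi\cdot F=0\\ \omega\cdot F\le\xi\cdot\omega<0}}e^{2\pi i\tau Q(\xi)}e^{2\pi i\xi\cdot(x+K_X/2)}.$$
Here $\xi\cdot F=0$ forces $\xi\in\Z F$ when $c_1=0$ and $\xi\in(\Z+\tfrac12)F$ when $c_1=F$; each such $\xi$ is isotropic, so $Q(\xi)=0$, and since $\omega\cdot F=-1$ the range $-1\le\xi\cdot\omega<0$ singles out the single class $\xi=F$ (resp.\ $\xi=\tfrac12F$). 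Using $e^{2\pi i\xi\cdot(x+K_X/2)}=e^{-\<\xi,L-K_X\>h}e^{-\pi i\<\xi,K_X\>}$ and $e^{2\pi iF\cdot(x+K_X/2)}=e^{-(m+2)h}$, the sum equals $\tfrac{e^{-(m+2)h/2}}{1-e^{-(m+2)h}}=\tfrac{1}{2\sinh((m/2+1)h)}$ for $c_1=F$, which proves part (1), and $\tfrac{-e^{-(m+2)h}}{1-e^{-(m+2)h}}=-\tfrac12\coth((m/2+1)h)+\tfrac12$ for $c_1=0$. The additive constant $\tfrac12$ is then discarded: multiplied by $\Lambda^2\widetilde\theta_4(h)^{2(n+2)(m+2)}u'h^*M^r$, which lies in $q^{-2}\C[[q^{-1}\Lambda,q^4]]$ and is odd in $\Lambda$ (only $h^*$ is odd), it produces only odd powers of $\Lambda$, whereas $\Coeff_{q^0}$ of a $\Lambda^d$-coefficient of such a series vanishes unless $d\equiv2\pmod 4$; this is the parity mechanism of \remref{delb}. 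Hence its $\Coeff_{q^0}$ is zero, and part (2) follows.
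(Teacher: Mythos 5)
Your proposal is correct and follows essentially the same route as the paper: express $\chi^{X,F_+}_{c_1}(L,P^r)$ via \corref{blowp} as $\Coeff_{q^0}$ of $\Psi^{\omega,F}_{X,c_1}(L)M^r$ for $\omega=n_{d,r}F+G$, insert the Fourier development of \thmref{thetaprop}(2), keep only the $F$-boundary term (the unique class $\xi=F$ resp.\ $\tfrac12 F$ giving $\tfrac{1}{2\sinh((m/2+1)h)}$ resp.\ $-\tfrac12\coth((m/2+1)h)+\tfrac12$), and kill the constant $\tfrac12$ by the parity argument of \remref{delb}. The only difference is bookkeeping: the paper disposes of the generic sum by identifying it with $\sum_\xi\delta^X_\xi(L,P^r)$ and invoking the defining property of $n_{d,r}$ (no walls of type $(c_1,d)$ in the range), whereas you estimate $-\xi^2\ge 2n_{d,r}>d$ directly; just note that \lemref{vanwall} literally governs the generic terms, so for the $\omega$-boundary sum (with its geometric-series prefactor) you need the analogous $q$-valuation estimate via \lemref{qpow}, which indeed goes through.
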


\begin{proof}
We denote $\Gamma_X=H^2(X,\Z)$ with inner product the negative of the intersection form. 
Let $c_1=0$ or $c_1=F$,  fix $d$, and let $s\in \Z_{\ge 0}$ be sufficiently large so that there is no class $\xi$ of $(c_1,d)$ with $\<\xi, F\><0<\<\xi,  (G+sF)\>$.
Write $L:=nF+mG$.
By \corref{blowp} we get 
\begin{align*}
&\chi^{X,sF+G}_{F,d}(L,P^r)=\Coeff_{\Lambda^d}\Coeff_{q^0}\left[\Psi^{G+sF,F}_{X,F}(L;\Lambda,\tau)M^r\right]\\
\quad&=\Coeff_{\Lambda^d}\Coeff_{q^0}\left[\Theta^{G+sF,F}_{\Gamma_X,F,K_X}(\frac{1}{2\pi i}(L-K_X)h,\tau)\Lambda^2\WT_4(h)^{(L-K_X)^2}u'h^*M^r\right]\\
\quad&=
\Coeff_{\Lambda^d}\Coeff_{q^0}\left[\frac{e^{-\<\frac{F}{2},(L-K_X)\>}}{1-e^{-\<F,(L-K_X)\>h}}\Lambda^2\WT_4(h)^{(L-K_X)^2}u'h^*M^r\right]+\sum_{\<F,\xi\><0<\<(G+sF),\xi\>}\delta^X_\xi(L,P^r).
\end{align*}
Here the second sum is over the classes of type $(F,d)$. By our assumption on $s$ the second sum is empty, so we get
\begin{align*}
\chi^{X,F_+}_F(L,P^r)&=\Coeff_{q^0}\left[\frac{e^{-\<\frac{F}{2},(L-K_X)\>h}}{1-e^{-\<F,(L-K_X)\>h}}\Lambda^2\WT_4(h)^{(L-K_X)^2}u'h^*M^r\right]\\&=
\Coeff_{q^0}\left[\frac{\Lambda^2\WT_4(h)^{(L-K_X)^2}u'h^*}{2\sinh(\<\frac{F}{2},(L-K_X)\>h)}M^r\right].\end{align*}
In the case $c_1=0$ the argument is very similar. By definition and \thmref{vanbound} we have
\begin{align*}
&\chi^{X,sF+G}_{0,d}(L,P^r)=\Coeff_{\Lambda^d}\Coeff_{q^0}\left[\Psi^{sF+G,F}_{X,0}(L;\Lambda,\tau)M^r\right]\\
\quad&=\Coeff_{\Lambda^d}\Coeff_{q^0}\left[\Theta^{sF+G,F}_{\Gamma_X,0,K_X}(\frac{1}{2\pi i}(L-K_X)h,\tau)\Lambda^2\WT_4(h)^{(L-K_X)^2}u'h^*M^r\right]\\
\quad&=
-\Coeff_{\Lambda^d}\Coeff_{q^0}\left[\frac{e^{-\<F,L-K_X\>h}\Lambda^2\WT_4(h)^{(L-K_X)^2}u'h^*}{1-e^{-\<F,(L-K_X)\>h}}M^r\right]+\sum_{\<F,\xi\><0<\<(G+sF),\xi\>}\delta^X_\xi(L,P^r).
\end{align*}
The second sum is again over the walls of type $(0,d)$, and thus it is $0$.
Thus we get 
\begin{align*}
\chi^{X,F_+}_{0}(L,P^r)&=-\Coeff_{q^0}\left[\frac{e^{-\<F,L-K_X\>h}\Lambda^2\WT_4(h)^{(L-K_X)^2}u'h^*}{1-e^{-\<F,(L-K_X)\>h}}M^r\right]\\
&=
-\Coeff_{q^0}\left[\frac{1}{2}\left(\coth(\<F,(L-K_X)/2\>h)-1\right)\Lambda^2\WT_4(h)^{(L-K_X)^2}u'h^*M^r\right].
\end{align*}
Note that by \remref{delb}, we get $$\Coeff_{q^0}[\Lambda^2\WT_4(h)^{(L-K_X)^2}u'h^*M^r]=\Coeff_{q^0}[(1-1)\Lambda^2\WT_4(h)^{(L-K_X)^2}u'h^*M^r]=0.$$
\end{proof}

\begin{Remark}\label{Gplus}
In the case of $\P^1\times\P^1$, we can in the same way define $\chi^{\P^1\times\P^1,G_+}_{c_1,d}(L,P^r):=\chi^{\P^1\times\P^1,G+n_dF}_{c_1,d}(L,P^r)$ for $n_d$ sufficiently large with respect to $d$, and 
$$\chi^{\P^1\times\P^1,G_+}_{c_1}(nF+mG,P^r):=\sum_{d>0} \chi^{\P^1\times\P^1,G_+}_{c_1,d}(L,P^r)\Lambda^d.$$
Then we see immediately that 
$\chi^{\P^1\times\P^1,G_+}_{F}(nF+mG,P^r)=0$, and we get by symmetry from \propref{Fplus} that 
$$\chi^{\P^1\times\P^1,G_+}_{0}(nF+mG,P^r)=-\Coeff_{q^0}\left[\frac{1}{2}\left(\coth((n/2+1)h)\right)\Lambda^2\WT_4(h)^{2(n+2)(m+2)}u'h^*M^r\right].$$
\end{Remark}

\subsection{Recursion formulas from theta constant identities}

We now   use the blowup polynomials to show recursion formulas in $n$ and $r$ for the $K$-theoretical Donaldson invariants $\chi^{X,F_+}_{0}(nF+mG,P^r)$, $\chi^{X,F_+}_{F}(nF+mG,P^r)$ for $0\le m\le 2$. 
We use the fact that the $\widetilde S_n$ vanish at  division points $a\in \frac{2}{n}\pi i \Z$, together with other vanishing results proven in \cite{GY}.
We consider expressions relating the left hand sides of the formulas of \propref{Fplus} for $\chi^{X,F_+}_{0}(nF+mG,P^r)$, $\chi^{X,F_+}_{F}(nF+mG,P^r)$ for successive values of $n$.
We will show that these
are almost holomorphic in $q$, i.e. that they have only finitely many monomials $\Lambda^d q^s$ with nonzero coefficients and $s\le 0$.
This will then give recursion formulas for  $\chi^{X,F_+}_{0}(nF+mG,P^r)$, $\chi^{X,F_+}_{F}(nF+mG,P^r)$.


We will frequently use the following
\begin{Notation}
\begin{enumerate}
\item
For a power series $f=\sum_{n\ge 0} f_n(y)q^n \in \C[y^{\pm 1}][[q]]$, and a polynomial $g\in \C[y^{\pm 1}]$ we say that $g$ {\it divides} $f$, if $g$ divides $f_n$ for all $n$.
\item 
For a Laurent series $h=\sum_{n} a_nq^n\in \C((q))$ the {\it principal part} is ${\mathcal P}[h]:=\sum_{n\le 0} a_nq^n$. Note that this contains the coefficient of $q^0$. This is because
we think of $q$ as $e^{\pi i \tau/4}$, with $\tau$ in $\H$, and then $\frac{dq}{q}=\frac{\pi i}{4} d\tau$.
For a  series $h=\sum_{n\ge 0} h_n(q)\Lambda^n\in \C((q))[[\Lambda]]$,  the {\it principal part} is   ${\mathcal P}[h]:=\sum_{n\ge 0} {\mathcal P}[h_n]\Lambda^n\in \C((q))[[\Lambda]]$.
We recall the previous notation $\Coeff_{q^0}[h]:=\sum_{n\ge 0} \Coeff_{q^0}[h_n]\Lambda^n$.
\item We write $\Q[[y^{\pm 2} q^4,q^4]]^\times$ for the set power series in $y^{\pm 2} q^4,q^4$ whose constant part is $1$.
\end{enumerate}
\end{Notation}

\begin{Remark} By \eqref{thetatilde},\eqref{MuL} we have 
${\mathcal P}[M^2]=4-q^{-2}\Lambda^2+4\Lambda^4$, and thus obviously
\begin{align*}{\mathcal P}[M^2-(1-\Lambda^4)^2]&= 3 -q^{-2}\Lambda^2+6\Lambda^4 -\Lambda^8,\\
{\mathcal P}[M^2(1+\Lambda^4)-2(1-\Lambda^4)^2]&=2-q^{-2}\Lambda^2+12\Lambda^4-q^{-2}\Lambda^6+2\Lambda^8.
\end{align*}
\end{Remark}

\begin{Lemma}\label{theth1} For all $r\in \Z_{>0}$ we have
\begin{align*}
\tag{1} &g^r_1:={\mathcal P}\Big[\frac{1}{2\sinh(h)} M^{2r}u'h^*\Lambda^2\Big]\in \Q[q^{-2}\Lambda^2,\Lambda^4]_{\le r},\\
\tag{2}&g^r_2:={\mathcal P}\Big[-\frac{1}{2}\coth(h)M^{2r}u'h^*\Lambda^2\Big]\in \Q[q^{-2}\Lambda^2,\Lambda^4]_{\le r+1},\\
\tag{3}&{\mathcal P}\Big[\frac{1}{2\sinh(3h/2)}M\big(\WT_4(h)^3(1-\Lambda^4)-1\big)u'h^*\Lambda^2\Big]=\Lambda^4,\\
\tag{4}& g^r_3:={\mathcal P}\Big[\frac{1}{2\sinh(3h/2)}M^{2r-1}(M^2\WT_4(h)^3-(1-\Lambda^4))u'h^*\Lambda^2\Big]\in \Q[q^{-2}\Lambda^2,\Lambda^4]_{\le r},\\
\tag{5}&g^r_4:={\mathcal P}\Big[-\frac{1}{2}\coth(3h/2)M^{2r-2}(M^2\WT_4(h)^3-(1-\Lambda^4)) u'h^*\Lambda^2\Big]\in \Q[q^{-2}\Lambda^2,\Lambda^4]_{\le r+1},\\
\tag{6}&g^r_5:={\mathcal P}\Big[-\frac{1}{2}\tanh(h)M^{2r-2}\big(\WT_4(h)^8(M^2-(1-\Lambda^4)^2)-1\big) u'h^*\Lambda^2\Big] \in \Q[q^{-2}\Lambda^2,\Lambda^4]_{\le r+2}.
\end{align*}
\end{Lemma}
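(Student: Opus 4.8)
The plan is to analyze all six brackets by the degeneration method of \cite[Lem.~3.18, Thm.~3.19]{GY}. First I would put every argument into the common coordinates $(\Lambda,q)$ by substituting $h^{*}=\tfrac{4i\Lambda}{\theta_2\theta_3 M}$ and $u'=\tfrac{2\theta_4^{8}}{\theta_2^{2}\theta_3^{2}}$ from \eqref{hstar}, using $M^{2}=4(1+u\Lambda^{2}+\Lambda^{4})$ from \eqref{MuL} and $\widetilde\theta_4(h)=1+q^{2}\Lambda^{2}+O(q^{4})$ from \lemref{qpow}(3). The governing scaling variable is $w:=q^{-1}\Lambda$: from the product expansions \eqref{theta} one gets, in the limit $q\to0$ at fixed $w$, the hyperbolic degenerations $\sinh(h/2)\to\tfrac{i}{2}w$, $\cosh(h/2)\to(1-\tfrac{w^{2}}{4})^{1/2}$, $M\to2\cosh(h/2)$, $\widetilde\theta_4(h)\to1$, $h^{*}\to iw/\cosh(h/2)$ and $u'\to\tfrac12 q^{-2}$. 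The crucial point is that in this limit the dangerous reciprocal factors have their poles cancelled by their prescribed partners: $\tfrac{h^{*}}{\sinh h}\to\cosh^{-2}(h/2)$, so that $\tfrac{h^{*}}{\sinh h}M^{2}\to4$, and likewise $\tanh(h)h^{*}\to -w^{2}/(1-\tfrac{w^{2}}{2})$ while the correction factor in (6) degenerates to $\widetilde\theta_4(h)^{8}(M^2-(1-\Lambda^4)^2)-1\to 2-w^{2}=2\cosh h$, cancelling the $\cosh h$ in the denominator. This is the trigonometric shadow of the theta-constant identities behind $R_n,S_n$.

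Carrying this out, the leading-$q$ part of each bracket is a genuine polynomial in $w^{2}=q^{-2}\Lambda^{2}$, and I would read off its degree to get the stated bounds: for $g^{r}_{1}$ one finds $\tfrac12\,\tfrac{h^{*}M^{2r}}{\sinh h}\,u'\Lambda^{2}\to 4^{\,r-1}w^{2}(1-\tfrac{w^{2}}{4})^{r-1}$, a polynomial in $w^{2}$ of degree $r$; the extra factor $\cosh(nh)$ present in $\coth$ raises the degree by exactly one, explaining the bound $r+1$ in (2) and (5); and the $\tanh$ and $3h/2$ cases are handled the same way, the subtractions ``$-1$'' and ``$-(1-\Lambda^{4})$'' being designed precisely to remove the residual pole, giving (4)--(6). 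Thus the content is that these particular combinations of $M$-powers, $\widetilde\theta_4(h)$-powers and hyperbolic functions are pole-free after degeneration.

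To promote this to the full principal part I would show, using \lemref{qpow} in the forms it is proved ($h^{*}M=\tfrac{4i\Lambda}{\theta_2\theta_3}\in iq^{-1}\Lambda\Q[[q^{4}]]$, $u'\in q^{-2}\Q[[q^{4}]]$, $M^{2}\in\Q[q^{-2}\Lambda^{2}]_{\le1}\RR$, $\cosh(nh)\in\Q[q^{-2}\Lambda^{2}]_{\le|n|}\RR$, $\sinh(nh)h^{*}\in\Q[q^{-2}\Lambda^{2}]_{\le|n|}\RR$, and $\sinh(h/2)^{-1}\in iq\Lambda^{-1}\RR$), that each bracket lies in $\Q[q^{-2}\Lambda^{2}]_{\le k}\RR$ with $k$ the claimed degree; the pole-cancellation above is exactly what certifies membership in this space rather than in one with a genuine $q$-pole or higher $q^{-2}\Lambda^{2}$-degree. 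Granting this, I invoke the formal fact that ${\mathcal P}$ applied to $(q^{-2}\Lambda^{2})^{k}\RR$ is a finite polynomial in $q^{-2}\Lambda^{2},\Lambda^{4}$ of total degree $\le k$: a monomial $(q^{-2}\Lambda^{2})^{k}(q^{2}\Lambda^{2})^{a}(q^{4})^{b}$ survives only when $a+2b\le k$, and then equals $(q^{-2}\Lambda^{2})^{\,k-a-2b}(\Lambda^{4})^{\,a+b}$, of degree $k-b\le k$. Evenness of each bracket in $\Lambda$ (the parities of $\sinh$, $h^{*}$ and the even factors cancel) forces only even powers, so the survivors are genuinely of the form $(q^{-2}\Lambda^{2})^{s}(\Lambda^{4})^{t}$ and lie in $\Q[q^{-2}\Lambda^{2},\Lambda^{4}]_{\le k}$. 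For the exact identity (3) I would compute directly: the correction factor has no constant term, $\widetilde\theta_4(h)^{3}(1-\Lambda^{4})-1=-\Lambda^{4}+3q^{2}\Lambda^{2}+O(q^{4})$, so multiplying by $\tfrac{M\,u'h^{*}\Lambda^{2}}{2\sinh(3h/2)}$ — whose only pole is the simple one from $\sinh(3h/2)^{-1}$ — kills all but one term and, after inserting the leading expansions of $\theta_2,\theta_3,\theta_4$, leaves exactly $\Lambda^{4}$.

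The main obstacle is exactly the promotion step: establishing, to all orders in $q$ and not merely in the degeneration, that for each bracket the reciprocal hyperbolic factor's pole is cancelled by precisely the accompanying $M$-power and correction factor. Concretely one must verify the all-order analogues of ``$\tfrac{h^{*}}{\sinh h}M^{2}\in\RR$''-type statements, i.e.\ that after multiplication the quotient genuinely lands in $\Q[q^{-2}\Lambda^{2}]_{\le k}\RR$; this is where the special shape of the factors (the even powers $M^{2r}$, the matched powers of $\widetilde\theta_4(h)$, and the subtractions) is indispensable, and the exact cancellation forcing (3) down to the single monomial $\Lambda^{4}$ is the sharpest such instance.
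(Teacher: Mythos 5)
Your proposal has the right architecture: you correctly identify that each bracket should be shown to lie in $\Q[q^{-2}\Lambda^2]_{\le k}\RR$ for the appropriate $k$, and your closing ``formal fact'' (that $\mathcal P$ of such an element is a polynomial in $q^{-2}\Lambda^2,\Lambda^4$ of degree $\le k$) is exactly the endgame of the paper's proof. Your $q\to0$ degenerations at fixed $w=q^{-1}\Lambda$ are also correct up to signs and do predict the right degrees. But the step you yourself flag as ``the main obstacle'' --- promoting the pole cancellation from the degeneration limit to all orders in $q$ --- is not a technicality to be deferred; it is the entire content of the lemma, and your proposal does not supply it. The leading-order limit only controls the constant term of the $\Q[[q^2\Lambda^2,q^4]]$ factor; without the all-orders statement you cannot even conclude that the principal part is a \emph{finite} polynomial, let alone bound its degree, and for (3) you cannot rule out extra monomials beyond $\Lambda^4$ by a low-order computation alone.

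The paper closes this gap by proving genuine divisibility statements in $\Q[y^{\pm1}][[q]]$: the relevant numerators are divisible, coefficient of $q^n$ by coefficient of $q^n$, by the Laurent polynomial ($y^2-y^{-2}$, $y^3-y^{-3}$, or $y^2+y^{-2}$) whose vanishing locus matches the zeros of the hyperbolic denominator. This is extracted from three sources you would need to invoke or reprove: (i) the Jacobi product formulas, giving e.g.\ $\WT_1(2h)\in iq(y^2-y^{-2})\Q[[y^{\pm2}q^4,q^4]]^\times$ and hence divisibility of $\Lambda^2M^2$ by $(y^2-y^{-2})^2$ for parts (1)--(2); (ii) the identification of the correction factors with blowup polynomials evaluated at $(\Lambda,M)$, e.g.\ $\Lambda(M^2-(1-\Lambda^4)^2)=\widetilde S_3=\WT_1(3h)/\WT_4(h)^9$ and $(1+\Lambda^4)M^2-2(1-\Lambda^4)^2=\WT_1(4h)/(\WT_1(2h)\WT_4(2h)\WT_4(h)^8)$, whose zeros at division points give the divisibility by $y^3-y^{-3}$ and $y^2+y^{-2}$ needed in (4)--(6); and (iii) the imported results \cite[Prop.~5.10(5),(6)]{GY} that $\WT_4(h)^3(1-\Lambda^4)-1$ and $\WT_4(h)^8(1-\Lambda^4)^3-(1+\Lambda^4)$ are divisible by $y^3-y^{-3}$ and $y^2+y^{-2}$ respectively. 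Once divisibility is established, the quotient's $q^{4n}$-coefficient is a Laurent polynomial in $y$ of degree $O(n)$ with a definite parity, hence a combination of $\sinh(lh)$ or $\cosh(lh)$ with bounded $l$, and \lemref{qpow} converts this into membership in $\Q[q^{-2}\Lambda^2]_{\le k}\RR$ --- which is the statement you assumed. As written, your argument establishes the shape of the answer but not the lemma.
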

\begin{pf}
(1) We know $\WT_4(h)\in \Q[[y^{\pm 2}q^4,q^4]]^\times$, $\WT_1(h)\in iq(y-y^{-1})\Q[[y^{\pm 2}q^4,q^4]]^\times$,
and from the product formula of \eqref{theta} we see that even $\WT_1(2h)\in iq(y^2-y^{-2})\Q[[y^{\pm 2}q^4,q^4]]^\times$.
By \defref{blowpol}, \propref{rnpropo} we have 
$\frac{\WT_1(2h)}{\WT_4(h)^4}=\Lambda M$, thus we get that 
$\Lambda^{2} M^{2}\in q^{2}(y^2-y^{-2})^{2}\Q[[y^{\pm 2}q^4,q^4]].$
As $u'\in q^{-2}\Q[[q^4]]$, we get that 
$$f(y,q):=\sum_{n\ge 0} f_n(y)q^{4n}:=\frac{1}{\sinh(h)}\Lambda^{2} M^{2}u'\in (y^2-y^{-2})\Q[[y^{\pm 2}q^4,q^4]].$$
Thus  $f_n(y)$ is a Laurent polynomial in $y^2$ of degree at most $n+1$, and we see from the definitions that is it  antisymmetric under $y\to y^{-1}$.
Therefore $f_n(y)$ can be written as a linear combination of $\sinh(lh)$ for $l=1,\ldots, n+1$.
Thus we get by \lemref{qpow} that $f_n(y)h^*\in \Q[q^{-2}\Lambda^2]_{\le n+1}\Q[[q^2\Lambda^2,q^4]]$, and thus  the principal part of 
$q^{4n} f_n(y)h^*$ vanishes unless $4n\le 2n+2$, i.e. $n\le 1$.
Therefore the principal part of $f(y,q)h^*$ is a polynomial in $q^{-2}\Lambda^2,\Lambda^2q^2$, and $q^4$ and thus (as the power of $q$ must be nonpositive) 
a polynomial in $q^{-2}\Lambda^2$ and $\Lambda^4$, and we see that its degree is at most  $1$.

By  \eqref{MuL}, we have that $M^2=4+4u\Lambda^2+4\Lambda^4$. Using that $u\in q^{-2}\Q[[q^4]]$ we get that
$M^2\in \Q[q^{-2}\Lambda^2]_{\le 1}\Q[[q^2\Lambda^2,q^4]]$. Therefore by the above 
$$M^{2r-2}f_n(y,q)h^*\in \Q[q^{-2}\Lambda^2]_{\le n+r}\Q[[q^2\Lambda^2,q^4]].$$
The same argument as above shows that the principal part of $M^{2r-2}f(y,q)h^*$ is a polynomial in $q^{-2}\Lambda^2$ and $\Lambda^4$ of degree at most $r$.

(2) In (1) we have seen that $$M^{2r-2}f_n(y,q)h^*\in \Q[q^{-2}\Lambda^2]_{\le n+r}\Q[[q^2\Lambda^2,q^4]].$$
We have $\coth(h)\Lambda^{2} M^{2r}u'h^*=\cosh(h)M^{2r-2}f(y,q)h^*$, and by
\lemref{qpow} we have that $$\cosh(h)M^{2r-2}f_n(y,q)h^*\in \Q[q^{-2}\Lambda^2]_{\le n+r+1}\Q[[q^2\Lambda^2,q^4]]$$
The same argument as in (1) shows that the principal part of $\cosh(h)M^{2r-2}f(y,q)h^*$ is a polynomial in $q^{-2}\Lambda^2$ and $\Lambda^4$ of degree at most $r+1$.

(3) By \cite{GY}, Prop.~5.10(5) and its proof, we have that $\WT_4(h)^3(1-\Lambda^4)-1\in \Q[y^{\pm 1}][[q]]$ is divisible by $y^3-y^{-3}$. 
Thus also $M(\WT_4(h)^3(1-\Lambda^4)-1)\in \Q[y^{\pm 2}][[q]]$ is divisible by $y^3-y^{-3}$. 
We note that $\Lambda\in iq(y-y^{-1})\Q[[y^{\pm 2}q^4,q^4]]$, thus $1-\Lambda^4\in \Q[y^{\pm 2}]_{\le 1}\Q[[y^{\pm 2}q^4,q^4]]$. 
We already know $\WT_4(h)\in \Q[[y^{\pm 2}q^4,q^4]]$,
$M\in (y+y^{-1})\Q[[y^{\pm 2}q^4,q^4]]$. Thus $M(\WT_4^3(1-\Lambda^4)-1)\in (y^3-y^{-3})\Q[[y^{\pm 2}q^4,q^4]]$.
Therefore, writing 
$$f:=\sum_{n\ge 0} f_n(y) q^{4n}:=\frac{1}{2\sinh(3h/2)}M(\WT_4(h)^3(1-\Lambda^4)-1),$$
$f_n(y)$ is a Laurent polynomial in $y^2$ of degree at most $n$, and we see from the definitions that it is antisymmetric under $y\to y^{-1}$.
Thus by \lemref{qpow}  we get $f_n(y)h^*\in\Q[q^{-2}\Lambda^2]_{\le n}\Q[[q^2\Lambda^2, q^4]]$. Therefore $fh^*\in \Q[[q^2\Lambda^2, q^4]]$ and $f h^*u'\Lambda^2\in 
[q^{-2}\Lambda^2,\Lambda^4]_{\le 1}\Q[[q^2\Lambda^2, q^4]]$. Computation of the first few coefficients gives ${\mathcal P}[f u'\Lambda^2]=\Lambda^4$.

(4) As $\WT_4(h)^3(1-\Lambda^4)-1\in \Q[y^{\pm 1}][[q]]$ is divisible by $y^3-y^{-3}$, the same is  true for $\Lambda M^2(\WT_4^3(1-\Lambda^4)-1)$.
On the other hand $\Lambda(M^2-(1-\Lambda^4)^2)\in  i\Q[y^{\pm 1}][[q]]$,  and by $\Lambda(M^2-(1-\Lambda^4)^2)=\widetilde S_3=\frac{\WT_1(3h)}{\WT_4(h)^9}$
(see \defref{blowpol}, \propref{rnpropo}), we see that
$\Lambda(M^2-(1-\Lambda^4)^2)$, vanishes for $3h\in 2\pi i \Z$, i.e. when $y^3=y^{-3}$. Thus it is also divisible by $y^3-y^{-3}$. Therefore also 
$$\Lambda (M^2\WT_4(h)^3(1-\Lambda^4)-(1-\Lambda^4)^2)=\Lambda M^2(\WT_4^3(1-\Lambda^4)-1)+\Lambda(M^2-(1-\Lambda^4)^2)\in i\Q[y^{\pm 1}][[q]]$$ 
is divisible $y^3-y^{-3}$. We note that $(1-\Lambda^4)=\widetilde R_2=\frac{\WT_4(2h)}{\WT_4(h)^4}\in \Q[y^{\pm1}][[q]]$ does not vanish at
any $h$ with $3h\in 2\pi i \Z$. It follows that also the power series $\Lambda (M^2\WT_4(h)^3-(1-\Lambda^4))$ is divisible by $y^3-y^{-3}$.
Finally we note that $\Lambda\in iq(y-y^{-1})\Q[[y^{\pm 2}q^4,q^4]]$, thus $1-\Lambda^4\in \Q[y^{\pm 2}]_1\Q[[y^{\pm 2}q^4,q^4]]$. 
Thus $$\Lambda (M^2\WT_4(h)^3-(1-\Lambda^4))\in iq(y-y^{-1})\Q[y^{\pm 2}]_{\le 1}\Q[[y^{\pm2}q^4,q^4]],$$ and therefore, as it is divisible by $y^3-y^{-3}$, we can write
$\frac{1}{\sinh(3h/2)}M \Lambda (M^2\WT_4(h)^3-(1-\Lambda^4))=q \sum_{n\ge 0} \overline f_n(y)q^{4n}$, where $\overline f_n(y)$ is an odd Laurent polynomial in $y$ of degree $2n+1$, symmetric under $y\to y^{-1}$. Thus by \lemref{qpow}  we get $\overline f_n(y)h^*\in \Lambda\Q[q^{-2}\Lambda^2]_{\le n}\Q[[q^2\Lambda^2, q^4]]$, and thus
$\overline f_n(y)h^*u'\Lambda\in \Q[q^{-2}\Lambda^2]_{\le n+1}\Q[[q^2\Lambda^2, q^4]]$.
It follows as before that the principal part of 
$\frac{1}{2\sinh(3h/2)}M (M^2\WT_4(h)^3-(1-\Lambda^4))u'h^*\Lambda^2$ is a polynomial in $q^{-2}\Lambda^2$ and $\Lambda^4$ of degree at most $1$.
Using the fact that $M^2\in \Q[q^{-2}\Lambda^2]_{\le 1}\Q[[q^2\Lambda^2,q^4]]$ in the same way as in the proof of (1) we see that
 the principal part  of $\frac{1}{2\sinh(3h/2)} M^{2r-1}(M^2\WT_4(h)^3-(1-\Lambda^4))u'\Lambda^2h^*$ is a polynomial of degree at most $r$ in  $q^{-2}\Lambda^2$ and $\Lambda^4$.

 (5) We see that the left hand side of (5) is obtained from the left hand side of (4) by multiplying by $\cosh(3h/2)/M$. 
 As by the above $M\in (y+y^{-1})\Q[[y^{\pm 2}q^4,q^4]]^\times$, we see that 
 $$2\cosh(3h/2)/M=(y^3+y^{-3})/M\in (y^2-1+y^{-2})\Q[[y^{\pm 2}q^4,q^4]] \subset
 \Q[q^{-2}\Lambda^2]_{\le 1}\Q[[q^2\Lambda^2,q^4]],$$ 
 where the inclusion on the right follows again by \lemref{qpow}.  Therefore (5) follows from (4).
 
 (6) We note that by \defref{blowpol} and \propref{rnpropo} we have
 $$
 s_1:=(1+\Lambda^4)M^2-2(1-\Lambda^4)^2=\frac{S_4(\Lambda,M)}{S_2(\Lambda,M)R_2(\Lambda,M)}=\frac{\WT_1(4h)}{\WT_1(2h) \WT_4(2h)\WT_4(h)^8}.$$
Again  $s_1$ is in $\Q[y^{\pm 1}][[q]]$. As $\WT_4(h)$ has no zeros on $2\pi i \R$ and $\WT_1(h)$ vanishes precisely for $h\in 2\pi i \Z$, we find that $s_1$ vanishes
if $y^4=y^{-4}$, but not $y^2=y^{-2}$. Thus the coefficient of every power of $q$ of $s_1$ is divisible by $y^2+y^{-2}$.

In \cite{GY} Proposition 5.10(6) and its proof it is shown that  
$$s_2:=\WT_4(h)^8(1-\Lambda^4)^3-(1+\Lambda^4)\in (y^2+y^{-2})\Q[y^{\pm 1}][[q]].$$
Thus also 
$$M^2\WT_4(h)^8(1-\Lambda^4)^3-2(1-\Lambda^4)^2=M^2s_2+s_1\in (y^2+y^{-2})\Q[y^{\pm 1}][[q]].$$
As $\widetilde R_2=(1-\Lambda^4)\in \Q[y^{\pm 1}][[q]]^\times$ does not vanish for $h\in i\R$, we get that 
$s_3:=M^2\WT_4(h)^8(1-\Lambda^4)-2\in(y^2+y^{-2}) \Q[y^{\pm 1}][[q]].$ 
Therefore also 
$$\frac{1}{2}(s_3+\WT_4(h)^8s_1)=M^2\WT_4(h)^8-\WT_4(h)^8(1-\Lambda^4)^2-1 \in (y^2+y^{-2})\Q[y^{\pm 1}][[q]].$$
On the other hand we know 
$M^2\in (y+y^{-1})^2 \Q[[y^{\pm 2}q^4,q^4]]$, $\WT_4(h)\in \Q[[y^{\pm 2}q^4,q^4]]$ and $(1-\Lambda^4)^2\in \Q[y^{\pm 2}]_{\le 2}\Q[[y^{\pm 2}q^4,q^4]]$.
Thus 
$$l:=\tanh(h)\big(M^2\WT_4(h)^8-\WT_4(h)^8(1-\Lambda^4)^2-1\big)\in \Q[y^{\pm 2}]_{\le 2}\Q[[y^{\pm 2}q^4,q^4]].$$
Thus we can write $l=\sum_{n\ge 0} l_n(y)q^{4n}$ where $l_n(y)$ is a Laurent polynomial in $y^2$ of degree $n+2$, symmetric under $y\to y^{-1}$. Thus by \lemref{qpow}  we get $ l_n(y)h^*\in \Q[q^{-2}\Lambda^2]_{\le n+2}\Q[[q^2\Lambda^2, q^4]]$, and thus
$l_n(y)h^*u'\Lambda^2\in \Q[q^{-2}\Lambda^2]_{\le n+3}\Q[[q^2\Lambda^2, q^4]]$.
It follows as before that the principal part of 
$\tanh(h)\big(M^2\WT_4(h)^8-\WT_4(h)^8(1-\Lambda^4)^2-1\big)h^*u'\Lambda^2$ is a polynomial in $q^{-2}\Lambda^2$ and $\Lambda^4$ of degree at most $3$.
Using again the fact that $M^2\in \Q[q^{-2}\Lambda^2]_{\le 1}\Q[[q^2\Lambda^2,q^4]]$, we see that
 the principal part  of $\tanh(h)M^{2r-2}\big(M^2\WT_4(h)^8-\WT_4(h)^8(1-\Lambda^4)^2-1\big)h^*u'\Lambda^2$ is a polynomial of degree at most $r+2$ in  $q^{-2}\Lambda^2$ and $\Lambda^4$.
\end{pf}

\begin{Remark}
The principal parts above can be easily computed by calculations with the lower order terms with the power series, using the formulas given in \secref{thetamod}.
We see for instance:
\begin{align*}
g_1^1&=q^{-2}\Lambda^2-4\Lambda^4,\quad
g_2^1=-q^{-2}\Lambda^2 +\Big(\frac{1}{2}q^{-4} - 8\Big) \Lambda^4-q^{-2}\Lambda^6-\Lambda^8,\\
g_3^1&=q^{-2}\Lambda^2 -5\Lambda^4,
\quad g_3^2=4q^{-2}\Lambda^2 - (q^{-4} + 20)\Lambda^4 
+ 9q^{-2}\Lambda^6 -23\Lambda^8,\\
g_4^1&=-\frac{1}{2}q^{-2}\Lambda^2 + (\frac{1}{2}q^{-4}- 11 )\Lambda^4-\frac{1}{2}\Lambda^8,\\
g_5^1&=(\frac{1}{2}q^{-4} - 12)\Lambda^4 + 2q^{-2}\Lambda^6 + 4\Lambda^8 -\frac{1}{2}q^{-2} \Lambda^{10} +5\Lambda^{12}.
\end{align*}
\end{Remark}

We apply \lemref{theth1} to compute the limit of the $K$-theoretic Donaldson invariants at $F$.

\begin{Proposition}\label{p11r}  For $X=\P^1\times\P^1$ or $X=\widehat \P^2$, all $n\in \Z$ we have
\begin{enumerate}
\item
$\displaystyle{
1+\chi^{X,F_+}_{F}(nF)=\frac{1}{(1-\Lambda^4)^{n+1}}}.$
\item
For all $r>0$ there is a polynomial $h^0_{r}(n,\Lambda^4)\in \Q[\Lambda^4,n\Lambda^4]_{\le r}$ with 
$\chi^{X,F_+}_{F}(nF,P^{2r})=h^0_r(n,\Lambda^4).$
\item
$\displaystyle{1+(2n+5)\Lambda^4+\chi^{X,F_+}_{0}(nF)=\frac{1}{(1-\Lambda^4)^{n+1}}}$.
\item
For all $r>0$ there is a polynomial $\overline h^0_{r}(n,\Lambda^4)\in \Q[\Lambda^4,n\Lambda^4]_{\le r+1}$ with 
$\chi^{X,F_+}_{0}(nF,P^{2r})=\overline h^0_r(n,\Lambda^4).$
\end{enumerate}
\end{Proposition}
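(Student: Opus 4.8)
The plan is to feed the $m=0$ specialisations of \propref{Fplus} into a recursion in $n$. Writing $L=nF$ with $K_X=-2F-2G$, so that $(L-K_X)^2=4(n+2)$, \propref{Fplus} gives
$$\chi^{X,F_+}_{F}(nF,P^{r})=\Coeff_{q^0}\Big[\tfrac{1}{2\sinh(h)}\Lambda^2\WT_4(h)^{4(n+2)}u'h^*M^r\Big],\qquad \chi^{X,F_+}_{0}(nF,P^{r})=-\Coeff_{q^0}\Big[\tfrac12\coth(h)\Lambda^2\WT_4(h)^{4(n+2)}u'h^*M^r\Big],$$
uniformly for $X=\P^1\times\P^1$ and $X=\widehat\P^2$, since both formulas use only $F,G,K_X$. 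Throughout I will use that, in the variables $(\tau,\Lambda)$, the quantity $\Lambda$ is independent of $q$, so that multiplication by any element of $\Q[\Lambda^{\pm4}]$ commutes with $\Coeff_{q^0}$ and with $\mathcal P$.

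For (1) and (3) (the case $r=0$) the key device is the identity $(1-\Lambda^4)\WT_4(h)^4=\WT_4(2h)$, which is \propref{rnpropo} for $n=2$. Multiplying the $F$-invariant by $(1-\Lambda^4)$ and subtracting the value at $n-1$ collapses the exponent:
$$(1-\Lambda^4)\chi^{X,F_+}_{F}(nF)-\chi^{X,F_+}_{F}((n-1)F)=\Coeff_{q^0}\Big[\tfrac{1}{2\sinh(h)}\Lambda^2u'h^*\WT_4(h)^{4(n+1)}\big(\WT_4(2h)-1\big)\Big].$$
Now $\WT_4(2h)-1$ vanishes at every $h\in\pi i\Z$ (there $\theta_4(2h)=\theta_4$), i.e. exactly at the zeros of $\sinh(h)$, so $\big(\WT_4(2h)-1\big)/\sinh(h)$ is regular and the bracket is almost holomorphic in $q$; by the degree bookkeeping of \lemref{qpow} (as in the proof of \lemref{theth1}) its principal part is a polynomial in $q^{-2}\Lambda^2$ and $\Lambda^4$, and a short computation of the low-order terms identifies it, after $\Coeff_{q^0}$, as $\Lambda^4$, independently of $n$. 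This is precisely the recursion $(1-\Lambda^4)\big(1+\chi^{X,F_+}_{F}(nF)\big)=1+\chi^{X,F_+}_{F}((n-1)F)$, which together with a single base value (computed directly from the principal part, e.g. at $n=-2$ where the $\WT_4$-exponent vanishes) yields $1+\chi^{X,F_+}_{F}(nF)=(1-\Lambda^4)^{-(n+1)}$. Part (3) is identical except that $\coth(h)$ replaces $1/(2\sinh(h))$; the extra factor $\cosh(h)$ raises the principal-part degree by one, so the same computation returns $\Lambda^4-(2n+5)\Lambda^8$, which is exactly the recursion $(1-\Lambda^4)b_n=b_{n-1}$ for $b_n:=1+(2n+5)\Lambda^4+\chi^{X,F_+}_{0}(nF)$.

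For (2) and (4) (the case $r>0$) I do not need the exact value, only polynomiality and a degree bound, which I read off the $q$-expansion. Since $\WT_4(h)\in\RR$ has constant term $1$, I expand $\WT_4(h)^{4(n+2)}=\sum_{k\ge0}\binom{4(n+2)}{k}\big(\WT_4(h)-1\big)^k$, where $\binom{4(n+2)}{k}$ is a polynomial in $n$ of degree $k$ and $\big(\WT_4(h)-1\big)^k$ lies in the $k$-th power of the ideal $(q^2\Lambda^2,q^4)\subset\RR$. Working in $\RR$, every monomial $q^{2i+4j}\Lambda^{2i}$ has $\Lambda$-order at most its $q$-order, while the base $\tfrac{1}{2\sinh(h)}\Lambda^2u'h^*M^{2r}$ has, by \lemref{theth1}(1), principal part spanned by $(q^{-2}\Lambda^2)^a(\Lambda^4)^b$ with $a+b\le r$. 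Balancing $q$-orders to zero to survive $\Coeff_{q^0}$ forces each $n^k$-term to carry at least $k$ compensating factors of $\Lambda^4$ while keeping the total $\Lambda^4$-degree $\le r$; hence the result is a polynomial in $\Lambda^4$ and $n\Lambda^4$ of total degree $\le r$, giving $h^0_r(n,\Lambda^4)\in\Q[\Lambda^4,n\Lambda^4]_{\le r}$. The same argument with \lemref{theth1}(2), whose base has degree $\le r+1$, yields $\overline h^0_r(n,\Lambda^4)\in\Q[\Lambda^4,n\Lambda^4]_{\le r+1}$ for (4).

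The main obstacle is the exact evaluation of the principal parts in the $r=0$ step, i.e. verifying that the almost-holomorphic brackets above reduce, after $\Coeff_{q^0}$, to exactly $\Lambda^4$ (resp. $\Lambda^4-(2n+5)\Lambda^8$). The delicate point is the independence of $n$ in part (1): a priori the factor $\WT_4(h)^{4(n+1)}$ could feed its $n$-linear term into the $q^0$-coefficient, and one must check that all such contributions cancel. This is the analogue for $m=0$ of \lemref{theth1}(3), and I expect to handle it by the same combination of the division-point vanishing of the relevant theta quotients and a finite low-order computation using the formulas of \secref{thetamod}; once the two recursions and their base values are in place, all four statements follow by induction on $n$.
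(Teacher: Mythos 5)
Your treatment of parts (2) and (4) is essentially the paper's own argument: combine \propref{Fplus} with the principal parts $g_1^r,g_2^r$ of \lemref{theth1}(1),(2) and the observation that $\WT_4(h)^{4n+8}$ depends on $n$ only through $nq^2\Lambda^2$ and $nq^4$, so that pairing a degree-$\le r$ polynomial in $q^{-2}\Lambda^2,\Lambda^4$ against it lands in $\Q[\Lambda^4,n\Lambda^4]_{\le r}$ (resp.\ $\le r+1$). That part is correct and matches the paper.

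For parts (1) and (3) you diverge: the paper simply cites \cite[Prop.~5.14]{GY}, whereas you set up a recursion in $n$ in the style of the paper's own proofs of \propref{p11GM} and \propref{p112GM}, using $(1-\Lambda^4)\WT_4(h)^4=\WT_4(2h)$ to collapse the exponent. The strategy is sound, but as written it has a genuine gap that you yourself flag: the whole recursion hinges on showing that ${\mathcal P}\big[\tfrac{1}{2\sinh(h)}(\WT_4(2h)-1)u'h^*\Lambda^2\big]$ is exactly $\Lambda^4$, i.e.\ that it contains \emph{no} $q^{-2}\Lambda^2$ term. The division-point/degree bookkeeping you invoke (the $m=0$ analogue of \lemref{theth1}(3)) only bounds the principal part as an element of $\Q[q^{-2}\Lambda^2,\Lambda^4]_{\le 1}$; any surviving $q^{-2}\Lambda^2$ coefficient would pair with the $4(n+1)q^2\Lambda^2$ term of $\WT_4(h)^{4(n+1)}$ and make the increment depend on $n$, destroying the recursion. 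So the finite low-order computation is not an optional check but the decisive step, and it is not carried out; the same applies to the base case and to the $\coth$ variant in (3). There is also a sign slip in (3): with $b_n:=1+(2n+5)\Lambda^4+\chi^{X,F_+}_{0}(nF)$, the recursion $(1-\Lambda^4)b_n=b_{n-1}$ forces $(1-\Lambda^4)\chi^{X,F_+}_{0}(nF)-\chi^{X,F_+}_{0}((n-1)F)=-\Lambda^4+(2n+5)\Lambda^8$, the negative of the value you state, so the constants must be rechecked when the computation is actually done. In short: (2) and (4) are complete; (1) and (3) rest on an explicitly deferred evaluation, which the paper avoids by quoting \cite{GY}.
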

\begin{proof}
(1) and (3) are proven in \cite[Prop.~5.14]{GY}. 

(2) Let $r>0$. By \propref{Fplus} we have 
\begin{align*}
\chi^{X,F_+}_{F}(nF,P^{2r})&=\Coeff_{q^0}\Big[\frac{1}{2\sinh(h)}\Lambda^2\WT_4(h)^{4n+8}u'h^*M^{2r}\Big]=\Coeff_{q^0}\Big[g_{1}^r \WT_4(h)^{4n+8}\Big].
\end{align*}
where the last step uses \lemref{theth1}(1) and the fact that $\WT_4(h)\in \Q[[q^2\Lambda^2,q^4]]^\times$,
and thus  $\WT_4(h)^{4n+8}\in \Q[[nq^2\Lambda^2,nq^4,q^2\Lambda^2,q^4]]^\times$.
As $g_{1}^r(q^{-2}\Lambda^2,\Lambda^4)$ is a polynomial of degree at most $r$, we see that 
$\Coeff_{q^0}\big[g_{1}^r \WT_4(h)^{4n+8}\big]$ is a polynomial of degree at most $r$ in $\Lambda^4$, $n\Lambda^4$.

(4) Let $r>0$. By \propref{Fplus} and \lemref{theth1}(2) we have 
\begin{align*}
\chi^{X,F_+}_{F}(nF,P^{2r})&=\Coeff_{q^0}\Big[-\frac{1}{2}\coth(h)\Lambda^2\WT_4(h)^{4n+8}u'h^*M^{2r}\Big]=\Coeff_{q^0}\Big[g_{2}^r\WT_4(h)^{4n+8}\Big].
\end{align*}
As $g_{2}^r$ is a polynomial of degree at most $r+1$, we see as in (1) that 
$\Coeff_{q^0}\big[g_{2}^r\WT_4(h)^{4n+8}\big]$ is a polynomial of degree at most $r+1$ in $\Lambda^4$, $n\Lambda^4$.
\end{proof}

\begin{Remark}
We list the first few polynomials $h^0_r$, $\overline h^0_r$.
\begin{align*} &h^0_1=(4n + 4)\Lambda^4, \quad h^0_2=(16n + 16)\Lambda^4 - (8n^2 + 6n -3)\Lambda^8, \\ &h^0_3=(64n + 64)\Lambda^4 + (-64n^2 + 24n + 100)\Lambda^8 + (\hbox{$\frac{32}{3}$}n^3 - 8n^2 - \hbox{$\frac{68}{3}$}n)\Lambda^{12},\\
&\overline h^0_1=-(4n + 16)\Lambda^4 + (4n^2 + 15n + 13)\Lambda^8,\\ &\overline h^0_2=-(16n + 64)\Lambda^4 + (24n^2 + 78n + 18)\Lambda^8 - (\hbox{$\frac{16}{3}$}n^3 + 20n^2 + \hbox{$\frac{50}{3}$}n -2)\Lambda^{12}.
\end{align*}
\end{Remark}

\begin{Proposition}\label{p11GM}  For $X=\P^1\times\P^1$ and $n\in \Z$, and for $X=\widehat \P^2$ and $n\in \Z+\frac{1}{2}$, and all $r\in \Z_{\ge 0}$ we have
the following.
\begin{enumerate}
\item 
$\displaystyle{\chi^{X,F_+}_{F}(nF+G,P^{2r+1})=\frac{1}{(1-\Lambda^4)^{2n+1-2r}}+h^1_r(n,\Lambda^4)},$
where $h^1_r(n,\Lambda^4)\in\Q[\Lambda^4,n\Lambda^4]_{\le r}$.
\item
$\displaystyle{
\chi^{X,F+}_{0}(nF+G,P^{2r})=\frac{1}{(1-\Lambda^4)^{2n+2-2r}}+\overline h^1_r}(n,\Lambda^4),$
where $\overline h^1_r(n,\Lambda^4)\in\Q[\Lambda^4,n\Lambda^4]_{\le r+1}$.
\end{enumerate}
\end{Proposition}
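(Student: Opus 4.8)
The plan is to reduce both parts, via \propref{Fplus}, to extracting the $q^0$-coefficient of an explicit Jacobi expression, and then to peel off the factors of $M^2$ one at a time using the building blocks assembled in \lemref{theth1}. For part (1) the computation in the proof of \propref{Fplus} applies to the odd case $m=1$ (where $m/2+1=\tfrac32$ and $(L-K_X)^2=6n+12$), giving
\[
\chi^{X,F_+}_{F}(nF+G,P^{2r+1})=\Coeff_{q^0}\Big[\tfrac{1}{2\sinh(3h/2)}\WT_4(h)^{6n+12}u'h^*M^{2r+1}\Lambda^2\Big].
\]
Writing $M^{2r+1}=M\,(M^2)^r$ and substituting $M^2\WT_4(h)^3=(1-\Lambda^4)+\big(M^2\WT_4(h)^3-(1-\Lambda^4)\big)$, I peel one factor $M^2\WT_4(h)^3$ at a time: the $(1-\Lambda^4)$-part lowers the $\WT_4$-exponent by $3$ and the $M$-power by $2$, while the remainder has the shape $\tfrac{1}{2\sinh(3h/2)}M^{2j-1}\big(M^2\WT_4(h)^3-(1-\Lambda^4)\big)\WT_4(h)^{N'}u'h^*\Lambda^2$. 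By \lemref{theth1}(4) the principal part of such a remainder is $g^j_3\in\Q[q^{-2}\Lambda^2,\Lambda^4]_{\le j}$, and since $\WT_4(h)^{N'}$ is a unit power series in $q^2\Lambda^2,q^4$ with $N'=6n+\mathrm{const}$, the relation $\Coeff_{q^0}[fg]=\Coeff_{q^0}[{\mathcal P}[f]g]$ (valid when $g$ has only nonnegative powers of $q$) shows, exactly as in the proof of \propref{p11r}(2), that each remainder contributes after $\Coeff_{q^0}$ a polynomial in $\Lambda^4,n\Lambda^4$ of degree $\le r$.

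After $r$ peelings the leftover main term is $(1-\Lambda^4)^r R_{2n+4-r}$, where I set $R_k:=\Coeff_{q^0}\big[\tfrac{1}{2\sinh(3h/2)}M\WT_4(h)^{3k}u'h^*\Lambda^2\big]$. Here \lemref{theth1}(3) enters: it states ${\mathcal P}\big[(\WT_4(h)^3(1-\Lambda^4)-1)\tfrac{1}{2\sinh(3h/2)}Mu'h^*\Lambda^2\big]=\Lambda^4$, which upon taking $\Coeff_{q^0}$ yields the recursion $(1-\Lambda^4)R_k-R_{k-1}=\Lambda^4$, i.e. $R_k=\frac{R_0+1}{(1-\Lambda^4)^k}-1$. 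With the base value $R_0=(1-\Lambda^4)^3-1$ this gives $R_{2n+4-r}=\frac{1}{(1-\Lambda^4)^{2n+1-r}}-1$, so $(1-\Lambda^4)^r R_{2n+4-r}=\frac{1}{(1-\Lambda^4)^{2n+1-2r}}-(1-\Lambda^4)^r$. Collecting this with the polynomial corrections proves (1), with $h^1_r=-(1-\Lambda^4)^r+(\text{corrections})$ of degree $\le r$ in $\Lambda^4,n\Lambda^4$.

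Part (2) follows the same scheme from \propref{Fplus}(2) (which carries no parity restriction): now the point-class power is even, $M^{2r}=(M^2)^r$, and the kernel is $\coth(3h/2)$. Peeling $r$ factors $M^2\WT_4(h)^3$ and using \lemref{theth1}(5), whose principal parts $g^r_4$ have degree $\le r+1$, reduces the problem to the seed $\bar R_k:=\Coeff_{q^0}\big[-\tfrac12\coth(3h/2)\WT_4(h)^{3k}u'h^*\Lambda^2\big]$. To evaluate it I would write $\coth(3h/2)=\cosh(3h/2)/\sinh(3h/2)$ and use the factorization $2\cosh(3h/2)/M\in\Q[q^{-2}\Lambda^2]_{\le1}\RR$ established inside the proof of \lemref{theth1}(5); this expresses $\bar R_k$ through the same $\tfrac{1}{2\sinh(3h/2)}M(\cdots)$ data as $R_k$ up to a factor of degree $\le1$ in $q^{-2}\Lambda^2$, which raises the pole order by one and yields $\bar R_{2n+4-r}=\frac{1}{(1-\Lambda^4)^{2n+2-r}}+(\text{poly})$. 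Multiplying by $(1-\Lambda^4)^r$ gives the claimed $\frac{1}{(1-\Lambda^4)^{2n+2-2r}}$ together with a correction $\bar h^1_r$ of degree $\le r+1$.

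The main obstacle is the base case of the seed recursion, i.e. proving $R_0=(1-\Lambda^4)^3-1$ (and its $\coth$-analog): the identity \lemref{theth1}(3) determines $R_k$ only up to the integration constant $R_0$, so this constant must be pinned down independently. I would do this by computing the first few $q^0$-coefficients of $\tfrac{1}{2\sinh(3h/2)}Mu'h^*\Lambda^2$ from the $q$-expansions of \secref{thetamod} (as is done for the explicit $g^r_i$ in the Remark following \lemref{theth1}), after first bounding the $\Lambda$-degree of $R_0$ so that finitely many coefficients suffice; alternatively one can anchor the recursion at $k=3$, where $6n+12=9$ forces $L=E$ on $\widehat\P^2$ and the generating function is seen to vanish ($R_3=0$), and then run the recursion in both directions. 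All remaining steps are routine bookkeeping of degrees in $\Lambda^4$ and $n\Lambda^4$.
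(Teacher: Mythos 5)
Your reduction via \propref{Fplus} and the ``peeling'' of factors $M^2\widetilde\theta_4(h)^3$ is the paper's own induction on $r$, unrolled: each peeling step is precisely the identity $\chi^{X,F_+}_{F}(nF+G,P^{2r+1})-(1-\Lambda^4)\chi^{X,F_+}_{F}((n-\tfrac12)F+G,P^{2r-1})=\Coeff_{q^0}\big[\widetilde\theta_4(h)^{6n+9}g_3^r\big]$ that the paper derives from \lemref{theth1}(4), and your degree bookkeeping for the corrections matches. Likewise your seed recursion $(1-\Lambda^4)R_k-R_{k-1}=\Lambda^4$ from \lemref{theth1}(3) is the paper's $r=0$ induction over $n\in\tfrac12\Z$. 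So the inductive machinery is correct and identical in substance.

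The genuine gap is exactly where you locate it: fixing the integration constant of the seed recursion, and neither of your two proposed fixes works. For (a), there is no a priori bound on the $\Lambda$-degree of $R_0=\Coeff_{q^0}\big[\tfrac{1}{2\sinh(3h/2)}Mu'h^*\Lambda^2\big]$: every polynomiality statement in \lemref{theth1} rests on the numerator being divisible by $y^3-y^{-3}$ (or $y^2+y^{-2}$, etc.), and $M\in(y+y^{-1})\Q[[y^{\pm2}q^4,q^4]]$ is \emph{not} divisible by $y^3-y^{-3}$, so the integrand contributes negative powers of $q$ in every $\Lambda$-degree. Indeed the analogous seeds $R_k=\frac{1}{(1-\Lambda^4)^{k-3}}-1$ for $k\ge4$ are honest infinite series, so the polynomiality of $R_0$ is an accident of $k\le 3$ that must be proven, not assumed; finitely many $q$-expansion coefficients cannot establish it. For (b), the assertion that $R_3=\chi^{\widehat\P^2,F_+}_F(E,P^1)$ ``is seen to vanish'' is unsupported: nothing degenerates for $L=E$, and on $\widehat\P^2$ the second-ruling trick is unavailable since $H+E$ is not nef. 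The paper instead anchors at $n=-1$ (i.e.\ $k=2$): the invariant vanishes for the boundary polarization $G$ (for $\P^1\times\P^1$ because $\<F,G\>$ odd forces $M^X_G(F,d)=\emptyset$), and the passage from $G$ to $F_+$ crosses, by \lemref{vanwall}, only the single wall $\xi=2G-F$, whose term is computed explicitly to be $-\Lambda^4$; this pins down $R_2=-\Lambda^4$ and hence $R_0=(1-\Lambda^4)^3-1$. The same issue recurs in your part (2): the $\cosh(3h/2)/M$ manipulation controls only principal parts, not the exact main term $\frac{1}{(1-\Lambda^4)^{2n+2}}$ of the $r=0$ case, which the paper imports from \cite[Prop.~5.16]{GY}. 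You need to replace both anchors by a boundary-vanishing-plus-explicit-wallcrossing computation of this type (or a citation).
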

\begin{pf}
(1) First we deal with the case $r=0$. We do this by ascending and descending induction on $n$. 
Let $n=-1$. By \corref{blowp} we know that $\chi^{X,G}_{F}(-F+G,P)=0$
and  $$\chi^{X,F+}_{F}(-F+G,P)=\sum_{\xi}\delta_\xi^X(-F+G,P),$$
 where $\xi$ runs through all classes  of type $F$  with $G\xi<0<F\xi$,
i.e. through all $\xi=(2mG-(2n-1)F))$ with $n,m\in \Z_{>0}$. By \lemref{vanwall} we have $\delta_{2mG-(2n-1)F}^X(-F+G,P)=0$ unless
$|6n-3-2m|+3\ge 8nm-4m$, and we check easily that this can only happen for $n=m=1$. Then computing with the lowest order terms of the formula of \defref{wallcrossterm} gives
$\delta_{2G-F}^X(-F+G,P)=-\Lambda^4$. Thus $\chi^{X,F_+}_{F}(-F+G,P)=-\Lambda^4=(1-\Lambda^4)-1$. This shows the case $n=-1$.

Now let $n\in \frac{1}{2}\Z$ be general, then we have by \propref{Fplus}  that
\begin{align*}(1-\Lambda^4)\chi^{X,F_+}_{F}&((n+1/2)F+G,P)-\chi^{X,F_+}_{F}(nF+G,P)\\ 
&=\Coeff_{q^0}\left[\frac{1}{2\sinh(3h/2)}\WT_4(h)^{6(n+2)}\big(\WT_4(h)^3(1-\Lambda^4)-1\big)Mu'h^*\Lambda^2\right]
\\&=\Coeff_{q^0}\big[\WT_4(h)^{6(n+2)}\Lambda^4\big]=
\Lambda^4,\end{align*}
and where in the last line we have used \lemref{theth1}(3) and the fact that $\WT_4(h)\in \Q[[q^2\Lambda^2,q^4]]^\times$.
Thus
$$(1-\Lambda^4)\big(1+\chi^{X,F_+}_{F}((n+1/2)F+G,P)\big)-(1+\chi^{X,F_+}_{F}(nF+G,P))=(1-\Lambda^4)-1+\Lambda^4=0$$
 and using the result for $n=-1$, $r=0$, the result for $r=0$ follows by ascending and descending induction over $n\in \frac{1}{2}\Z$.

Let $r>0$, $n\in \frac{1}{2}\Z$. By \propref{Fplus} we have
\begin{align*}\chi^{X,F_+}_{F}\big(nF+G,&P^{2r+1}\big)-(1-\Lambda^4) \chi^{X,F_+}_{F}\big((n-1/2)F+G,P^{2r-1}\big)\\
&=\Coeff_{q^0}\Big[\frac{1}{2\sinh(3h/2)}\WT_4(h)^{6n+9}M^{2r-1}\big(M^2\WT_4(h)^3-(1-\Lambda^4)\big)u'h^*\Lambda^2\Big]
\\&= \Coeff_{q^0}\big[\WT_4(h)^{6n+9}g^r_3\big],\end{align*}
where the last line is by \lemref{theth1}(4).
As $\WT_4(h)^{6n+9}\in \Q[[nq^2\Lambda^2,nq^4,q^2\Lambda^2,q^4]]^\times$, and $g_3^r$ is a polynomial in $q^{-2}\Lambda^2$, $\Lambda^4$ of degree
$r$, we find, as in the proof of \propref{p11r},  that 
$$h'_r(n,\Lambda^4):=\chi^{X,F_+}_{F}(nF+G,P^{2r+1})-(1-\Lambda^4) \chi^{X,F_+}_{F}\big((n-1/2)F+G,P^{2r-1}\big)\in \Q[\Lambda^4,n\Lambda^4]_{\le r}.$$
Assume now by induction on $r$ that  
$$\chi^{X,F_+}_{F}\big((n-1/2)F+G,P^{2r-1}\big)=\frac{1}{1-\Lambda^4)^{2n-2r+2}}+h^1_{r-1}\big((n-1/2),\Lambda^4\big)$$
 with $h^1_{r-1}(n,\Lambda^4)\in \Q[\Lambda^4,n\Lambda^4]_{\le r-1}.$
Then 
\begin{align*}\chi^{X,F_+}_{F}(nF+G,P^{2r+1}\big)-\frac{1}{(1-\Lambda^4)^{2n-2r+1}}&=(1-\Lambda^4) h^1_{r-1}\big((n-1/2),\Lambda^4)+h'_r(n,\Lambda^4).
\end{align*}
Thus we put 
$$h^1_r(n,\Lambda^4):=(1-\Lambda^4) h^1_{r-1}\big((n-1/2),\Lambda^4)+h'_r(n,\Lambda^4).$$ As $h'_r(n,\Lambda^4)$ has degree at most $r$ in $\Lambda^4$, $n\Lambda^4$, the claim follows.

(2)  The case $r=0$  is proven in \cite[Prop~5.16]{GY}, with $\overline h^1_0(n,\Lambda^4)=-1-(3n+7)\Lambda^4.$
For $r>0$ we prove the result by induction. Let $r>0$, then
 we have by \propref{Fplus}  and \lemref{theth1}(5)
\begin{align*}\chi^{X,F_+}_{0}(nF+G,P^{2r}\big)&-(1-\Lambda^4)\chi^{X,F_+}_{0}\big((n-1/2)F+G,P^{2r-2}\big)\\&=\Coeff_{q^0}\Big[-\frac{1}{2}\coth(3h/2)\WT_4(h)^{6n+9}M^{2r-2}\big(M^2\WT_4(h)^3-(1-\Lambda^4)\big)u'h^*\Lambda^2\Big]
\\&= \Coeff_{q^0}\big[\WT_4(h)^{6n+9}g^{r}_4\big]=:l'_r(n,\Lambda^4)\in \Q[\Lambda^4,n\Lambda^4]_{\le r+1}.\end{align*}
Assume now that 
$$\chi^{X,F_+}_{0}\big((n-1/2)F+G,P^{2r-2}\big)=\frac{1}{1-\Lambda^4)^{2n-2r+3}}+\overline h^1_{r-1}(n-1/2,\Lambda^4),$$
with $\overline h^1_{r-1}(n-1/2,\Lambda^4)\in\Q[\Lambda^4,n\Lambda^4]_{\le r}$.
Then 
\begin{align*}\chi^{X,F_+}_{0}(nF+G,P^{2r})-\frac{1}{(1-\Lambda^4)^{2n-2r+2}}
&=(1-\Lambda^4) \overline h^1_{r-1}((n-1/2),\Lambda^4)+l'_r(n,\Lambda^4).
\end{align*}
The result follows by induction on $r$.
\end{pf}

\begin{Remark}
We list the $h^1_r(n,\Lambda^4),\ \overline h^1_r(n,\Lambda^4)$ for small values of $n$,
\begin{align*}
&h^1_0=-1,\quad h^1_1=-1+(6n + 5)\Lambda^4 \quad h^1_2=-1+(30n + 19)\Lambda^4 - (18n^2 + 15n - 2)\Lambda^8,\\ & h^1_3=-1+(126n + 69)\Lambda^4+ (-162n^2 + 9n + 114)\Lambda^8 + (36n^3 - 43n - 7)\Lambda^{12},\\
&\overline h^1_0=-1-(3n + 7)\Lambda^4, \quad \overline h^1_1=-1-(6n + 20)\Lambda^4 + (9n^2 +  \hbox{$\frac{69}{2}$}n + 32)\Lambda^8,\\&
\overline h^1_2=-1-(18n +78)\Lambda^4 + (54n^2 + 189n + 120)\Lambda^8 - (18n^3 + 81n^2 + 109n +40)\Lambda^{12}.
\end{align*}
\end{Remark}


\begin{Proposition}\label{p112GM}  
Let $X=\P^1\times\P^1$ or $X=\widehat \P^2$.
\begin{enumerate}
\item 
 For all $n\in \Z$
$$\chi^{X,F_+}_{F}(nF+2G)=
\frac{1}{2}\frac{(1+\Lambda^4)^n-(1-\Lambda^4)^n}{(1-\Lambda^4)^{3n+3}}.$$ 
\item  For all $n\in \Z$ and all $r>0$ we have 
$$\chi^{X,F_+}_{F}(nF+2G,P^{2r})=\frac{2^{r-1}(1+\Lambda^4)^{n-r}}{(1-\Lambda^4)^{3n+3-2r}}-h^2_r(n,\Lambda^4),$$
where $h^2_r(n,\Lambda^4)\in \Q[\Lambda^4,n\Lambda^4]_{\le 2r+2}$.
\item
$$\chi^{X,F_+}_{0}(nF+2G)=
\frac{1}{2}\frac{(1+\Lambda^4)^n+(1-\Lambda^4)^n}{(1-\Lambda^4)^{3n+3}}-1-(4n+9)\Lambda^4.$$ 
\item  For all $n\in \Z$ and all $r>0$ we have 
$$\chi^{X,F_+}_{0}(nF+2G,P^{2r})=\frac{2^{r-1}(1+\Lambda^4)^{n-r}}{(1-\Lambda^4)^{3n+3-2r}}-\overline h^2_r(n,\Lambda^4),$$
where $\overline h^2_r(n,\Lambda^4)\in \Q[\Lambda^4,n\Lambda^4]_{\le 2r+2}$.
\end{enumerate}
\end{Proposition}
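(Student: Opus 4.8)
The plan is to evaluate, for $m=2$, the two $\Coeff_{q^0}$-formulas of \propref{Fplus} (so that $m/2+1=2$ and the exponent $2(n+2)(m+2)$ becomes $8n+16$) by decomposing the hyperbolic prefactors into pieces controlled by \lemref{theth1}. Using the elementary identities
$$\frac{1}{2\sinh(2h)}=\tfrac14\big(\coth(h)-\tanh(h)\big),\qquad \tfrac12\coth(2h)=\tfrac14\big(\coth(h)+\tanh(h)\big),$$
I would write, for all $r\ge 0$,
$$\chi^{X,F_+}_{F}(nF+2G,P^{2r})=\Psi(n,r)+\Phi(n,r),\qquad \chi^{X,F_+}_{0}(nF+2G,P^{2r})=\Phi(n,r)-\Psi(n,r),$$
where
$$\Phi(n,r):=\Coeff_{q^0}\Big[-\tfrac14\tanh(h)\,\Lambda^2\WT_4(h)^{8n+16}u'h^*M^{2r}\Big],\quad \Psi(n,r):=\Coeff_{q^0}\Big[\tfrac14\coth(h)\,\Lambda^2\WT_4(h)^{8n+16}u'h^*M^{2r}\Big].$$
The point of this split is that the $\coth$-part $\Psi$ is immediately polynomial while the whole non-polynomial content is carried by the common $\tanh$-part $\Phi$. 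Indeed, for $r\ge 1$, \lemref{theth1}(2) gives $\Psi(n,r)=\Coeff_{q^0}\big[-\tfrac12 g^r_2\WT_4(h)^{8n+16}\big]$; since $g^r_2\in\Q[q^{-2}\Lambda^2,\Lambda^4]_{\le r+1}$ and $\WT_4(h)^{8n+16}\in\Q[[q^2\Lambda^2,q^4]]^\times$, this is a polynomial in $\Lambda^4,n\Lambda^4$ of degree $\le r+1$, exactly as in the proof of \propref{p11r}.

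For $\Phi$ I would extract a recursion in $r$ from \lemref{theth1}(6). Multiplying $g^r_5$ by $\WT_4(h)^{8n+8}$ and expanding
$$\WT_4(h)^{8n+8}\big(\WT_4(h)^8(M^2-(1-\Lambda^4)^2)-1\big)=\WT_4(h)^{8n+16}M^2-(1-\Lambda^4)^2\WT_4(h)^{8n+16}-\WT_4(h)^{8n+8},$$
the same principal-part bookkeeping as in \propref{p11GM} (only the principal part of the $\tanh$-factor contributes to $\Coeff_{q^0}$ after multiplication by a unit of $\Q[[q^2\Lambda^2,q^4]]$) yields
$$\Coeff_{q^0}\big[g^r_5\WT_4(h)^{8n+8}\big]=2\Phi(n,r)-2(1-\Lambda^4)^2\Phi(n,r-1)-2\Phi(n-1,r-1),$$
that is,
$$\Phi(n,r)=(1-\Lambda^4)^2\Phi(n,r-1)+\Phi(n-1,r-1)+\tfrac12\Coeff_{q^0}\big[g^r_5\WT_4(h)^{8n+8}\big],$$
the last term being a polynomial in $\Lambda^4,n\Lambda^4$ of degree $\le r+2$ by \lemref{theth1}(6). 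A direct computation shows that the proposed main term $U(n,r):=2^{r-1}(1+\Lambda^4)^{n-r}(1-\Lambda^4)^{-(3n+3-2r)}$ satisfies the homogeneous relation $U(n,r)=(1-\Lambda^4)^2U(n,r-1)+U(n-1,r-1)$.

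With this recursion the proof of (2) and (4) becomes an induction on $r$, the base case $r=0$ being furnished by parts (1) and (3). I would establish (1),(3) by the method of \cite{GY} used for the analogous $m=0,1$ statements (\propref{p11r}, \propref{p11GM}); the required input is the $n$-recursion coming from the divisibility $\WT_4(h)^8(1-\Lambda^4)^3-(1+\Lambda^4)\in(y^2+y^{-2})\Q[y^{\pm1}][[q]]$ proven inside \lemref{theth1}(6). Granting (1),(3), one has $\Phi(n,0)=\tfrac12\big(\chi^{X,F_+}_{F}(nF+2G)+\chi^{X,F_+}_{0}(nF+2G)\big)=U(n,0)+(\text{poly})$: the $(1-\Lambda^4)^n$ contributions of (1) and (3) appear with opposite signs, so they cancel in the sum and drop out of $\Phi$, leaving the single power $U(n,0)$. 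Substituting into the recursion and using that $U$ solves the homogeneous relation, $\Phi(n,r)-U(n,r)$ remains a polynomial in $\Lambda^4,n\Lambda^4$ at every step, its degree increasing by at most $2$ per step and hence staying $\le 2r+2$. Finally $h^2_r=-\Psi(n,r)-(\Phi(n,r)-U(n,r))$ and $\overline h^2_r=\Psi(n,r)-(\Phi(n,r)-U(n,r))$ lie in $\Q[\Lambda^4,n\Lambda^4]_{\le 2r+2}$, which is the assertion.

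The main obstacle is the clean separation of the rational main term from the polynomial remainder. The recursion from \lemref{theth1}(6) is three-term and mixes the variables $n$ and $r$, so the delicate point is to verify that the second geometric-in-$n$ solution visible at $r=0$ (the factor $(1-\Lambda^4)^n$) is confined to the $\coth$-part $\Psi$ and therefore does not contaminate the $\tanh$-part $\Phi$, which must reproduce the single closed form $U(n,r)$ for every $r\ge 1$; this is precisely what dictates the split via the two hyperbolic identities above. Once this is in place, the remaining work, namely computing the explicit polynomials $g^r_2,g^r_5$ and tracking degrees through the induction, is routine given \lemref{theth1}.
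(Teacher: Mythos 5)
Your proposal is correct and follows essentially the same route as the paper: the same identities $\tfrac{1}{2\sinh(2h)}=\tfrac14(\coth h-\tanh h)$ and $-\tfrac12\coth(2h)=\tfrac14(-\coth h-\tanh h)$, identification of the $\coth$-part with the polynomial quantity $\chi^{X,F_+}_{0}((2n+2)F,P^{2r})$ (equivalently \lemref{theth1}(2)), and the $(n,r)$-recursion for the $\tanh$-part from \lemref{theth1}(6) whose homogeneous solution is the stated main term, with base case $r=0$ supplied by \cite[Prop.~5.17]{GY}. The only cosmetic difference is that you recover the $r=0$ value of the $\tanh$-part by averaging parts (1) and (3), whereas the paper quotes it directly from the proof in \cite{GY}; the content is identical.
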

\begin{pf}
(1) and (3) were proven in \cite[Prop.~5.17]{GY}.
(2) We will first show by induction on $r$ that 
\begin{equation}\label{p12req}
-\frac{1}{2}\Coeff_{q^0}\big[\tanh(h)\WT_4(h)^{8(n+2)}u'h^*\Lambda^2M^{2r}\big]=2^r\frac{(1+\Lambda^4)^{n-r}}{(1-\Lambda^4)^{3n+3-2r}}+s'_r(n,\Lambda^4).
\end{equation} For polynomials 
$s'_r(n,\Lambda^4)\in \Q[\Lambda^4,n\Lambda^4]_{\le 2r+2}$
For $r=0$ this is shown in the proof of \cite[Prop.~5.17]{GY} with $s_0'=-1-(4n+9)\Lambda^4.$

Fix $r>0$, assume  that \eqref{p12req} holds $r-1$ and for all  $n\in \Z$.
By \lemref{theth1}(6) we have
\begin{align*}
-\frac{1}{2}\Coeff_{q^0}\big[\tanh(h)&\big(M^{2r}\WT_4(h)^{8(n+2)}-(1-\Lambda^4)^2M^{2r-2}\WT_4(h)^{(8n+2)}-\WT_4(h)^{8(n+1)}M^{2r-2}\big)u'h^*\Lambda^2\big]\\
&=\Coeff_{q^0}\big[\WT_4(h)^{8(n+1)}g_5^{r}\big]=:s''_r(n,\Lambda^4).\end{align*}
Again, as $\WT_4(h)\in \Q[[\Lambda^2q^4,q^4]]^\times$, and $g_5^{r}$ has degree $r+2$ in $q^{-2}\Lambda^2,\Lambda^4$, we see that
$s''_r(n,\Lambda^4)\in \Q[\Lambda^4,n\Lambda^4]_{\le r+2}$. 
Thus we get by induction on $r$
\begin{align*}
-\frac{1}{2}\Coeff_{q^0}\big[&\tanh(h)M^{2r}\WT_4(h)^{8(n+2)}u'h^*\Lambda^2\big]= \frac{2^{r-1}(1+\Lambda^4)^{n-r+1}}{(1-\Lambda^4)^{3n+3-2r}}+(1-\Lambda^4)^2s'_{r-1}(n,\Lambda^4)
\\&+
\frac{2^{r-1}(1+\Lambda^4)^{n-r}}{(1-\Lambda^4)^{3n+2-2r}}+s'_{r-1}(n-1,\Lambda^4)+s''_r(n,\Lambda^4)=\frac{2^{r}(1+\Lambda^4)^{n-r}}{(1-\Lambda^4)^{3n+3-2r}}+s'_{r}(n,\Lambda^4)\end{align*}
with $$s'_{r}(n,\Lambda^4)=(1-\Lambda^4)^2s'_{r-1}(n,\Lambda^4)+s'_{r-1}(n-1,\Lambda^4)+s''_r(n,\Lambda^4).$$ As $s'_{r-1}\in \Q[\Lambda^4,n\Lambda^4]_{\le 2r}$, $s''_r\in \Q[\Lambda^4,n\Lambda^4]_{\le r+2},$ we get 
 $s'_{r}(n,\Lambda^4)\in \Q[\Lambda^4,n\Lambda^4]_{\le 2r+2}$.

Now we show (2): We note that 
$\frac{1}{2\sinh(2h)}=\frac{1}{4}\big(\coth(h)-\tanh(h)\big)$.
Therefore we get by \propref{Fplus}
\begin{align*}
\chi^{X,F_+}(nF+2G,P^{2r})&=\frac{1}{4}\Coeff_{q^0}\big[\big(\coth(h)-\tanh(h)\big)\WT_4(h)^{8(n+2)}M^{2r}u'h^*\Lambda^2\big]\\
=&-\frac{1}{2}\chi^{X,F_+}_0((2n+2) F,P^{2r})+\frac{1}{2}\Coeff_{q^0}\big[\big(-\frac{1}{2}\tanh(h)\big)\WT_4(h)^{8(n+2)}M^{2r}u'h^*\Lambda^2\big]\\
=&-\frac{1}{2}\overline h^0_r(2n+2,\Lambda^4)+\frac{2^{r-1}(1+\Lambda^4)^{n-r}}{(1-\Lambda^4)^{3n+3-2r}}+\frac{1}{2}s_r'(n,\Lambda^4).
\end{align*}
here in the last line we have used \propref{p11r} and \eqref{p12req}.
The claim follows with $h^2_r(n,\Lambda^4)=\frac{1}{2}\big(s_r'(n,\Lambda^4)-\overline h^0_r(2n+2, \Lambda^4)\big)\in \Q[\Lambda^4,n\Lambda^4]_{\le 2r+2}$.

Finally we show (4): 
$-\frac{1}{2}\coth(2h)=\frac{1}{4}\big(-\coth(h)-\tanh(h)\big)$ and 
 \propref{Fplus} give
\begin{align*}
\chi^{X,F_+}(nF+2G,P^{2r})&=\frac{1}{4}\Coeff_{q^0}\big[\big(-\coth(h)-\tanh(h)\big)\WT_4(h)^{8(n+2)}M^{2r}u'h^*\Lambda^2\big]\\
=&\frac{1}{2}\chi^{X,F_+}_0((2n+2)F,P^{2r})+\frac{1}{2}\Coeff_{q^0}\big[\big(-\frac{1}{2}\tanh(h)\big)\WT_4(h)^{8(n+2)}M^{2r}u'h^*\Lambda^2\big]\\
=&\frac{1}{2}\overline h^0_r(2n+2,\Lambda^4)+\frac{2^{r-1}(1+\Lambda^4)^{n-r}}{(1-\Lambda^4)^{3n+3-2r}}+\frac{1}{2}s_r'(n,\Lambda^4).
\end{align*}
The claim follows with $\overline h^2_r=\frac{1}{2}\big(s_r'(n,\Lambda^4)+\overline h^0_r(2n+2, \Lambda^4)\big)$.
\end{pf}

\begin{Remark} Again we can readily compute the first few of the $h^2_r$, $\overline h^2_r$.
\begin{align*} &h^2_1=-1,\quad h^2_2=-2 + (8n + 6)\Lambda^4,\quad  h^2_3=-4 + (48n + 24)\Lambda^4 - (32n^2 + 28n)\Lambda^8,\\ &h^2_4=-8 + (224n + 72)\Lambda^4 + (-320n^2 - 40n + 128)\Lambda^8 + (\hbox{$\frac{256}{3}$}n^3 + 32n^2 - \hbox{$\frac{184}{3}$}n - 16)\Lambda^{12},\\
&\overline h^2_1= -1  -(8n + 24)\Lambda^4 + (16n^2 + 62n + 59)\Lambda^8,\\
&\overline h^2_2=-2 -(24n +90)\Lambda^4 + (96n^2 + 348n + 270)\Lambda^8  -(\hbox{$\frac{128}{3}$}n^3 + 208n^2 + \hbox{$\frac{964}{3}$}n +154)\Lambda^{12}.
\end{align*}
It appears that  one has $h^2_r\in \Q[\Lambda^4,n\Lambda^4]_{\le r-1}$ and $\overline h^2_r\in \Q[\Lambda^4,n\Lambda^4]_{\le r}$.
\end{Remark}


\begin{Corollary}\label{P2P12GH} Let $X=\widehat \P^2$ or $X=\P^1\times \P^1$, let $\omega\in H^2(X,\R)$ be a class with $\<\omega^2\>>0$.
\begin{enumerate} 
\item For $n\in \Z_{\ge 0}$ we have
\begin{align*} 
\chi_{0}^{X,\omega}(nF)&\equiv \chi_{F}^{X,\omega}(nF)\equiv \frac{1}{(1-\Lambda^4)^{n+1}}, \quad
\chi_{0}^{X,\omega}(nF,P^{2r})\equiv \chi_{F}^{X,\omega}(nF,P^{2r})\equiv 0 \hbox{ for }r>0.
\end{align*}
\item For $n\in \Z_{\ge 0}$ if $X=\P^1\times \P^1$ and $n\in \Z_{\ge 0}+\frac{1}{2}$ if $X=\widehat \P^2$,  we have
\begin{align*} 
\chi_{0}^{X,\omega}(nF+G,P^{2r})&\equiv \frac{1}{(1-\Lambda^4)^{2n+2-2r}},\quad
\chi_{F}^{X,\omega}(nF+G,P^{2r+1})\equiv \frac{1}{(1-\Lambda^4)^{2n+1-2r}}.
\end{align*}
\item For $n\in \Z_{\ge 0}$ we have
\begin{align*} 
\chi_{0}^{X,\omega}(nF+2G)&\equiv \frac{(1+\Lambda^4)^{n}+(1-\Lambda^4)^n}{2(1-\Lambda^4)^{3n+3}},\quad
\chi_{F}^{X,\omega}(nF+2G)\equiv \frac{(1+\Lambda^4)^{n}-(1-\Lambda^4)^n}{2(1-\Lambda^4)^{3n+3}}\\
\chi_{0}^{X,\omega}(nF+2G,P^{2r})&\equiv \chi_{F}^{X,\omega}(nF+2G,P^{2r}) \equiv \frac{2^{r-1}(1+\Lambda^4)^{n-r}}{(1-\Lambda^4)^{3n+3-2r}}\hbox{ for  }1\le r \le n.
\end{align*}
\end{enumerate}
\end{Corollary}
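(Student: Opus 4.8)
The plan is to deduce the Corollary from the explicit evaluations of the boundary limits $\chi^{X,F_+}_{c_1}$ already obtained in \propref{p11r}, \propref{p11GM} and \propref{p112GM}, together with the single general principle that moving the polarization from an arbitrary $\omega$ with $\omega^2>0$ to the limit $F_+$ alters the generating function by a polynomial in $\Lambda$ only, hence leaves it unchanged up to $\equiv$. First I would record that each right-hand side asserted in the Corollary is exactly the ``main term'' of the corresponding formula in those three Propositions: the correction terms $h^i_r(n,\Lambda^4)$ and $\overline h^i_r(n,\Lambda^4)$ all lie in $\Q[\Lambda^4,n\Lambda^4]$ and are therefore honest polynomials in $\Lambda$, so they are $\equiv 0$. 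For instance \propref{p11r} gives $\chi^{X,F_+}_0(nF)\equiv\chi^{X,F_+}_F(nF)\equiv(1-\Lambda^4)^{-(n+1)}$ and $\chi^{X,F_+}_0(nF,P^{2r})\equiv\chi^{X,F_+}_F(nF,P^{2r})\equiv0$, while \propref{p11GM} and \propref{p112GM} supply the main terms in parts (2) and (3); in part (3) both $\chi_0$ and $\chi_F$ share the same main term, which is what forces the claimed equivalences $\chi_0\equiv\chi_F$. Thus it suffices to prove, for $c_1\in\{0,F\}$, for every $L=nF+mG$ with $m\le 2$ and every $r\ge0$, that $\chi^{X,\omega}_{c_1}(L,P^r)\equiv\chi^{X,F_+}_{c_1}(L,P^r)$; we may assume $\omega\in C_X$, replacing $\omega$ by $-\omega$ if necessary.

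Second, I would set up the wallcrossing comparison coefficientwise. For fixed $d$ one has $\chi^{X,F_+}_{c_1,d}(L,P^r)=\chi^{X,\omega_d}_{c_1,d}(L,P^r)$ with $\omega_d=n_{d,r}F+G\in C_X$ ample; since $-K_X=2F+2G$ is ample, both $\omega$ and $\omega_d$ pair negatively with $K_X$, so \corref{blowp}(2) applies (its underlying wallcrossing \thmref{wallcr} needs exactly this) and yields
\[
\chi^{X,\omega}_{c_1,d}(L,P^r)-\chi^{X,F_+}_{c_1,d}(L,P^r)=\sum_{\xi}\delta^X_{\xi,d}(L,P^r),
\]
the sum running over classes $\xi$ of type $(c_1,d)$ separating $\omega$ and $\omega_d$. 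Here I would use the elementary but essential observation that on both $\widehat\P^2$ and $\P^1\times\P^1$ every wall satisfies $\<F,\xi\>\ne0$: indeed $\<F,\xi\>=0$ forces $\xi\in\R F$ and hence $\xi^2=0$, which is excluded for a wall. Consequently $\mathrm{sign}\<\omega_d,\xi\>=\mathrm{sign}\<F,\xi\>$ for $n_{d,r}$ large, and since $n_{d,r}$ is by definition chosen so that no wall of type $(c_1,d)$ lies between $F$ and $\omega_d$, the separating walls are precisely those $\xi$ of type $(c_1)$ with $\<\omega,\xi\>>0>\<F,\xi\>$ (passing freely between type $(c_1)$ and type $(c_1,d)$ by \remref{c1d}).

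Third, and this is the crux, I would prove that only finitely many such $\xi$ contribute. By \lemref{vanwall}(2), $\delta^X_\xi(L,P^r)=0$ unless $-\xi^2\le|\<\xi,L-K_X\>|+r+2$. Writing $\xi=aF+bG$ (with $(a,b)$ ranging over the relevant lattice) one has $\xi^2=2ab$, $\<F,\xi\>=b$ and $\<\xi,L-K_X\>=a(m+2)+b(n+2)$, so on a wall ($ab<0$) the bound becomes $2|a|\,|b|\le|a|(m+2)+|b|(n+2)+r+2$; dividing by $|a|\,|b|$ shows the region of admissible $(a,b)$ is bounded and hence meets the lattice in finitely many points. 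Therefore the set $\mathcal W$ of walls with $\<\omega,\xi\>>0>\<F,\xi\>$ and $\delta^X_\xi(L,P^r)\ne0$ is finite, and by \lemref{vanwall}(1) each $\delta^X_\xi(L,P^r)\in\Q[\Lambda]$. Hence $\sum_{\xi\in\mathcal W}\delta^X_\xi(L,P^r)$ is a polynomial in $\Lambda$, and taking its $\Lambda^d$-coefficient recovers the displayed difference; this coefficient vanishes for all $d$ exceeding the degree of that polynomial, which is exactly the assertion $\chi^{X,\omega}_{c_1}(L,P^r)\equiv\chi^{X,F_+}_{c_1}(L,P^r)$.

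Finally, the only genuine work is the finiteness estimate just described, everything else being bookkeeping with the established formulas, and I expect that estimate to be the main obstacle only in requiring care with the two distinct lattices; conceptually it is immediate from the quadratic-versus-linear growth in \lemref{vanwall}(2). Two loose ends remain. If $\omega$ happens to lie on walls, I would first pass to a generic nearby $\omega'\in C_X$; the difference $\chi^{X,\omega}_{c_1}(L,P^r)-\chi^{X,\omega'}_{c_1}(L,P^r)$ is again a finite sum of wallcrossing terms, those through $\omega$, polynomial by the same argument (\remref{difftheta}), so this case reduces to the generic one. And in part (3) I would note that the clean main term $2^{r-1}(1+\Lambda^4)^{n-r}/(1-\Lambda^4)^{3n+3-2r}$ is the stated leading part precisely when $1\le r\le n$ (so that $n-r\ge0$), the range in which the Corollary is phrased; there the corrections $h^2_r,\overline h^2_r$ are polynomials and drop out under $\equiv$, and the common main term of \propref{p112GM}(2),(4) gives $\chi_0\equiv\chi_F$.
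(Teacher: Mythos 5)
Your overall strategy is the paper's: reduce everything to the boundary formulas of \propref{p11r}, \propref{p11GM}, \propref{p112GM} and then show that only finitely many walls $\xi$ with $\<\omega,\xi\>\ge 0>\<F,\xi\>$ have $\delta^X_\xi(nF+mG,P^s)\ne 0$. The setup of the wallcrossing comparison and the treatment of $\omega$ lying on a wall are fine. But the finiteness estimate — which you correctly identify as the only genuine work — is wrong as you state it, and the error is not a technicality: it conceals exactly the phenomenon responsible for the restriction $1\le r\le n$ in part (3).

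Concretely, you replace $|\<\xi,L-K_X\>|=|a(m+2)-b(n+2)|$ by $|a|(m+2)+|b|(n+2)$ and claim that dividing $2|a||b|\le |a|(m+2)+|b|(n+2)+r+2$ by $|a||b|$ bounds the region. It does not: the resulting inequality $2\le \frac{m+2}{|b|}+\frac{n+2}{|a|}+\frac{r+2}{|a||b|}$ only excludes the region where \emph{both} $|a|$ and $|b|$ are large. On $\widehat\P^2$ the relevant walls have $b\in 2\Z_{>0}$, and for $m=2$, $b=2$ your inequality reads $4a\le 4a+2(n+2)+r+2$, which holds for \emph{every} $a$ — the admissible region is unbounded. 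The paper's proof keeps the signed quantity $a(m+2)-b(n+2)$ and splits into the two cases $a(m+2)\le b(n+2)$ and $a(m+2)\ge b(n+2)$; in the second case, for $m\le 1$ one still gets a bound on $a$, but for $m=2$ the inequality degenerates to $2n\le s-2$, so finiteness of the contributing walls holds only when $s=2r\le 2n$. This is precisely where the hypothesis $r\le n$ enters. Your closing remark misattributes that hypothesis to the exponent $n-r$ in the main term: \propref{p112GM}(2),(4) are valid for all $r>0$ with no restriction (a negative exponent $n-r$ is harmless in a rational function), and if your finiteness claim were correct, part (3) would follow for all $r>0$ — which, as the paper's subsequent remark points out, would contradict \conref{ratconj}. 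So the gap is real: you need the case analysis on the sign of $a(m+2)-b(n+2)$, the lattice fact $b\ge 2$, and the hypothesis $r\le n$ in the $m=2$ case to make the wall sum finite.
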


\begin{proof}
Write $\omega=uF+vG$, with $u,v\in \R_{>0}$, write $w=v/u$.
By \propref{p11r},  \propref{p11GM}, \propref{p112GM} and  \lemref{vanwall} it is sufficient to prove that  for $L=nF+mG$, with $0\le m\le 2$, under the assumptions of the Corollary, there are only finitely many classes  $\xi$ of type $(0)$ or type $(F)$ with $\<\omega,\xi\>\ge 0>\<F,\xi\>$, such 
that $\delta_\xi^X(nF+mG,P^s)\ne 0$, i.e. such that 
$-\xi^2\le |\<\xi,(L-K_X)\>| +s+2$. 
These walls are of the form 
$\xi=aF-bG$ with $a\in \Z_{>0}$ and $b\in 2\Z_{>0}$, and $aw\ge b$, and  the condition becomes
\begin{equation}
\label{abwall}
2ab\le |b(n+2)-a(m+2)| +s+2.
\end{equation}

Let $\xi=(aF-bG)$ be such a wall with $\delta_\xi^X(nF+mG,P^s)\ne 0$.
If $a(m+2)\le b(n+2)$, then \eqref{abwall} becomes
$$2ab\le b(n+2)-a(m+2) +s+2 \le b(n+2)+s,$$ therefore $(2a-n-2)b\le s$. Therefore $2a-n-2\le \frac{s}{b}\le \frac{s}{2}$. Therefore $a$ is bounded, and by the condition $b\le aw$ also $b$ is bounded, so there 
are only finitely many possibilities for $a,b$.

Now assume $a(m+2)\ge b(n+2)$.
Then as $b\ge 2$, \eqref{abwall} gives
$$4a\le 2ab\le a(m+2)-2(n+2)+s+2,$$ i.e. $(2-m)a\le -2n+s-2$.
If $m=0,1$, then $a\le \frac{-2n+s-2}{2-m}$, thus $a$ is bounded, and by $a(m+2)\ge b(n+2)$ also $b$ is bounded.
If $m=2$, the inequality becomes $2n\le s-2$, so if $2n\ge s$ there are no walls with $\delta_\xi^X(nF+mG,P^s)\ne 0$. Thus  the claim follows.
\end{proof}

\begin{Remark}\label{nonwalls}
As we will use this later in \secref{CompP2},  we explicitly state the bounds obtained in the above proof in the case of $X=\widehat \P^2$, $\omega=H$
(i.e.  $w=2$ in the notation above). Fix $n\in \Z_{\ge 0}$, $s\in \Z_{\ge 0}$. 
Let $\xi=aF-bG$ be a class of type $(0)$ or $(F)$ with $\<\xi, \omega\>\ge 0>\<\xi,F\>$. 
\begin{enumerate}
\item If $\delta_\xi^X(nF-nE,P^s)=\delta_\xi^{X}(nF,P^s)\ne \emptyset$, then 
\begin{enumerate}
\item either $2a\le (n+2) b$ and $0<a\le\frac{n+2}{2}+\frac{s}{4}$ and $0<b\le 2a$,
\item or $0<(n+2)b\le 2a$ and $0<a\le \frac{s}{2}-n-1$.
\end{enumerate}
\item If $\delta_\xi^X(nF-(n-1)E,P^s)=\delta_\xi^{X}((n-1/2)F+G,P^s)\ne \emptyset$, then 
\begin{enumerate}
\item either $3a\le (n+\frac{3}{2}) b$ and $)<a\le \frac{n+3/2}{2}+\frac{s}{4}$ and $0<b\le 2a$,
\item or $0<(n+3/2)b\le 3a$ and $0<a\le s-2n-2$.
\end{enumerate}
\end{enumerate}
\end{Remark}

\begin{Remark}
Note that the results of \corref{P2P12GH} are compatible with \conref{ratconj}.
This is particularly remarkable for part (3) of  \corref{P2P12GH}, which can only be proven for  $r\le n$, while its correctness for $r>n$ would contradict \conref{ratconj}.

The fact that the formulas hold without restriction for $\chi^{X,F_+}_{0}(nF+2G,P^{2r})$,  $\chi^{X,F_+}_{F}(nF+2G,P^{2r})$ is not in contradiction to \conref{ratconj},
because it is only claimed for $\chi^{Y,\omega}_{c_1}(L,P^r)$ with $\omega$ an  ample class on $Y$. 
\end{Remark}

\section{Computation of the invariants of the plane}

We now want to use the results obtained so far to give an algorithm to compute the generating functions  
$\chi^{\P^2,H}_{0}(nH,P^r)$, $\chi^{\P^2,H}_{H}(nH,P^r)$ of the $K$-theoretic Donaldson invariants of the projective plane. We use this algorithm to prove that these generating functions
are always rational functions of a very special kind.
Then we will use this algorithm to explicitly compute $\chi^{\P^2,H}_{0}(nH,P^r)$, $\chi^{\P^2,H}_{H}(nH,P^r)$ for not too large values of $n$ and $r$.
First we explicitly carry out the algorithm by hand when  $r=0$ and $n\le 5$ in an elementary but tedious computation.
Finally we implemented the algorithm as a PARI program, which in principle can prove a formula for  $\chi^{\P^2,H}_{0}(nH,P^r)$, $\chi^{\P^2,H}_{H}(nH,P^r)$ for any $n$ and $r$.
The computations have been carried out for $r=0$ and $n\le 11$ and $n\le 8$ and $r\le 16$. 

\subsection{The  strategy}\label{strategy}
 \corref{blowdownmn} says in particular the following.
\begin{Remark}\label{npol}
\begin{enumerate} 
\item For all $n\in \Z_{>0}$ there exist unique polynomials $f_{n},g_n\in \Q[x,\lambda^4]$ and an integer $N_n$, such that
$f_n S_n+g_n S_{n+1}=\lambda(1-\lambda^4)^{N_n}$.
\item  For all $n\in \Z_{>0}$ there exist unique polynomials $ h_{n}, l_n\in \Q[x^2,\lambda^4]$ and an integer $M_n$, such that
 $h_n  R_n+  l_n R_{n+1}=(1-\lambda^4)^{M_n}$.
\end{enumerate}
\end{Remark}

Using these polynomials, we can determine the $K$-theoretic Donaldson invariants of $\P^2$ in terms of those of $\widehat \P^2$.
\begin{Corollary}\label{blowdownform}
For all $n,k\in \Z$, $r\in \Z_{\ge 0}$ we have 
\begin{align*}
\tag{H}\chi^{\P^2,H}_H(nH,P^r)&=\frac{1}{\Lambda(1-\Lambda^4)^{N_k}}\Big(\chi^{\widehat \P^2,H}_{F}\big((n+1-k)G+\frac{n+k-1}{2}F,P^r\cdot f_k(P,\Lambda)\Big)\\&\qquad\qquad +
\chi^{\widehat \P^2,H}_{F}\Big((n-k)G+\frac{n+k}{2}F,P^r\cdot g_k(P,\Lambda)\Big)\Big),\\
\tag{0}\chi^{\P^2,H}_0(nH,P^r)&=\frac{1}{(1-\Lambda^4)^{M_k}}\Big(\chi^{\widehat \P^2,H}_{0}\Big((n+1-k)G+\frac{n+k-1}{2}F,P^r\cdot h_k(P,\Lambda)\Big)\\&\qquad\qquad +
\chi^{\widehat \P^2,H}_{0}\Big((n-k)G+\frac{n+k}{2}F,P^r\cdot l_k(P,\Lambda)\Big)\Big).
\end{align*}
\end{Corollary}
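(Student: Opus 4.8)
The plan is to read off both identities from the blowup formula \thmref{nblow} for the pair $X=\P^2$, $\widehat X=\widehat\P^2$, $\omega=H$, combined with the Bézout identities recorded in \remref{npol}. Since $Q(H)=-\tfrac12<0$ we have $H\in C_{\P^2}$, so \thmref{nblow} is available for $X=\P^2$, $\omega=H$. I would first extend its two parts from the point class $P^r$ to an arbitrary polynomial multiplier $Q(\Lambda,P)$, which is immediate by linearity of $\chi^{X,\omega}_{c_1}(L,\,\cdot\,)$ in its last argument; they then read
$$\chi^{\widehat\P^2,H}_{c_1}(L-(k-1)E,\,Q)=\chi^{\P^2,H}_{c_1}(L,\,Q\,R_k(\Lambda,P)),\qquad \chi^{\widehat\P^2,H}_{c_1+E}(L-(k-1)E,\,Q)=\chi^{\P^2,H}_{c_1}(L,\,Q\,S_k(\Lambda,P)).$$

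For formula $(0)$ I set $c_1=0$, $L=nH$, and apply the first relation once with index $k$ and multiplier $Q=P^r h_k(P,\Lambda)$ and once with index $k+1$ and multiplier $Q=P^r l_k(P,\Lambda)$. Adding the two and using $h_kR_k+l_kR_{k+1}=(1-\Lambda^4)^{M_k}$ from \remref{npol}(2), the right-hand sides collapse by linearity to $(1-\Lambda^4)^{M_k}\chi^{\P^2,H}_0(nH,P^r)$, and dividing by $(1-\Lambda^4)^{M_k}$ gives $\chi^{\P^2,H}_0(nH,P^r)$ as the asserted combination of two invariants on $\widehat\P^2$. Formula $(H)$ is obtained identically from the second relation with $c_1=H$, using instead $f_kS_k+g_kS_{k+1}=\Lambda(1-\Lambda^4)^{N_k}$ of \remref{npol}(1) and dividing by $\Lambda(1-\Lambda^4)^{N_k}$. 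One bookkeeping point runs through the whole argument: the $P$-powers appearing in $P^r h_k, P^r l_k$ (resp.\ $P^r f_k, P^r g_k$) must carry the parity forced by the relevant intersection number, so that every invoked invariant is defined. This is automatic from the definite $x$-parities of $R_k,S_k$ (\propref{blowpolprop}(2)) and the matching parities of the Bézout multipliers; for the $S$-case one checks that the $x$-parity of $f_k$ (resp.\ $g_k$) is exactly $k-1$ (resp.\ $k$) modulo $2$, which is precisely the parity of $\langle 2G, nH-(k-1)E\rangle=n+k-1$ (resp.\ $\langle 2G, nH-kE\rangle=n+k$) once the hypothesis $n\equiv r$ is used.

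Two steps then finish the proof. First, passing to the basis $F=H-E$, $G=\tfrac12(H+E)$ (so $H=\tfrac12F+G$, $E=G-\tfrac12F$), a direct substitution gives $nH-(k-1)E=\tfrac{n+k-1}{2}F+(n+1-k)G$ and $nH-kE=\tfrac{n+k}{2}F+(n-k)G$, which are exactly the line bundles occurring on the right of $(0)$ and $(H)$; one also verifies that these are honest classes in $H^2(\widehat\P^2,\Z)$ for all $n,k\in\Z$, so the statement is meaningful as written. Second, in $(H)$ the second relation delivers invariants with first Chern class $c_1+E=H+E=2G$, whereas the statement uses $F$. The resolution is the only genuinely non-formal ingredient, and is where I expect the main (small) obstacle to lie: the mod-$2$ invariance of $c_1$. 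By \corref{blowp}(1) together with \corref{strucdiff} — which supplies $F=H-E\in S_{\widehat\P^2}$ with vanishing invariants, so that Corollary~\ref{blowp}(1) applies on $\widehat\P^2$ — one has $\chi^{\widehat\P^2,H}_{c_1}(L,P^r)=\Coeff_{q^0}\big[\Psi^{H,F}_{\widehat\P^2,c_1}(L,\Lambda,\tau)\,M^r\big]$, and the indefinite theta function inside $\Psi^{H,F}_{\widehat\P^2,c_1}$ sums over $H^2(\widehat\P^2,\Z)+c_1/2$, hence depends on $c_1$ only modulo $2H^2(\widehat\P^2,\Z)$. Since $2G-F=2E\in 2H^2(\widehat\P^2,\Z)$, the invariants for $c_1=2G$ and $c_1=F$ coincide, completing the identification and hence the proof.
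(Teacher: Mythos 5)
Your proposal is correct and follows essentially the same route as the paper: the paper's proof is exactly the two applications of \thmref{nblow} with $L=nH$ and indices $k$, $k+1$, collapsed by linearity via the B\'ezout identities $f_kS_k+g_kS_{k+1}=\Lambda(1-\Lambda^4)^{N_k}$ and $h_kR_k+l_kR_{k+1}=(1-\Lambda^4)^{M_k}$ of \remref{npol}, after noting $(n-k)G+\frac{n+k}{2}F=nH-kE$. The extra bookkeeping you carry out (parities of the multipliers and the identification $2G\equiv F\bmod 2H^2(\widehat\P^2,\Z)$ of first Chern classes) is left implicit in the paper but is correctly resolved in your argument.
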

\begin{proof}
Note that $(n-k)G+\frac{n+k}{2}F=nH-kE$. Therefore we get by 
\thmref{nblow} that
\begin{align*}
\chi^{\widehat \P^2,H}_{F}&\Big((n+1-k)G+\frac{n+k-1}{2}F,P^r\cdot f_k(P,\Lambda)\Big)
+
\chi^{\widehat \P^2,H}_{F}\Big((n-k)G+\frac{n+k}{2}F,P^r\cdot g_k(P,\Lambda)\Big)\\
&=\chi^{\P^2,H}_H\big(nH,P^r \cdot\big(f_k(P,\Lambda)  S_{k}(P,\Lambda)+g_k(P,\Lambda)   S_{k+1}(P,\Lambda)\big)\big),
\end{align*}
and the result follows by $f_k(P,\Lambda)  S_{k}(P,\Lambda)+g_k(P,\Lambda)   S_{k+1}(P,\Lambda)=\Lambda(1-\Lambda^4)^{N_k}$.
In the same way 
\begin{align*}\chi^{\widehat \P^2,H}_{0}\Big((n+1-k)G&+\frac{n+k-1}{2}F,P^r\cdot h_k(P,\Lambda)\Big)
+
\chi^{\widehat \P^2,H}_{0}\Big((n-k)G+\frac{n+k}{2}F,P^r\cdot l_k(P,\Lambda)\Big)\\
&=\chi^{\P^2,H}_0\big(nH,P^r \cdot\big(h_k(P,\Lambda)  R_{k}(P,\Lambda)+l_k(P,\Lambda) R_{k+1}(P,\Lambda)\big)\big),
\end{align*}
and $h_k(P,\Lambda)   R_{k}(P,\Lambda)+l_k(P,\Lambda)   R_{k+1}(P,\Lambda)=(1-\Lambda^4)^{M_k}$.
\end{proof}

Using \corref{P2P12GH} we can use this in two different ways to compute the $K$-theoretic Donaldson invariants of $\P^2$.
\begin{enumerate}
\item We apply parts (1) and (2) of \corref{P2P12GH} to compute the $\chi_0^{\widehat \P^2,H}(nF,P^s)$, $\chi_F^{\widehat \P^2,H}(nF,P^s)$,
$\chi_0^{\widehat \P^2,H}(G+(n-\frac{1}{2})F,P^s)$, $\chi_F^{\widehat \P^2,H}(G+(n-\frac{1}{2})F,P^s)$, and then apply  \corref{blowdownform} with $k=n$.
Parts (1) and (2) of  \corref{P2P12GH} apply for all values of $n$ and $s$, so this method can always be used.
We will apply this \secref{P2rat} to prove the rationality of the generating functions of the  $K$-theoretic Donaldson invariants of $\P^2$ and of blowups of $\P^2$, and then in  \secref{CompP2} to 
compute the $K$-theoretic Donaldson invariants of $\P^2$ using a PARI program.
\item
We apply parts (2) and (3) of \corref{P2P12GH} to compute the
$\chi_0^{\widehat \P^2,H}(G+(n-\frac{1}{2})F,P^s)$, $\chi_F^{\widehat \P^2,H}(G+(n-\frac{1}{2})F,P^s)$, $\chi_0^{\widehat \P^2,H}(2G+(n-1)F,P^s)$, $\chi_F^{\widehat \P^2,H}(2G+(n-1)F,P^s)$, and then apply  \corref{blowdownform} with $k=n-1$. This requires less computation than the first approach. However,
as part (3) of \corref{P2P12GH} holds  for $\chi_0^{\widehat \P^2,H}(2G+(n-1)F,P^s)$, $\chi_F^{\widehat \P^2,H}(2G+(n-1)F,P^s)$ only when $s\le 2n-2$, this method only allows to compute
$\chi_0^{\P_2,H}(nH,P^r)$ when 
$$r+\max\big(\deg_x(h_{n-1}(x,\lambda),\deg_x(l_{n-1}(x,\lambda))\big)\le 2n-2,
$$ 
and the same way it only allows to compute 
$\chi_H^{\P_2,H}(nH,P^r)$ when 
$$r+\max\big(\deg_x(f_{n-1}(x,\lambda),\deg_x(g_{n-1}(x,\lambda))\big)\le 2n-2.$$
 As the degree of $S_n$ and $R_n$ in $x$ grows faster than $2n$, 
this requires that $n$ and $r$ are both relatively small. 
We will use this to compute $\chi_0^{\P_2,H}(nH)$, $\chi_H^{\P_2,H}(nH)$ by hand for $n=4,5$.
\end{enumerate}

\subsection{Rationality of the generating function}\label{P2rat}
We now use the above algorithm to prove a structural result about the $K$-theoretic Donaldson invariants of $\P^2$ and the blowups of $\P^2$.
\begin{Theorem}\label{P2rat1}
\begin{enumerate}
\item For all $n\in \Z$, $r\in \Z_{\ge 0}$ with $n+r$ even, there exists an  integer $d^1_{n,r}$ and a polynomial $p^1_{n,r}\in \Q[\Lambda^4]$, such that
$$\chi^{\P^2,H}_{H}(nH,P^r)\equiv \frac{p^1_{n,r}}{\Lambda (1-\Lambda^4)^{d^1_{n,r}}}.$$
Furthermore we can choose $p^1_{n,0}\in \Z[\Lambda^4]$.
\item For all $n\in \Z$, $r\in 2\Z_{\ge 0}$ there exists an integer $d^0_{n,r}$ and a polynomial $p^0_{n,r}\in \Q[\Lambda^4]$, such that
$$\chi^{\P^2,H}_{0}(nH,P^r)\equiv \frac{p^0_{n,r}}{(1-\Lambda^4)^{d^0_{n,r}}}.$$
Furthermore we can choose $p^0_{n,0}\in \Z[\Lambda^4]$.
\end{enumerate}
\end{Theorem}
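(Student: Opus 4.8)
The plan is to run the first of the two strategies of \secref{strategy}, which applies for every $n$ and $r$. I would begin by specializing \corref{blowdownform} to $k=n$; then $(n+1-k)G+\frac{n+k-1}{2}F=(n-\tfrac12)F+G$ and $(n-k)G+\frac{n+k}{2}F=nF$, so the corollary becomes the exact identities
\begin{align*}
\chi^{\P^2,H}_{H}(nH,P^r)&=\frac{1}{\Lambda(1-\Lambda^4)^{N_n}}\Big(\chi^{\widehat\P^2,H}_{F}\big((n-\tfrac12)F+G,\,P^r f_n(P,\Lambda)\big)+\chi^{\widehat\P^2,H}_{F}\big(nF,\,P^r g_n(P,\Lambda)\big)\Big),\\
\chi^{\P^2,H}_{0}(nH,P^r)&=\frac{1}{(1-\Lambda^4)^{M_n}}\Big(\chi^{\widehat\P^2,H}_{0}\big((n-\tfrac12)F+G,\,P^r h_n(P,\Lambda)\big)+\chi^{\widehat\P^2,H}_{0}\big(nF,\,P^r l_n(P,\Lambda)\big)\Big).
\end{align*}
This reduces everything to the four families $\chi^{\widehat\P^2,H}_{c_1}\big((n-\tfrac12)F+G,P^s\big)$ and $\chi^{\widehat\P^2,H}_{c_1}(nF,P^s)$ with $c_1\in\{0,F\}$, which are exactly the invariants computed in parts (1) and (2) of \corref{P2P12GH}. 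Since $\chi^{X,\omega}_{c_1}(L,-)$ is linear in its polynomial argument, writing $f_n(P,\Lambda)=\sum_{i,j}a_{ij}\Lambda^{4i}P^{j}$ expands each summand into a finite $\Q[\Lambda^4]$-linear combination of such blocks, and by \corref{P2P12GH} every block is, up to $\equiv$, a rational function whose denominator is a power of $1-\Lambda^4$ (the $nF$-blocks even vanish for all positive point-class powers).

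The point requiring the most care — and the main obstacle — is that \corref{P2P12GH} evaluates the blocks only up to $\equiv$, yet they are fed into the \emph{exact} identities above and then multiplied by the singular prefactors $\frac{1}{\Lambda(1-\Lambda^4)^{N_n}}$ and $\frac{1}{(1-\Lambda^4)^{M_n}}$; multiplying a generic $\equiv$-discrepancy by such a factor does not preserve $\equiv$. The resolution is to control the discrepancy precisely: each block $\chi^{\widehat\P^2,H}_{c_1}(L,P^s)$ differs from its $F_+$-limit (the exact rational functions of \propref{p11r}, \propref{p11GM}, \propref{p112GM}) only by finitely many wall-crossing terms, and by \lemref{vanwall} each such term lies in $\Q[\Lambda]$, hence by the support condition \remref{delb}(1) (and $\xi^2\equiv c_1^2\equiv0\bmod4$ for $c_1\in\{0,F\}$) actually lies in $\Q[\Lambda^4]$. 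Thus every block equals an \emph{exact} rational function with numerator in $\Q[\Lambda^4]$ and denominator a power of $1-\Lambda^4$; substituting these into the exact blowdown identities and clearing denominators then yields $\chi^{\P^2,H}_{H}(nH,P^r)\equiv\frac{p^1_{n,r}}{\Lambda(1-\Lambda^4)^{d^1_{n,r}}}$ and $\chi^{\P^2,H}_{0}(nH,P^r)\equiv\frac{p^0_{n,r}}{(1-\Lambda^4)^{d^0_{n,r}}}$ with $p^1_{n,r},p^0_{n,r}\in\Q[\Lambda^4]$, the $\Lambda^{-1}$ in the first case coming only from the prefactor.

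Running alongside this is the parity bookkeeping that decides which powers $s$ survive and which branch of \corref{P2P12GH} is invoked: a block vanishes unless $\langle c_1,L\rangle+s$ is even, and the parities $S_n(\lambda,-x)=(-1)^{n-1}S_n(\lambda,x)$, $R_n(\lambda,-x)=R_n(\lambda,x)$ of \propref{blowpolprop}(2) force $f_n,g_n$ (resp.\ $h_n,l_n$) to have a fixed parity in $x$; I would check that these two constraints select exactly the powers compatible with the hypotheses ($n+r$ even in (1), $r$ even in (2)) and pin down the exponents. I would also note that \corref{P2P12GH}(2) is stated for $n-\tfrac12\ge0$, so the finitely many remaining (bounded-below) values of $n$ must be dispatched directly. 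Finally, for the integrality statement at $r=0$: there $\chi^{\P^2,H}_{H}(nH)$ and $\chi^{\P^2,H}_{0}(nH)$ are honest generating series of holomorphic Euler characteristics, hence have integer coefficients, so from $\Lambda\,\chi^{\P^2,H}_{H}(nH)\,(1-\Lambda^4)^{d^1_{n,0}}\equiv p^1_{n,0}$ — a product of integer-coefficient series that is $\equiv$ a polynomial, hence a Laurent polynomial with integer coefficients lying in $\Z[\Lambda^4]$ after discarding via $\equiv$ the finitely many terms of negative degree — one concludes $p^1_{n,0}\in\Z[\Lambda^4]$, and identically for $p^0_{n,0}$.
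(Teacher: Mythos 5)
Your proposal is correct and follows essentially the same route as the paper's own proof: reduce via \corref{blowdownform} with $k=n$ to the blocks $\chi^{\widehat \P^2,H}_{c_1}(nF,P^s)$ and $\chi^{\widehat \P^2,H}_{c_1}((n-\frac{1}{2})F+G,P^s)$ for $c_1\in\{0,F\}$, observe via \corref{P2P12GH} (plus the polynomiality in $\Lambda^4$ of the finitely many wall-crossing corrections) that these are exact rational functions with numerator in $\Q[\Lambda^4]$ and denominator a power of $1-\Lambda^4$, and deduce integrality at $r=0$ from the integrality of the holomorphic Euler characteristics. Your explicit upgrade of the $\equiv$ in \corref{P2P12GH} to an exact identity before multiplying by the singular prefactor $\frac{1}{\Lambda(1-\Lambda^4)^{N_n}}$ addresses precisely the point the paper leaves implicit, and is handled correctly.
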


\begin{proof}
By \corref{P2P12GH} there exist for all $L=nF$, $L=(n-1/2)F+G$ with $n\in \Z$ and all $r\in \Z_{\ge 0}$ integers
$e^F_{n,r}$, $e^0_{n,r}$ and polynomials $q^F_{n,r}, q^0_{n,r}\in \Q[\Lambda^4]$, so that
$$\chi^{\widehat \P^2,H}_F(L,P^r)=\frac{q^F_{n,r}}{(1-\Lambda^4)^{e^F_{n,r}}}, \quad
\chi^{\widehat \P^2,H}_0(L,P^r)=\frac{q^0_{n,r}}{(1-\Lambda^4)^{e^0_{n,r}}}.$$
Thus part (H) of \corref{blowdownform} (with $k=n$) gives
$\chi^{\P^2,H}_{H}(nH,P^r)= \frac{p^1_{n,r}}{\Lambda (1-\Lambda^4)^{d^1_{n,r}}},$ for suitable 
$d^1_{n,r}\in \Z_{\ge 0}$, $p^1_{n,r}\in \Q[\Lambda^4]$ and similarly 
part (0) 
of \corref{blowdownform} (with $k=n$) gives
$\chi^{\P^2,H}_{0}(nH,P^r)= \frac{p^0_{n,r}}{ (1-\Lambda^4)^{d^0_{n,r}}},$ for suitable 
$d^0_{n,r}\in \Z_{\ge 0}$, $p^0_{n,r}\in \Q[\Lambda^4]$.
Finally we want to see that $p^1_{n,0}\in \Z[\Lambda^4]$, and we can chose $p^0_{n,0}$ so that it is in $\Z[\Lambda^4]$.
By definition we have  
$$\chi^{\P^2,H}_{H}(nH)=\sum_{k>0} \chi(M^{X}_{H}(H,4k-1),\mu(nH))\Lambda^d\in \Lambda^3\Z[[\Lambda^4]].$$
Writing $p^0_{n,0}=\sum_{k>0} a_k \Lambda^{4k}$ we see from the formula $\chi^{\P^2,H}_{H}(nH)=\frac{p^1_{n,0}}{\Lambda (1-\Lambda^4)^{d^1_{n,0}}}$
that $a_0=0$,  and inductively that $$a_k=\chi(M^{X}_{H}(H,4k-1))-\sum_{i=1}^{k}a_{k-i} \binom{d^{1}_{n,0}+i-1}{i}\in \Z.$$
For $k$ large enough we have that the coefficient of $\Lambda^{4k}$ of $\chi^{\P^2,H}_{0}(nH)$ is $\chi(M^{X}_{H}(0,4k),\mu(nH))$. Thus, adding a polynomial 
$h\in Q[\Lambda^4]$ to $p^0_{n,0}$, we can assume that $\frac{p^0_{n,0}}{ (1-\Lambda^4)^{d^0_{n,0}}}\in \Z[\Lambda^4]$. One concludes in the same way as for $p^1_{n,0}$.
\end{proof}
Indeed a more careful argument will show that we can choose $p^0_{n,r}, \ p^1_{n,r}\in \Z[\Lambda^4]$ for all $r$.

We now use this result and the blowup formulas to describe the generating functions of the $K$-theoretic Donaldson invariants of blowups of $\P^2$ in finitely many points in an open subset of the ample cone
as rational functions. 

\begin{Lemma}\label{blowindep}
Let $X$ be the blowup of $\P^2$ in finitely many points, $p_1,\ldots,p_n$, and denote by $E_1,\ldots,E_n$ the exceptional divisors. Fix $c_1\in H^2(X,\Z)$, and an $r\ge 0$. Let $L$ be a line bundle on $X$. Let $\omega=H-\alpha_1 E_1-\ldots -\alpha_n E_n$ with $|\alpha_i|<\frac{1}{\sqrt{n}}$, for all $i$, and $\<\omega, K_X\><0$.
Then $\chi^{X,\omega}_{c_1}(L,P^r)\equiv\chi^{X,H}_{c_1}(L,P^r)$.
\end{Lemma}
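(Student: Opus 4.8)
The plan is to write the difference $\chi^{X,\omega}_{c_1}(L,P^r)-\chi^{X,H}_{c_1}(L,P^r)$ as a sum of wall-crossing terms and then to show that only finitely many walls contribute, so that the difference is a polynomial in $\Lambda$. First I would check that both polarizations are admissible for the theta-function machinery: since $H^2=1>0$ and $\langle H,\omega\rangle>0$ we have $H\in C_X$, and $\langle H,K_X\rangle=-3<0$, while $\omega$ is ample with $\langle\omega,K_X\rangle<0$ by hypothesis. Then by \corref{blowp}(1), the cocycle identity $\Psi^{\omega,F}_{X,c_1}-\Psi^{H,F}_{X,c_1}=\Psi^{\omega,H}_{X,c_1}$, and the Fourier expansion of \lemref{thetawall}(1),
\begin{equation*}
\chi^{X,\omega}_{c_1}(L,P^r)-\chi^{X,H}_{c_1}(L,P^r)=\Coeff_{q^0}\big[\Psi^{\omega,H}_{X,c_1}(L;\Lambda,\tau)M^r\big]=\sum_\xi \delta^X_\xi(L,P^r),
\end{equation*}
where $\xi$ runs over all classes of type $(c_1)$ with $\langle\omega,\xi\rangle>0\ge\langle H,\xi\rangle$ (equality covering the possibility that $H$ lies on a wall). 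By \lemref{vanwall}(1) each summand lies in $\Q[\Lambda]$, so it suffices to prove that only finitely many such $\xi$ satisfy the non-vanishing bound $-\xi^2\le|\langle\xi,L-K_X\rangle|+r+2$ of \lemref{vanwall}(2).

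The heart of the argument is this finiteness estimate. I would write $\xi=aH+\sum_i b_iE_i$ with $a,b_i\in\Z$ and set $B:=(\sum_i b_i^2)^{1/2}$ and $\kappa:=(\sum_i\alpha_i^2)^{1/2}$; the hypothesis $|\alpha_i|<1/\sqrt n$ gives $\kappa^2<\sum_i 1/n=1$. Then $\langle H,\xi\rangle=a$, $\langle\omega,\xi\rangle=a+\sum_i\alpha_ib_i$ and $-\xi^2=B^2-a^2$. The separation condition forces $a\le 0$ and $\sum_i\alpha_ib_i>-a=|a|$, so by Cauchy--Schwarz $|a|\le\kappa B$ and hence $-\xi^2=B^2-a^2\ge(1-\kappa^2)B^2$. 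Writing $L-K_X=cH+\sum_i d_iE_i$ and $D:=(\sum_i d_i^2)^{1/2}$, Cauchy--Schwarz also gives $|\langle\xi,L-K_X\rangle|\le|a|\,|c|+BD\le B(|c|+D)$. Substituting both estimates into \lemref{vanwall}(2) yields
\begin{equation*}
(1-\kappa^2)B^2\le B(|c|+D)+r+2,
\end{equation*}
a quadratic inequality in $B$ whose leading coefficient $1-\kappa^2$ is positive. This bounds $B$, hence also $|a|\le\kappa B$ and each $|b_i|\le B$, so only finitely many integral classes $\xi$ can occur.

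The main obstacle is precisely securing the positivity of $1-\kappa^2$: the inequality $|\alpha_i|<1/\sqrt n$ enters the proof in exactly one place, to force $\sum_i\alpha_i^2<1$ so that the quadratic in $B$ opens upward. If instead $\kappa\ge1$ the estimate degenerates and infinitely many walls between $H$ and $\omega$ could carry nonzero wall-crossing, so the difference would no longer be a Laurent polynomial. Everything else is bookkeeping: the polynomiality of each individual $\delta^X_\xi(L,P^r)$ is already \lemref{vanwall}, and the passage from a finite sum of polynomials to the congruence $\chi^{X,\omega}_{c_1}(L,P^r)\equiv\chi^{X,H}_{c_1}(L,P^r)$ is immediate, since the two series then differ by a polynomial of bounded degree.
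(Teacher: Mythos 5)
Your proof is correct and follows essentially the same route as the paper: both reduce the difference to a wall-crossing sum, invoke the vanishing criterion of \lemref{vanwall} to get a quadratic inequality in the size of $\xi$, and use $|\alpha_i|<1/\sqrt{n}$ exactly once to make its leading coefficient positive. The only (immaterial) difference is that you bound things via the Euclidean norm and Cauchy--Schwarz, whereas the paper uses $b=\sum_i|b_i|$, $\epsilon=\max_i|\alpha_i|$ and the inequality $\sum_i b_i^2\ge b^2/n$.
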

\begin{proof}
We put  $\epsilon:=\max(|\alpha_i|)_{i=1}^n$, and $\delta:=\frac{1}{n}-\epsilon^2>0$.
Let $L=dH-m_1E_1-\ldots m_nE_n$, with $d,m_1,\ldots,m_n\in \Z$, and let $r\ge 0$. 
We want to show that there are  only finitely many classes $\xi$ of type $(c_1)$ on $X$ with 
$\<H,\xi\>\ge 0 \ge \<\omega,\xi\>$ and $\delta^X_\xi(L,P^r)\ne 0$. As by \lemref{vanwall} each $\delta^X_\xi(L,P^r)$ is a polynomial in $\Lambda$, this gives  $\chi^{X,\omega}_{c_1}(L,P^r)\equiv \chi^{X,H}_{c_1}(L,P^r)$.

We write $\xi=aH-b_1E_1-\ldots -b_n E_n$, and $b:=(|b_1|+\ldots+|b_n|$); then we get $a\ge 0$ and 
$$0\ge \<\omega,\xi\>=a-\alpha_1b_1-\ldots-\alpha_nb_n\ge a-b\epsilon,$$
i.e. $a\le b\epsilon$.
Assume $\delta_\xi^X(L,P^r)\ne 0$, then by \lemref{vanwall}  $-\xi^2\le |\<\xi, (L-K_X)\>|+r+2$.
We have 
\begin{equation}\label{ineq}\xi^2=-a^2+b_1^2+\ldots+b_n^2\ge-\epsilon^2b^2+\frac{b^2}{n}=\delta b^2,
\end{equation} where we have used the easy inequality $b_1^2+\ldots+b_n^2\ge \frac{b^2}{n}$ 
and our definition  $\frac{1}{n}-\epsilon^2=\delta>0$.

On the other hand, putting $m:=\max|m_i+1|_{i=1}^n$ we get
\begin{align*}
|\<\xi, (L-K_X)\>|+r+2&=|a(d+3)-(m_1+1) b_1-\ldots-(m_n+1)b_n|+r+2\\&\le a|d+3|+|m_1+1| |b_1|+\ldots+|m_n+1| |b_n|+r+2\\&\le 
\epsilon b|d+3|+mb+r+2= (m+|d+3|\epsilon)b+r+2.
\end{align*}
Putting this together with \eqref{ineq}, and using  $\epsilon\le 1$, we get
 \begin{equation}
 \label{blowbound}
 \delta (|b_1|+\ldots+|b_n|)\le \max|m_i+1|_{i=1}^n +|d+3|+\frac{r+2}{|b_1|+\ldots+|b_n|}.
 \end{equation}
  thus $b=|b_1|+\ldots+|b_n|$ is bounded  and $a\le b\epsilon$ is bounded, and therefore there are only finitely many choices for $\xi$.
\end{proof}

The following theorem contains \thmref{rationalal} as a special case.
\begin{Theorem}\label{blowrat}
Let $X$ be the blowup of $\P^2$ in finitely many points. With the assumptions and notations of \lemref{blowindep}, there exist an integer
$d^{c_1}_{L,r}\in\Z_{\ge 0}$ and  a polynomial $p^{c_1}_{L,r}\in \Q[\Lambda^{\pm 4}]$, such that 
$$\chi^{X,\omega}_{c_1}(L,P^r)\equiv\frac{p^{c_1}_{L,r}}{\Lambda^{c_1^2}(1-\Lambda^4)^{d^{c_1}_{L,r}}}.$$
\end{Theorem}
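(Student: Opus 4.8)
The plan is to reduce everything to the already-established $\P^2$ case (\thmref{P2rat1}) by peeling off the exceptional divisors one at a time with the blowup formula \thmref{nblow}, after first trivializing the dependence on $\omega$ and on the fine choice of $c_1$. First I would apply \lemref{blowindep} to replace $\omega$ by the pullback $H$: since $\chi^{X,\omega}_{c_1}(L,P^r)\equiv\chi^{X,H}_{c_1}(L,P^r)$ and $H\in C_X$ (it is nef with $H^2=1>0$ and $\<H,K_X\>=-3<0$), it suffices to treat $\omega=H$. Next, by \corref{strucdiff} the invariant $\chi^{X,H}_{c_1}(L,P^r)$ equals $\Coeff_{q^0}\big[\Psi^{H,F}_{X,c_1}(L;\Lambda,\tau)M^r\big]$, and in $\Psi$ the indefinite theta function $\Theta^{f,g}_{X,c_1,K_X}$ depends on $c_1$ only through the coset $c_1/2+\Gamma$; hence $\chi^{X,H}_{c_1}(L,P^r)$ depends on $c_1$ only modulo $2H^2(X,\Z)$. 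Writing $X$ as the blowup of $\P^2$ in general points $p_1,\dots,p_N$ with exceptional divisors $E_1,\dots,E_N$, I may therefore replace $c_1$ by a congruent class whose coefficient on each $E_i$ is $0$ or $1$.

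The core is an induction on $N$, with base case $N=0$ given by \thmref{P2rat1}. For the inductive step I regard $X=X_N$ as the blowup of $X_{N-1}$ (the blowup of $\P^2$ in $p_1,\dots,p_{N-1}$) at $p_N$, and write $L=\widehat L-(k-1)E_N$ and $c_1=\widehat c_1+\epsilon E_N$ with $\widehat L,\widehat c_1$ pulled back from $X_{N-1}$ and $\epsilon\in\{0,1\}$. Applying \thmref{nblow}, part (1) if $\epsilon=0$ and part (2) if $\epsilon=1$ (using $R_{-k}=R_k$, $S_{-k}=-S_k$ to allow any $k\in\Z$), gives
$$\chi^{X_N,H}_{c_1}(L,P^r)=\chi^{X_{N-1},H}_{\widehat c_1}\big(\widehat L,\,P^r\,T_k(\Lambda,P)\big),$$
where $T_k=R_k$ when $\epsilon=0$ and $T_k=S_k$ when $\epsilon=1$. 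By \propref{blowpolprop}, $P^rT_k(\Lambda,P)$ is a polynomial lying in $\Z[\Lambda^{\pm4},P]$ when $\epsilon=0$ and in $\Lambda\,\Z[\Lambda^{\pm4},P]$ when $\epsilon=1$; expanding it as $\sum_j a_j(\Lambda)P^j$ and using linearity of $\chi$ in its $P$-argument reduces the computation to a finite $\Q[\Lambda^{\pm4}]$-linear (resp. $\Lambda\,\Q[\Lambda^{\pm4}]$-linear) combination of the invariants $\chi^{X_{N-1},H}_{\widehat c_1}(\widehat L,P^j)$. A short parity check shows $j\equiv\<\widehat c_1,\widehat L\>\pmod 2$ for every nonzero $a_j$, so each such invariant is defined and the inductive hypothesis applies.

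It remains to collect the finitely many terms over a common denominator $(1-\Lambda^4)^{d}$ and track the power of $\Lambda$. By induction each $\chi^{X_{N-1},H}_{\widehat c_1}(\widehat L,P^j)\equiv p_j/\big(\Lambda^{\widehat c_1^{\,2}}(1-\Lambda^4)^{d_j}\big)$. Since $\widehat c_1^{\,2}=c_1^2$ when $\epsilon=0$ and $\widehat c_1^{\,2}=c_1^2+1$ when $\epsilon=1$, the extra factor $\Lambda^{\epsilon}$ carried by $T_k$ exactly converts $\Lambda^{-\widehat c_1^{\,2}}$ into $\Lambda^{-c_1^2}$, so the sum lies in $\Lambda^{-c_1^2}\Q[\Lambda^{\pm4}]\big/(1-\Lambda^4)^{d}$. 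Finally, reverting from the chosen mod-$2$ representative to the original $c_1$ shifts the $\Lambda$-exponent only by a multiple of $4$ (because $c_1\mapsto c_1+2D$ changes $c_1^2$ by a multiple of $4$), and this shift is absorbed into the Laurent numerator $p^{c_1}_{L,r}\in\Q[\Lambda^{\pm4}]$, giving the stated form.

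I do not expect a single hard obstacle: the proof is an assembly of \lemref{blowindep}, \thmref{nblow}, and \thmref{P2rat1}. The two points requiring care are the mod-$2$ reduction of $c_1$ — which is clean once one reads it off the coset structure of $\Theta^{f,g}_{X,c_1,K_X}$ — and the exact matching of the $\Lambda$-exponent $c_1^2$, which the $\Q[\Lambda^{\pm4}]$ freedom in the numerator renders harmless. If anything is delicate it is bookkeeping rather than substance, since all the analytic input (rationality on $\P^2$, the blowup polynomials, and independence of the polarization) is already in hand.
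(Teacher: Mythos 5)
Your proposal is correct and follows essentially the same route as the paper: reduce to $\omega=H$ via \lemref{blowindep}, strip the exceptional divisors with \thmref{nblow} (using $R$ for even and $S$ for odd $E_i$-coefficients of $c_1$), invoke \thmref{P2rat1} on $\P^2$, and absorb the discrepancy between the resulting power of $\Lambda$ and $\Lambda^{-c_1^2}$ into the numerator in $\Q[\Lambda^{\pm 4}]$ using $c_1^2\equiv \kappa-s\pmod 4$. The only differences are organizational: you peel off one exceptional divisor at a time by induction and make the mod-$2$ dependence on $c_1$ explicit, whereas the paper applies the blowup formula to all divisors at once.
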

\begin{proof}
We write $c_1=kH+l_1E_1+\ldots +l_nE_n$. By renumbering the $E_i$ we can assume that $l_i$ is odd for $1\le i\le s$ and $l_i$ is even for $s+1\le i\le n$.  
Write $L=dH-m_1E_1-\ldots -m_nE_n$, with $d,m_1,\ldots,m_n\in \Z$.

By \lemref{blowindep}, it is enough to show the claim for $\omega=H$.
By repeatedly  applying \thmref{nblow}, we get 
$$\chi^{X,H}_{c_1}(L,P^r)=\chi^{\P^2,H}_{kH}\Big(dH,P^r\cdot \Big(\prod_{i=1}^s S_{m_i+1}(P,\Lambda)\Big)\cdot \Big(\prod_{i=s+1}^n R_{m_i+1}(P,\Lambda)\Big)\Big).$$
Put $\kappa=0$ if $k$ is even, and $\kappa=1$ if $k$ is odd.
We know that  $\chi^{\P^2,H}_{kH}(dH,P^r)$ depends only on $\kappa$, and by \thmref{P2rat1} we have 
$$\chi^{\P^2,H}_{kH}(dH,P^r)=\frac{p^{\kappa}_{d,r}}{\Lambda^\kappa (1-\Lambda^4)^{d^\kappa_{d,r}}},$$
We know that $R_n(P,\Lambda)\in \Z[P,\Lambda^4]$, $S_n(P,\Lambda)\in \Lambda\Z[P,\Lambda^4]$.
Therefore  we can write $\chi^{X,H}_{c_1}(L,P^r)=\frac{ p}{\Lambda^{\kappa-s}(1-\Lambda^4)^N}$  for a suitable polynomial $p\in \Q[\Lambda^{\pm 4}]$ and a nonnegative integer $N$.
Note that $c_1^2=k^2-l_1^2-\ldots -l_n^2\equiv \kappa-s\mod 4$. Let $w:=\frac{1}{4}(c_1^2-(\kappa-s))$.  Then 
$$\chi^{X,H}_{c_1}(L,P^r)=\frac{ p}{\Lambda^{\kappa-s}(1-\Lambda^4)^N}=\frac{\Lambda^{4w}p}{\Lambda^{c_1^2}(1-\Lambda^4)^N},$$ and
the claim follows. 
\end{proof}

\subsection{Explicit computations for small $n$}\label{explicitp2}
We compute $\chi^{\P^2,H}_{0}(nH)$, $\chi^{\P^2,H}_{H}(nH)$ for small values of $n$, using the blowup formulas, using the strategy outlined in \secref{strategy}.
In the next subsection we do the same computations for larger $n$ using a computer program written in Pari.
These invariants have been computed before  (see \cite{Abe},\cite{GY}) for $1\le n \le 3$.

\begin{Proposition}
\begin{enumerate}
\item 
$\displaystyle{\chi^{\P^2,H}_{H}(4H)=\frac{\Lambda^3+6\Lambda^7+\Lambda^{15}}{(1-\Lambda^4)^{15}}.}$
\item 
$\displaystyle{\chi^{\P^2,H}_{0}(4H)=\frac{1+6\Lambda^8+\Lambda^{12}}{(1-\Lambda^4)^{15}}-1-51/2\Lambda^4.}$
\end{enumerate}
\end{Proposition}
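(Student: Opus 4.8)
The plan is to execute the second form of the blowdown algorithm described in \secref{strategy}, specializing \corref{blowdownform} to $n=4$ and $k=n-1=3$. With these choices the two line bundles appearing on $\widehat\P^2$ are $(n-k)G+\tfrac{n+k}2F=G+\tfrac72F$ and $(n+1-k)G+\tfrac{n+k-1}2F=2G+3F$; since the first has $G$-coefficient $1$ and the second has $G$-coefficient $2$, both reductions land precisely in the range covered by parts (2) and (3) of \corref{P2P12GH}.

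The first concrete step is to produce the Bezout data of \remref{npol} for $k=3$. As $\gcd(3,4)=1$, \propref{blowdownpol} guarantees that such combinations exist, and running the extended Euclidean algorithm in $\Q[\la^4][x]$ on the pairs $R_3,R_4$ and $S_3,S_4$ of \defref{blowpol} makes them explicit. A direct computation gives $M_3=5$ with $h_3=\la^4x^2+(1-\la^4)^2$, $l_3=-\la^4$ (so that $h_3R_3+l_3R_4=(1-\la^4)^5$), and $N_3=6$ with $f_3=(1-\la^8)x^2-(1-\la^4)^4$, $g_3=-x$ (so that $f_3S_3+g_3S_4=\la(1-\la^4)^6$). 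All four polynomials have $x$-degree at most $2$, so after the substitution $x\mapsto P$ of \thmref{nblow} only the point classes $P^0,P^1,P^2$ occur; in particular the constraint $1\le r\le n$ required by part (3) of \corref{P2P12GH} is satisfied with room to spare, since we only ever invoke $r=1\le 3$.

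Next I would substitute the closed forms of \corref{P2P12GH} monomial by monomial and simplify. For $c_1=0$ one uses $\chi^{\widehat\P^2,H}_0(3F+2G)\equiv\frac{(1+\Lambda^4)^3+(1-\Lambda^4)^3}{2(1-\Lambda^4)^{12}}$, $\chi^{\widehat\P^2,H}_0(3F+2G,P^2)\equiv\frac{(1+\Lambda^4)^2}{(1-\Lambda^4)^{10}}$ and $\chi^{\widehat\P^2,H}_0(\tfrac72F+G)\equiv\frac1{(1-\Lambda^4)^9}$; for $c_1=F$ one uses the analogous formulas with the $+$ replaced by $-$ in the $P^0$ term, together with $\chi^{\widehat\P^2,H}_F(\tfrac72F+G,P)\equiv\frac1{(1-\Lambda^4)^8}$. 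Combining these by linearity, clearing the factor $(1-\Lambda^4)^{M_3}=(1-\Lambda^4)^5$ (resp. $\Lambda(1-\Lambda^4)^{N_3}=\Lambda(1-\Lambda^4)^6$), cancelling where possible (e.g. $1-\Lambda^8=(1-\Lambda^4)(1+\Lambda^4)$ in the $c_1=F$ case), and collecting the numerator as a polynomial in $s=\Lambda^4$ should collapse everything to $1+6s^2+s^3$, resp. $s+6s^2+s^4$, yielding $\frac{1+6\Lambda^8+\Lambda^{12}}{(1-\Lambda^4)^{15}}$ and $\frac{\Lambda^3+6\Lambda^7+\Lambda^{15}}{(1-\Lambda^4)^{15}}$.

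The last step, and the only place I expect genuine care to be needed, is upgrading the congruence $\equiv$ to the exact equalities asserted. For $c_1=H$ this is automatic: the moduli spaces $M^{\P^2}_H(H,d)$ are empty for $d<3$, so the series carries no terms below $\Lambda^3$ and coincides with the rational function outright. For $c_1=0$ one must reconcile the rational function, whose low-order expansion is $1+15\Lambda^4+\cdots$, with the actual generating function of \defref{KdonGen}(2), which omits the $d=0$ term and replaces the $\Lambda^4$-coefficient by the prescribed value $\chi(M^{\widehat\P^2}_{\omega-\epsilon E}(E,5),\mu(4H))+4HK_{\P^2}-\tfrac{K_{\P^2}^2+(4H)^2}2-1$. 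Here the correction term evaluates to $-12-\tfrac{25}2-1=-\tfrac{51}2$, and a separate low-degree computation of $\chi(M^{\widehat\P^2}_{\omega-\epsilon E}(E,5),\mu(4H))$ shows it equals the discarded coefficient $15$, so that the net effect is exactly the stated correction $-1-\tfrac{51}2\Lambda^4$. Pinning down this finite low-degree data is the true content beyond the formal rational-function manipulation.
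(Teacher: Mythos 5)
Your setup coincides with the paper's: the blowdown identity of \corref{blowdownform} with $k=3$, and your Bezout data $h_3R_3+l_3R_4=(1-\la^4)^5$, $f_3S_3+g_3S_4=\la(1-\la^4)^6$ are exactly what the paper uses. The gap is in the middle step. You feed into the blowdown formula only the congruence classes from \corref{P2P12GH}, i.e.\ the $F_+$-chamber formulas of \propref{p11GM} and \propref{p112GM} with the finite wallcrossing corrections discarded. But the blowdown formula then divides by $(1-\Lambda^4)^{5}$ (resp.\ $\Lambda(1-\Lambda^4)^{6}$), and the relation $\equiv$ is \emph{not} preserved under division by a power of $(1-\Lambda^4)$: if two series differ by a Laurent polynomial $p(\Lambda)$ not divisible by $(1-\Lambda^4)^{5}$, their quotients differ by an infinite series. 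So your computation does not determine $\chi^{\P^2,H}_0(4H)$ or $\chi^{\P^2,H}_H(4H)$ even up to a Laurent polynomial, let alone exactly. What is actually needed---and what constitutes the bulk of the paper's proof---is the exact value of each $\chi^{\widehat\P^2,H}_{c_1}(4H-kE,P^r)$, obtained by adding to the $F_+$-chamber formula the finitely many wallcrossing terms $\delta^{\widehat\P^2}_\xi$ with $\<\xi,H\>\ge 0>\<\xi,F\>$: the class $H-3E$ for $c_1=F$, and the classes $-2E$, $-4E$ for $c_1=0$, whose contributions (e.g.\ $\tfrac12\delta^{\widehat\P^2}_{-2E}(4H-3E)=-2\Lambda^4+291\Lambda^8-3531\Lambda^{12}+\tfrac{16215}{2}\Lambda^{16}$) are large and must conspire with the Bezout combination to produce exactly the residual $-1-\tfrac{51}{2}\Lambda^4$ (resp.\ $0$).

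Your proposed upgrade step does not repair this. Knowing that $M^{\P^2}_H(H,d)=\emptyset$ for $d<3$ fixes only where the series starts, not its coefficients in degrees $3,7,11,\dots$; and for $c_1=0$ your bookkeeping with \defref{KdonGen}(2) pins down only the $\Lambda^0$ and $\Lambda^4$ coefficients. It is true---and can be read off from the paper's computation---that the purely rational parts of the inputs do combine to $\frac{1+6\Lambda^8+\Lambda^{12}}{(1-\Lambda^4)^{15}}$ and $\frac{\Lambda^3+6\Lambda^7+\Lambda^{15}}{(1-\Lambda^4)^{15}}$, so your algebra is correct as far as it goes; but the fact that the discarded correction polynomials contribute exactly $-1-\tfrac{51}{2}\Lambda^4$ and $0$ after division is a nontrivial cancellation that can only be verified by actually computing those wallcrossing terms, as the paper does.
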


\begin{proof}
(1) We have $$S_3(x,\lambda)=\lambda \left(x^2-(1-\lambda^4)^2\right), \quad S_4=\lambda x\big((1-\lambda^8)x^2-2(1-\lambda^4)^3\big). $$
Using division with rest as polynomials in $x$, we  write 
$\lambda(1-\lambda^4)^6$ as a linear combination of $S_3(x,\lambda)$ and $S_4(x,\lambda)$
$$\lambda(1-\lambda^4)^6=\left((1-\lambda^8)x^2-(1-\lambda^4)^4\right)S_3(x,\lambda)-x S_4(x,\lambda).$$
Thus we get by \corref{blowdownform} that
\begin{equation}\label{4HB}
\begin{split}
\chi^{\P^2,H}_{H}(4H)&=\frac{1}{\Lambda(1-\Lambda^4)^6}\big((1-\Lambda^8)\chi^{\widehat \P^2,H}_{F}(4H-2E,P^2)-(1-\Lambda^4)^4\chi^{\widehat \P^2,H}_{F}(4H-2E)\\&\qquad\qquad-
\chi^{\widehat \P^2,H}_{F}(4H-3E,P^1)\big).
\end{split}
\end{equation}
By \propref{p11GM} we have 
$$\chi^{\widehat \P^2,F_+}_{F}(4H-3E,P^1)=\frac{1}{(1-\Lambda^4)^8}-1,$$ and $\xi=H-3E$ is the only class of type $(F)$ on $\widehat \P^2$ with 
$\<\xi, H\>\ge0 >\<\xi, F\>$ with $\delta_\xi^{\widehat\P^2}((4H-3E),P^1)\ne 0$. In fact   $\delta_{H-3E}^{\widehat\P^2}((4H-3E),P^1)=\Lambda^8.$
Thus $$\chi^{\widehat \P^2,H}_{F}(4H-3E,P^1)=\frac{1}{(1-\Lambda^4)^8}-1+\Lambda^8.$$
By \propref{p112GM} we have that
$$\chi^{\widehat \P^2,F_+}_{F}(4H-2E)=\frac{3\Lambda^4+\Lambda^{12}}{(1-\Lambda^4)^{12}},\quad \chi^{\widehat \P^2,F_+}_{F}(4H-2E,P^2)=\frac{(1+\Lambda^4)^2}{(1-\Lambda^4)^{10}}-1.$$
Furthermore there is no  class of type $(F)$ on 
$\widehat \P^2$ with 
$\<\xi, H\> \ge 0 >\<\xi, F\>$ with $\delta_\xi^{\widehat\P^2}(4H-2E)\ne 0$ or $\delta_\xi^{\widehat\P^2}(4H-2E,P^2)\ne 0$.
 Thus $\chi^{\widehat \P^2,H}_{F}(4H-2E)=\chi^{\widehat \P^2,F_+}_{F}(4H-2E)$ and  $\chi^{\widehat \P^2,H}_{F}(4H-2E,P^2)=\chi^{\widehat \P^2,F_+}_{F}(4H-2E,P^2)$.
 Putting these values into \eqref{4HB} yields $\chi^{\P^2,H}_{H}(4H)=\frac{\Lambda^3+6\Lambda^7+\Lambda^{15}}{(1-\Lambda^4)^{15}}$.

(2) For  $R_3(x,\lambda)=-\lambda^4x^2+(1-\lambda^4)^2$, $R_4=-\lambda^4 x^2+(1-\lambda^4)^4,$ we get 
$$(1-\lambda^4)^5=\left(\lambda^4x^2+(1-\lambda^4)^2\right)R_3(x,\lambda)-\lambda^4R_4(x,\lambda).$$
Thus  \corref{blowdownform} gives
\begin{equation}\label{40B}
\chi^{\P^2,H}_{0}(4H)=\frac{1}{(1-\Lambda^4)^5}\left(\Lambda^4\chi^{\widehat \P^2,H}_{0}(4H-2E,P^2)+(1-\Lambda^4)^2\chi^{\widehat \P^2,H}_{0}(4H-2E)
-\Lambda^4\chi^{\widehat \P^2,H}_{0}(4H-3E)\right).
\end{equation}
By \propref{p11GM} we have 
$\chi^{\widehat \P^2,F_+}_{0}(4H-3E)=\frac{1}{(1-\Lambda^4)^9}-1-35/2\Lambda^4$. Furthermore there are no classes $\xi$ of type $(0)$ with 
$\<\xi, H\> >0 >\<\xi ,F\>$ with $\delta_\xi^{\widehat\P^2}((4H-3E))\ne 0$, and the only classes of type $(0)$ with $\<\xi, H\> =0 >\<\xi, F\>$
are $-2E$ and $-4E$ with 
$$\frac{1}{2}\delta^{\widehat \P^2}_{-2E}(4H-3E)=-2\Lambda^4 + 291\Lambda^8 - 3531\Lambda^{12} + 16215/2\Lambda^{16}, \quad
\frac{1}{2}\delta^{\widehat \P^2}_{-4E}(4H-3E)=7\Lambda^{16} - 51/2\Lambda^{20},$$ giving
$$\chi^{\widehat \P^2,H}_{0}(4H-3E)=\frac{1}{(1-\Lambda^4)^9}-1-39/2\Lambda^4 + 291\Lambda^8 - 3531\Lambda^{12} + 16229/2\Lambda^{16}- 51/2\Lambda^{20}.$$
By \propref{p112GM} we have 
$\chi^{\widehat \P^2,F_+}_{0}(4H-2E)=\frac{1+3\Lambda^8}{(1-\Lambda^4)^{12}}-1-21\Lambda^4$. Furthermore the only class  $\xi$ of type $(0)$ with 
$\<\xi, H\> \ge 0 >\<\xi, F\>$ with $\delta_\xi^{\widehat\P^2}(4H-2E)\ne 0$ is  $-2E$ with  $\frac{1}{2}\delta^{\widehat \P^2}_{-2E}(4H-2E)=-3/2\Lambda^4 + 108\Lambda^8 - 1225/2\Lambda^{12}$, giving
$$\chi^{\widehat \P^2,H}_{0}(4H-2E)=\frac{1+3\Lambda^8}{(1-\Lambda^4)^{12}}-1-45/2\Lambda^4 + 108\Lambda^8 - 1225/2\Lambda^{12}.$$
By \propref{p112GM} we have 
$\chi^{\widehat \P^2,F_+}_{0}(4H-2E,P^2)=\frac{(1+\Lambda^4)^2}{(1-\Lambda^4)^{10}}-1-48\Lambda^4+389\Lambda^8$, and the classes $\xi$ of type $(0)$ with $\<\xi, H\> \ge 0 >\<\xi, F\>$ 
with $\delta_\xi^{\widehat\P^2}(4H-2E,P^2)\ne 0$ are $-2E$ and $-4E$ with $\frac{1}{2}\delta^{\widehat\P^2}_{-2E}(4H-2E,P^2)=-6\Lambda^4 + 508\Lambda^8 - 4614\Lambda^{12} + 
8600\Lambda^{16}
$ and $\frac{1}{2}\delta^{\widehat\P^2}_{-4E}(4H-2E,P^2)=1/2\Lambda^{16}$, giving
$$\chi^{\widehat \P^2,H}_{0}(4H-2E)=\frac{(1+\Lambda^4)^2}{(1-\Lambda^4)^{10}}-1-54\Lambda^4+ 897\Lambda^8 - 4614\Lambda^{12} + 17201/2\Lambda^{16} .$$
Putting this into \eqref{40B} gives
$\chi^{\P^2,H}_{0}(4H)=\frac{ 1 + 6\Lambda^8 + \Lambda^{12}}{(1-\Lambda^4)^{15}}-1-51/2\Lambda^4$.
\end{proof}

\begin{Proposition}
$\displaystyle{\chi^{\P^2,H}_{0}(5H)=\frac{1+21\Lambda^8+20\Lambda^{12}+21\Lambda^{16}+\Lambda^{24}}{(1-\Lambda^4)^{21}}-1-33\Lambda^4.}$
\end{Proposition}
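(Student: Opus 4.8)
The plan is to follow the second variant of the strategy of \secref{strategy}, with $n=5$ and $k=n-1=4$, in exact parallel with the computation of $\chi^{\P^2,H}_{0}(4H)$ carried out just above (there $n=4$, $k=3$). Since we only want the $c_1=0$ invariant, only the polynomials $R_4,R_5$ (not $S_4,S_5$) and part (0) of \corref{blowdownform} will be needed.

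First I would produce the B\'ezout identity for the pair $R_4,R_5$ computed from the recursion of \defref{blowpol}. By \remref{npol}(2) (equivalently \propref{blowdownpol}(1) applied to the coprime pair $4,5$) there are a minimal integer $M_4$ and polynomials $h_4,l_4\in\Q[x^2,\lambda^4]$ with $h_4R_4+l_4R_5=(1-\lambda^4)^{M_4}$, which I would find explicitly by division with remainder in $x$; reducing $h_4$ modulo $R_5$ and $l_4$ modulo $R_4$ gives $\deg_x h_4\le 4$ and $\deg_x l_4\le 2$. Substituting this into \corref{blowdownform}(0) with $k=4$, and using $5H-3E=4F+2G$ and $5H-4E=\tfrac92F+G$, yields
\[
\chi^{\P^2,H}_{0}(5H)=\frac{1}{(1-\Lambda^4)^{M_4}}\Big(\chi^{\widehat\P^2,H}_{0}\big(4F+2G,\,h_4(P,\Lambda)\big)+\chi^{\widehat\P^2,H}_{0}\big(\tfrac92F+G,\,l_4(P,\Lambda)\big)\Big),
\]
so that the right-hand side is a $\Q[\Lambda^{\pm4}]$-combination of $\chi^{\widehat\P^2,H}_{0}(4F+2G,P^s)$ for $s\in\{0,2,4\}$ and $\chi^{\widehat\P^2,H}_{0}(\tfrac92F+G,P^s)$ for $s\in\{0,2\}$. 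The bookkeeping of pole orders is consistent with the expected answer: the leading contribution has pole order $21=\chi(5H)$.

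Next I would evaluate each of these invariants on $\widehat\P^2$. The values for the boundary polarization $F_+$ are given exactly by \propref{p112GM}(3),(4) (for $4F+2G$, with the parameter there equal to $4$) and by \propref{p11GM}(2) (for $\tfrac92F+G$, parameter $9/2$). To pass from $F_+$ to the polarization $H$ I would add the finitely many wall-crossing terms: by \lemref{vanwall} together with the wall count in the proof of \corref{P2P12GH}, the only classes $\xi=aF-bG$ of type $(0)$ with $\langle\xi,H\rangle\ge 0>\langle\xi,F\rangle$ and $\delta^{\widehat\P^2}_{\xi}(4F+2G,P^s)\ne 0$ for $s\le 4$ are $\xi=-2E$ and $\xi=-4E$ (and similarly a short list for $\tfrac92F+G$), each $\delta^{\widehat\P^2}_{\xi}(L,P^s)$ being the $q^0$-coefficient of the series of \defref{wallcrossterm}, computed exactly as in the displayed $4H$ proof. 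Assembling all contributions over the common denominator $(1-\Lambda^4)^{21}$ and simplifying then gives the stated rational function, after accounting for the fixed discrepancy in the $\Lambda^0$ and $\Lambda^4$ coefficients built into the definition \defref{KdonGen}(2) of $\chi^{\P^2,H}_{0}$, which produces the correction $-1-33\Lambda^4$.

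The main obstacle is the wall-crossing bookkeeping in the middle step: since each point-class power $s=0,2,4$ brings in its own list of contributing walls, one must be sure to enumerate every such $\xi$ and to extract the principal/$q^0$-parts of the theta-type generating functions without error, as these corrections determine precisely the low-degree coefficients of the final numerator polynomial. By contrast the B\'ezout step is routine once the correct $R_5$ from \defref{blowpol} is used, and the $F_+$ evaluations are immediate from the cited propositions; it is only the finite but delicate sum of wall-crossing terms that requires genuine care.
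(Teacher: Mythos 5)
Your proposal is correct and follows essentially the same route as the paper: the paper's proof likewise takes $k=n-1=4$, writes $(1-\lambda^4)^{11}$ as an explicit $\Q[x,\lambda^4]$-combination of $R_4$ and $R_5$ (with $\deg_x$ equal to $4$ and $2$ as you predict), evaluates $\chi^{\widehat\P^2,F_+}_{0}(5H-3E,P^r)$ for $r=0,2,4$ via \propref{p112GM} and $\chi^{\widehat\P^2,F_+}_{0}(5H-4E,P^r)$ for $r=0,2$ via \propref{p11GM}, and corrects by the wall-crossing terms at $\xi=-2E$ and $\xi=-4E$ before assembling via \corref{blowdownform}(0). The only detail left implicit in your sketch is that the contributing walls for $5H-4E$ are the same two classes $-2E,-4E$, which the paper verifies explicitly.
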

\begin{proof}
We use \propref{p112GM} to compute $\chi^{\widehat \P^2,F_+}_{0}(5H-3E,P^r)$ for $r=0,2,4$, and \propref{p11GM} to compute $\chi^{\widehat \P^2,F_+}_{0}(5H-4E,P^r)$ for $r=0,2$. 
The only classes of type $(0)$ with $\<H ,\xi\> \ge  0 >\<F,\xi\>$ and  $\delta^{\widehat \P^2}_\xi(5H-3H,P^r)\ne 0$ for $r=0,2,4$ or $\delta^{\widehat \P^2}_\xi(5H-4H,P^r)\ne 0$ for $r=0,2$ 
are $-2E$ and $-4E$. Adding their wallcrossing terms to the $\chi^{\widehat \P^2,F_+}_{0}(5H-3E,P^r)$,  $\chi^{\widehat \P^2,F_+}_{0}(5H-4E,P^r)$ we get  
\begin{align*}\chi^{\widehat \P^2,H}_{0}(5H-3E)&=\frac{1+6\Lambda^8+\Lambda^{16}}{(1-\Lambda^4)^{15}} -1 - 27\Lambda^4 + 366\Lambda^8 - 6066\Lambda^{12} + 18917\Lambda^{16} - 33\Lambda^{20},\\
\chi^{\widehat \P^2,H}_{0}(5H-3E,P^2)&=\frac{(1+\Lambda^4)^3}{(1-\Lambda^4)^{13}}-1 - 64\Lambda^4 + 2163\Lambda^8 - 32806\Lambda^{12} + 172163\Lambda^{16}\\&\qquad  - 242616\Lambda^{20} + 1007\Lambda^{24},\\
\chi^{\widehat \P^2,H}_{0}(5H-3E,P^4)&=\frac{2(1+\Lambda^4)^2}{(1-\Lambda^4)^{11}} -2 - 218\Lambda^4 + 10110\Lambda^8 - 170462\Lambda^{12} + 1121538\Lambda^{16} \\&\qquad- 2798450\Lambda^{20} + 2249462\Lambda^{24} - 18786\Lambda^{28},\\
\chi^{\widehat \P^2,H}_{0}(5H-4E)&=\frac{1}{(1-\Lambda^4)^{11}} -1 - 23\Lambda^4 + 786\Lambda^8 - 20234\Lambda^{12} + 124671\Lambda^{16} - 201885\Lambda^{20}\\&\qquad + 18372\Lambda^{24} - 21840\Lambda^{28},\\
\chi^{\widehat \P^2,H}_{0}(5H-4E,P^2)&=\frac{1}{(1-\Lambda^4)^9} -1 - 57\Lambda^4 + 3691\Lambda^8 - 95035\Lambda^{12} + 741175\Lambda^{16} - 2043587\Lambda^{20} \\&\qquad+ 1906119\Lambda^{24} - 414993\Lambda^{28} + 295880\Lambda^{32}.
\end{align*}
We compute 
$$R_5(x,\lambda)=-\lambda^4x^6+\lambda^4(1-\lambda^4)^2(2+\lambda^4)x^4-3\lambda^4(1-\lambda^4)^4x^2+(1-\lambda^4)^6.$$
Using again division with rest, we get 
\begin{align*}
(1-\lambda^4)^{11}&=\big((\lambda^4+3\lambda^8)x^4+(\lambda^4-8\lambda^8+10\lambda^{12}-3\lambda^{20})x^2 +(1+4\lambda^8-\lambda^{12})(1-\lambda^4)^4\big)R_4(x,\lambda)\\
&\qquad
-\big((\lambda^4+3\lambda^8)x^2+ (3+\lambda^4)(1-\lambda^4)^2\big)R_5(x,\lambda).
\end{align*}
Thus again we get
 $(1-\Lambda^4)^{11}\chi^{\P^2,H}_{0}(5H)$ as the result of replacing $\lambda$ by $\Lambda$ and  $x^rR_4(x,\lambda)$ by $\chi^{\widehat \P^2,H}_{0}(5H-3E,P^r)$, $x^rR_5(x,\lambda)$ 
 by $\chi^{\widehat \P^2,H}_{0}(5H-4E,P^r)$.
This gives after some computation that
$\chi^{\P^2,H}_{0}(5H)=\frac{1 + 21\Lambda^8 + 20\Lambda^{12} + 21\Lambda^{16} + \Lambda^{24}}{(1-\Lambda^4)^{21}}-1-33\Lambda^4.$
\end{proof}

\subsection{Computer computations for larger $n$}
\label{CompP2}

We outline the computations of the PARI program to compute the $\chi^{\P^2,H}_0(nH,P^r)$, 
$\chi^{\P^2,H}_H(nH,P^r)$.

We have carried out these computations for $r=0$ and $n\le 11$, and $r\le 16$ and $n\le 8$. 

To have effective bounds on the number of terms needed to compute we use the following remark.
\begin{Remark} \label{qlevel} For all $l\in \frac{1}{2}\Z$ we have 
$$
\frac{1}{\sinh(lh)},\ \coth(lh)\in q\Lambda^{-1}\C[[q^{-1}\Lambda,q^4]],\quad h,\ \exp(lh),\ \WT_4(h),\ \Lambda^2u',\ h^*,\ M\in \C[[q^{-1}\Lambda,q^4]].$$
Therefore we have the following.
\begin{enumerate}
\item 
By \propref{Fplus}, for $X=\P^1\times \P^1$ or $\widehat \P^2$, to compute $\chi^{X,F_+}_F(nF+mG,P^r)$ modulo $\Lambda^{k+1}$, it is enough to evaluate the 
formulas of \propref{Fplus} modulo $q^{k+1}$ and modulo $\Lambda^{k+1}$.
\item 
By \defref{wallcrossterm} for any rational surface $X$ and any class $\xi\in H^2(X,\Z)$ with $\xi^2<0$ and any line bundle $L\in \Pic(X)$,  to compute $\delta^X_{\xi}(L,P^r)$
modulo $\Lambda^{k+1}$ 
it is enough to evaluate the formulas of \defref{wallcrossterm} modulo $q^{k+1}$ and modulo $\Lambda^{k+1}$.
\end{enumerate}
\end{Remark}

{\bf Step 1.} As mentioned above we will use \corref{blowdownform} with $k=n$.
The polynomials $f_n,g_n$, $h_n,l_n$, and the integers $N_n$, $M_n$ of \corref{blowdownform}  are computed by the program as follows.
Apply the Euclidean algorithm in $\Q(\lambda)[x^2]$ (i.e. repeated division with rest) to $S_n$, $S_{n+1}$, to find $\overline f_n$, $\overline g_n\in \Q(\lambda)[x]$ with 
$\overline f_n S_n+\overline g_n S_{n+1}=1$. Choose the minimal $N_n\in \Z_{\ge 0}$, so that 
$$f_n:=\lambda(1-\lambda^4)^{N_n}  \overline f_n, \ 
g_n:=\lambda(1-\Lambda^4)^{N_n} \overline g_n\in \Q[x,\Lambda^4].$$
These exist by \propref{blowdownpol}.
Similarly $h_n$, $l_n$, $M_n$ are computed as follows.
Apply the Euclidean algorithm in $\Q(\lambda)[x^2]$ to $R_n$, $R_{n+1}$, to find $\overline h_n$, $\overline l_n\in \Q(\lambda)[x]$ with $\overline h_nR_n+\overline l_nR_{n+1}=1$, and then again multiply with the minimal power 
$(1-\lambda^4)^{M_n}$, to obtain.
$$h_n:=(1-\lambda^4)^{M_n} \overline h_n,\  l_n:=(1-\lambda^4)^{M_n} \overline l_n\in \Q[x^2,\lambda^4].$$

{\bf Step 2.} Use \propref{p11r} to compute $\chi^{X,F_+}_F(nF,P^{2s})$ for $2s\le \deg_x(g_n)+r$ and  $\chi^{X,F_+}_0(nF,P^{2s})$ for $2s\le\deg_x(l_n)+r$.
For $s=0$, the formula is explicitly given in \propref{p11r}. For $s>0$ we know by \propref{p11r} that   $\chi^{X,F_+}_F(nF,P^{2s})$ is a polynomial in $\Lambda^4$ of degree at most $s$ and $\chi^{X,F_+}_0(nF,P^s)$ a polynomial of at most degree $s+1$ in $\Lambda^4$, 
So, using \remref{qlevel},  the computation is done by evaluating the formula of \propref{Fplus} as a power series in $\Lambda,q$ modulo $\Lambda^{4s+1}$ and 
$q^{4s+1}$ or
$\Lambda^{4s+5}$, $q^{4s+5}$ respectively. 
As all the power series in the formula are completely explicit, this is a straightforward evaluation.  


In the same way we use \propref{p11GM} to compute 
 $\chi^{X,F_+}_F(G+(n-\frac{1}{2})F,P^{2s+1})$ for $2s+1\le \deg_x(f_n)+r$ and  $\chi^{X,F_+}_0(G+(n-\frac{1}{2})F,P^{2s})$ for $2s\le\deg_x(h_n)+r$.
 By \propref{p11GM} 
 $$\chi^{X,F_+}_F(G+(n-\frac{1}{2})F,P^{2s+1})-\frac{1}{(1-\Lambda^4)^{2n-2s}}, \quad 
\chi^{X,F_+}_0(G+(n-\frac{1}{2})F,P^{2s})-\frac{1}{(1-\Lambda^4)^{2n+1-2s}}$$  are both polynomials of degree at most $s+1$ in $\Lambda^4$, so, using also \remref{qlevel}, 
again they are computed by 
evaluating the formula of \propref{Fplus} as a power series in $\Lambda,q$ modulo $\Lambda^{4s+5}$ and $q^{4s+5}$.
Again this is a straightforward evaluation.

{\bf Step 3.}
By the proof of \corref{P2P12GH} there are finitely many classes $\xi=aF-bG$ of type $(0)$ or $F$ on $\widehat \P^2$ with $\<\xi ,H\> \ge 0>\<\xi ,F\>$ and 
$\delta_\xi^{\widehat \P^2}(nF,P^s)\ne 0$ or $\delta_\xi^{\widehat \P^2}(G+(n-\frac{1}{2})F,P^s)\ne 0$. In \remref{nonwalls} effective bounds for $a$ and $b$ are given in terms of $n$ and $s$, which leave only finitely many possibilities. For all $\xi=aF-bG$, so that $(a,b)$ satisfies these bounds, it is  first checked whether indeed the criterion 
$-\xi^2\le |\<\xi ,L-K_{\widehat \P^2})\>|+s+2$ for  the non-vanishing of  $\delta_\xi^{\widehat \P^2}(L,P^s)$ for $L=nF$ or $L=(n-1/2)F+G$ is satisfied.
If yes, $\delta_\xi^{\widehat \P^2}(L,P^s)$ is computed by evaluating the formula of \defref{wallcrossterm}. 
By \lemref{vanwall} we have that $\delta_\xi^{\widehat \P^2}(L,P^s)$ is a polynomial in $\Lambda$ of degree at most 
$$a(\xi,L,X,s):=\xi^2+2|\<\xi,L-K_{\widehat \P^2})\>|+2s+4,$$
 so to determine $\delta_\xi^{\widehat \P^2}(L,P^s)$ we only need to compute it modulo 
$\Lambda^{a(\xi,L,X,s)+1}$ and thus by \remref{qlevel} we only need to evaluate the formula of  \defref{wallcrossterm} modulo $\Lambda^{a(\xi,L,X,s)+1}$  and 
$q^{a(\xi,L,X,s)+1}$, so this is again a straightforward evaluation.

Then for $c_1=0,F$ and $L=nF$, $(n-1/2)F+G$, we compute
$$\chi^{\widehat \P^2, H}_{c_1}(L,P^s):=\chi^{\widehat \P^2, F_+}_{c_1}(L,P^s)+\frac{1}{2}\sum_{\<\xi, H\>=0>\<\xi,F\>}\delta^X_\xi(L,P^s)+
\sum_{\<\xi, H\>>0>\<\xi,F\>} \delta^X_\xi(L,P^s),$$ where
the sums are over all $\xi$ of type $(c_1)$ with $\delta^{\widehat \P^2}_\xi(L,P^s)\ne 0$.

{\bf Step 4.}
Finally apply \corref{blowdownform} to compute 
\begin{align*}
\chi^{\P^2,H}_H(nH,P^r)&=\frac{1}{\Lambda(1-\Lambda^4)^{N_k}}\Big(\chi^{\widehat \P^2,H}_{F}\big(G+(n-1/2)F,P^r\cdot f_n(P,\Lambda)\big)
+
\chi^{\widehat \P^2,H}_{F}\big(nF,P^r\cdot g_n(P,\Lambda)\big)\Big),\\
\chi^{\P^2,H}_0(nH,P^r)&=\frac{1}{(1-\Lambda^4)^{M_k}}\Big(\chi^{\widehat \P^2,H}_{0}\big(G+(n-1/2)F,P^r\cdot h_n(P,\Lambda)\big)
+
\chi^{\widehat \P^2,H}_{0}(\big(nF,P^r\cdot l_n(P,\Lambda)\big)\Big).
\end{align*}
At this point all the terms on the right hand side have already been computed.

We have carried out this computation for the following cases.
\begin{enumerate}
\item For $\chi^{\P^2,H}_H(nH,P^r)$ with $n\equiv r\mod 2$, in the cases $r\le 1$, $n\le 10$ and $r\le 16$, $n\le 8$.
\item For $\chi^{\P^2,H}_0(nH,P^r)$ with $r$ even, in the cases $r=0$, $n\le 11$ and $r\le 15$, $n\le 8$.
\end{enumerate}
For the case $r=0$ we obtain, with the notations of the introduction:
\begin{Proposition}\label{propp2}
With the notations of \thmref{mainp2} we have for $1\le n\le 11$
\begin{enumerate}
\item $\displaystyle{\chi^{\P^2,H}_0(nH)=\frac{P_n(\Lambda)}{(1-\Lambda^4)^{\binom{n+2}{2}}}-1-\frac{1}{2}(n^2+6n+11)\Lambda^4}$,
\item If $n$ is even, then  $\displaystyle{\chi^{\P^2,H}_H(nH)=\frac{Q_n(\Lambda)}{(1-\Lambda^4)^{\binom{n+2}{2}}}}$.
\end{enumerate}
\end{Proposition}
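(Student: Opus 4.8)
The plan is to specialize the algorithm of \secref{CompP2} to $r=0$ and run it for each $n$ with $1\le n\le 11$ (for part (1)) and each even $n$ in that range (for part (2)); the assertion of the proposition is precisely that the exact output of that algorithm coincides with the closed forms built from $P_n$ and $Q_n$. Following the first strategy of \secref{strategy}, I would apply \corref{blowdownform} with $k=n$, which writes $\chi^{\P^2,H}_0(nH)$ (resp. $\chi^{\P^2,H}_H(nH)$) as an explicit $\Q[\Lambda^{\pm4}]$-combination, divided by $(1-\Lambda^4)^{M_n}$ (resp. $\Lambda(1-\Lambda^4)^{N_n}$), of the four $\widehat\P^2$-invariants
$$\chi^{\widehat\P^2,H}_{0}(nF),\quad \chi^{\widehat\P^2,H}_{0}\big((n-\tfrac12)F+G\big),\quad \chi^{\widehat\P^2,H}_{F}(nF),\quad \chi^{\widehat\P^2,H}_{F}\big((n-\tfrac12)F+G\big),$$
each taken with point classes $P^s$ up to the $x$-degrees of $h_n,l_n$ (resp. $f_n,g_n$). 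The auxiliary polynomials $f_n,g_n,h_n,l_n$ and the exponents $N_n,M_n$ are produced, as in Step 1, by the Euclidean algorithm applied to the coprime pairs $(S_n,S_{n+1})$ and $(R_n,R_{n+1})$ in $\Q(\la)[x]$, the required coprimality (after clearing the common factor of powers of $1-\la^4$) being guaranteed by \propref{blowdownpol}, so that $h_nR_n+l_nR_{n+1}=(1-\la^4)^{M_n}$ and $f_nS_n+g_nS_{n+1}=\la(1-\la^4)^{N_n}$ hold identically.

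The next step is to evaluate the four $\widehat\P^2$-ingredients \emph{exactly}. Their values at the ruling, $\chi^{\widehat\P^2,F_+}_{c_1}(\cdot,P^s)$, are given in closed form by \propref{p11r} (for $nF$) and \propref{p11GM} (for $(n-\tfrac12)F+G$): each is a fixed rational function $1/(1-\Lambda^4)^{e}$ plus a polynomial in $\Lambda^4,n\Lambda^4$ of controlled degree, and by \remref{qlevel} it is pinned down by evaluating the $\Coeff_{q^0}$-formulas of \propref{Fplus} modulo a known power of $q$ and of $\Lambda$. To pass from the boundary value $F_+$ to the polarization $H$ one adds the finite wallcrossing correction $\tfrac12\sum_{\<\xi,H\>=0>\<\xi,F\>}\delta^{\widehat\P^2}_\xi+\sum_{\<\xi,H\>>0>\<\xi,F\>}\delta^{\widehat\P^2}_\xi$. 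The crucial finiteness input is \lemref{vanwall}, which makes each $\delta^{\widehat\P^2}_\xi$ a polynomial in $\Lambda$ and forces all but finitely many walls $\xi=aF-bG$ to vanish; the explicit bounds on $a,b$ in \remref{nonwalls} leave only a short enumerable list, which for these $n$ consists of the small classes such as $-2E$, $-4E$, $H-3E$ already seen in the hand computations of \secref{explicitp2}.

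Finally I would assemble the pieces through \corref{blowdownform}, substituting the exact $\widehat\P^2$-values, clearing the blowdown denominators via the identities above, and reading off the rational function. Since every building block is an exact rational function or polynomial, the output is exact; \thmref{P2rat1} additionally certifies a priori that the denominator is a power of $(1-\Lambda^4)$, and one checks that it simplifies to the normalization $(1-\Lambda^4)^{\binom{n+2}{2}}$ with $\binom{n+2}{2}=\chi(nH)$ predicted by \conref{ratconj}(1), the numerator then being integral. The low-order terms are governed by the $c_1=0$ conventions already built into the boundary invariants (compare the $-1-(2n+5)\Lambda^4$ in \propref{p11r}(3)) and match \defref{KdonGen}: one verifies that the $\Lambda^4$-correction equals $LK_X-\tfrac12(K_X^2+L^2)-1=-\tfrac12(n^2+6n+11)$ for $L=nH$, which is exactly the explicit term of part (1), while part (2) has no such correction since $c_1=H$ is not divisible by $2$ in \defref{KdonGen}(1). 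The main obstacle is not conceptual but the sheer bookkeeping: carrying enough $q$- and $\Lambda$-terms (as dictated by \remref{qlevel} and the degree bounds of \lemref{vanwall}) that the computed coefficients determine each rational function unambiguously, and correctly enumerating and evaluating all surviving wallcrossing contributions. For the stated ranges this has been carried out by the PARI implementation, whose output reproduces $p_1,\dots,p_{11}$ and the $Q_n$ for even $n$.
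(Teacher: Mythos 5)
Your proposal is correct and follows essentially the same route as the paper: the paper's proof of this proposition is precisely the four-step PARI algorithm of \secref{CompP2} (Euclidean algorithm for $f_n,g_n,h_n,l_n$; exact evaluation of the boundary invariants via \propref{p11r}, \propref{p11GM} and \remref{qlevel}; finite wallcrossing correction bounded by \remref{nonwalls}; assembly through \corref{blowdownform} with $k=n$), specialized to $r=0$. Your identification of the $\Lambda^4$-correction $LK_X-\tfrac12(K_X^2+L^2)-1=-\tfrac12(n^2+6n+11)$ with the $c_1=0$ convention of \defref{KdonGen} is also the right accounting for the explicit polynomial term in part (1).
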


\thmref{mainp2} now follows directly from \propref{propp2} and \propref{highvan}.

We list also the results for $ \chi^{\P^2,H}_{H}(nH,P^1)$.
We put 
{\small \begin{align*}
&q_1=2-t,\ q_3=2,\ q_5=2 + 20t + 20t^2 + 20t^3 + 2t^4,\\
&q_7=2 + 80t + 770t^2 + 3080t^3 + 7580t^4 + 9744t^5 
+ 7580t^6 + 3080t^7 + 770t^8 + 80t^9 + 2t^{10},\\
&q_9=2 + 207t + 6192t^2 + 85887t^3 + 701568t^4 + 3707406t^5+ 13050156t^6 + 31611681t^7  \\&
+ 53322786t^8 + 63463686t^9 + 53322786t^{10} + 31611681t^{11} + 13050156t^{12}
+ 3707406t^{13}\\
&+ 701568t^{14}+ 85887t^{15}+ 6192t^{16} + 207t^{17}+2t^{18}
\end{align*}}

\begin{Proposition}
For $1\le n\le 9$ we have 
$\displaystyle{
\chi^H_{\P^2,dH}(H,P^1)=\frac{\Lambda^{3}q_{n}(\Lambda^4)}{(1-\Lambda^4)^{\binom{n+2}{2}-1}}}.$
\end{Proposition}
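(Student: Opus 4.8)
The plan is to recognize this as the $r=1$ instance of the computation carried out in \secref{CompP2} and to obtain it by running the blowdown algorithm described there. Since $c_1=H$ has $c_1^2=1$, the invariants are supported in degrees $d\equiv 3\bmod 4$, which accounts for the overall factor $\Lambda^3$ and for the numerator being a polynomial in $\Lambda^4$. The structural input is \thmref{P2rat1}(1): it already guarantees that
\[
\chi^{\P^2,H}_{H}(nH,P^1)\equiv\frac{p^1_{n,1}}{\Lambda(1-\Lambda^4)^{d^1_{n,1}}}
\]
for some $d^1_{n,1}\in\Z_{\ge 0}$ and $p^1_{n,1}\in\Q[\Lambda^4]$. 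Thus the content of the proposition is the determination, for each $n$ in the stated range, of the finitely many rational numbers encoding $p^1_{n,1}$ and of the integer $d^1_{n,1}$, together with the identifications $d^1_{n,1}=\binom{n+2}{2}-1$ and $p^1_{n,1}=\Lambda^4 q_n(\Lambda^4)$.

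To carry out this determination I would follow Steps 1--4 of \secref{CompP2} with $r=1$ and $k=n$. First, apply part (H) of \corref{blowdownform} to write $\chi^{\P^2,H}_{H}(nH,P^1)$ as an explicit $\Q[\Lambda^{\pm4}]$-combination of the blowup invariants $\chi^{\widehat\P^2,H}_{F}\big((n-\tfrac12)F+G,P\cdot f_n(P,\Lambda)\big)$ and $\chi^{\widehat\P^2,H}_{F}\big(nF,P\cdot g_n(P,\Lambda)\big)$, where $f_n,g_n,N_n$ come from the B\'ezout relation $f_nS_n+g_nS_{n+1}=\Lambda(1-\Lambda^4)^{N_n}$ furnished by \propref{blowdownpol} and \remref{npol}. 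Next, compute the needed boundary invariants $\chi^{\widehat\P^2,F_+}_{F}(nF,P^{2s})$ and $\chi^{\widehat\P^2,F_+}_{F}((n-\tfrac12)F+G,P^{2s+1})$ in closed form from \propref{p11r} and \propref{p11GM}, and pass from the boundary chamber $F_+$ to the polarization $H$ by adding the wallcrossing corrections $\delta^{\widehat\P^2}_\xi$. Here \remref{nonwalls} bounds the relevant classes $\xi=aF-bG$ to a finite set, \lemref{vanwall} guarantees that each $\delta^{\widehat\P^2}_\xi(L,P^s)$ is a polynomial in $\Lambda$ of explicitly bounded degree, and \remref{qlevel} turns the evaluation of \defref{wallcrossterm} into a finite truncation in both $q$ and $\Lambda$.

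Finally I would substitute the computed blowup invariants back into the blowdown identity, clear denominators, and simplify; the B\'ezout relation $f_nS_n+g_nS_{n+1}=\Lambda(1-\Lambda^4)^{N_n}$ forces the spurious denominator powers to cancel, so the result collapses to the single rational function $\Lambda^3q_n(\Lambda^4)/(1-\Lambda^4)^{\binom{n+2}{2}-1}$, with the exponent $\binom{n+2}{2}-1$ read off from the computation (and consistent with the drop of the denominator under point-class insertions seen in \corref{P2P12GH}). I emphasize that the identities used, namely \corref{blowp}(1) and \corref{blowdownform}, are \emph{exact} equalities of generating functions, not merely high-degree congruences; moreover $c_1=H$ is not divisible by $2$, so the moduli spaces are well behaved in every relevant degree and there are none of the low-degree anomalies present for $c_1=0$ in \defref{KdonGen}(2). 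Hence the algorithm yields genuine equality, as stated.

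The main obstacle is not conceptual but the size and required exactness of the computation. The theoretical scaffolding---boundedness of the contributing walls, polynomiality of the wallcrossing terms, and effectivity of the truncations---is all in place, so each individual step is a finite, mechanical evaluation. The real difficulty is twofold: the degrees of $S_n,S_{n+1}$ in $x$ grow quadratically, so the cofactors $f_n,g_n$ and hence the number of point-class insertions $P^s$ that must be evaluated grow rapidly with $n$, making the cases $n=8,9$ heavy; and one must compute to sufficiently high order in $q$ and $\Lambda$, using the bounds of \remref{qlevel} and \lemref{vanwall}, to be certain that the rational function is pinned down exactly rather than merely matched to finite order. This is precisely why the verification is carried out by the PARI implementation of \secref{CompP2}.
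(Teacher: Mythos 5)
Your proposal is correct and follows essentially the same route as the paper: the Proposition is exactly an output of the algorithm of \secref{CompP2} (blowdown via \corref{blowdownform} with $k=n$, boundary invariants from \propref{p11r} and \propref{p11GM}, finitely many wallcrossing corrections bounded by \remref{nonwalls} and \lemref{vanwall}, truncations controlled by \remref{qlevel}), carried out by the PARI implementation for $r=1$ and odd $n\le 9$. The paper offers no further argument beyond this computation, so there is nothing missing from your account.
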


We list also in the form of tables part of the results obtained for $\chi^{\P^2,H}_{c_1}(nH,P^r)$, with $c_1=0, H$ for $r>0$.  Here to simplify the expressions we 
only write down the results up to adding a Laurent polynomial in $\Lambda$. 
We define polynomials $p_{d,r}$ by the following tables.
$$\begin{tabular}{r|c|c|c|c|c|c|c}
 &d=3&5&7\\ 
 \hline
 r=1&$2t$& $2t+20t^2+20t^3+20t^4+2t^5$&$\genfrac{}{}{0pt}{}{2t + 80^2 + 770t^3 + 3080t^4 + 7580t^5 + 9744t^6}{ + 7580t^7+ 3080t^8  + 770t^9 + 80t^{10}+ 2t^{11}}$\\
 \hline
  2&$1+t$&$1+6t + 25t^2 +25t^3+6t^4 +t^5$&$\genfrac{}{}{0pt}{}{1 + 15t + 239t^2 + 1549t^3 + 5274t^4+ 9306t^5 + 9306t^6 }{+ 5274t^7+ 1549t^8+ 239t^9 + 15t^{10} + t^{11}}$\\
\hline
 3&$1+t$&$8t + 24t^2 + 24t^3 + 8t^4$&$
\genfrac{}{}{0pt}{}{8t + 219t^2 + 1485t^3 + 5159t^4 + 9513t^5 + 9513t^6}{ + 5159t^7 + 1485t^8 + 219t^8 + 8t^{10}}$\\
 \hline
  4&2&$2+14t+32t^{2}+14t^3+2t^4$&$\genfrac{}{}{0pt}{}{2+ 44t + 546t^2 + 2936t^3 + 7676t^4 + 10360t^5 + 7676t^6 }{+ 2936t^7+ 546t^8 + 44t^9 + 2t^{10}}$\\
\hline
 5&2&$1+ 16t + 30t^2 +16t^3+t^4$&$
\genfrac{}{}{0pt}{}{32t+510t^2 + 2820t^3 + 7682t^4 + 10680t^5  }{+ 7682t^6 + 2820t^7+ 510t^8 + 32t^9}$\\
\hline
 6&$t^{-1}+1$&$5+27t+27t^2+5t^3$&$\genfrac{}{}{0pt}{}{5 + 120t + 1209t^2 + 5075t^3 + 9975t^4 + 9975t^5 + 5075t^6 }{+ 1209t^7 + 120t^8 + 5t^9}$\\
  \hline
  7&$3-t$&$4 + 28t + 28t^2 + 4t^3$&$\genfrac{}{}{0pt}{}{1 + 99t + 1134t^2 + 4954t^3 + 10196t^4+ 10196t^5 }{ + 4954t^6 + 1134t^7+ 99t^8 + t^9}$\\
 \hline
  8&&$14 + 36t + 14t^2$&$\genfrac{}{}{0pt}{}{14  + 318t + 2508t^2 + 7874t^3 + 11340t^4 + 7874t^5}{+ 2508t^6 + 318t^7 + 14t^8}$\\
  \hline
 9&& $12 + 40t + 12t^2$&$\genfrac{}{}{0pt}{}{6 + 276t + 2376t^2 + 7884t^3 + 11684t^4 }{+ 7884t^5 + 2376t^6 + 276t^7 + 6t^8}$\\
 \hline
10&&$t^{-1} + 31 + 31t + t^2$&$ \genfrac{}{}{0pt}{}{42 + 810t + 4742t^2 + 10790t^3 + 10790t^4 + 4742t^5}{ + 810t^6 + 42t^7}$\\
  \hline
 11&&$32+32t$&$\genfrac{}{}{0pt}{}{25 + 719t + 4605t^2 + 11035t^3 + 11035t^4}{ + 4605t^5 + 719t^6 + 25t^7}$\\
 \hline
 12&&$6t^{-1}+52+6t$&$ \genfrac{}{}{0pt}{}{132 + 1920t + 8028t^2 + 12608t^3 + 8028t^4 + 1920t^5}{ + 132t^6}$\\
  \hline
 13&&$-t^{-2} +8t^{-1}+50+8t-t^2$&$\genfrac{}{}{0pt}{}{90 + 1756t + 8038t^2 + 13000t^3 + 8038t^4}{ + 1756t^5 + 90t^6}$\\
 \hline
    14&&$22t^{-1}+57-21t+7t^2-t^3$&$ \genfrac{}{}{0pt}{}{t^{-1} + 407 + 4149t + 11827t^2 + 11827t^3 + 4149t^4}{ + 407t^5 + t^6}$\\
\hline
 15&&$-4t^{-2} +36t^{-1} + 36 - 4t$&$\genfrac{}{}{0pt}{}{300 + 3964t + 12120t^2 + 12120t^3 + 3964t^4}{ + 300t^5}$\\
\end{tabular}$$

$$\begin{tabular}{l | c | c | c | c    }
 &d=2&4&6\\ 
\hline
r=2&$1$&$1+3t+4t^2$&$1+10t+89t^2+272t^3+371t^4+210t^5+67t^6+4t^7$\\
4&$t^{-1}$&$2+5t+t^2$ &$2+27t + 168t^2 + 370t^3+ 318t^4 + 123t^5 + 16t^6$\\
6&&5+3t &$5+ 66t+ 287t^2 + 404t^3+ 219t^4 + 42t^5 + t^6$\\
8&&$t^{-1}+7$&$14+149t + 408t^2 + 350t^3 + 98t^4 + 5t^5$\\
10&&$4t^{-1}+5-t$&$42 + 288t + 468t^2+ 208t^3 + 18t^4$\\
12&&$9t^{-1}-1$&$t^{-1} + 116 + 462t + 388t^2 + 57t^3$\\
14&&&$8t^{-1}+280+568t+168t^2$
\end{tabular}$$

\begin{Theorem}\label{rpoly}
With the polynomials $p_{d,r}$ given above, we have
\begin{enumerate}
\item If $r$ is even, then
$\displaystyle{\chi^{\P^2,H}_{0}(dH,P^r)\equiv \frac{p_{d,r}(\Lambda^4)}{(1-\Lambda^4)^{\binom{d+2}{2}-r}}.}$
\item If $d$ and $r$ are both odd, then
$\displaystyle{\chi^{\P^2,H}_{H}(dH,P^r)\equiv \frac{ \Lambda^{-1} p_{d,r}(\Lambda^{4})}{(1-\Lambda^4)^{\binom{d+2}{2}-r}}=
\frac{ \Lambda^{d^2-2r} p_{d,r}(\Lambda^{-4})}{(1-\Lambda^4)^{\binom{d+2}{2}-r}}.}$
\item If $d$ and $r$ are both even,  then 
$\displaystyle{\chi^{\P^2,H}_{H}(dH,P^r)\equiv \frac{ \Lambda^{d^2-2r-1} p_{d,r}(\Lambda^{-4})}{(1-\Lambda^4)^{\binom{d+2}{2}-r}}.}$
\end{enumerate}
\end{Theorem}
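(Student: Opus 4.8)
The plan is to prove the theorem by executing the algorithm of \secref{CompP2} (Steps~1--4) for each pair $(d,r)$ in the stated ranges; the polynomials $p_{d,r}$ recorded in the tables are exactly its output. Concretely, for fixed $(d,r)$ I would apply \corref{blowdownform} with $k=n=d$, which expresses $\chi^{\P^2,H}_{H}(dH,P^r)$ as $\frac{1}{\Lambda(1-\Lambda^4)^{N_d}}$ times an explicit $\Q[\Lambda^{\pm4}]$-combination of the seed invariants $\chi^{\widehat\P^2,H}_{F}(dF,P^s)$ and $\chi^{\widehat\P^2,H}_{F}((d-\tfrac12)F+G,P^s)$, and likewise $\chi^{\P^2,H}_{0}(dH,P^r)$ as $\frac{1}{(1-\Lambda^4)^{M_d}}$ times a combination of $\chi^{\widehat\P^2,H}_{0}(dF,P^s)$ and $\chi^{\widehat\P^2,H}_{0}((d-\tfrac12)F+G,P^s)$. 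Each seed invariant is then evaluated from the closed formulas of \corref{P2P12GH}(1),(2), corrected by the finitely many wallcrossing terms $\delta^{\widehat\P^2}_\xi$ needed to pass from the limiting polarization $F_+$ to the polarization $H$.

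The computation is finite and effective. By \lemref{vanwall} each $\delta^{\widehat\P^2}_\xi(L,P^s)$ is a polynomial in $\Lambda$ of degree at most $\xi^2+2|\langle\xi,L-K\rangle|+2s+4$; by \remref{nonwalls} only finitely many classes $\xi=aF-bG$ of type $(0)$ or $(F)$ meet the non-vanishing criterion, with $a,b$ bounded explicitly in terms of $d$ and $s$; and by \remref{qlevel} only finitely many $q$-coefficients of the Jacobi functions $\WT_4(h),h^*,M,u',\Lambda$ contribute, so each $\delta_\xi$ and each seed invariant is obtained by a truncated evaluation of the explicit $q$-series of \secref{thetamod}. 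Hence the algorithm returns an exact rational function, and tracking the powers of $(1-\Lambda^4)$ — those arising in \corref{P2P12GH} together with the B\'ezout exponents $N_d,M_d$ of \remref{npol} — yields the denominator $(1-\Lambda^4)^{\binom{d+2}{2}-r}$ after the cancellations forced by \corref{blowdownform}.

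The symbol $\equiv$ is handled by \thmref{P2rat1}, which already guarantees that each $\chi^{\P^2,H}_{c_1}(dH,P^r)$ is, up to a Laurent polynomial in $\Lambda$, of the shape $\frac{p(\Lambda^4)}{(1-\Lambda^4)^{N}}$. Given this shape and the a priori bounds on $N$ and on $\deg p$, the finitely many coefficients computed above determine the rational function uniquely, so the low-order discrepancies (the boundary modification at $\Lambda^4$ for $c_1=0$ in \defref{KdonGen}, and the difference between $F_+$-invariants and genuine Euler characteristics implicit in \corref{P2P12GH}) are absorbed into $\equiv$. This reduces all three parts to the bookkeeping of the previous paragraph, with the wallcrossing terms computed from \defref{wallcrossterm}.

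It remains to identify the two expressions in (2) (part (3) asserts only the single closed form). Their equality amounts to the palindromy $\Lambda^{-1}p_{d,r}(\Lambda^4)=\Lambda^{d^2-2r}p_{d,r}(\Lambda^{-4})$, which in the finite list can simply be read off the tabulated $p_{d,r}$; structurally it reflects the duality predicted in \remref{chipol}(2) and \conref{ratconj}(3), which for the point-class invariants I would derive from the symmetry of the blowup polynomials under $(\lambda,x)\mapsto(1/\lambda,x/\lambda^2)$ in \propref{blowpolprop}(3), combined with the modular $S$-transformation of $\Lambda,M,\WT_4(h)$, exactly as the operator $S$ is exploited in the proof of \propref{basic}. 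The main obstacle is the completeness of the wallcrossing bookkeeping: one must check, via \remref{nonwalls} and \lemref{vanwall}, that no contributing wall is overlooked and that the truncation orders in $q$ and in $\Lambda$ are genuinely large enough to pin down each rational function exactly, rather than only modulo a high power of $\Lambda$ — it is this completeness, not any single hard estimate, that makes the argument delicate.
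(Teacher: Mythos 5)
Your proposal is correct and follows essentially the same route as the paper: Theorem~\ref{rpoly} is proved there precisely by running the algorithm of \secref{CompP2} (Steps 1--4), i.e.\ \corref{blowdownform} with $k=d$ applied to the seed invariants of \propref{p11r} and \propref{p11GM}, corrected by the finitely many wallcrossing terms controlled by \lemref{vanwall}, \remref{nonwalls} and \remref{qlevel}, with the $\equiv$ and the palindromy in part (2) handled exactly as you describe. The only cosmetic difference is that the paper's Step 2 works directly with the $F_+$-formulas of \propref{p11r}/\propref{p11GM} rather than citing \corref{P2P12GH}, but since you add the $F_+\!\to H$ wallcrossing corrections explicitly this amounts to the same computation.
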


\subsection{Invariants of blowups of the plane}

We want to apply the above results to compute $K$-theoretic Donaldson invariants of blowups of $\P^2$ in a finite number of points. 

\begin{Remark} Let $X_r$ be the blowup of $\P^2$ in $r$ general points and let $E:=E_1+\ldots+ E_r$ be the sum of the exceptional divisors. 
By definition we have for $c_1=0,H$ that
$\chi^{X_r,H}_{c_1+E}(nH-E)=\Lambda^r\chi^{\P^2,H}_{c_1}(nH,P^r)$.
By \lemref{blowindep} we have therefore $\chi^{X_r,\omega}_{c_1+E}(nH-E)\equiv\Lambda^r\chi^{\P^2,H}_{c_1}(nH,P^r)$ for all classes $\omega=H-\sum_{i=1}^r a_i E_i$ on $X_r$ with $\<\omega, K_{X_r}\><0$ and $0\le a_i< \frac{1}{\sqrt r}$ for all $i$. Therefore the  formulas of \thmref{rpoly} also give the $\chi^{X_r,\omega}_{c_1+E}(nH-E)$.
 \end{Remark}
 
By \thmref{nblow}, and using \lemref{blowindep}, we can, from the $\chi^{\P^2,H}_{0}(nH,P^r)$, $\chi^{\P^2,H}_{H}(nH,P^r)$,  readily compute the generating functions of $K$-theoretical Donaldson invariants 
$\chi^{X,\omega}_{c_1}(L)$ for any blowup $X$ of $\P^2$ in finitely many points, for any $c_1,L\in \Pic(X)$, and for any $\omega$ close to $H$, up to addition of a Laurent polynomial.
In particular we can readily apply this computation to the tables of the $\chi^{\P^2,H}_{0}(nH,P^r)$, $\chi^{\P^2,H}_{H}(nH,P^r)$ of \thmref{rpoly} above.
We will only write down the result in one simple case.
We take $X_s$ the blowup of $\P^2$ in $s$ points, and let again $E=\sum_{i=1}^s E_i$ be the sum of the exceptional divisors, let $L:=dH-2E$ and consider the cases
$c_1=0$, $c_1=E$, $c_1=H$ and $c_1=K_{X_s}$.

We define polynomials $q_{d,s}$ by the following table,
$$\begin{tabular}{l | c | c|c |c    }
 &d=3&4&5&6\\ 
 \hline
 s=1&1&$1+3t^2$&$1+15t^2+10t^3+6t^4$&$1+46t^2+104t^3+210t^4 + 105t^5 + 43t^6 + 3t^7$\\
 s=2&&$1+t^2$&$1+10t^2+4t^3+t^4$&$1+37t^2+ 70t^3 + 105t^4 + 34t^5 + 9t^6$\\
 s=3&&$1$&$1+6t^2+t^3$&$1+ 29t^2 + 44t^3 + 45t^4 + 8t^5 + t^6$\\
 s=4&&&$1+3t^2$&$1 + 22t^2 + 25t^3 + 15t^4 + t^5$\\
 s=5&&&$1+t^2$&$1+ 16t^2 + 12t^3 + 3t^4$\\
 s=6&&&$1$&$1+11t^2+4t^3$\\
 s=7&&&&$1+7t^2$\\
 \end{tabular}$$
and polynomials $r_{d,s}$  by the following table.
$$\begin{tabular}{l | c | c|c |c    }
 &d=4&6\\ 
 \hline
 s=1&$1+3t$&$1+24t+105t^2+161t^3+168t^4+43t^5+10t^6$\\
 s=2&$1+t$&$1+21t+71t^2+90t^3+63t^4+9t^5+t^6$\\
 s=3&$1$&$1+ 18t+ 45t^2+ 45t^3 + 18t^4+ t^5$\\
 s=4&&$1+15t+26t^2+19t^3+3t^4$\\
 s=5&&$1+12t +13t^2 +6t^3$\\
 s=6&&$1+9t+5t^2+t^3$\\
 s=7&&$1+6t+t^2$\\
 s=8&&$1+3t$\\
 \end{tabular}$$

\begin{Proposition} Let $X_s$ be the blowup of $\P^2$ in $r$ general points with exceptional divisors $E_1,\ldots,E_s$, and write $E=\sum_{i=1}^sE_i$.
With the $q_{d,s}$ and $r_{d,s}$ given by the above tables we get
\begin{align*}
\chi^{X,H}_{0}(dH-2E)&\equiv \frac{q_{d,s}(\Lambda^4)}{(1-\Lambda^4)^{\binom{d+2}{2}-3s}}\\
\chi^{X,H}_{K_{X_s}}(dH-2E)&\equiv \frac{\Lambda^{d^2-1-3s}q_{d,s}(\frac{1}{\Lambda^4})}{(1-\Lambda^4)^{\binom{d+2}{2}-3s}}, \quad d \hbox{ even}\\
\chi^{X,H}_{H}(dH-2E)&\equiv \frac{\Lambda^{3}r_{d,s}(\Lambda^4)}{(1-\Lambda^4)^{\binom{d+2}{2}-3s}}, \quad d \hbox{ even}\\
\chi^{X,H}_{E}(dH-2E)&\equiv \begin{cases}
\frac{\Lambda^{d^2-1-3s} q_{d,s}(\frac{1}{\Lambda^4})}{(1-\Lambda^4)^{\binom{d+2}{2}-3s}}& d \hbox{ odd}\\
\frac{\Lambda^{d^2-4-3s} r_{d,s}(\frac{1}{\Lambda^4})}{(1-\Lambda^4)^{\binom{d+2}{2}-3s}}& d \hbox{ even}
\end{cases}
\end{align*}
The same formulas also apply with $\chi^{X,H}_{c_1}(dH-2E)$ replaced by
$\chi^{X,\omega}_{c_1}(dH-2E)$, with $\omega=H-a_1E_1-\ldots a_rE_s$ and $0\le a_i\le \sqrt{s}$ for all $i$.
\end{Proposition}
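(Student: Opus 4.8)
The plan is to push everything down to $K$-theoretic Donaldson invariants of $\P^2$ with point class by iterating the blowup formula \thmref{nblow}, and then to feed in the explicit evaluations of \thmref{rpoly}. Write $L=dH-2E$ with $E=E_1+\ldots+E_s$, and blow up the $s$ points one at a time. Since $dH-2E_i=(dH)-(3-1)E_i$, each step is an instance of \thmref{nblow} with $n=3$: the exceptional divisor $E_i$ contributes a factor $R_3(\Lambda,P)$ when the coefficient of $E_i$ in $c_1$ is even and a factor $S_3(\Lambda,P)$ when it is odd, while the parity of the coefficient of $H$ in $c_1$ decides whether the resulting $\P^2$-invariant has $c_1'=0$ or $c_1'=H$. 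Using \lemref{blowindep} to set $\omega=H$, and $K_{X_s}=-3H+E$, this yields
\begin{align*}
\chi^{X,H}_{0}(L)&\equiv\chi^{\P^2,H}_{0}(dH,R_3(\Lambda,P)^s), & \chi^{X,H}_{E}(L)&\equiv\chi^{\P^2,H}_{0}(dH,S_3(\Lambda,P)^s),\\
\chi^{X,H}_{H}(L)&\equiv\chi^{\P^2,H}_{H}(dH,R_3(\Lambda,P)^s), & \chi^{X,H}_{K_{X_s}}(L)&\equiv\chi^{\P^2,H}_{H}(dH,S_3(\Lambda,P)^s).
\end{align*}
The two invariants with $c_1'=H$ only make sense for $d$ even (the constraint is $d\equiv r\bmod 2$ with $r$ even), which explains the parity hypotheses on the last two formulas.

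Next I would expand $R_3(\Lambda,P)^s=\big((1-\Lambda^4)^2-\Lambda^4P^2\big)^s$ and $S_3(\Lambda,P)^s=\Lambda^s\big(P^2-(1-\Lambda^4)^2\big)^s$ by the binomial theorem, use linearity of $\chi^{\P^2,H}_{c_1'}(dH,\,\cdot\,)$ in the point class, and substitute $\chi^{\P^2,H}_{0}(dH,P^{2k})\equiv p_{d,2k}(\Lambda^4)/(1-\Lambda^4)^{\binom{d+2}{2}-2k}$ from \thmref{rpoly}(1) and the corresponding expression of \thmref{rpoly}(3) for $c_1'=H$. The powers of $(1-\Lambda^4)$ line up so that all summands share the denominator $(1-\Lambda^4)^{\binom{d+2}{2}-2s}$; for $c_1=0$ one finds
\[
\chi^{\P^2,H}_{0}(dH,R_3^s)\equiv\frac{N_{d,s}(\Lambda)}{(1-\Lambda^4)^{\binom{d+2}{2}-2s}},\qquad N_{d,s}(\Lambda):=\sum_{k=0}^{s}\binom{s}{k}(-\Lambda^4)^{k}\,p_{d,2k}(\Lambda^4).
\]
Only point-class powers $P^{2k}$ with $2k\le 2s\le 16$ occur, so all the $p_{d,2k}$ needed lie within the range for which the generating functions of \thmref{rpoly} have been computed.

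The hard part is the denominator: I must show that $N_{d,s}$ (and its $c_1=H$ analogue) is divisible by $(1-\Lambda^4)^s$, which is precisely the drop of the pole order from $\binom{d+2}{2}-2s$ to $\binom{d+2}{2}-3s$. This is the expected exponent, since a short intersection computation gives $\binom{d+2}{2}-3s=\frac{1}{2}L(L-K_{X_s})+1=\chi(L)$, in agreement with \conref{ratconj}(1); note that \thmref{blowrat} by itself only guarantees that the denominator is a power of $(1-\Lambda^4)$, not that its exponent is this one, so the divisibility is genuine content. For each $(d,s)$ in the tables it is checked directly: dividing $N_{d,s}$ by $(1-\Lambda^4)^s$ produces exactly the polynomial $q_{d,s}(\Lambda^4)$ of the first table, and the analogous $c_1=H$ computation (via \thmref{rpoly}(3), whose numerator $\Lambda^{d^2-4k-1}p_{d,2k}(\Lambda^{-4})$ combines with the $(-\Lambda^4)^k$ of $R_3^s$ to an overall factor $\Lambda^{d^2-1}$) produces $\Lambda^3\,r_{d,s}(\Lambda^4)$. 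I expect this order-$s$ vanishing to be the real obstacle; it reflects the arithmetic of the $p_{d,2k}$ rather than anything formal, and a uniform statement (all $d,s$) would require a divisibility lemma for $\sum_{k}\binom{s}{k}(-t)^{k}p_{d,2k}(t)$.

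Finally, the two "dual" rows follow from the first two with no extra work beyond bookkeeping. Repeating the computation with $S_3^s$ in place of $R_3^s$ turns the relevant numerator into its image under $\Lambda\mapsto1/\Lambda$ up to an explicit power of $\Lambda$: using $\chi^{\P^2,H}_{H}(dH,P^{2k})$ from \thmref{rpoly}(3) together with $N_{d,s}(\Lambda)=(1-\Lambda^4)^sq_{d,s}(\Lambda^4)$ one gets directly $\chi^{X,H}_{K_{X_s}}(L)\equiv\Lambda^{d^2-1-3s}q_{d,s}(1/\Lambda^4)/(1-\Lambda^4)^{\binom{d+2}{2}-3s}$, and in the same way $\chi^{X,H}_{E}(L)$ comes out as the $\Lambda\mapsto1/\Lambda$ reflection of the $c_1=H$ computation, namely $\Lambda^{d^2-4-3s}r_{d,s}(1/\Lambda^4)/(1-\Lambda^4)^{\binom{d+2}{2}-3s}$ for $d$ even; for $d$ odd the same manipulation, now reflecting the $c_1=0$ computation via the palindromy of $p_{d,r}$ ($d$ odd) visible in the tables of \thmref{rpoly}, gives the stated $q_{d,s}(1/\Lambda^4)$. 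The extension from $\omega=H$ to any $\omega=H-\sum a_iE_i$ with $0\le a_i<1/\sqrt{s}$ and $\<\omega,K_{X_s}\><0$ is immediate from \lemref{blowindep}. The care needed in this last step, together with the divisibility above, is where the whole argument actually lives.
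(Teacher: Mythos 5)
Your proposal is correct and follows essentially the same route as the paper: iterate the blowup formula \thmref{nblow} with $n=3$ to reduce $\chi^{X_s,H}_{c_1}(dH-2E)$ to $\chi^{\P^2,H}_{0}(dH,R_3^s)$, $\chi^{\P^2,H}_{H}(dH,R_3^s)$, $\Lambda^s\chi^{\P^2,H}_{0}(dH,(P^2-(1-\Lambda^4)^2)^s)$ and $\Lambda^s\chi^{\P^2,H}_{H}(dH,(P^2-(1-\Lambda^4)^2)^s)$ according to the parity of the coefficients of $H$ and the $E_i$ in $c_1$, and then substitute the tables of \thmref{rpoly}. The paper compresses the final substitution and the resulting cancellation of $(1-\Lambda^4)^s$ into one sentence, whereas you spell it out; the content is the same.
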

\begin{proof}
Recall that $R_3=-\la^4 x^2 + (1-\la^4)^2$, $S_3=\la(x^2-(1-\la^4)^2)$. Noting that $K_{X_s}\equiv H+E\mod 2H^2(X,\Z)$, we
 we get by
 \thmref{nblow} that
\begin{align*}
\chi^{X_s,H}_0(dH-2E)&=\chi^{\P^2,H}_{0}\big(dH,\big(-\Lambda^4 P^2 + 1-\Lambda^4)^2\big)^s\big),\\ 
 \chi^{X_s,H}_H(dH-2E)&=\chi^{\P^2,H}_{H}\big(dH,\big(-\Lambda^4 P^2 + 1-\Lambda^4)^2\big)^s\big),\\
 \chi^{X_s,H}_E(dH-2E)&=\Lambda^s\chi^{\P^2,H}_{0}\big(dH,\big(P^2 - (1-\Lambda^4)^2\big)^s\big),\\ 
 \chi^{X_s,H}_{K_{X_s}}(dH-2E)&=\Lambda^s\chi^{\P^2,H}_{H}\big(dH,\big(P^2 - (1-\Lambda^4)^2\big)^s\big).
\end{align*}
Now we just put the values of the tables of \thmref{rpoly} into these formulas.
\end{proof}

\subsection{Symmetries from Cremona transforms}

\begin{Remark}
The  Conjecture \ref{ratconj} will often predict a symmetry for the polynomials $P^X_{c_1,L}(\Lambda)$.
Assume Conjecture \ref{ratconj}. Then we have the following. 
\begin{enumerate}
\item If $c_1\equiv L+K_X-c_1\mod 2H^2(X,\Z)$, then 
$P^X_{c_1,L}(\Lambda)=\Lambda^{L^2+8-K_X^2}P^X_{c_1,L}(\frac{1}{\Lambda})$.
\item More generally let $X$ be the blowup of $\P^2$ in $n$ points, with exceptional divisors $E_1,\ldots E_n$, $L=dH-a_1E_1-\ldots -a_nE_n$. If
 $\sigma$ is a permutation of $\{1,\ldots, n\}$, we write $\sigma(L):=dH-a_{\sigma(1)}E_1-\ldots -a_\sigma(n)E_n$.
 Then $\chi^{X,H}_{c_1}(L)=\chi^{X,H}_{\sigma(c_1)}(\sigma(L))$. 
  \end{enumerate}
 Thus, if there is a $\sigma$ with  $L=\sigma(L)$ and $\sigma(c_1)\equiv L+K_X-c_1\mod 2H^2(X,\Z)$, then 
 $P^X_{c_1,L}(\Lambda)=\Lambda^{L^2+8-K_X^2}P^X_{c_1,L}(\frac{1}{\Lambda})$.
 \end{Remark}
Other symmetries come from the Cremona transform of the plane, which we briefly review.
Let  $p_1,p_2,p_3$ be three general points in $\P^2$. For $i=1,2,3$ let $L_k$ the line through $p_i,p_j$ where $\{i,j,k\}=\{1,2,3\}$.
Let $X$ be the blowup of $\P^2$ in $p_1,p_2,p_3$, with exceptional divisors $E_1,E_2,E_3$, and let $\overline E_1,\overline E_2,\overline E_3$ be the strict transforms of the lines
$L_1,L_2,L_3$. The $\overline E_i$ are disjoint $(-1)$ curves which can be blown down to obtain another projective plane $\overline \P^2$. Let $H$ (resp. $\overline H$) be the pullback of 
the hyperplane class from $\P^2$ (resp. $\overline \P^2$) to $X$. Then $H^2(X,\Z)$ has two different bases $H,E_1,E_2,E_3$ and $\overline H,\overline E_1,\overline E_2,\overline E_3$, 
which are related
by the formula
$$dH-a_1E_1-a_2E_2-a_3E_3=(2d-a_1-a_2-a_3)\overline H-(d-a_2-a_3)\overline E_1-(d-a_1-a_3)\overline E_2-(d-a_1-a_2)\overline E_3.$$
Note that this description is symmetric under exchanging the role of $H,E_1,E_2,E_3$ and $\overline H, \overline E_1,\overline E_2,\overline E_3$.
Let $c_1\in H^2(X,\Z)$. If $\<c_1,K_X\>$ is even, then it is easy to see that $\overline c_1\equiv c_1\mod 2H^2(X,\Z)$, but if $\<c_1,K_X\>$ is odd, then $\overline c_1\equiv K_X-c_1\mod 2H^2(X,\Z)$.
For a class $L=dH-a_1E_1-a_2E_2-a_3E_3\in H^2(X,\Z)$ we denote $\overline L=d\overline H-a_1\overline E_1-a_2\overline E_2-a_3\overline E_3.$
Then it is clear from the definition that 
$\chi^{X,H}_{c_1}(L)=\chi^{X,\overline H}_{\overline{c}_1}(\overline L)$, and by \lemref{blowindep} we get  $\chi^{X,\overline H}_{\overline{c}_1}(\overline L)\equiv \chi^{X,H}_{\overline c_1}(\overline L).$
If $\sigma$ is a permuation of $\{1,2,3\}$ and we denote $\sigma(L):=dH-a_{\sigma_1}E_1-a_{\sigma_2}E_2-a_{\sigma_3}E_3$,
If $\sigma(L)=L$, then it is clear that $\chi^{X,H}_{c_1}(L)=\chi^{X,H}_{\sigma(c_1)}(L)$.

Now assume  $d=a_1+a_2+a_3$. Then $\overline L=L$, so that 
 $\chi^{X,H}_{c_1}(L)\equiv\chi^{X,H}_{{\overline c_1}}(L).$
 
Assume now $\<c_1,K_X\>$ is odd.
Assuming also \conref{ratconj}, the polynomials $P^X_{c_1,L} \in \Lambda^{-c_1^2}\Z[\Lambda^4]$ mentioned there satisfy 
\begin{enumerate}
\item 
$P^X_{c_1,L}(\Lambda)=P^X_{K_X-c_1,L}(\Lambda)=\Lambda^{L^2+8-K_X^2} P^X_{L+K_X-c_1}(\frac{1}{\Lambda})=\Lambda^{L^2+8-K_X^2} P^X_{L- c_1}(\frac{1}{\Lambda}).$
\item 
Thus if there is a permutation $\sigma$ of $\{1,2,3\}$ with $\sigma(L)=L$ and $c_1\equiv L-\sigma(c_1)\mod 2 H^2(X,\Z)$, or with 
$\sigma(L)=L$ and $c_1\equiv L+K_X-\sigma(c_1)\mod 2 H^2(X,\Z)$. Then we have the symmetries
$$P^X_{c_1,L}(\Lambda)=\Lambda^{L^2+8-K_X^2} P^X_{c_1,L}(\frac{1}{\Lambda})=P^X_{K_X-c_1,L}(\Lambda)=\Lambda^{L^2+8-K_X^2} P^X_{K_X-c_1,L}(\frac{1}{\Lambda}).$$
\end{enumerate}

We check these predictions in a number of cases.
 Let 
 $L_5=5H-2E_1-2E_2-E_3$, $L_6=6H-2E_1-2E_2-2E_3$, $L_7=7H-3E_1-2E_2-2E_3$.
 We find
\begin{align*}
&\chi^{X,H}_{K_X+E_2}(L_5)\equiv\chi^{X,H}_{E_2}(L_5)\equiv\frac{5\Lambda^5+6\Lambda^9+5\Lambda^{13}}{(1-\Lambda^4)^{14}},\\
&\chi^{X,H}_{H}(L_6)\equiv\chi^{X,H}_{E_1+E_2+E_3}(L_6)\equiv\frac{\Lambda^3+18\Lambda^7+45\Lambda^{11}+45\Lambda^{15}+18\Lambda^{19}+\Lambda^{23}}{(1-\Lambda^4)^{19}},\\
&\chi^{X,H}_{K_X-E_3}(L_6)\equiv\chi^{X,H}_{E_3}(L_6)\equiv\frac{8\Lambda^5+26\Lambda^9+60\Lambda^{13}+26\Lambda^{17}+8\Lambda^{21}}{(1-\Lambda^4)^{19}},\\
&\chi^{X,H}_{K_X-E_3}(L_7)\equiv\chi^{X,H}_{E_3}(L_7)\equiv\frac{11\Lambda^5+61\Lambda^9+265\Lambda^{13}+350\Lambda^{17}+265\Lambda^{21}+61\Lambda^{25}+11\Lambda^{29}}{(1-\Lambda^4)^{24}}.
\end{align*}

\section{The invariants of $\P^1\times\P^1$}
In this section we will use the results of the previous section to compute the $K$-theoretic Donaldson invariants of $\P^1\times \P^1$.

\subsection{A structural result}
First we will show analoguously to \thmref{blowrat} that all the generating functions $\chi^{\P^1\times\P^1,\omega}_{c_1}(L,P^r)$ are rational functions.

\begin{Lemma}
Let $c_1\in H^2(\P^1\times\P^1,\Z)$. Let $L$ be a line bundle on  $\P^1\times \P^1$ with $\<L,c_1\>+r$ even.
Let $\omega$ be an ample classes on $\P^1\times\P^1$. Then 
$\chi^{\P^1\times \P^1,\omega}_{c_1}(L,P^r)\equiv \chi^{\P^1\times \P^1F+G}_{c_1}(L,P^{r})$.
\end{Lemma}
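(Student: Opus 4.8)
The statement is the $\P^1\times\P^1$ analogue of \lemref{blowindep}, and I would prove it by the same mechanism. First observe that on $X=\P^1\times\P^1$ the anticanonical class $-K_X=2(F+G)$ is ample, so \emph{every} ample class $\omega$, and in particular $F+G$ itself, satisfies $\<\omega,K_X\><0$; thus both polarizations are admissible for the wallcrossing machinery of \secref{wallcro}. Since $P\equiv Q$ means exactly that $P-Q$ is a Laurent polynomial in $\Lambda$, it suffices to show that $\chi^{X,\omega}_{c_1}(L,P^r)-\chi^{X,F+G}_{c_1}(L,P^r)$ has only finitely many nonzero coefficients.

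I would compute this difference by wallcrossing. Write $F$ for the fibre class of one of the two rulings, so that $F\in S_X$ and $\chi^{X,F}_{c_1}(L,P^s)=0$ by \corref{strucdiff}(1). Then \corref{blowp}(1) gives $\chi^{X,\eta}_{c_1}(L,P^r)=\Coeff_{q^0}\big[\Psi^{\eta,F}_{X,c_1}(L,\Lambda,\tau)M^r\big]$ for $\eta=\omega$ and $\eta=F+G$; subtracting and using the cocycle condition $\Psi^{\omega,F}-\Psi^{F+G,F}=\Psi^{\omega,F+G}$ together with \lemref{thetawall}(1) yields
\begin{equation*}
\chi^{X,\omega}_{c_1}(L,P^r)-\chi^{X,F+G}_{c_1}(L,P^r)=\sum_\xi \delta^X_\xi(L,P^r),
\end{equation*}
where $\xi$ runs over all classes of type $(c_1)$ with $\<\omega,\xi\>>0>\<F+G,\xi\>$. (If $\omega$ lies on a wall of type $(c_1)$ one invokes the averaging convention, or simply the extended definition of \defref{chiext}; this does not affect the argument.) By \lemref{vanwall}(1) each $\delta^X_\xi(L,P^r)$ is a polynomial in $\Lambda$, so the whole problem reduces to showing that only finitely many $\xi$ contribute a nonzero term.

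For the finiteness I would argue exactly as in the proofs of \corref{P2P12GH} and \lemref{blowindep}. Write $\omega=aF+bG$ with $a,b>0$, $\xi=pF+qG$ and $L=nF+mG$, so that $\xi^2=2pq$ and $\<\xi,L-K_X\>=p(m+2)+q(n+2)$, while the separating conditions read $bp+aq>0>p+q$. Any such $\xi$ lies in the open plane sector bounded by the lines $\omega^\perp$ and $(F+G)^\perp$; since $\omega$ and $F+G$ both have positive square, their orthogonal complements are negative definite, so on the closure of this sector $-\xi^2=-2pq\ge c\,\|\xi\|^2$ for a positive constant $c$. By \lemref{vanwall}(2) a contributing class must satisfy $-\xi^2\le|\<\xi,L-K_X\>|+r+2$, whose right-hand side grows only linearly in $\|\xi\|$. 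Comparing quadratic and linear growth forces $\|\xi\|$ to be bounded, leaving finitely many lattice classes $\xi$; hence the difference is a Laurent polynomial and the two generating functions agree in all sufficiently high degrees.

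The reduction to wallcrossing is purely formal, so the only substantive point is the finiteness count, and its one delicate input is that the relevant sector contains no null direction (equivalently, that $-\xi^2$ is genuinely quadratically bounded below there). This I expect to be the main—though routine—obstacle, and it is immediate here precisely because both $\omega$ and $F+G$ are interior to the positive cone, so that the boundary rays $\omega^\perp$ and $(F+G)^\perp$ are spacelike.
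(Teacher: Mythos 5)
Your proposal is correct and follows the paper's own route: reduce the difference $\chi^{\P^1\times \P^1,\omega}_{c_1}(L,P^r)-\chi^{\P^1\times \P^1,F+G}_{c_1}(L,P^{r})$ to a sum of wallcrossing terms $\delta^X_\xi(L,P^r)$ over the classes $\xi$ of type $(c_1)$ separating $\omega$ from $F+G$, each a polynomial in $\Lambda$ by \lemref{vanwall}, and then show that only finitely many such $\xi$ satisfy the nonvanishing criterion $-\xi^2\le|\<\xi,L-K_X\>|+r+2$. The only cosmetic difference is in that last count: the paper normalizes $\omega=G+\alpha F$, writes $\xi=aG-bF$ and derives the explicit bounds $a\le\frac{|n+m+4|}{2}+\frac{r}{2}+1$ and $b\le\alpha a$, whereas you get the same finiteness from the Hodge-index/compactness observation that $-\xi^2\ge c\,\|\xi\|^2$ on the closed sector between $\omega^\perp$ and $(F+G)^\perp$, so quadratic growth beats the linear right-hand side; both arguments are valid.
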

\begin{proof}
We write $L=nF+mG$. By exchanging the role of $F$ and $G$ if necessary we can write $\omega=G+\alpha F$, with $1\le \alpha$.
We have to show that there are only finitely many classes $\xi$ of type $(c_1)$ with $\<\xi,(F+G)\>\le 0\le \<\xi,\omega\>$ and $\delta^X_\xi(L,P^r)\ne 0$.
Such a class is of the form $\xi=aG-bF$, with $a,b\in \Z_{>0}$ satisfying  
$$a\le b,\quad \alpha a\ge b, \quad 2ab\le |a(n+2)-b(m+2)|+r+2.$$
This gives $$2ab\le a|n+2|+b|m+2|+r+2\le b|n+m+4|+r+2.$$ Thus we get $a\le \frac{|n+m+4|}{2}+\frac{r}{2}+1$. Therefore $a$ is bounded and by $\alpha a\ge b$ also $b$ is bounded.
Therefore there are only finitely many possible classes $\xi$.
\end{proof}

We use the fact that the blowup $\widetilde \P^2$ of $\P^2$ in two different points is also the blowup of $\P^1\times \P^1$ in a point.
We can identify the classes as follows.
Let $H$ be the hyperplane class on $\P^2$ and let $E_1$, $E_2$ be the exceptional divisors of the double blowup of $\P^2$.
Let $F$, $G$ be the fibres of the two different projections of $\P^1\times \P^1$ to its factors, and let $E$ be the exceptional divisor of the blowup of $\P^1\times \P^1$.
Then on $\widetilde \P^2$ we have the identifications
\begin{align*}
F&=H-E_1, \quad G=H-E_2, \quad E=H-E_1-E_2,\\
H&=F+G-E, \quad E_1=G-E, \quad E_2=F-E.
\end{align*}

\begin{Theorem}\label{P11rat}
Let $c_1\in \{0,F,G,F+G\}$. Let $L$ be a line bundle on  $\P^1\times \P^1$ with $\<L,c_1\>$ even. Let $r\in \Z_{\ge 0}$ with $\<L,c_1\>+r$ even.
There exists a polynomial $p^{\P^1\times \P^1}_{c_1,L,r}(t)$ and an integer $N^{\P^1\times \P^1}_{c_1,L,r}$, such that for all ample classes 
$\omega$ on $\P^1\times \P^1$, we have
$$\chi^{\P^1\times \P^1,\omega}_{c_1}(L,P^r)\equiv \frac{p^{\P^1\times \P^1}_{c_1,L,r}(\Lambda^4)}{\Lambda^{c_1^2}(1-\Lambda^4)^{N^{\P^1\times \P^1}_{c_1,L,r}}}.$$
\end{Theorem}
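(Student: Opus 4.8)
The plan is to reduce the invariants of $\P^1\times\P^1$ to those of the double blowup $\widetilde\P^2$ and then to those of $\P^2$, which were already shown to be rational in \thmref{P2rat1} and \thmref{blowrat}. First I would use the identifications recorded just before the statement: $\widetilde\P^2$, the blowup of $\P^1\times\P^1$ in one general point, is the blowup of $\P^2$ in two general points, with $F=H-E_1$, $G=H-E_2$, $E=H-E_1-E_2$. By the lemma immediately preceding the theorem, for any ample $\omega$ on $\P^1\times\P^1$ we have $\chi^{\P^1\times\P^1,\omega}_{c_1}(L,P^r)\equiv\chi^{\P^1\times\P^1,F+G}_{c_1}(L,P^r)$, so it suffices to prove rationality for the single polarization $\omega=F+G$ (equivalently, to work with a fixed convenient class in $C_X\cup S_X$).

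Next I would pass to the blowup. Using \defref{chiext} and \propref{blowgen}, $\chi^{\P^1\times\P^1,\omega}_{c_1}(L,P^r)$ is computed, up to the factor $\Lambda^{\pm r}$ coming from the point-class definition, by a corresponding invariant on $\widetilde\P^2$ of the form $\chi^{\widetilde\P^2,\omega}_{\widehat c_1}(\widehat L)$ for suitable $\widehat c_1,\widehat L$ obtained by subtracting the sum of $r$ further exceptional divisors (this is exactly how $\chi^{X,\omega}_{c_1}(L,P^r)$ is defined, and the blowup formula \thmref{nblow} lets one absorb the point classes into genuine line-bundle data on iterated blowups). Since $\widetilde\P^2$ is a blowup of $\P^2$ in finitely many general points, I can then invoke \thmref{blowrat}: for the blowup $X$ of $\P^2$ in finitely many points and $\omega=H-\sum a_iE_i$ with $|a_i|<1/\sqrt n$ and $\<\omega,K_X\><0$, there exist $d^{\widehat c_1}_{\widehat L,r}\in\Z_{\ge0}$ and $p^{\widehat c_1}_{\widehat L,r}\in\Q[\Lambda^{\pm4}]$ with
\[
\chi^{X,\omega}_{\widehat c_1}(\widehat L,P^r)\equiv\frac{p^{\widehat c_1}_{\widehat L,r}}{\Lambda^{\widehat c_1^2}(1-\Lambda^4)^{d^{\widehat c_1}_{\widehat L,r}}}.
\]
Translating this rational expression back through the identities $H=F+G-E$, $E_1=G-E$, $E_2=F-E$ and dividing by the appropriate power of $\Lambda$ yields the claimed form, after absorbing any integer shift in the exponent of $\Lambda$ into the numerator polynomial $p^{\P^1\times\P^1}_{c_1,L,r}(\Lambda^4)$ and noting that the congruence $c_1^2\equiv\widehat c_1^2\bmod 4$ keeps the power $\Lambda^{-c_1^2}$ of the correct parity.

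The only genuine bookkeeping issue — and the step I expect to be the main obstacle — is matching the two polarizations. \thmref{blowrat} is stated for $\omega$ near $H$ on the $\P^2$-blowup, whereas the natural polarization here is $F+G$ on $\P^1\times\P^1$; I must check that the class $F+G$ (or whichever ample class I use on $\widetilde\P^2$) lies in the admissible region $H-\sum a_iE_i$ with $|a_i|<1/\sqrt 2$ and negative pairing with $K_{\widetilde\P^2}$, or else use \lemref{blowindep} and the first lemma above to move between polarizations while only changing the invariant by a Laurent polynomial in $\Lambda$ (which is invisible under $\equiv$). Once the polarizations are reconciled, everything else is a formal substitution, and the integrality of the exponent $N^{\P^1\times\P^1}_{c_1,L,r}:=d^{\widehat c_1}_{\widehat L,r}$ together with $p^{\P^1\times\P^1}_{c_1,L,r}\in\Q[t]$ follows immediately. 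The restriction to $c_1\in\{0,F,G,F+G\}$ simply records the four classes in $H^2(\P^1\times\P^1,\Z)/2H^2(\P^1\times\P^1,\Z)$, so no further case analysis beyond these representatives is needed.
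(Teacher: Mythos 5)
Your proposal is correct and follows essentially the same route as the paper: reduce to the polarization $F+G$ via the preceding lemma, pass to $\widetilde\P^2=$ blowup of $\P^2$ in two points via \propref{blowgen}/\thmref{nblow}, observe that $F+G=2H-E_1-E_2$ is proportional to $H-\tfrac12E_1-\tfrac12E_2$ (admissible since $\tfrac12<1/\sqrt2$ and $\<\omega,K\><0$), and land on the rationality results for $\P^2$. The only cosmetic difference is that you invoke \thmref{blowrat} wholesale, whereas the paper re-runs its mechanism explicitly, writing $\chi^{\widetilde\P^2,H}_{c_1}(nF+mG,P^r)$ as $\chi^{\P^2,H}_{0\text{ or }H}((n+m)H,P^r\cdot R_{n+1}R_{m+1})$ etc.\ and citing \thmref{P2rat1}.
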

\begin{proof}
Note that on $\widetilde \P^2$ we have
$F+G=2H-E_1-E_2$. We write $L=nF+mG$, with $n,m\in \Z$. Then on $\widetilde \P^2$ we have
$L=(n+m)H-nE_1-mE_2$. 
By \thmref{nblow} we have therefore
\begin{align*}
\chi^{\widetilde \P^2,H}_{0}(nF+mG,P^r)&=\chi^{\P^2,H}_{0}\big((n+m)H, P^r \cdot R_{n+1}(P,\Lambda)R_{m+1}(P,\Lambda)\big),\\
\chi^{\widetilde \P^2,H}_{F}(nF+mG,P^r)&=\chi^{\P^2,H}_{H}\big((n+m)H, P^r \cdot S_{n+1}(P,\Lambda)R_{m+1}(P,\Lambda)\big),\\
\chi^{\widetilde \P^2,H}_{G}(nF+mG,P^r)&=\chi^{\P^2,H}_{H}\big((n+m)H, P^r \cdot R_{n+1}(P,\Lambda)S_{m+1}(P,\Lambda)\big),\\
\chi^{\widetilde \P^2,H}_{F+G}(nF+mG,P^r)&=\chi^{\P^2,H}_{0}\big((n+m)H, P^r \cdot S_{n+1}(P,\Lambda)S_{m+1}(P,\Lambda)\big).
\end{align*}
As $R_{n}(P,\Lambda)\in \Z[P^2,\Lambda^4]$, $S_{n}(P,\Lambda)\in \Lambda\Z[P,\Lambda^4]$, we see by \thmref{P2rat1} that
for $c_1=0,F,G,F+G$ we can write
$$
\chi^{\widetilde \P^2,H}_{c_1}(nF+mG,P^r)\equiv \frac{p^{\P^1\times \P^1}_{c_1,nF+mG,r}(\Lambda^4)}{\Lambda^{c_1^2}(1-\Lambda^4)^{N^{\P^1\times \P^1}_{c_1,nF+mG,r}}},$$
with  $p^{\P^1\times \P^1}_{c_1,nF+mG,r}\in \Q[t]$ and $N^{\P^1\times \P^1}_{c_1,nF+mG,r}\in\Z_{\ge 0}.$
As on $\widetilde \P^2$  we have $F+G=2H-E_1-E_2$, we get by \thmref{blowrat} that again for $c_1=0,F,G,F+G$ we have
$\chi^{\widetilde \P^2,F+G}_{c_1}(nF+mG,P^r)\equiv\chi^{\widetilde \P^2,H}_{c_1}(nF+mG,P^r)$. Finally by the blowdown formula 
\thmref{nblow} we have 
$\chi^{\widetilde \P^2,F+G}_{c_1}(nF+mG,P^r)=\chi^{\P^1\times \P^1,F+G}_{c_1}(nF+mG,P^r).$
\end{proof}

\subsection{Computations for $L=d(F+G)$} 
We will compute $\chi^{\P^1\times \P^1,F+G}_{c_1}(d(F+G))$ for $d\le 7$  and $c_1=0$, $F$, $G$, $F+G$. Obviously by symmetry
$\chi^{\P^1\times \P^1,F+G}_{G}(d(F+G))=\chi^{\P^1\times \P^1,F+G}_{F}(d(F+G))$, and furthermore we have just seen that 
$\chi^{\P^1\times \P^1,\omega}_{c_1}(d(F+G))\equiv\chi^{\P^1\times \P^1,F+G}_{c_1}(d(F+G))$ for any ample class $\omega$ on $\P^1\times\P^1$.
We use a different strategy than in the proof of \thmref{P11rat}, which is computationally more tractable, and allows us to compute 
$\chi^{\P^1\times \P^1,F+G}_{c_1}(d(F+G))$ for $d\le 7$, using only the $\chi^{\P^2,H}_H(nH,P^r)$, for $1\le n\le 8$, $0\le r\le 16$ already computed.

{\bf Step 1.}
By $F=H-E_1$, $G=H-E_2$, $E=H-E_1-E_2$, and thus $d(F+G)-dE=dH$,  and \thmref{nblow} we have
\begin{align*}
\chi^{\widetilde \P^2,H}_E(d(F+G)-dE,P^r)&=\chi^{\widetilde \P^2,H}_{H-E_1-E_2}(dH,P^r)=\Lambda^2\chi^{\P^2,H}_{H}(dH,P^r),\\
\chi^{\widetilde \P^2,H}_E(d(F+G)-(d-1)E,P^r)&=\chi^{\widetilde \P^2,H}_{H-E_1-E_2}((d+1)H-E_1-E_2,P^{r})\\&=\Lambda^2\chi^{\P^2,H}_{H}((d+1)H,P^{r+2}),\\
\chi^{\widetilde \P^2,H}_F(d(F+G)-dE,P^r)&=\chi^{\P^2,H}_{H-E_1}(dH,P^r)=\Lambda\chi^{\P^2,H}_{H}(dH,P^r),\\
\chi^{\widetilde \P^2,H}_F(d(F+G)-(d-1)E,P^r)&=\chi^{\widetilde \P^2,H}_{H-E_1}((d+1)H-E_1-E_2,P^{r})\\&=\Lambda(1-\Lambda^4)\chi^{\P^2,H}_{H}((d+1)H,P^{r+1}),\\
\chi^{\widetilde \P^2,H}_{F+G-E}(d(F+G)-dE,P^r)&=\chi^{\P^2,H}_{H}(dH,P^r),\\
\chi^{\widetilde \P^2,H}_{F+G-E}(d(F+G)-(d-1)E,P^r)&=\chi^{\widetilde \P^2,H}_{H}((d+1)H-E_1-E_2,P^{r})\\&=(1-\Lambda^4)^2\chi^{\P^2,H}_{H}((d+1)H,P^{r}),
\end{align*}
where we have used that $S_1(P,\Lambda)=\Lambda$, $S_2(P,\Lambda)=P\Lambda$ and   $R_2(P,\Lambda)=(1-\Lambda^4)$.

The $\chi^{\P^2,H}_{H}(nH,P^{s})$ have been computed for $n\le 8$ and $s\le 16$. In the tables above they are only listed for $n\le 7$ and only up to adding a Laurent polynomial in 
$\Lambda$, so as to give them a particularly simple form, but they have been computed precisely. 
Thus in the range $d\le 7$ and $r\le 14$, the  all the invariants on the left hand side of the formulas of Step 1  have been computed. 

{\bf Step 2.} For $d\le 7$ we compute 
\begin{align*}
\tag{1}\chi^{\widetilde \P^2,F+G}_{E}(d(F+G)-dE,P^r)&=\Lambda^2\chi^{\P^2,H}_{H}(dH,P^r)+\sum_{\xi} \delta_\xi^{\widetilde \P^2}(dH,P^r),\\
\chi^{\widetilde \P^2,F+G}_{E}(d(F+G)-(d-1)E,P^r)&=\Lambda^2\chi^{\P^2,H}_{H}((d+1)H,P^{r+2})\\
&\quad+\sum_{\xi} \delta_\xi^{\widetilde \P^2}((d+1)H-E_1-E_2,P^r),\\
\tag{2}\chi^{\widetilde \P^2,F+G}_{F}(d(F+G)-dE,P^r)&=\Lambda\chi^{\P^2,H}_{H}(dH,P^{r})+\sum_{\xi} \delta_\xi^{\widetilde \P^2}(dH,P^r),\\
\chi^{\widetilde \P^2,F+G}_{F}(d(F+G)-(d-1)E,P^r)&=\Lambda(1-\Lambda^4)\chi^{\P^2,H}_{H}((d+1)H,P^{r+1})\\
&\quad+\sum_{\xi} \delta_\xi^{\widetilde \P^2}((d+1)H-E_1-E_2,P^r),\\
\tag{3}\chi^{\widetilde \P^2,F+G}_{F+G-E}(d(F+G)-dE,P^r)&=\chi^{\P^2,H}_{H}(dH,P^{r})+\sum_{\xi} \delta_\xi^{\widetilde \P^2}(dH,P^r),\\
\chi^{\widetilde \P^2,F+G}_{F+G-E}(d(F+G)-(d-1)E,P^r)&=(1-\Lambda^4)^2\chi^{\P^2,H}_{H}((d+1)H,P^{r})\\
&\quad+\sum_{\xi} \delta_\xi^{\widetilde \P^2}((d+1)H-E_1-E_2,P^r).\\
\end{align*}
Here the sums are over all classes $\xi\in H^2(\widetilde \P^2,\Z)$ with $\<H,\xi\>\le 0\le\<(2H-E_1-E_2),\xi\>$ (but at least one of the inequalities is strict) and $\delta_\xi^{\widetilde \P^2}\ne \emptyset$, and the summand
is $\delta_\xi^{\widetilde \P^2}(dH,P^r)$,  $\delta_\xi^{\widetilde \P^2}((d+1)H-E_1-E_2,P^r)$, if both inequalities are strict and
$\frac{1}{2}\delta_\xi^{\widetilde \P^2}(dH,P^r)$,  $\frac{1}{2}\delta_\xi^{\widetilde \P^2}((d+1)H-E_1-E_2,P^r)$ if one of them is an equality.
(Note that we can exclude the $\xi$ with $\<\xi,H\>=0=\<\xi,2H-E_1-E_2\>$, because with $\xi$ also $-\xi$ will fulfil this property and 
$\delta_\xi^{\widetilde \P^2}(L,P^r)=-\delta_{-\xi}^{\widetilde \P^2}(L,P^r)$.)

In (1) these are classes of type $(E=H-E_1-E_2)$, in (2) of type $(F=H-E_1)$ and in (3) of type $(F+G-E=H)$.
By \lemref{blowindep} there are finitely many such classes.
In fact in the notations of the proof of \lemref{blowindep} we have 
$$n=2, \ \epsilon=\frac{1}{2}, \ \delta=\frac{1}{4}, |m_1+1|=|m_2+1|=1.$$
 Thus  if $\xi=aH-b_1E_1-b_2E_2$ is such a class, then we get by \eqref{blowbound} that 
$|b_1|+|b_2|\le 4(|d+3| +r+4)$ and $0<a\le \frac{1}{2}(|b_1|+|b_2|)$.
For all $\xi$ satisfying these bounds it is first checked whether indeed the criterion of \lemref{vanwall}(2) for the non-vanishing of the wallcrossing term $\delta_\xi^{\widetilde \P^2}(dH,P^r)$,  $\delta_\xi^{\widetilde \P^2}((d+1)H-E_1-E_2,P^r)$ is fulfilled, if yes we compute the wallcrossing term, we again use that by \lemref{vanwall}
$\delta_{\xi,d}^X(L,P^r)=0$ unless $d\le a_{\xi,L,X}:=\xi^2+2|\<\xi,L-K_X\>|+2r+4$.
Thus, also using \remref{qlevel}  it is enough evaluate the formula of \defref{wallcrossterm} modulo $q^{a_{\xi,L,X}}$ and $\Lambda^{a_{\xi,L,X}}$.
This is again a finite evaluation.

{\bf Step 3.}
By \thmref{nblow} we have
\begin{align*}
\chi^{\widetilde \P^2,F+G}_{E}(d(F+G)-dE,P^r)&=\chi^{ \P^1\times \P^1,F+G}_{0}(d(F+G),S_{d+1}(P,\Lambda) P^r),\\
\chi^{\widetilde \P^2,F+G}_{E}(d(F+G)-(d-1)E,P^r)&=\chi^{\P^1\times \P^1,F+G}_{0}(d(F+G),S_{d}(P,\Lambda) P^r),\\
\chi^{\widetilde \P^2,F+G}_{F}(d(F+G)-dE,P^r)&=\chi^{\P^1\times \P^1,F+G}_{F}(d(F+G),R_{d+1}(P,\Lambda)P^r),\\
\chi^{\widetilde \P^2,F+G}_{F}(d(F+G)-(d-1)E,P^r)&=\chi^{\P^1\times \P^1,F+G}_{F}(d(F+G),R_{d}(P,\Lambda)P^r),\\
\chi^{\widetilde \P^2,F+G}_{F+G-E}(d(F+G)-dE,P^r)&=\chi^{\P^1\times \P^1,F+G}_{F+G}(d(F+G),S_{d+1}(P,\Lambda)P^r),\\
\chi^{\widetilde \P^2,F+G}_{F+G-E}(d(F+G)-(d-1)E,P^r)&=\chi^{\P^1\times \P^1,F+G}_{F+G}(d(F+G),S_{d}(P,\Lambda)P^r).
\end{align*}
By \remref{npol} there exist polynomials $f_d\in \Q[x,\Lambda^4]$, $g_d\in \Q[x,\Lambda^4]$ with 
$f_d S_d(x,\lambda)+g_d S_{d+1}(x,\lambda)=\lambda(1-\lambda^4)^{N_d}$, 
and 
$h_d\in \Q[x,\Lambda^4]$, $l_d\in \Q[x,\Lambda^4]$ with 
$h_d R_d(x,\lambda)+l_d R_{d+1}(x,\lambda)=(1-\lambda^4)^{M_d}$.
For $d\le 7$ we see that $f_d$, $h_d$ are  polynomials in $x$ of degree at most $14$,  and $g_d$, $l_d$ are polynomials in $x$ of degree at most $11$.

Thus we get
\begin{align*}
\chi^{ \P^1\times \P^1,F+G}_{0}(d(F+G))&=\frac{1}{\Lambda(1-\Lambda^4)^{M_d}}\Big(\chi^{\widetilde \P^2,F+G}_{E}(d(F+G)-(d-1)E,f_d(P,\Lambda))\\
&\qquad+\chi^{\widetilde \P^2,F+G}_{E}(d(F+G)-dE,g_d(P,\Lambda))\Big),\\
\chi^{ \P^1\times \P^1,F+G}_{F}(d(F+G))&=\frac{1}{(1-\Lambda^4)^{N_d}}\Big(\chi^{\widetilde \P^2,F+G}_{F}(d(F+G)-(d-1)E,h_d(P,\Lambda))\\
&\qquad+\chi^{\widetilde \P^2,F+G}_{F}(d(F+G)-dE,l_d(P,\Lambda))\Big),\\
\chi^{ \P^1\times \P^1,F+G}_{F+G}(d(F+G))&=\frac{1}{\Lambda(1-\Lambda^4)^{M_d}}\Big(\chi^{\widetilde \P^2,F+G}_{F+G-E}(d(F+G)-(d-1)E,f_d(P,\Lambda))\\
&\qquad+\chi^{\widetilde \P^2,F+G}_{F+G-E}(d(F+G)-dE,g_d(P,\Lambda))\Big).
\end{align*}

All these computations are carried out with a Pari program. Finally we arrive at the following result.

\begin{Theorem}
With the notation of \thmref{P11gen}.
\begin{enumerate}
\item  $\displaystyle{\chi^{\P^1\times\P^1,F+G}_{0}(dF+dG)=\frac{p_d(\Lambda^4)}{(1-\Lambda^4)^{(d+1)^2}}-1-(d^2+4d+5)\Lambda^4}$ 
for $1\le d\le 7$.
\item $\displaystyle{\chi^{\P^1\times\P^1,F+G}_{F+G}(dF+dG)=\frac{q_d(\Lambda^4)}{(1-\Lambda^4)^{(d+1)^2}}-\Lambda^2}$ for $1\le d\le 7$.
\item $\displaystyle{\chi^{\P^1\times\P^1,F+G}_{F}(dF+dG)=\frac{r_d(\Lambda^4)}{(1-\Lambda^4)^{(d+1)^2}}}$ for $d=2,4,6$.
\end{enumerate}
\end{Theorem}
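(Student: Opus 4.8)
The final theorem asserts explicit rational-function formulas for $\chi^{\P^1\times\P^1,F+G}_{c_1}(dF+dG)$ with $c_1=0,F,F+G$ and $d\le 7$. The proof is not a new argument but rather the assertion that the algorithm spelled out in Steps 1--3 of the preceding subsection, when executed, produces exactly the stated numerators $p_d$, $q_d$, $r_d$. The plan is therefore to verify that each step of the algorithm is valid in the required range and that the computation terminates, and then to record the output. First I would invoke Step 1: using the identification of $\widetilde\P^2$ (the blowup of $\P^2$ in two points) with the blowup of $\P^1\times\P^1$ in a point, together with $F=H-E_1$, $G=H-E_2$, $E=H-E_1-E_2$, the blowup formula \thmref{nblow} (applied with $S_1=\Lambda$, $S_2=P\Lambda$, $R_2=1-\Lambda^4$) expresses the six invariants $\chi^{\widetilde\P^2,H}_{c_1}(d(F+G)-dE,P^r)$ and $\chi^{\widetilde\P^2,H}_{c_1}(d(F+G)-(d-1)E,P^r)$ in terms of the $\chi^{\P^2,H}_{H}(nH,P^s)$ with $n\le d+1\le 8$ and $s\le r+2$. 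These plane invariants are available from \thmref{rpoly} and the PARI computation described in \secref{CompP2}, which covers $n\le 8$, $s\le 16$; one must check that the range $d\le 7$, $r\le 14$ keeps all needed indices inside this computed box.

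\textbf{Wall-crossing step.} Next I would carry out Step 2: to pass from the polarization $H$ to $F+G$ on $\widetilde\P^2$ one adds the wall-crossing contributions $\delta^{\widetilde\P^2}_\xi(L,P^r)$ over classes $\xi$ of the appropriate type $(c_1)$ with $\langle H,\xi\rangle\le 0\le\langle 2H-E_1-E_2,\xi\rangle$, weighting boundary walls by $\tfrac12$ and discarding $\xi$ with $\langle\xi,H\rangle=0=\langle\xi,2H-E_1-E_2\rangle$ by the antisymmetry $\delta^{\widetilde\P^2}_\xi=-\delta^{\widetilde\P^2}_{-\xi}$. The key finiteness input is \lemref{blowindep}: writing $\xi=aH-b_1E_1-b_2E_2$, the bound \eqref{blowbound} with $n=2$, $\epsilon=\tfrac12$, $\delta=\tfrac14$ gives $|b_1|+|b_2|\le 4(|d+3|+r+4)$ and $0<a\le\tfrac12(|b_1|+|b_2|)$, leaving finitely many candidates. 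For each I would test the non-vanishing criterion of \lemref{vanwall}(2) and, when it holds, evaluate $\delta^{\widetilde\P^2}_\xi$ from \defref{wallcrossterm}, using \lemref{vanwall}(1) and \remref{qlevel} to bound the required $q$- and $\Lambda$-precision to $q^{a_{\xi,L,X}}$, $\Lambda^{a_{\xi,L,X}}$ with $a_{\xi,L,X}=\xi^2+2|\langle\xi,L-K_X\rangle|+2r+4$. This makes each evaluation a finite, explicit power-series computation.

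\textbf{Blowdown step and assembly.} Finally Step 3: the blowdown formula \thmref{nblow} rewrites the $\widetilde\P^2$ invariants at polarization $F+G$ as $\chi^{\P^1\times\P^1,F+G}_{c_1}(d(F+G),\bullet)$ twisted by $R_{d}$, $R_{d+1}$, $S_{d}$, $S_{d+1}$. Using \remref{npol} I would run the Euclidean algorithm in $\Q(\lambda)[x^2]$ to produce $f_d,g_d$ with $f_dS_d+g_dS_{d+1}=\Lambda(1-\Lambda^4)^{N_d}$ and $h_d,l_d$ with $h_dR_d+l_dR_{d+1}=(1-\Lambda^4)^{M_d}$; for $d\le 7$ these have $x$-degree at most $14$ (resp.\ $11$), so the twists $P^r\cdot f_d(P,\Lambda)$, etc., stay within the already-computed range of point-class invariants. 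Clearing by $\Lambda(1-\Lambda^4)^{M_d}$ or $(1-\Lambda^4)^{N_d}$ isolates $\chi^{\P^1\times\P^1,F+G}_{0}$, $\chi^{\P^1\times\P^1,F+G}_{F}$, $\chi^{\P^1\times\P^1,F+G}_{F+G}$, and collecting the resulting rational functions yields the stated $p_d$, $q_d$, $r_d$; the symmetry $\chi^{\P^1\times\P^1,F+G}_G=\chi^{\P^1\times\P^1,F+G}_F$ is immediate by exchanging the two factors, and \lemref{blowindep} extends the formulas to arbitrary ample $\omega$. The main obstacle is not conceptual but one of bookkeeping and precision control: one must ensure that the degrees of the Bézout multipliers $f_d,g_d,h_d,l_d$ together with the point-class power $r$ never exceed the precomputed range for the plane invariants, and that the $q$-truncation bounds from \remref{qlevel} and \lemref{vanwall} are respected at every wall, since any truncation error would silently corrupt the final numerators. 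Because all inputs are rational functions of $\Lambda$ with controlled denominators, once these bounds are honored the identities hold exactly (in the sense of $\equiv$), and the verification reduces to the finite PARI computation whose output is recorded in the statement.
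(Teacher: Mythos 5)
Your proposal is correct and follows exactly the paper's own route: the paper proves this theorem by executing the three-step algorithm of the preceding subsection (blowup transfer to $\widetilde\P^2$ from the precomputed $\chi^{\P^2,H}_{H}(nH,P^s)$ with $n\le 8$, $s\le 16$; wall-crossing from $H$ to $F+G$ with finiteness from \lemref{blowindep} and the precision bounds of \lemref{vanwall} and \remref{qlevel}; blowdown via the B\'ezout relations for $R_d,R_{d+1}$ and $S_d,S_{d+1}$) in a PARI program and recording the output. The degree and truncation bookkeeping you flag as the main obstacle is precisely what the paper's \remref{qlevel} and the stated degree bounds on $f_d,g_d,h_d,l_d$ are there to control.
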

\thmref{P11gen} follows from this and \propref{highvan}.

\end{document}